\newtheorem{theorem}{Theorem}[section]
\newtheorem{corollary}[theorem]{Corollary}
\newtheorem{claim}[]{Claim}
\newtheorem{lemma}[theorem]{Lemma}
\newtheorem{proposition}[theorem]{Proposition}
\theoremstyle{definition}
\newtheorem{definition}[theorem]{Definition}
\newtheorem{conjecture}[theorem]{Conjecture}
\newtheorem*{acknowledgements}{Acknowledgements}
\theoremstyle{remark}
\newtheorem{remark}[theorem]{Remark}
\numberwithin{equation}{section}
\newcommand{\V}{\mathcal{V}}
\newcommand{\R}{\mathbb{R}}
\newcommand{\N}{\mathbb{N}}
\newcommand{\F}{\mathcal{F}}
\newcommand{\A}{\mathcal{A}}
\newcommand{\C}{\mathcal{C}}
\newcommand{\VC}{\mathcal{VC}}
\newcommand{\mZ}{\mathbb{Z}}
\newcommand{\Z}{\mathcal{Z}}
\newcommand{\M}{\mathbf{M}}
\newcommand{\bL}{\mathbf{L}}
\newcommand{\mF}{\mathbf{F}}
\newcommand{\spt}{\operatorname{spt}}
\newcommand{\dist}{\operatorname{dist}}
\newcommand{\Div}{\operatorname{div}}
\newcommand{\inj}{\operatorname{inj}}
\newcommand{\interior}{\operatorname{int}}
\newcommand{\Ric}{\operatorname{Ric}}
\newcommand{\Clos}{\operatorname{Clos}}
\newcommand{\Vol}{\operatorname{Vol}}
\newcommand{\An}{\operatorname{An}}
\newcommand{\am}{\operatorname{am}}
\newcommand{\dmn}{\operatorname{dmn}}
\newcommand{\rom}[1]{\expandafter\romannumeral #1}
\newcommand{\RP}{\mathbb{RP}}
\newcommand{\mS}{\mathbb{S}}
\newcommand{\mcA}{\mathcal{A}}
\newcommand{\Ah}{\mathcal{A}^h}
\newcommand{\Gpm}{{G_{\pm}}}
\newcommand{\CG}{\mathcal{C}^{G_{\pm}}}
\newcommand{\VCG}{\mathcal{VC}^{G_{\pm}}}
\newcommand{\sF}{\mathscr{F}}
\newcommand{\sE}{\mathscr{E}}
\newcommand{\sEG}{\mathscr{E}_{G_{\pm}}}
\newcommand{\sXG}{\mathscr{X}_{G_{\pm}}}
\newcommand{\mfX}{\mathfrak{X}}
\newcommand{\Diff}{\operatorname{Diff}}
\newcommand{\Is}{\mathfrak{Is}}
\newcommand{\xEG}{\mathscr{X}_{G_{\pm}}}
\newcommand{\msX}{\mathscr{X}}
\newcommand{\msY}{\mathscr{Y}}
\newcommand{\bmsX}{\overline{\mathscr{X}}}
\newcommand{\mcX}{\mathcal{X}}
\newcommand{\mcY}{\mathcal{Y}}
\newcommand{\bmcX}{\overline{\mathcal{X}}}
\newcommand{\bd}{\partial}
\newcommand{\mcS}{\mathcal{S}}
\newcommand{\mcR}{\mathcal{R}}
\newcommand{\mcH}{\mathcal{H}}
\DeclareMathOperator{\im}{im}
\DeclareMathOperator{\Index}{index}
\title{Minimal surfaces with low genus in lens spaces}
\author[Xingzhe Li]{Xingzhe Li}
\address{Cornell University, Department of Mathematics, Ithaca, New York 14850}
\email{xy346@cornell.edu}
\author[Tongrui Wang]{Tongrui Wang}
\address{Institute for Theoretical Sciences, Westlake Institute for Advanced Study, Westlake University, Hangzhou, Zhejiang, 310024, China}
\email{wangtongrui@westlake.edu.cn, wangtongrui@sjtu.edu.cn}
\author[Xuan Yao]{Xuan Yao}
\address{Cornell University, Department of Mathematics, Ithaca, New York 14850}
\email{xl833@cornell.edu}
\begin{document}

\begin{abstract}
    Given a Riemannian $\mathbb{RP}^3$ with a bumpy metric or a metric of positive Ricci curvature, we show that there either exist four distinct minimal real projective planes, or exist one minimal real projective plane together with two distinct minimal $2$-spheres. 
    Our proof is based on a variant multiplicity one theorem for the Simon-Smith min-max theory under certain equivariant settings. 
    In particular, we show under the positive Ricci assumption that $\RP^3$ contains at least four distinct minimal real projective planes and four distinct minimal tori.  
    Additionally, the number of minimal tori can be improved to five for a generic positive Ricci metric on $\RP^3$ by the degree method. 
    Moreover, using the same strategy, we show that in the lens space $L(4m,2m\pm 1)$, $m\geq 1$, with a bumpy metric or a metric of positive Ricci curvature, there either exist $N(m)$ numbers of distinct minimal Klein bottles, or exist one minimal Klein bottle and three distinct minimal $2$-spheres, where $N(1)=4$, $N(m)=2$ for $m\geq 2$, and the first case happens under the positive Ricci assumption. 
\end{abstract}

\keywords{}
\subjclass[2010]{Primary 53A10, 53C42}

\maketitle

\section{Introduction}

In 1917, Birkhoff \cite{birkhoff1917dynamical} proposed a min-max method and showed the existence of a closed geodesic in any Riemannian $2$-sphere $S^2$. 
Using multi-parameter families of closed curves to sweep out $S^2$, Lusternik-Schnirelmann \cite{lusternik1947topological} (see also Grayson \cite{grayson1989shortening}) further showed that there are at least $3$ closed geodesics in any Riemannian $2$-sphere. 


In one higher dimension, investigating minimal surfaces in $S^3$ is also a significant topic in differential geometry. 
In \cite{smith1983existence}, Simon-Smith set out a min-max process to construct minimal surfaces with controls on topology, which is a variant of the min-max theory established by Almgren \cite{almgren1962homotopy}\cite{almgren1965theory} and Pitts \cite{pitts2014existence} for minimal hypersurfaces (see also Schoen-Simon \cite{schoen1981regularity}). 
In particular, they 
proved the existence of an embedded minimal $2$-sphere in any Riemannian $S^3$ (cf. \cite{colding2003min}). 
In analogy with the result of Lusternik-Schnirelmann, S. T. Yau posed the following problem in his famous 1982 Problem Section \cite{yau1982seminar}. 
\begin{conjecture}[S. T. Yau \cite{yau1982seminar}]\label{Conj: four spheres}
	There are four distinct embedded minimal spheres in any manifold diffeomorphic to $S^3$. 
\end{conjecture}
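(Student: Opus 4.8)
The plan is to promote Simon-Smith's existence of a single minimal $2$-sphere to a multi-parameter statement. First I would fix the space of Simon-Smith sweepouts of $S^3$ by embedded $2$-spheres and analyze its topology: by Hatcher's resolution of the Smale conjecture, the space of unknotted embedded $2$-spheres in $S^3$, suitably compactified by the degenerate (collapsed) spheres, is homotopy equivalent to the space of round $2$-spheres, and one expects its $\mathbb{Z}/2$-cohomology ring to contain a degree-one generator $\bar\lambda$ with $\bar\lambda^4 \neq 0$, this fourth power being exactly the source of Yau's ``four''. Using the classes $\bar\lambda^p$ one defines $p$-sweepouts and the associated min-max widths $0 < \om_1 \le \om_2 \le \om_3 \le \om_4$ in the style of Marques-Neves, with the monotonicity and subadditivity of the $\om_p$ coming from the cup-product structure; here $\om_1$ is the original Simon-Smith width.

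Next I would show each $\om_p$ is attained by an embedded minimal $2$-sphere. Simon-Smith regularity together with the genus bound already forces the min-max limit to be a disjoint union of minimal $2$-spheres with integer multiplicities, since the genus of the limit cannot exceed that of the sweepout, which is $0$. The essential step is to upgrade this to \emph{multiplicity one}, so that $\om_p$ is the area of a genuine minimal $2$-sphere rather than a small sphere counted several times. Here I would adapt the multiplicity-one theorem for min-max (in the spirit of Zhou, and of Chodosh-Mantoulidis in the surface setting); this is exactly where a bumpy metric or a metric of positive Ricci curvature is used: bumpiness excludes the degenerate limits, while positive Ricci kills stable components and, by a Frankel-type argument, forces connectedness, and the catenoid estimate of Ketover-Marques-Neves would be used to prevent a width from being realized only with higher multiplicity.

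Granting multiplicity one, a Lusternik-Schnirelmann dichotomy closes the argument. If two consecutive widths coincide, $\om_p = \om_{p+1}$, then the set of minimal $2$-spheres of that area is positive-dimensional, hence infinite, so there are certainly more than four. If the four widths are all distinct, the four minimal $2$-spheres realizing them have distinct areas and are therefore pairwise distinct. To reinforce this and to distinguish the surfaces geometrically, I would also invoke the degree/parity counting of Haslhofer-Ketover via mean curvature flow with surgery, together with White's parity for the space of minimal $2$-spheres, which independently supplies a second sphere and indicates how to sharpen the count for generic metrics.

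The main obstacle is the multiplicity-one / non-degeneration step: for an arbitrary metric a min-max $2$-sphere could a priori appear as a multiple cover or collapse onto a lower-area configuration, and controlling this is precisely what keeps the conjecture open in general and forces the restriction to bumpy or positive-Ricci metrics. A secondary difficulty is the cohomological bookkeeping, namely verifying that the compactified space of Simon-Smith sweepouts of $S^3$ really carries a nonzero fourth cup power, so that there are genuinely four parameters available to exploit.
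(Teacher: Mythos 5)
The statement you were asked about is not a theorem of this paper: it is Yau's conjecture, recorded here only as motivation (Conjecture \ref{Conj: four spheres}), and the paper offers no proof of it. As stated — four embedded minimal spheres for \emph{any} metric on $S^3$ — it remains open; what is known is the resolution for bumpy metrics or metrics of positive Ricci curvature, due to Wang--Zhou \cite{wangzc2023existenceFour}, which this paper cites and builds on (adapting the strategy to $\RP^3$ and lens spaces). Your proposal does not close this gap either: it explicitly retreats to the bumpy/positive-Ricci setting, so even if every step were carried out it would not establish the conjecture as stated. That is the fundamental mismatch between your write-up and the statement.

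Judged instead as a sketch of the known partial result, your outline follows the Wang--Zhou/Marques--Neves route (multi-parameter sweepouts detecting $\bar\lambda^p$ for $p\le 4$, Lusternik--Schnirelmann, multiplicity one), but it underestimates the two hardest steps. First, multiplicity one in the Simon--Smith (isotopy) setting is not obtained by quoting Zhou's Almgren--Pitts theorem or the catenoid estimate; Wang--Zhou had to build a full min-max and regularity theory for prescribed-mean-curvature functionals $\A^h$ adapted to sweepouts by embedded spheres, and even then the conclusion is conditional: unstable two-sided components have multiplicity one, while stable components are not excluded. Second, your Lusternik--Schnirelmann dichotomy (``equal widths give infinitely many spheres, distinct widths give four'') is not sufficient, because for a bumpy metric a min-max width can be realized by a \emph{stable} sphere or by a configuration containing spheres of smaller area with multiplicity; the actual argument handles this by cutting along a stable sphere, attaching a cylindrical end, and running Song's confinement technique to manufacture the missing minimal spheres. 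Without these ingredients your outline has genuine gaps even for the generic/positive-Ricci case, and with them it is essentially the cited Wang--Zhou proof rather than a new route.
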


Towards this conjecture, White \cite{white1991space} first used degree methods to show the existence of at least two embedded minimal spheres in any Riemannian $S^3$ with positive Ricci curvature. 
In particular, if the metric is sufficiently close to the round metric, White \cite{white1991space} showed that there are at least four embedded minimal spheres. 
Additionally, it was also shown in \cite{white1991space} that the set of bumpy metrics is generic in the Baire sense, where a metric $g_{_M}$ on a given closed manifold $M$ is called {\em bumpy} if every closed embedded minimal hypersurface is non-degenerate. 
Using Simon-Smith min-max theory and mean curvature flow, Haslhofer-Ketover \cite{haslhofer2019minimalS2} applied the catenoid estimates (cf. \cite{ketover2020catenoid}) to overcome the difficulty of multiplicities and proved the existence of at least two embedded minimal spheres in $S^3$ with bumpy metrics. 
Recently, significant progress was made by Wang-Zhou \cite{wangzc2023existenceFour} which resolved Yau's conjecture in $S^3$ with bumpy metrics or positive Ricci curvature via Simon-Smith min-max.

Inspired by the work of Wang-Zhou \cite{wangzc2023existenceFour}, we set out this paper to investigate minimal surfaces with low genus in lens spaces. 

\subsection{Minimal projective planes in $\RP^3$}

The motivation of Yau's conjecture is closely connected to the topological structure of the space of embedded spheres in $S^3$ (cf. Hatcher's proof of Smale conjecture \cite[Appendix (14)]{hatcher1983smale}). 
In light of the generalized Smale conjecture for lens spaces (cf. \cite{bamler2019ricciContrac, bamler2023ricciDiffeo} and \cite{ketover2023smale}), one would expect that the space of embedded projective planes in $\RP^3$ can retract onto the space $\sE$ of great projective planes (the quotient of great spheres). 
Note $\sE$ can be parameterized by $\RP^3$. 
Therefore, except for the area minimizing projective plane (associated with $H_0(\sE,\mZ_2)$), we can build three non-trivial (multi-parameter) families of great projective planes sweeping out the ambient manifold $\RP^3$, each of which is associated with the generator of $H_i(\sE, \mZ_2)\cong\mZ_2$, $i=1,2,3$, respectively (cf. Section \ref{Sec: sweepouts in RP^3}). 
Analogous to Conjecture \ref{Conj: four spheres}, it is now reasonable to ask whether there exist at least four distinct embedded minimal projective planes in $(\RP^3,g_{_{\RP^3}})$? 

In this paper, we use equivariant min-max theory and a variant multiplicity one result to investigate the above question. 
Our first main result is the following dichotomy theorem.

\begin{theorem}\label{Thm: minimal RP2 (main)}
	Given a Riemannian $\RP^3$ with a bumpy metric or a metric of positive Ricci curvature, either
	\begin{itemize}
		\item[(i)] there exist at least four distinct embedded minimal projective planes, or
		\item[(ii)] there exist one embedded minimal projective plane and two embedded minimal spheres.
	\end{itemize}
 In particular, (i) is valid when $\RP^3$ has positive Ricci curvature.
\end{theorem}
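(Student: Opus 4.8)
The plan is to adapt the strategy of Wang--Zhou \cite{wangzc2023existenceFour} for Yau's conjecture in $S^3$ to the present equivariant setting, combining the double cover with the variant multiplicity one theorem. Pass to the universal cover $\pi\colon S^3\to\RP^3$ and let $G=\mZ_2$ act on $S^3$ by the antipodal deck transformation, a free action. Under $\pi$, embedded minimal projective planes in $\RP^3$ correspond bijectively to connected $G$-invariant embedded minimal $2$-spheres in $S^3$, while embedded minimal $2$-spheres in $\RP^3$ lift to disjoint antipodal pairs of embedded minimal $2$-spheres; the equivariant Simon--Smith genus bound forces every min-max varifold produced below to project to a disjoint union of minimal projective planes and minimal $2$-spheres. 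Using the identification $\sE\cong\RP^3$ of Section~\ref{Sec: sweepouts in RP^3} together with $H^*(\RP^3;\mZ_2)\cong\mZ_2[a]/(a^4)$, I first produce the four families announced in the introduction: the area-minimizing projective plane $P_0$ in the nonzero class of $H_2(\RP^3;\mZ_2)$ --- which exists, is a smoothly embedded $\RP^2$ of multiplicity one, of area $\omega_0$ --- and $1$-, $2$-, $3$-parameter $G$-equivariant sweepouts of $S^3$ by $G$-invariant $2$-spheres detecting $a$, $a^2$, $a^3$, with equivariant min-max widths $\omega_1\le\omega_2\le\omega_3$. Since generic slices of such a sweepout are embedded $\RP^2$'s, hence homologous to $P_0$, one has $\omega_0\le\omega_1$.

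The key input is the variant multiplicity one theorem: for a bumpy metric or a metric of positive Ricci curvature, each $\omega_i$, $i=1,2,3$, is attained by a $G$-equivariant minimal surface whose image in $\RP^3$ is either (a) an embedded minimal projective plane of multiplicity one, or (b) a disjoint union of embedded minimal $2$-spheres, each with even multiplicity as an $\RP^3$-varifold (coming from an antipodal pair upstairs). A Lusternik--Schnirelmann argument then shows that if two of the values $\omega_0\le\omega_1\le\omega_2\le\omega_3$ coincide, the set of minimal surfaces realizing that common value is infinite. For a bumpy metric this contradicts the finiteness of embedded minimal surfaces of bounded area and genus, so the four widths are distinct; for a metric of positive Ricci curvature one instead observes (see the last paragraph) that alternative (b) never occurs, so the continuum consists of minimal projective planes and conclusion (i) already holds. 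Hence we may assume $\omega_0<\omega_1<\omega_2<\omega_3$.

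This reduces the proof to a dichotomy. If all of $\omega_1,\omega_2,\omega_3$ fall under (a), then $P_0$ together with these three minimal projective planes are four embedded minimal projective planes, pairwise distinct because their areas $\omega_0<\omega_1<\omega_2<\omega_3$ are distinct --- conclusion (i). Otherwise some $\omega_i$ falls under (b), producing an embedded minimal $2$-sphere $S$; cutting $\RP^3$ along $S$ into a ball and a twisted $I$-bundle over $\RP^2$ and running further (equivariant) min-max, together with the catenoid estimate \cite{ketover2020catenoid}, one extracts a second embedded minimal $2$-sphere $S'\neq S$ of distinct area. With the projective plane $P_0$ this exhibits one embedded minimal projective plane and two distinct embedded minimal $2$-spheres --- conclusion (ii).

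Finally, when $\Ric>0$ every embedded minimal $2$-sphere is two-sided and has $Q(1,1)=-\int_S(|A|^2+\Ric(\nu))<0$, hence is unstable; the catenoid estimate then strictly decreases the max-area along the relevant sweepout near any hypothetical multiplicity-two sphere, contradicting the definition of $\omega_i$. Thus alternative (b) cannot occur, every $\omega_i$ is attained by a minimal projective plane, and (i) holds, as asserted. The main obstacle is twofold: first, establishing the variant multiplicity one theorem in the equivariant Simon--Smith framework, where the genus of the sweepout surfaces must be controlled throughout --- unlike in Almgren--Pitts min-max; and second, granting it, carrying out the catenoid-estimate and cut-and-min-max bookkeeping carefully enough to guarantee that a single multiplicity-two sphere can never occur in isolation, so that the only possibilities are ``four projective planes'' or ``one projective plane and at least two spheres'' and the count lands exactly on the stated dichotomy rather than on an intermediate configuration such as three projective planes and a single sphere.
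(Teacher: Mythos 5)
Your overall architecture (double cover, three equivariant sweepouts detecting $\alpha,\alpha^2,\alpha^3$, the area-minimizer $\Sigma_0$, Lusternik--Schnirelmann to separate the widths, then a dichotomy) matches the paper, but your ``key input'' overstates what the multiplicity one theorem actually gives, and this creates a genuine gap in the bumpy case. Theorem \ref{Thm: multiplicity one (main)} only guarantees multiplicity one for \emph{unstable} two-sided components (and stability of the double cover for one-sided ones); it does \emph{not} say that the projective-plane component of the min-max limit is always multiplicity one. Upstairs the distinguished component $\Gamma_{k,1}$ is a two-sided $G_\pm$-separating sphere, and if it is stable nothing controls its multiplicity. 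In that scenario your alternative (a) fails (no multiplicity-one conclusion), your alternative (b) need not occur (the limit can be a single stable projective plane, with no sphere component), the strict inequalities $\omega_1<\omega_2<\omega_3$ no longer force the underlying projective planes to be distinct (the same stable plane with different multiplicities can realize different widths), and there is no minimal sphere to feed into your cut-and-min-max step. This is exactly the paper's Sub-case II-b, which it handles not with the catenoid estimate but with Song's cylindrical-end technique (via \cite[Proposition 8.8]{wangzc2023existenceFour}): cut along the strictly stable lifted sphere (strict stability from bumpiness) and produce two minimal spheres inside the resulting ball, landing in conclusion (ii). Without an argument covering the stable-projective-plane case, your dichotomy does not close.

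Two secondary points. First, in the positive Ricci case you claim alternative (b) ``never occurs'' because minimal spheres are unstable and the catenoid estimate kills multiplicity two; but this only excludes multiplicity $\geq 2$, not a multiplicity-one sphere component in the limit. Excluding minimal spheres in $\RP^3$ with $\Ric>0$ altogether requires the embedded Frankel property applied to the disjoint antipodal pair of lifts (as the paper does); you never invoke it. Second, the strict inequality $\omega_0<\omega_1$ is not a consequence of the same Lusternik--Schnirelmann coincidence argument you use for $\omega_i=\omega_{i+1}$; the paper proves $2M_0<\mathbf{L}(\mathscr{P}_1)$ by a separate argument (Claim \ref{Claim: sweepout with small area} combined with \cite[Proposition 8.2]{marques2017existence}). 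Also, minor: the spheres in your alternative (b) do not acquire ``even multiplicity'' downstairs --- the antipodal pair upstairs is the $G$-orbit of a single sphere in $\RP^3$ and the multiplicity is unchanged by the quotient.
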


In \cite{haslhofer2019minimalS2}, Haslhofer-Ketover 
proved in $(\RP^3,g_{_{\RP^3}})$ with positive Ricci curvature that there are at least two embedded minimal projective planes. 
Our result can now improve the number of minimal projective planes in $(\RP^3,g_{_{\RP^3}})$ to four under the positive Ricci assumption. 
Indeed, a minimal sphere $\Sigma$ in $\RP^3$ will be lifted to two disjoint minimal spheres $\Sigma_+\sqcup\Sigma_-$ in $S^3$ so that the antipodal map on $S^3$ permutes $\Sigma_\pm$. 
Combining with the {\em embedded Frankel property} in manifolds with positive Ricci curvature (i.e. any two closed embedded minimal hypersurfaces must intersect), we conclude Theorem \ref{Thm: minimal RP2 (main)}(ii) can not happen if $(\RP^3,g_{_{\RP^3}})$ has positive Ricci.

In recent years, we have witnessed the tremendous progress of Almgren-Pitts min-max theory. 
One peak of the development is the resolution of Yau's conjecture \cite[Problem 88]{yau1982seminar} on the existence of infinitely many closed minimal surfaces due to Marques-Neves \cite{marques2017existence} and Song \cite{song2018existence}. 
Meanwhile, several significant advancements \cite{irie2018density, marques2019equidistribution, song2020generic} in the spatial distribution of closed minimal hypersurfaces were made by the multi-parameter min-max theory coupled with the Weyl law for the volume spectrum \cite{liokumovich2018weyl}. 
Moreover, after the establishment of a min-max theory for prescribed mean curvature hypersurfaces \cite{zhou2018existence, zhou2019constant}, Zhou \cite{zhou2020multiplicity} solved the Multiplicity One Conjecture in the Almgren-Pitts setting. 
Combining with the Morse index estimates, Marques-Neves \cite{marques2016morse, marques2021morse} complete their celebrated program on the Morse theory for the area functional (see also \cite{marques2020morse}). 
We refer to \cite{Zhou-ICM22} for a more detailed history.

\subsection{Strategy of the proof}
Although our proof shares the same spirit as Wang-Zhou's resolution \cite{wangzc2023existenceFour}, one major difference is that the sweepouts are formed by $1$-sided projective planes. 
However, Wang-Zhou's multiplicity one theorem \cite[Theorem B]{wangzc2023existenceFour} relies heavily on the min-max theory \cite[Theorem 2.4]{wangzc2023existenceFour} for prescribing mean curvature functionals $\Ah$ \eqref{Eq: Ah-functional}, while it is inappropriate to define the $\Ah$ functional or prescribed mean curvature functions for $1$-sided surfaces. 
Hence, \cite[Theorem B]{wangzc2023existenceFour} cannot be applied directly.

Let $\pi: S^3\to\RP^3$ be the double cover and $g_-$ be the involution map in $S^3$ (i.e. $\pi(p)=\pi(g_-(p))$ and $g_-(p)\neq p$ for all $p\in S^3$). 
Heuristically, we notice that a projective plane $\RP^2$ in $\RP^3$ can be lifted to a sphere $S=\pi^{-1}(\RP^2)$ which separates $S^3$ into two parts $\Omega_+,\Omega_-$ so that $\Omega_\pm = g_-(\Omega_\mp)$. 
Hence, if $\nu$ is a unit normal along $S$ (w.r.t. to the lifted metric), then $dg_-(\nu) = -\nu$ and the mean curvature $H$ of $S$ w.r.t. $\nu$ is anti-symmetric $H(p)=-H(g_-(p))$. 
This inspired us to lift the sweepouts in $\RP^3$ back into $S^3$ and consider the $\mZ_2=\{id, g_-\}$ equivariant min-max theory in $S^3$ for the prescribed mean curvature functionals $\Ah$ with anti-symmetric $h$ (i.e. $h(p)=-h(g_-(p))$). 

As one key novelty of this paper, we show the following multiplicity one theorem in a certain equivariant setting analogy to the above, which provides an alternative way to handle the multiplicities in the $1$-sided Simon-Smith min-max theory. 

\begin{theorem}\label{Thm: multiplicity one (main)}
	Let $(M, g_{_M})$ be a closed connected orientable $3$-dimensional Riemannian manifold, and let $G$ be a finite group acting freely and effectively as isometries on $M$ so that $G$ admits an index $2$ subgroup $G_+$ with coset $G_-=G\setminus G_+$. 
	Suppose $\Sigma_0$ is an orientable $G$-invariant genus-$\mathfrak{g}_0$ surface, and $\xEG$ consists all the $G$-equivariant embedding $\Sigma=\phi(\Sigma_0)$ of $\Sigma_0$ into $M$ so that $M\setminus\Sigma=\Omega_+\sqcup\Omega_-$ and $G_+\cdot\Omega_\pm=\Omega_\pm$, $G_-\cdot\Omega_{\pm}=\Omega_{\mp}$. 
	Then given any homotopy class of smooth sweepouts formed by surfaces in $\xEG$, the associated min-max varifold can be chosen as a $G$-varifold induced by a closed embedded $G$-invariant minimal surface $\Gamma$ with connected components $\{\Gamma_{j}\}_{j=1}^J$ and integer multiplicities $\{m_j\}_{j=1}^J$ so that 
	\begin{itemize}
		\item[(i)] if $\Gamma_j$ is unstable and $2$-sided, then $m_j=1$;
		\item[(ii)] if $\Gamma_j$ is $1$-sided, then its connected double cover is stable. 
	\end{itemize}
	Moreover, the weighted total genus of $\Gamma$ given by \eqref{eq:genus bound0} is bounded by $\mathfrak{g}_0$.
\end{theorem}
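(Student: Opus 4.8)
The plan is to reduce the equivariant Simon--Smith min-max problem in $M$ to an Allen--Cahn / prescribed-mean-curvature min-max problem on the double cover or, more naturally here, directly to the $G$-equivariant Almgren--Pitts–type framework, and then import the multiplicity one conclusions from the Wang--Zhou machinery via the anti-symmetric prescribing function trick indicated in the strategy section. Concretely, I would first set up the equivariant min-max: given a homotopy class $\Pi$ of smooth sweepouts by surfaces in $\xEG$, define the width $W(\Pi)$ as the min-max value over $\Pi$. Since each $\Sigma = \phi(\Sigma_0) \in \xEG$ bounds a region $\Omega_+$ with $G_+ \cdot \Omega_+ = \Omega_+$ and $G_- \cdot \Omega_+ = \Omega_-$, one can encode $\Sigma$ by the signed data $(\Omega_+, \nu)$ with a distinguished normal $\nu$; the $G_-$-equivariance forces $dg_-(\nu) = -\nu$ along $\Sigma$, and the whole family lives in the $G$-equivariant boundary framework. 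The first real step is a pull-tight / regularity argument producing a critical varifold $V = \sum_j m_j |\Gamma_j|$ supported on a closed embedded $G$-invariant minimal surface; this is the equivariant Simon--Smith existence theorem (De Lellis--Pellandini, Ketover, and Franz--Guaraco--Mramor type results for free finite group actions), which I would either cite or sketch from the equivariant interpolation and Almgren isomorphism already established earlier in the paper.

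The heart of the matter is upgrading this to multiplicity one on the unstable two-sided components. Here I would follow the Wang--Zhou strategy \cite{wangzc2023existenceFour} but replace the scalar prescribing function $h$ by an \emph{anti-symmetric} function $h \colon M \to \mathbb{R}$ with $h \circ g_- = -h$ for all $g_- \in G_-$; this is exactly the class for which the $\mathcal{A}^h$ functional descends to the signed-region picture on $M$, because the region $\Omega_+$ and its complement $\Omega_- = g_- \Omega_+$ get opposite $h$-weighted boundary terms, so $\mathcal{A}^h(\Omega_+)$ is $G$-invariant as a functional on the $G$-equivariant Caccioppoli data. Perturbing the metric within the equivariant class and taking a sequence $h_i \to 0$ of such anti-symmetric functions, the $h_i$-min-max surfaces $\Gamma_{h_i}$ are $G$-equivariantly embedded, have mean curvature $h_i$ with respect to the distinguished normal (hence genuinely two-sided with a globally defined $h_i$-weighted first variation), and converge as varifolds to the width varifold $V$. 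The key point — and this is where I expect the main obstacle — is the \emph{strict stability / catenoid-type obstruction}: if some unstable two-sided component $\Gamma_j$ appeared with multiplicity $m_j \geq 2$, one constructs from it a competitor sweepout with strictly smaller area using the Marques--Neves / Wang--Zhou deformation (or the catenoid estimate of Ketover and Haslhofer--Ketover) \emph{in a $G$-equivariant fashion}. Making that deformation respect the $G$-action when $\Gamma_j$ itself may be permuted nontrivially by $G$ (rather than pointwise fixed) is the delicate part: one has to run the gluing/neck-construction simultaneously on the orbit $\{g \cdot \Gamma_j : g \in G\}$, and when $g_- \Gamma_j = \Gamma_j$ the two sheets being glued are interchanged by $g_-$, so the Jacobi-field logarithmic cutoff must be chosen odd under $g_-$, which is precisely possible because $h$ is anti-symmetric and the first eigenfunction of the stability operator on such a component can be taken with the correct parity.

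For the remaining conclusions: statement (ii), that a one-sided component $\Gamma_j$ has stable connected double cover, follows as in the classical one-sided Simon--Smith setting — the double cover of $\Gamma_j$ sits inside $M$ (or a small tubular neighborhood) as a two-sided minimal surface, and if it were unstable one could again decrease area by an equivariant isotopy, contradicting the min-max characterization of the width; here no prescribing function is needed since one-sided components carry no multiplicity issue beyond $m_j = 1$ already forced by the separation hypothesis $M \setminus \Sigma = \Omega_+ \sqcup \Omega_-$. Finally, the weighted genus bound by $\mathfrak{g}_0$ is the equivariant Simon--Smith genus bound: I would invoke the genus-monotonicity under the min-max limit — Simon--Smith's original argument as refined by De Lellis--Pellandini and Ketover — applied to the $G$-equivariant sweepout, noting that the $G$-action permutes the components $\Gamma_j$ in orbits and the weighted count in \eqref{eq:genus bound0} is exactly the one that is lower-semicontinuous under varifold convergence of embedded surfaces with a uniform genus bound $\mathfrak{g}_0$ on each slice. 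The main obstacle, to reiterate, is the equivariant catenoid/deformation step in the presence of a component fixed setwise but not pointwise by $G_-$; everything else is a matter of transcribing the Wang--Zhou argument through the anti-symmetric $\mathcal{A}^h$ functional and checking $G$-equivariance is preserved at each stage.
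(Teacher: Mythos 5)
Your overall framework (equivariant PMC min-max for $\Ah$ with $G_-$-antisymmetric $h$, in the spirit of Wang--Zhou) is the right one and matches the paper, but the step on which everything hinges — why an unstable $2$-sided component cannot carry multiplicity $m_j\geq 2$ — is argued by the wrong mechanism. You propose to derive a contradiction by building, from a multiplicity-$\geq 2$ unstable component, a $G$-equivariant competitor sweepout of strictly smaller area via a catenoid-estimate/neck-gluing deformation. That is not how the paper (or Wang--Zhou) proceeds, and it is precisely the approach that is not known to work in general: the catenoid estimate handles only special low-parameter multiplicity-two scenarios, and producing an area-decreasing competitor from a higher-multiplicity min-max limit is the open difficulty that the PMC approximation was invented to bypass. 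Note also that statement (i) is conditional (``unstable $\Rightarrow m_j=1$''), and the natural proof is by contraposition: one shows that multiplicity $m_j\geq 2$ forces \emph{stability}. Concretely, the paper chooses $h\in C^\infty_{\Gpm}(M)$ supported in small $G$-balls meeting each candidate minimal surface $S_i$ in disks, changing sign there, and normalized so that $\int_{S_i} h\,\phi_i\,d\mcH^2=0$ against the first eigenfunction $\phi_i$ (and similarly on the double cover when $S_i$ is $1$-sided); then, if the $(\Gpm,\epsilon_k h)$-boundaries converge to a $2$-sided component with multiplicity $\geq 2$, the gap between the top and bottom graphical sheets produces a nonnegative, nontrivial, $G$-invariant $W^{1,2}$ supersolution of the perturbed Jacobi equation (Proposition \ref{Prop: supersolution}, inequality \eqref{Eq: supersolution}), and the orthogonality $\int h\,\phi_i=0$ together with $\phi_i>0$ forces $\lambda_1\geq 0$. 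Your parity remark (``choose the logarithmic cutoff odd under $g_-$'') does not substitute for this; the actual equivariance point is that the top-plus-bottom sheet pair is $G$-invariant, so the limiting supersolution is $G$-invariant, and the antisymmetry of $h$ only enters through the fact that $h$ must change sign on each component, which is what makes the orthogonal choice of $h$ possible.

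Your treatment of item (ii) is also not correct as written. You claim no prescribing function is needed because ``one-sided components carry no multiplicity issue beyond $m_j=1$ already forced by the separation hypothesis,'' and that instability of the double cover would let one ``decrease area by an equivariant isotopy, contradicting the min-max characterization of the width.'' Neither holds: the separation hypothesis constrains the sweepout surfaces, not the components of the limit varifold, and $1$-sided components of min-max limits can and do occur with multiplicity $\geq 2$ (this is exactly why the weighted genus bound \eqref{eq:genus bound0} carries the factor $\tfrac12 m_i(\mathfrak g(\Gamma_i)-1)$); moreover, an unstable component of a min-max varifold is not in contradiction with the width being critical — min-max limits are expected to be unstable. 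In the paper, (ii) is obtained by running the very same supersolution argument on the connected double cover of each $1$-sided component (using the eigenfunction-orthogonality condition imposed on the double cover in the choice of $h$), yielding stability of the double cover unconditionally. Finally, your genus-bound paragraph is in the right spirit but glosses over the point that the convergence is of PMC surfaces, so one needs $h\equiv 0$ near the curves being lifted and the $G$-equivariant $\A^0$-almost-minimizing property there (conditions (1)–(2) of Theorem \ref{thm:genus bound}); this is why $h$ is chosen to vanish outside the small $G$-balls in the first place.
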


\begin{remark}\label{Rem: idea}
    \begin{enumerate}[itemjoin=\\]
        \item By re-merging some components, we can assume each $\Gamma_j$ is $G$-invariant and $\Gamma_j/G$ is connected. 
        In applications, one can first take a sweepout of $M/G$ formed by 1-sided surfaces in $\{\Sigma/G:\Sigma\in\xEG\}$, and then pull it back to a sweepout of $M$ in $\xEG$. 
        Clearly, the min-max $G$-surfaces $\{\Gamma_j\}_{j=1}^J$ given by Theorem \ref{Thm: multiplicity one (main)} can be reduced to min-max surfaces $\{\Gamma_j/G\}_{j=1}^J$ in $M/G$ with the same multiplicity. 
        One should note that the multiplicity $m_j$ is related to the stability and the orientability of $\Gamma_j$ by Theorem \ref{Thm: multiplicity one (main)}, not to $\Gamma_j/G$. 
        \item For a general compact Lie group $G$ acting by isometries on $M^{n+1}$ with $3\leq {\rm codim}(G\cdot x)\leq 7$ for all $x\in M$, the $G$-equivariant min-max theory for closed minimal $G$-hypersurfaces has been established by Ketover \cite{ketover2016equivariant}, Liu \cite{liu2021existence}, and the second author \cite{wang2022min}\cite{wang2023min} in different settings. 
        However, our scenarios cannot be covered directly by these works since we also need an equivariant min-max for the $\A^h$-functional \eqref{Eq: Ah-functional} and both $h$ and $\Omega_\pm$ are $G_-$-antisymmetric. 
    \end{enumerate}
\end{remark}

After applying the above theorem to $M=S^3$, $G=\{id, g_-\}$, $G_+=\{id\}$, $G_-=\{g_-\}$, and $\xEG=\{\pi^{-1}(P): \mbox{$P$ is an embedded $\RP^2\subset\RP^3$}\}$, we obtain some $G$-invariant unions of disjoint minimal spheres $\{\Gamma_j\}_{j=1}^J$ with multiplicities $\{m_j\}_{j=1}^N$ satisfying (i)(ii), which alternatively gives a multiplicity result for the min-max outputs $\{\widetilde{\Gamma}_i= \pi(\Gamma_j)\}_{j=1}^J$ in $\RP^3$. 
 
Meanwhile, another difficulty arises as $\{\widetilde{\Gamma}_j\}_{j=1}^J$ may also contain minimal spheres instead of minimal projective planes. 
Since spheres have no contribution to the total genus, the genus bound does not help to distinguish them. 
Nevertheless, we can show that (Proposition \ref{Prop: exist 1-sided component}) every min-max minimal surface $\cup_{j=1}^J \widetilde{\Gamma}_j$ associated with the sweepouts formed by ($1$-sided) projective planes contains exactly one minimal projective plane, i.e. $\widetilde{\Gamma}_1\cong\RP^2$ and $\widetilde{\Gamma}_{j\geq 2}\cong S^2$. 
Hence, if $\RP^3$ has positive Ricci curvature, we see $\cup_{j=1}^J \widetilde{\Gamma}_j=\widetilde{\Gamma}_1\cong\RP^2$ (by the Frankel property) and $\pi^{-1}(\widetilde{\Gamma}_1)$ is unstable in $S^3$, which implies $m_j=1$ (Theorem \ref{Thm: multiplicity one (main)}) and gives four distinct minimal projective planes associated with $H_i(\sE,\mZ_2)$, $i=0,1,2,3$. 

However, in general, we can neither eliminate minimal spheres in $\{\widetilde{\Gamma}_j\}_{j=1}^J$ nor rule out the existence of minimal projective planes $P\subset \RP^3$ with stable $\pi^{-1}(P)\subset S^3$.  
Moreover, if we cut $\RP^3$ along a minimal projective plane $P$ with stable $\pi^{-1}(P)\subset S^3$, we would obtain one $3$-ball $B$ with boundary $\bd B=\pi^{-1}(P)$. 
After adding a cylindrical end $\pi^{-1}(P)\times [0,\infty)$ to $B$ and applying Song's technique \cite{song2018existence} as in \cite[Section 8.2, 8.4]{wangzc2023existenceFour}, one can only find minimal spheres (instead of minimal projective planes) confined in $\interior(B)$. 
Therefore, given the above phenomena, it is reasonable to propose the dichotomy theorem (Theorem \ref{Thm: minimal RP2 (main)}) for the coexistence of both minimal projective planes and minimal spheres. 

Indeed, we would conjecture that under a certain metric on $\RP^3$, every minimal real projective plane constructed via min-max could be the same area minimizing one, but with possibly different multiplicities. 
A possible example is the Riemannian $\RP^3$ so that its double cover $S^3$ with the pull-back metric looks like a symmetric dumbbell with a long thin neck $S^2\times [-R,R]$ satisfying that the center $S^2\times \{0\}$ covers the area minimizing $\RP^2$, and for $t\neq 0$, $S^2\times \{t\}$ has positive mean curvature pointing towards $S^2\times \{0\}$.

\subsection{Minimal surfaces with Euler characteristic $0$ in lens spaces $L(p,q)$}

The above idea can also be applied to investigate minimal Klein bottles and minimal tori in lens spaces $L(p,q)\cong S^3/\mZ_p$, where $p\geq 2$, $1\leq q\leq p-1$, and $\mZ_p=\langle\xi_{p,q}\rangle$ acts freely on $S^3$ by \eqref{Eq: lens space action}. 

In particular, we have a very similar dichotomy theorem for minimal Klein bottles in $L(4m, 2m\pm 1)$ as shown below. 
Note $L(4m, 2m\pm 1)$, $m\geq 1,$ are the only lens spaces that contain embedded Klein bottles (cf. \cite[Corollary 6.4]{bredon1969nonorientable}). 

\begin{theorem}\label{Thm: minimal Klein bottles (main)}
	Let $m\geq 1$ be an integer, $N(1)=4$, and $N(m)=2$ for $m\geq 2$.
	Then, given a lens space $L(4m, 2m\pm 1)$ with a bumpy metric or a metric of positive Ricci curvature,  
	\begin{itemize}
		\item[(i)] either there exist $N(m)$ numbers of embedded minimal Klein bottles,
		\item[(ii)] or there exist one embedded minimal Klein bottle and three embedded minimal spheres.
	\end{itemize}
	In particular, (i) is valid when $L(4m, 2m\pm 1)$ has positive Ricci curvature. 
\end{theorem}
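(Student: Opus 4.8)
The plan is to run the proof of Theorem~\ref{Thm: minimal RP2 (main)} essentially verbatim, with an embedded Klein bottle in $L:=L(4m,2m\pm1)$ in the role of $\RP^2\subset\RP^3$ and Theorem~\ref{Thm: multiplicity one (main)} again supplying the multiplicity control. \emph{Equivariant lift.} Let $\hat L\to L$ be the connected double cover, with deck group $G=\{\mathrm{id},\tau\}\cong\mZ_2$; one checks $\hat L\cong L(2m,1)$, and after pulling back the metric $\tau$ is an isometry. Put $G_+=\{\mathrm{id}\}$, $G_-=\{\tau\}$. Since an embedded Klein bottle $K\subset L$ is one-sided with $w_1(\nu_K)\neq0$, its preimage $\hat K\subset\hat L$ is a \emph{connected} torus; as $K$ one-sided forces $L\setminus K$ connected, $\hat K$ is two-sided and separates $\hat L$ into $\Omega_+\sqcup\Omega_-$ with $\tau(\Omega_\pm)=\Omega_\mp$. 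Thus $M=\hat L$, $G$, $G_\pm$, $\Sigma_0=\hat K$ (orientable, $\mathfrak g_0=1$), and $\xEG=\{$preimages of embedded Klein bottles isotopic to $K\}$ satisfy the hypotheses of Theorem~\ref{Thm: multiplicity one (main)} --- the same configuration as in the $\RP^3$ case, except that there $\Sigma_0$ is a $2$-sphere and $\mathfrak g_0=0$.

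\emph{Sweepouts and min-max.} As in Section~\ref{Sec: sweepouts in RP^3}, adapted to $L$, construct the canonical continuous families of embedded Klein bottles sweeping out $L$ (equivalently, $G$-equivariant sweepouts of $\hat L$ by surfaces in $\xEG$); the parameter space of these families has nontrivial $\mZ_2$-homology precisely in degrees $0,\dots,3$ when $m=1$ and precisely in degrees $0,1$ when $m\ge2$, which is the source of $N(1)=4$ and $N(m)=2$. For each $i<N(m)$, Theorem~\ref{Thm: multiplicity one (main)} yields a width $\omega_i$ and a closed embedded minimal $G$-surface $\Gamma^{(i)}\subset\hat L$ of weighted total genus $\le\mathfrak g_0=1$; projecting, $\Gamma^{(i)}/G$ is an embedded minimal surface in $L$. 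An embedded sphere in $L$ is null-homologous mod $2$ (as $\pi_2(L)=0$), and an embedded torus in a lens space is null-homologous mod $2$ and lifts to two disjoint tori in $\hat L$ (contributing $2$ to the genus budget); hence the budget leaves room for at most one positive-genus component, a two-sided torus in $\hat L$, while the nontrivial $\mZ_2$-class detected by the sweepout --- which can only be carried by one-sided surfaces --- must be hit, exactly as in Proposition~\ref{Prop: exist 1-sided component}. Therefore $\Gamma^{(i)}/G=K_i\sqcup S^{(i)}_1\sqcup\cdots\sqcup S^{(i)}_{k_i}$ with $K_i$ a minimal Klein bottle and each $S^{(i)}_\ell\cong S^2$, and $K_i$ carries multiplicity one whenever its lift $\hat K_i$ is unstable (Theorem~\ref{Thm: multiplicity one (main)}(i)).

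\emph{Dichotomy.} If $L$ carries a metric of positive Ricci curvature, so does $\hat L$; every closed minimal surface there is unstable (test with $\phi\equiv1$), so $\hat K_i$ is unstable, $K_i$ has multiplicity one, and by the embedded Frankel property (in $L$, or in $\hat L$ after lifting) $\Gamma^{(i)}/G=K_i$ has no sphere component. If the widths $\omega_0<\cdots<\omega_{N(m)-1}$ are distinct the $K_i$ have distinct areas, hence are $N(m)$ distinct minimal Klein bottles; if some consecutive widths agree, a Lusternik--Schnirelmann argument gives infinitely many --- either way, conclusion~(i). If $L$ is only bumpy, run the same widths: either they deliver $N(m)$ distinct multiplicity-one minimal Klein bottles (conclusion~(i)), or, at the first stage where no new Klein bottle can be extracted, the multiplicity degenerates, and --- as controlled by the catenoid estimate in \cite[\S8]{wangzc2023existenceFour} --- the min-max surface contains a minimal sphere or a minimal Klein bottle $K^\ast$ with stable lift. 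One then cuts $L$ along $K^\ast$ (or along a stable minimal sphere); cutting along $K^\ast$ leaves a solid torus $V$ with $\partial V$ the orientation double cover of $K^\ast$, and attaching a half-cylinder $\partial V\times[0,\infty)$ and applying Song's deformation argument as in \cite[\S8]{wangzc2023existenceFour} produces three distinct minimal spheres confined to $\operatorname{int}(V)\subset L$, which together with $K^\ast$ give conclusion~(ii).

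\emph{Main obstacle.} Two points carry the weight. First, the topological input: identifying the parameter space of Klein-bottle sweepouts of $L$ and computing its $\mZ_2$-homology --- in particular explaining why $m=1$ is exceptional --- and checking that the configuration of Step~1 genuinely has the $G_-$-antisymmetry of $\Omega_\pm$ required by Theorem~\ref{Thm: multiplicity one (main)}. Second, the bumpy-metric half of the dichotomy: adapting Wang--Zhou's multiplicity-degeneration analysis and Song's compactification to the present topology, where the complement of the one-sided Klein bottle $K^\ast$ is a solid torus rather than a ball, and verifying that this produces exactly the asserted three minimal spheres.
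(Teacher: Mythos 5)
Your skeleton does match the paper's: equivariant lift of the Klein bottle, the multiplicity one theorem, cohomologically detected sweepouts, Lusternik--Schnirelmann separation of widths, Frankel in positive Ricci, and Song's trick in the bumpy case. The one structural difference is that you lift only to the double cover $\hat L\cong L(2m,1)$ with $G=\mZ_2$, while the paper works on $S^3$ with $G=\mZ_{4m}$, $G_+=\mZ_{2m}$; this is a harmless repackaging, since your $(\hat L,\hat K,\Omega_\pm)$ is exactly the quotient by $G_+$ of the paper's configuration, and Theorem \ref{Thm: multiplicity one (main)} applies to either. The first genuine gap is that the entire source of $N(1)=4$ and $N(m)=2$ --- constructing the Klein-bottle sweepouts and verifying that they detect $\alpha,\alpha^2,\alpha^3\in H^*(\xEG;\mZ_2)$ when $m=1$ but only $\alpha$ when $m\geq 2$ --- is asserted rather than proved. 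This is precisely the content of Section \ref{Sec: sweepouts by K^2}, which uses Ketover's explicit $S^1\times\RP^2$ (resp.\ $S^1$) families of flat Klein bottles lifted to Clifford tori in the round $S^3$, together with an explicit computation against the Almgren--Pitts class $\bar\lambda$; without it the case distinction $m=1$ versus $m\geq 2$, i.e.\ the quantitative statement of the theorem, is unproven. You flag this as the main obstacle, but it is not an optional refinement --- it is where the theorem's counts live, and in your double-cover setup you would still have to carry out this construction (descended to $\hat L$ or $L$) yourself.

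The second gap is the bumpy case, which you have mis-structured. Since $\Sigma_0$ is a torus ($\mathfrak g_0=1$), the genus bound \eqref{eq:genus bound0} forces the $G_\pm$-separating component to have multiplicity one regardless of stability (Corollary \ref{Cor: exist min-max and minimizing K2}(i)); unlike the $\RP^3$ case there is no ``multiplicity degenerates / Klein bottle with stable lift'' alternative, and the only way conclusion (i) can fail is that some width carries extra sphere components alongside the multiplicity-one Klein bottle. The paper then takes that disjoint minimal sphere $S$, cuts $L$ along the Klein bottle to get a compact manifold with torus boundary, isotopically area-minimizes $S$ to obtain a strictly stable sphere $S_0$, and applies \cite[Proposition 8.8]{wangzc2023existenceFour} to the component of the complement of $S_0$ disjoint from the boundary, getting one further sphere if $S\neq S_0$ and two if $S=S_0$; the total of three spheres comes from combining $S$, $S_0$ and that output --- Song's argument by itself does not produce three spheres in the complement of $K^\ast$, as you assert, and the min-max sphere need not be stable, so one cannot simply ``cut along a stable minimal sphere''. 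Two smaller omissions: you must exclude $\tau$-invariant sphere components projecting to minimal $\RP^2$'s in $L$ (the paper invokes Geiges--Thies: no embedded $\RP^2$ in $L(4m,2m\pm1)$), so that alternative (ii) really produces spheres; and the area-minimizing Klein bottle counted in both alternatives requires the minimization statement of Corollary \ref{Cor: exist min-max and minimizing K2} (Meeks--Simon--Yau plus the genus bound and Proposition \ref{Prop: exist 1-sided component}), not merely the widths. Your side remark that every embedded torus in a lens space lifts to two disjoint tori is also false in general (it depends on whether its $\pi_1$-image lies in the index-two subgroup), though that remark is not load-bearing since the genus budget handles it.
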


One notices that an embedded Klein bottle $K$ in $L(4m,2m\pm 1)$ can be lifted to an embedded $G$-invariant torus $T$ in $S^3$ so that $S^3\setminus T$ has two $G_+$-invariant components $U_+\sqcup U_-$ with $G_-$ permuting them, where $G=\mZ_{4m}=\langle\xi_{4m,2m\pm 1}\rangle$, $G_+=\mZ_{2m}=\langle\xi_{4m,2m\pm 1}^2\rangle$, and $G_-=G\setminus G_+$. 
Hence, using the idea in Remark \ref{Rem: idea}, we can lift the sweepout in $L(4m,2m\pm 1)$ formed by Klein bottles to a family of $\mZ_{4m}$-invariant tori in $S^3$, and apply Theorem \ref{Thm: multiplicity one (main)} to obtain closed embedded $\mZ_{4m}$-invariant minimal surfaces $\cup_{j=1}^J\Gamma_j$ in $S^3$ with the total weighted genus bounded by $1$. 
Similarly, $\cup_{j=1}^J\widetilde{\Gamma}_j= \cup_{j=1}^J\Gamma_j/\mZ_{4m}$ contains exactly one embedded minimal Klein bottle $\widetilde{\Gamma}_1$ (Corollary \ref{Cor: exist min-max and minimizing K2}). 
Then, $\widetilde{\Gamma}_1$ must have multiplicity one due to the weighted genus bound. 
The difficulty mainly comes from the possible existence of minimal spheres, which can be handled similarly by considering the coexistence of them.

Moreover, we also establish a new result regarding minimal tori in lens spaces. 

\begin{theorem}\label{Thm: minimal tori (main)}
	Let $L(p,q)$, $p\geq 2$, be a lens space with a metric of positive Ricci curvature. 
	Then \begin{itemize}
		\item[(i)] $L(2,1)\cong \RP^3$ contains at least four embedded minimal tori; in particular, for almost every (in the sense of Baire  category) Riemannian metric of positive Ricci curvature on $\RP^3$, there are at least five embedded minimal tori;  
        \item[(ii)] $L(p,1)$ and $L(p,p-1)$ with $p>2$ contains at least three embedded minimal tori;
		\item[(iii)] $L(p,q)$ with $q\notin\{1,p-1\}$ contains at least one embedded minimal torus. 
	\end{itemize} 
\end{theorem}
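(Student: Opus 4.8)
The plan is to construct embedded minimal tori in $L(p,q)$ by running Simon--Smith min-max over multi-parameter families of tori, using the identification $L(p,q)=S^3/\langle\xi_{p,q}\rangle$ to exhibit these families, and then combining a genus bound with the rigidity furnished by positive Ricci curvature and a Lusternik--Schnirelmann count. The genus-$1$ Heegaard splitting of $L(p,q)$ lifts to the Clifford torus $\mathbb{T}_0=\{|z_1|=|z_2|=1/\sqrt2\}\subset S^3$, which is $\langle\xi_{p,q}\rangle$-invariant and separates $S^3$ into the two $\langle\xi_{p,q}\rangle$-invariant solid tori filling $\{|z_1|\neq|z_2|\}$; descending $\langle\xi_{p,q}\rangle$-equivariant families of such tori (note $\langle\xi_{p,q}\rangle$ acts freely, hence by translations, on each invariant torus, so the quotient is again a torus) produces genus-$1$ sweepouts of $L(p,q)$. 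Since a torus is $2$-sided in the orientable $L(p,q)$, the multiplicity-one property for these sweepouts follows from the methods of Wang--Zhou \cite{wangzc2023existenceFour} (applied directly, or equivariantly after lifting to $S^3$); so the $1$-sided refinement in Theorem \ref{Thm: multiplicity one (main)} is not needed here, and the equivariant picture on $S^3$ serves only to display the sweepouts.

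Next I would record the rigidity that, under positive Ricci curvature, forces every such min-max varifold to be a single embedded minimal torus with multiplicity one. First, $L(p,q)$ has no closed embedded minimal $2$-sphere: it would lift to $p\geq 2$ pairwise disjoint minimal spheres in the (also positive-Ricci) cover $S^3$, contradicting the Frankel property there. Second, $L(p,q)$ has no closed stable minimal surface, by the second variation inequality with the constant function (and, for a $1$-sided surface, the same on its double cover). Third, the Frankel property in $L(p,q)$ makes the min-max support connected. A torus sweepout has $\mathfrak{g}_0=1$, so the single component has genus $\leq1$ by the genus bound and hence is a torus by the first point; being $2$-sided and, by the second point, unstable, it has multiplicity one. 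Consequently each homotopy class of torus sweepouts yields an honest embedded minimal torus, and tori arising with distinct min-max widths are distinct.

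It remains to count cohomologically independent sweepout classes, which is where the three cases split. For (iii), the $1$-parameter latitude sweepout $\{|z_1|=r\}_{r\in[0,1]}$ (degenerating to the two core circles at the ends) is $\langle\xi_{p,q}\rangle$-invariant and descends to $L(p,q)$, giving one minimal torus. For the larger counts one exploits the symmetry of the round Heegaard torus: the space of $\langle\xi_{p,q}\rangle$-invariant Clifford tori is the orbit of $\mathbb{T}_0$ under the centralizer $Z(\xi_{p,q})$ of $\xi_{p,q}$ in $SO(4)$, modulo its stabilizer, and $Z(\xi_{p,q})$ equals all of $SO(4)$ when $\xi_{2,1}=-\mathrm{Id}$ (case $\RP^3$), equals a $4$-dimensional subgroup of the form $T^1\times SU(2)$ precisely when $q\in\{1,p-1\}$ (the space of such tori then being $\RP^2$), and equals only the maximal torus $T^2$ otherwise (the space being a point). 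Assembling the corresponding parameter space of enclosed regions, with its degenerate strata at the ends, one finds $\mZ_2$-cup length $3$, $2$, and $0$ respectively, hence (adding the latitude direction) $4$, $3$, and $1$ nested sweepout classes with min-max widths $\omega_1\leq\cdots\leq\omega_N$. A Lusternik--Schnirelmann argument \cite{marques2017existence} then concludes: if the $\omega_i$ are all distinct the $N$ minimal tori are distinct, and if two coincide the space of minimal surfaces at that width is infinite and, by the rigidity above, consists of tori, so again there are at least $N$. For the generic improvement in (i): for a bumpy metric of positive Ricci curvature these min-max minimal tori are nondegenerate, and a White-type degree / Euler-characteristic count \cite{white1991space} over the $4$-dimensional space of Clifford tori produces an additional minimal torus, for a total of five.

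The main obstacle is the counting in (i) and (ii): identifying the parameter space of regions bounded by $\langle\xi_{p,q}\rangle$-invariant Clifford tori --- in particular handling the degenerate ends of the sweepouts --- and computing its cohomology to pin down the numbers $4$, $3$, $1$; and running the Lusternik--Schnirelmann argument so the predicted widths are realized by \emph{distinct} tori, where multiplicity one is essential (and under a non-bumpy positive-Ricci metric requires the $\Ah$-approximation arguments of \cite{wangzc2023existenceFour} to retain it). By comparison, the fact that every output is a torus rather than a sphere, and hence the existence statement (iii), follows comparatively easily once the non-existence of minimal spheres and of stable minimal surfaces under positive Ricci is in hand.
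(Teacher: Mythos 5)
Your overall architecture coincides with the paper's: for torus sweepouts the paper also applies Wang--Zhou's multiplicity one theorem directly (not the equivariant Theorem \ref{Thm: multiplicity one (main)}), since $T^2$ is $2$-sided (Corollary \ref{Cor: exist min-max T2}); positive Ricci rules out spheres (lift to $p$ disjoint spheres in $S^3$ versus Frankel), stable surfaces, and the $1$-sided cases, so each min-max width is realized by a connected embedded minimal torus with multiplicity one; distinct widths come from Lusternik--Schnirelmann (Theorem \ref{Thm: L-S in Lens space for T2}); and the fifth torus in (i) comes from White's degree argument with $d=(-1)^{\Index}\chi(\RP^2\times\RP^2)=-1$, forcing an odd number of minimal tori for generic positive Ricci metrics. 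Case (ii) is simply quoted from Ketover in the paper, so your sketch there is extra but also unproved.

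The genuine gap is the step you yourself flag as the main obstacle: producing \emph{four} cohomologically independent sweepout classes for $\RP^3$ (and one for $q\notin\{1,p-1\}$). Your count ``$\mZ_2$-cup length $3$, $2$, $0$, plus the latitude direction gives $4$, $3$, $1$'' is not an argument: the $\mZ_2$-cup length of the Clifford torus space $\RP^2\times\RP^2$ is $4$, not $3$, and in any case what is needed is not a cup-length of that space but the nonvanishing of the fourth power of the \emph{single} class $\alpha=\iota^*(\bar\lambda)$ pulled back from the cycle space, in the \emph{relative} group $H^4(\bmsX,\msY;\mZ_2)$ --- relative because the natural $5$-parameter family $\mathcal{G}(t,a,b)=\tau(a,E_t(b))/G$ degenerates at $t=\pm1$ to great circles, so one must work on the compactified parameter space $\bmcX=([-1,1]\times\mS^2\times\mS^2)/(\mZ_2\oplus\mZ_2)$ rel its boundary and run a relative Lusternik--Schnirelmann theory. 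The paper's Lemma \ref{Lem: lambda4 not 0} ($\lambda^4\neq0$, $\lambda^5=0$), proved in Appendix \ref{Appendix: lambda4} via Poincar\'e--Lefschetz duality and explicit transverse intersection representatives of the cup powers, is precisely the technical content your proposal leaves open; without it neither the number $4$ in (i) nor the strict chain of widths $\bL(\mathscr{P}_1)<\cdots<\bL(\mathscr{P}_4)$ is justified, and your centralizer heuristic for the size of the family does not substitute for it.
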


\begin{remark}
     Using the weighted genus bound \cite[(2.6)]{ketover2022flipping} and the catenoid estimates \cite{ketover2020catenoid}, Ketover concluded under the positive Ricci assumption that there are three minimal tori in $L(p,1), p\neq 2$ (\cite[Theorem 4.9]{ketover2022flipping}), and two minimal Klein bottles in $L(4p,2p\pm 1), p >1$ (\cite[Theorem 4.8]{ketover2022flipping}).  
    Our Theorem \ref{Thm: minimal Klein bottles (main)} and \ref{Thm: minimal tori (main)} extend these results by some new constructions and the multiplicity one theorem. 
\end{remark}

As an equivalent formulation of the Smale conjecture in lens spaces, Ketover-Liokumovich showed (\cite[Theorem 1.4]{ketover2023smale}) that the space of Heegaard tori in $L(p,q)$ retracts onto the space $\sE_{p,q}$ of Clifford tori. 
Hence, the parameterization of $\sE_{p,q}$ (cf. \cite[Proposition 2.3]{ketover2023smale}) and the Lusternik-Schnirelmann theory support the counts of minimal tori in Theorem \ref{Thm: minimal tori (main)}. 

It is also noteworthy that the space of Clifford tori in $S^3$ and $\RP^3$ can both be parameterized by $\RP^2\times\RP^2$. 
One would then expect five distinct minimal tori in a Riemannian $S^3$ and $\RP^3$. 
However, after White's existence result of one minimal torus in any positive Ricci curved $S^3$ (\cite[Theorem 6.2]{white1991space}) via the degree method, there has been no substantial progress (to the best of the author's knowledge) in the general construction of more minimal tori in a Riemannian $S^3$. 
Nevertheless, our Theorem \ref{Thm: minimal tori (main)}(i) has fulfilled the expectation in $\RP^3$ to a certain extent. 

The main challenges in finding minimal tori lie in two aspects. 
Firstly, the moduli space $\msX$ of embedded tori in $S^3$ or $\RP^3$ is very complicated, making it difficult to find an element $\alpha\in H^1(\msX\cup\bd\msX,\bd\msX;\mZ_2)$ with $\alpha^5\neq 0\in H^5(\msX\cup\bd\msX,\bd\msX;\mZ_2)$, while such an $\alpha$ is crucial for applying the Lusternik-Schnirelmann theory. 
In Section \ref{Sec: sweepouts by T^2}, we address this by considering part of the boundary $\msY:=\{{\rm great~circles}\}\subset \bd\msX$ and identifying an element $\alpha\in H^1(\msX\cup\msY,\msY;\mZ_2)$ with $\alpha^4\neq 0$ (Lemma \ref{Lem: lambda4 not 0}). 
This approach allows us to find four minimal tori in $\RP^3$ with positive Ricci curvature and potentially five via White’s degree method. 
Indeed, this construction of $\alpha$ is also valid in $S^3$. 
Additionally, another challenge is to distinguish tori from $2$-spheres in min-max outputs. 
In lens space, the positive Ricci assumption is sufficient to exclude all the minimal $2$-spheres by the embedded Frankel property, which simplifies the problem considerably. 
In contrast, this advantage is absent in $S^3$, where distinguishing between these surfaces remains challenging.

\begin{acknowledgements}
	The authors would like to thank Professor Gang Tian for his interest in this work, and to Professor Xin Zhou for the guidance and discussions. They also thank Professor Zhichao Wang, Professor Jintian Zhu, Alex Xu, and Nick Sun for helpful discussions. X.L. and X.Y. are grateful to T.W. and Westlake University for hosting them for a visit during which many of these ideas were conceived. Part of this work was done when X.L. and X.Y. visited Princeton University and we would also like to thank Princeton University for their hospitality. T.W. is supported by China Postdoctoral Science Foundation 2022M722844. X.L. and X.Y. are supported by NSF grant DMS-1945178. 
\end{acknowledgements}


\section{Preliminaries}\label{Sec: Preliminaries}

In this section, we collect necessary preliminaries and notations in the equivariant setting. 

Let $(M^3, g_{_M})$ be an oriented connected closed Riemannian $3$-dimensional manifold and $G$ be a finite group acting freely and effectively as isometries on $M$. 
By \cite{moore1980equivariant}, $M^3$ can be $G$-equivariantly isometrically embedded into some $\R^L$. 
In addition, let $G_+$ be an index $2$ subgroup of $G$ (i.e. $[G:G_+]=2$), and denote by $G_-$ the coset of $G_+$ in $G$ (i.e. $G=G_+\sqcup G_-$). 
Note we only specify the ambient manifold $M^3$ and the free actions of $G$ to $S^3$ and $\mZ_p$ respectively in Section \ref{Sec: Minimal RP2}, \ref{Sec: Minimal K2}, \ref{Sec: minimal T2}. 

To signify the $G$-invariance, we will add a superscript or a subscript `$G$' to the usual terminologies. 
We also use `$\Gpm$' to emphasize the $G_+$-symmetry together with the $G_-$-antisymmetry. 
\begin{itemize}
    \item $\pi$ : the projection $\pi:M\mapsto M/G$ given by $p \mapsto [p]:=\{g\cdot p: g\in G\}$;
    \item $\mcH^m$ : the $m$-dimensional Hausdorff measure in $\R^L$;
    \item $B_r(p)$ : Euclidean open ball of radius $r>0$ centered at $p$;
	\item $A_{s,r}(p)$ : Euclidean open annulus $B_r(p) \setminus \Clos(B_s(p))$;
    \item $B_r^G(p) := B_r(G\cdot p) = G\cdot B_r(p)$, and $A_{s,r}^G(p) := A_{s,r}(G\cdot p) = G\cdot A_{s,r}(p) $;
    \item $h\in C^\infty_{\Gpm}(M)$: a smooth mean curvature prescribing function, where
    \begin{align}\label{Eq: Ck Gpm function}
        C^k_{\Gpm}(M):=
        \left\{f\in C^k(M): 
        \begin{array}{ll}
            f(p)=f(g\cdot p), &\forall p\in M, g\in G_+, \\
            f(p)= -f(g\cdot p), &\forall p\in M, g\in G_-.
        \end{array}
        \right\},
    \end{align}
    for $k\in\N\cup\{\infty\}$;
    \item $\C(M)$, $\C^G(M)$: the space of sets $\Omega \subset M$ with finite perimeter (i.e. Caccioppoli sets, cf. \cite[\S 40]{simon1983lectures}), and the space of $\Omega\in\C(M)$ with $g\cdot \Omega =\Omega$ for all $g\in G$ (i.e. $G$-invariant Caccioppoli sets);
    \item $\C^{\Gpm}(M)$: the space of $G_+$-invariant Caccioppoli sets $\Omega\in\C(M)$ so that $G_-$ permutes $\Omega$ and $M\setminus \Omega$ as two ($G_+$-invariant) Caccioppoli sets, i.e.
    \[\C^{\Gpm}(M) := 
    \left\{\Omega\in\C(M) : 
    \begin{array}{ll}
        g\cdot \Omega = \Omega, & \forall g\in G_+, \\
        g\cdot \Omega = M\setminus \Omega \mbox{ in }\C(M),& \forall g\in G_-.
    \end{array}
    \right\};\]
    \item $\V(M)$, $\V^G(M)$ : the space of $2$-varifolds in $M^3$, and the space of $V\in\V(M)$ with $g_\# V=V$ for all $g\in G$ (i.e. $G$-invariant $2$-varifolds);
    \item $\mfX(M)$, $\mfX^G(M)$ : the space of smooth vector fields in $M$, and the space of $X\in \mfX(M)$ with $dgX=X$ for all $g\in G$;
    \item $\Diff_0(M)$ : the connected component of the diffeomorphism group of $M$ containing the identity.
\end{itemize}

For any $\Omega\in\C(M)$, denote by $\bd\Omega$ the (reduced)-boundary of $[[\Omega]]$ as a $3$-dimensional mod $2$ flat chain, by $|\bd\Omega|$ the induced integral varifold, and by $\nu_{\bd \Omega}$ the outward pointing unit normal of $\bd\Omega$ (\cite[14.2]{simon1983lectures}). 
In addition, denote by $\|\bd\Omega\|$ and $\|V\|$ the induced Radon measure in $M$ associated with $\bd\Omega$ and $V\in\V(M)$. 

\begin{remark}\label{Rem: equivariant PMC vector}
    For any $\Omega\in\CG(M^3)$, the $2$-dimensional mod $2$ flat chain $\bd \Omega$ and the varifold $|\bd\Omega|$ are both $G$-invariant. 
    Besides, the outward unit normal $\nu_{\bd \Omega}$ is $G_+$-invariant so that $dg(\nu_{\bd \Omega})=-\nu_{\bd \Omega}$ for any $g\in G_-$. 
    Hence, $h\cdot \nu_{\bd \Omega}$ is $G$-invariant. 
\end{remark}

Let $U\subset M$ be an open set. 
Then we denote by $\Is(U)$ the set of isotopies of $M$ supported in $U$. 
Additionally, after replacing $M$ by $U$, one can similarly define the localized notions $\C(U), \V(U), \mfX(U)$, and $\C^G(U), \C^{\Gpm}(U),\V^G(U),\mfX^G(U)$ provided that $U$ is $G$-invariant. 

Note that a map $F:M\to M$ is said to be {\em $G$-equivariant} if $F(g\cdot p)=g\cdot F(p)$ for all $p\in M, g\in G$. 
We can now introduce the following notations:
\begin{itemize}
    \item $\Diff^G_0(M)$ : the connected component of the $G$-equivariant diffeomorphism group of $M$ containing the identity;
    \item $\Is^G(U)$ : the set of $G$-equivariant isotopies of $M$ supported in an open $G$-set $U$.
\end{itemize}
Furthermore, for any $G$-invariant subset $A\subset M$ with connected components $\{A_i\}_{i=1}^I$, we say $A$ is {\em $G$-connected} if for any $i,j\in\{1,\dots,I\}$, there is $g_{i,j}\in G$ with $g_{i,j}\cdot A_j=A_i$. 
Finally, given a ($G$-)connected open set $U\subset M$, we say a set of ($G$-)connected $C^1$-embedded ($G$-)surfaces $\{\Gamma^i\}_{i=1}^l \subset U$ with $\bd\Gamma^i\cap U =\emptyset$ is {\em ordered}, denoted by 
\[\Gamma^1\leq \cdots\leq \Gamma^l,\] 
if for each $i$, $\Gamma_i$ separates $U$ into two ($G$-)connected components $U^i_\pm$ (i.e. $U\setminus\Gamma_i = U^i_+\sqcup U^i_-$) so that $\Gamma^1,\dots,\Gamma^{i-1}\subset \Clos(U^i_-)$ and $\Gamma^{i+1},\dots,\Gamma^l\subset \Clos(U^i_+)$. 

Since the group actions in this paper are free and effective, we can make the following definition. 
\begin{definition}\label{Def: appropriately small}
    A $G$-invariant open set $U\subset M$ is said to be {\em appropriately small}, if each connected component of $U$ is diffeomorphic to $\pi(U)=U/G$. 
\end{definition}
Namely, an appropriately small open $G$-set $U$ shall have $\#G$ connected components $\{U_i\}_{i=1}^{\#G}$ with $G$ permuting them. 
For instance, any open $G$-set $U\subset B^G_r(p)$ is appropriately small provided $r>0$ is smaller than the injectivity radius of $G\cdot p$. 

\subsection{$\Ah$-functional and $\VCG$-space}

For any pairs $(V,\Omega)\in \V^G(M)\times\CG(M)\subset \V(M)\times\C(M)$ and $h \in C^\infty_{\Gpm}(M)$, define the prescribing mean curvature functional by 
\begin{align}\label{Eq: Ah-functional}
    \Ah(V, \Omega)=\|V\|(M)-\int_{\Omega} h ~d\mcH^{3}.
\end{align}
Note $F_\#(V,\Omega):=(F_\#V,F(\Omega))$ is still a pair in $\V^G(M)\times\CG(M)$ for any $G$-equivariant diffeomorphism $F$ of $M$. 
Hence, we have the following definitions generalizing \cite[Definition 1.1]{wangzc2023existenceFour}. 

\begin{definition}\label{Def: stationary pairs}
    A pair $(V,\Omega)\in \V^G(M)\times\CG(M)$ is said to be {\em $\Ah$-stationary} (resp. {\em $(G,\Ah)$-stationary}) in a (resp. $G$-invariant) open set $U\subset M$, if for any $X\in\mfX(U)$ (resp. $X\in\mfX^G(U)$) with generated diffeomorphisms $\{\phi^t\}$, 
    \begin{align}\label{Eq: 1st variation of pairs}
        \delta \Ah_{V, \Omega}(X) & :=\left.\frac{d}{d t}\right|_{t=0} \Ah\left(\phi_{\#}^{t}(V, \Omega)\right) \\ & =\int_{G_2(M)}  \operatorname{div}_{S} X(x) ~d V(x, S) - \int_{\partial \Omega}\left\langle X, \nu_{\partial \Omega}\right\rangle h ~d \mu_{\partial \Omega}=0. \nonumber
    \end{align}
    In addition, $(V,\Omega)$ is said to be {\em $\Ah$-stable} (resp. {\em $(G,\Ah)$-stable}) in $U$, if 
    \begin{align}\label{Eq: 2nd variation of pairs}
        \delta^{2} \Ah_{V, \Omega}(X, X):=\left.\frac{d^{2}}{d t^{2}}\right|_{t=0} \Ah\left(\phi_{\#}^{t}(V, \Omega)\right) \geq 0
    \end{align}
    for any $X\in\mfX(U)$ (resp. $X\in\mfX^G(U)$). 
\end{definition}

In the following lemma, we shall prove that $(G,\Ah)$-stationarity implies $\Ah$-stationarity. 
Note the proof also works for general (non-free non-effective) $G$-actions. 

\begin{lemma}\label{Lem: G-stationary and stationary for pairs}
    Given any $G$-invariant open set $U\subset M$, a pair $(V,\Omega)\in \V^G(M)\times\CG(M)$ is $(G,\Ah)$-stationary in $U$ if and only if $(V,\Omega)$ is $\Ah$-stationary in $U$. 
\end{lemma}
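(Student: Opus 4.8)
The plan is to prove the two implications separately, the forward direction being trivial and the reverse direction requiring an averaging argument.

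\textbf{The easy direction.} If $(V,\Omega)$ is $\Ah$-stationary in $U$, then by definition $\delta\Ah_{V,\Omega}(X)=0$ for \emph{all} $X\in\mfX(U)$; since $\mfX^G(U)\subset\mfX(U)$, in particular this holds for all $G$-equivariant vector fields, so $(V,\Omega)$ is $(G,\Ah)$-stationary. No work is needed here.

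\textbf{The hard direction.} Suppose $(V,\Omega)$ is $(G,\Ah)$-stationary in $U$; I want to show $\delta\Ah_{V,\Omega}(X)=0$ for an arbitrary $X\in\mfX(U)$. The idea is to symmetrize $X$: set
\[
\bar{X} := \frac{1}{\#G}\sum_{g\in G} (dg)\, (X\circ g^{-1}),
\]
which is a smooth vector field supported in $G\cdot U = U$ (since $U$ is $G$-invariant) and is $G$-equivariant, i.e. $\bar{X}\in\mfX^G(U)$. Therefore $\delta\Ah_{V,\Omega}(\bar X)=0$ by hypothesis. The remaining task is to verify that $\delta\Ah_{V,\Omega}(X) = \delta\Ah_{V,\Omega}(\bar X)$, which will follow from showing that each term in the first-variation formula \eqref{Eq: 1st variation of pairs} is unchanged under replacing $X$ by $g_\#X := (dg)(X\circ g^{-1})$, then averaging. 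For the varifold term: since $g$ is an isometry, $\operatorname{div}_{S}(g_\# X)(g\cdot x) = (\operatorname{div}_{(dg)^{-1}S} X)(x)$, and because $g_\#V = V$ (as $V\in\V^G(M)$), the change of variables $(x,S)\mapsto(g\cdot x, (dg)_x S)$ in the integral over $G_2(M)$ shows $\int \operatorname{div}_S(g_\# X)\,dV = \int\operatorname{div}_S X\,dV$. For the boundary term: one needs $\int_{\bd\Omega}\langle g_\# X,\nu_{\bd\Omega}\rangle h\,d\mu_{\bd\Omega}=\int_{\bd\Omega}\langle X,\nu_{\bd\Omega}\rangle h\,d\mu_{\bd\Omega}$. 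This is where Remark \ref{Rem: equivariant PMC vector} does the work: for $g\in G_+$ one has $g_\#\nu_{\bd\Omega}=\nu_{\bd\Omega}$ and $h\circ g = h$, while for $g\in G_-$ one has $g_\#\nu_{\bd\Omega} = -\nu_{\bd\Omega}$ and $h\circ g = -h$, so the product $h\,\nu_{\bd\Omega}$ is $G$-invariant; combined with $g(\bd\Omega)=\bd\Omega$ (as $\Omega\in\CG(M)$) and the isometry property, the change of variables $y\mapsto g\cdot y$ on $\bd\Omega$ gives equality of the two boundary integrals. Averaging over $g\in G$ then yields $\delta\Ah_{V,\Omega}(X) = \delta\Ah_{V,\Omega}(\bar X) = 0$.

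\textbf{Main obstacle.} The only genuinely delicate point is the bookkeeping of signs in the boundary term: one must keep track of how $dg$ acts on the outward unit normal versus how $h$ transforms, and observe that the two sign flips on the coset $G_-$ cancel precisely because $h\in C^\infty_{\Gpm}(M)$ was \emph{defined} to be $G_-$-antisymmetric. This is exactly the compatibility between the symmetry of $h$ and the $\Gpm$-structure of $\Omega$ that makes the equivariant $\Ah$-theory well-posed; the varifold term, by contrast, is a routine isometry-invariance computation. No freeness or effectiveness of the action is used, so the lemma holds for general $G$-actions, as claimed in the statement.
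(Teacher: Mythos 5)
Your proof is correct and follows essentially the same route as the paper: both arguments symmetrize $X$ to the $G$-average $X_G\in\mfX^G(U)$ and then check term-by-term that the first variation is unchanged, using $g_\#V=V$ for the divergence term and the $G$-invariance of $\bd\Omega$ and of $h\,\nu_{\bd\Omega}$ (Remark \ref{Rem: equivariant PMC vector}) for the boundary term. The only cosmetic difference is that you verify the varifold term by a direct change of variables in the first-variation integral, whereas the paper phrases it through the conjugated flows $g^{-1}\circ\phi^t\circ g$; your closing observation that freeness and effectiveness are not needed also matches the remark preceding the lemma in the paper.
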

\begin{proof}
    It is sufficient to show that for any $X\in \mfX(U)$, there exists $X_G\in\mfX^G(U)$ so that $\delta\Ah_{V,\Omega}(X) = \delta\Ah_{V,\Omega}(X_G)$. 
    Indeed, define $X_G:= (\sum_{g\in G} dg^{-1}(X))/\#G$ to be the averaged $G$-invariant vector filed compactly supported in $U$. 
    Since $\bd\Omega$ and $h\nu_{\bd\Omega}$ are both $G$-invariant (Remark \ref{Rem: equivariant PMC vector}), we have $\int_{\partial \Omega} \langle X, h\nu_{\partial \Omega}\rangle ~d \mu_{\partial \Omega} = \int_{\partial \Omega} \langle X_G, h\nu_{\partial \Omega}\rangle ~d \mu_{\partial \Omega}$. 
    In addition, note the diffeomorphisms generated by $X$ and $dg^{-1}(X)$ are $\{\phi^t\}$ and $\{g^{-1}\circ \phi^t\circ g\}$ respectively. 
    Hence, the $G$-invariance of $V$ 
    implies $\|\phi^t_\# V\|(M) = \|(g^{-1}\circ \phi^t\circ g)_\# V\|(M) $ and $\int  \operatorname{div}_{S} X d V = \int  \operatorname{div}_{S} (dg^{-1}(X)) d V = \int \operatorname{div}_{S} X_G d V$. 
\end{proof}

\begin{remark}\label{Rem: G-stable and stable for pairs}
    In particular, if the open $G$-set $U$ is appropriately small (Definition \ref{Def: appropriately small}), then we also have the equivalence between the $G$-stability and stability in $U$. 
    Indeed, since the stability of $(V,\Omega)$ in $U$ is equivalent to the stability in every component $U_i$ of $U$, $i=1,\dots,\#G$, it follows from \eqref{Eq: Ah in appropriately small set} and the proof of Lemma \ref{Lem: G-minimizing and minimizing for boundary} that $\#G \cdot \delta^2\Ah_{V,\Omega}(X,X) = \delta^2\Ah_{V,\Omega}(X_G,X_G)$, where $X\in\mfX(U_i)$ and $X_G=\sum_{g\in G} dg^{-1}(X)\in\mfX^G(U)$. 
\end{remark}

Motivated by Almgren's VZ-space \cite{almgren1965theory}, we make the following definition for the weak topological completion of the diagonal pairs $\{(|\bd\Omega|,\Omega): \Omega\in\CG(M)\}\subset \V^G(M)\times\CG(M)$. 

\begin{definition}\label{Def: VC space}
    The {\em $\VC$-space $\VC(M)$} (resp. {\em $\VCG$-space $\VCG(M)$}) is the space of all pairs $(V,\Omega)$ in $\V(M)\times \C(M)$ (resp. $\V^G(M)\times\CG(M)$) so that $V=\lim_{k\to\infty}|\bd\Omega_k|$ and $\Omega=\lim_{k\to\infty}\Omega_k$ for some sequence $\{\Omega_k\}_{k\in\N}$ in $\C(M)$ (resp. $\CG(M)$). 

    For any $(V,\Omega), (V',\Omega')\in \VC(M)$, define the {\em $\sF$-distance} between them by
    \[\sF((V,\Omega), (V',\Omega')) := \mF(V,V') + \F(\Omega,\Omega'),\]
    where $\mF$ is the varifolds $\mF$-metric (\cite[2.1(19)]{pitts2014existence}) and $\F$ is the flat metric (\cite[\S 31]{simon1983lectures}). 
\end{definition}

Note $\V^G(M)$ and $\CG(M)$ are closed sub-spaces of $\V(M)$ and $\C(M)$ respectively. 
One verifies that $\VCG(M)$ is also a closed sub-space of $\VC(M)$. 
Hence, the $\sF$ distance can be induced to $\VCG(M)$. 

Moreover, combining Lemma \ref{Lem: G-stationary and stationary for pairs} with \cite[Lemma 1.4-1.7]{wangzc2023existenceFour}, we have the following results. 

\begin{lemma}\label{Lem: preliminary of VCG-space}
    Denote by $c:=\sup_{p\in M}|h(p)|$. 
    Then for any $C>0$, $(V,\Omega)\in\VCG(M)$, and $G$-invariant open set $U\subset M$, the following statements are valid:
    \begin{itemize}
        \item[(i)] $\spt(\bd\Omega)\subset \spt(\|V\|)$ and $\|\bd\Omega\| \leq \|V\|$;
        \item[(ii)] if $(V,\Omega)$ is $(G,\Ah)$-stationary in $U$, then $V$ has $c$-bounded first variation in $U$, i.e. $|\delta V(X)| := |\int_{G_2(M)} \Div_S X(x) dV(S,x)| \leq c \int_{M}|X| d \|V\|, \forall  X \in \mfX(U) $;
        \item[(iii)] $A^C:=\{(V,\Omega)\in\VCG(M): \|V\|(M)\leq C\}$ is a compact metric space with respect to the $\sF$-distance;
        \item[(iv)] $A^C_0:=\{(V,\Omega)\in A^C: (V,\Omega)\mbox{ is $(G,\Ah)$-stationary} \} $ is a compact subset of $A^C$ with respect to the $\sF$-distance.
    \end{itemize}
\end{lemma}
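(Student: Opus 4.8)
\textbf{Proof proposal for Lemma \ref{Lem: preliminary of VCG-space}.}

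The plan is to reduce everything to the non-equivariant statements of \cite[Lemma 1.4--1.7]{wangzc2023existenceFour} together with Lemma \ref{Lem: G-stationary and stationary for pairs}, checking at each point that the sub-class $\VCG(M)\subset\VC(M)$ inherits the relevant structure. First I would record that $\VCG(M)$ is a closed subset of $\VC(M)$ (already noted in the text after Definition \ref{Def: VC space}), so that the $\sF$-distance restricts to it and compactness statements pass to closed subsets. For (i), I would simply invoke the corresponding inclusion and inequality from \cite{wangzc2023existenceFour}: for a pair $(V,\Omega)\in\VC(M)$ arising as a limit $(|\bd\Omega_k|,\Omega_k)$, lower semicontinuity of mass and the fact that $\mathbf{M}(\bd\Omega)\le\liminf\mathbf{M}(\bd\Omega_k)$ give $\|\bd\Omega\|\le\|V\|$, hence $\spt(\bd\Omega)\subset\spt(\|V\|)$; the $G$-invariance plays no role here.

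For (ii), the point is that $(G,\Ah)$-stationarity in a $G$-invariant open $U$ is by Lemma \ref{Lem: G-stationary and stationary for pairs} equivalent to $\Ah$-stationarity in $U$, i.e.\ the first-variation identity \eqref{Eq: 1st variation of pairs} holds for all $X\in\mfX(U)$. Given that, for any $X\in\mfX(U)$ we have $|\delta V(X)| = |\int_{\bd\Omega}\langle X,\nu_{\bd\Omega}\rangle h\, d\mu_{\bd\Omega}| \le c\,\|\bd\Omega\|(M) \le c\,\|V\|(M)$ by part (i); to get the stated pointwise bound $|\delta V(X)|\le c\int_M|X|\,d\|V\|$ one localizes this estimate, exactly as in \cite[Lemma 1.6]{wangzc2023existenceFour}, using that $\|\bd\Omega\|\le\|V\|$ as measures and $|h|\le c$. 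So (ii) follows from the non-equivariant statement applied to the $\Ah$-stationary pair $(V,\Omega)$, which is legitimate precisely because of Lemma \ref{Lem: G-stationary and stationary for pairs}. For (iii), compactness of $A^C$: the set $\{(V,\Omega)\in\VC(M):\|V\|(M)\le C\}$ is $\sF$-compact by \cite[Lemma 1.5 or 1.7]{wangzc2023existenceFour} (compactness of varifolds with bounded mass in $\mathbf{F}$-metric plus compactness of Caccioppoli sets with bounded perimeter in the flat topology, together with the closedness of the diagonal-limit class), and $A^C$ is its intersection with the closed subset $\VCG(M)$, hence compact. For (iv), $A_0^C$ is the subset of $A^C$ cut out by the stationarity condition; since $(G,\Ah)$-stationarity is a closed condition under $\sF$-convergence — the first variation $\delta\Ah_{V,\Omega}(X)$ depends continuously on $(V,\Omega)$ in the $\sF$-topology for each fixed $X\in\mfX^G(U)$, using $\F$-convergence of $\Omega_k$ to control $\int_{\bd\Omega}\langle X,\nu\rangle h$ and $\mathbf{F}$-convergence of $V_k$ to control $\int\Div_S X\,dV$ — the subset $A_0^C$ is closed in the compact space $A^C$, hence compact.

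The only genuinely non-routine point is the closedness of the stationarity condition used in (iv): one must be careful that the boundary term $\int_{\bd\Omega}\langle X,\nu_{\bd\Omega}\rangle h\,d\mu_{\bd\Omega}$ is continuous in $\Omega$ under flat-norm convergence, which is where the structure of $\VC$-pairs (as opposed to arbitrary varifold-set pairs) is essential — this is exactly the content of the relevant lemma in \cite{wangzc2023existenceFour}, and I would invoke it rather than reprove it. Everything else is bookkeeping: checking $G$-invariance is preserved under the limits (immediate, since $\V^G(M)$ and $\CG(M)$ are closed) and that restricting compact sets to closed subsets preserves compactness. So the proof is essentially a one-paragraph citation of \cite[Lemma 1.4--1.7]{wangzc2023existenceFour} with Lemma \ref{Lem: G-stationary and stationary for pairs} inserted to handle the passage between $(G,\Ah)$- and $\Ah$-stationarity in part (ii).
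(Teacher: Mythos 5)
Your proposal is correct and follows essentially the same route as the paper: the paper's proof is exactly the two-step reduction you describe, namely that (i) and (iii) follow from the closedness of $\VCG(M)$ in $\VC(M)$ together with the corresponding non-equivariant lemmas of \cite{wangzc2023existenceFour}, while (ii) and (iv) follow by combining those lemmas with Lemma \ref{Lem: G-stationary and stationary for pairs}. The only (inessential) discrepancy is in which numbered lemma of \cite{wangzc2023existenceFour} you attach to which item.
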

\begin{proof}
    Since $\VCG(M)$ is a closed sub-space of $\VC(M)$, (i) and (iii) follow directly from \cite[Lemma 1.4, 1.6]{wangzc2023existenceFour}. 
    Combining Lemma \ref{Lem: G-stationary and stationary for pairs} and \cite[Lemma 1.5, 1.7]{wangzc2023existenceFour}, we conclude (ii) and (iv). 
\end{proof}

\subsection{$C^{1,1}$ almost embedded $(G_{\pm},h)$-surfaces}

In this subsection, we consider the $\Ah$-functional for $C^{1,1}$ (almost embedded) $\Gpm$-boundaries. 

\begin{definition}\label{Def: almost embedding}
    Let $U\subset M$ be an open subset. 
    A $C^{1,1}$ immersed surface $\phi: \Sigma\to U$ with $\phi(\bd\Sigma)\cap U = \emptyset$ is said to be a {\em $C^{1,1}$ almost embedded surface} in $U$, if at any non-embedded point $p\in\phi(\Sigma)$, there is a neighborhood $W\subset U$ of $p$ so that 
    \begin{itemize}
        \item $\Sigma\cap\phi^{-1}(W)$ is a disjoint union of connected components $\sqcup_{i=1}^l\Gamma^i$;
        \item $\phi(\Gamma^i)\subset W$ is a $C^{1,1}$ embedding for each $i=1,\dots,l$;
        \item for each $i$, any other component $\phi(\Gamma^j)$ ($j\neq i$) lies on one side of $\phi(\Gamma^i)$ (i.e. $\phi(\Gamma^j)\leq\phi(\Gamma^i)$ or $\phi(\Gamma^i)\leq\phi(\Gamma^j)$).
    \end{itemize}
\end{definition}

For simplicity, we will denote $\phi(\Sigma)$ and $\phi(\Gamma^i)$ by $\Sigma$ and $\Gamma^i$ respectively in appropriate context. 
The subset of non-embedded points in $\Sigma$, denoted by $\mcS(\Sigma)$, is called the {\em touching set}, and $\mcR(\Sigma):=\Sigma\setminus\mcS(\Sigma)$ is the {\em regular set}. 

\begin{definition}[$C^{1,1}$ $G_\pm$-boundary]\label{Def: G-boundary}
    Let $U\subset M$ be a $G$-invariant open subset. Then a $G$-equivariant $C^{1,1}$ almost embedded surface $\phi:\Sigma\to U$ is said to be a {\em $C^{1,1}$ (almost embedded) $\Gpm$-boundary} in $U$, if $\Sigma$ is oriented and
    \begin{align}\label{Eq: G-boundary}
        \phi_\#([[\Sigma]]) = \bd\Omega
    \end{align}
    as $2$-currents with $\mZ_2$-coefficients in $U$ for some $\Omega\in\CG(U)$. 
\end{definition}

\begin{remark}\label{Rem: unit normal on G-boundary}
    Let $(\Sigma,\Omega)$ be a $C^{1,1}$ $\Gpm$-boundary in $U$. 
    Then both of $\mcR(\Sigma)$ and $\mcS(\Sigma)$ are $G$-invariant. 
    Additionally, it follows from \cite[Lemma 1.11]{wangzc2023existenceFour} that $\Sigma$ (as an immersed surface) admits a unit normal $\nu_\Sigma$ so that 
    \begin{itemize}
        \item $\nu_\Sigma=\nu_{\bd\Omega}$ along $\spt(\bd\Omega)$ provided $\Omega\neq \emptyset$ or $U$;
        \item if $\Sigma$ decomposes into ordered $G$-connected sheets $\Gamma^1\leq \cdots \leq \Gamma^l$ in a $G$-connected open set $W\subset U$, then $\nu_\Sigma$ must alternate orientations along $\{\Gamma^i\}_{i=1}^l$. 
    \end{itemize} 
\end{remark}

For any $C^{1,1}$ $\Gpm$-boundary $(\Sigma,\Omega)$ (in $M$), define its $\Ah$-functional by
\begin{align}\label{Eq: Ah-functinal for boundary}
    \Ah(\Sigma,\Omega) := \mcH^2(\Sigma) - \int_\Omega h ~d\mcH^3
\end{align}
It then follows from (\ref{Eq: 1st variation of pairs}) and \cite[Lemma 1.12]{wangzc2023existenceFour} that for any $X\in\mfX(M)$,
\begin{align}\label{Eq: 1st variation of boundary}
    \delta \Ah_{\Sigma, \Omega}(X) &  =\int_{\Sigma}  \Div_{\Sigma} X ~d\mcH^2 - \int_{\partial \Omega}\left\langle X, \nu_{\partial \Omega}\right\rangle h ~d \mu_{\partial \Omega}. \\
    & = \int_{\Sigma}  \Div_{\Sigma} X  - \left\langle X, \nu_{ \Sigma}\right\rangle h ~d\mcH^2, \nonumber
\end{align}
where $\nu_\Sigma$ is given in the above remark. 

\begin{definition}[$C^{1,1}$ $(\Gpm,h)$-boundary]\label{Def: stationary boundary}
    A $C^{1,1}$ $\Gpm$-boundary $(\Sigma,\Omega)$ in a $G$-invariant open set $U$ is said to be {\em $\Ah$-stationary} (resp. {\em $(G,\Ah)$-stationary}) in $U$, if for any $X\in \mfX(U)$ (resp. $X\in \mfX^G(U)$), $\delta\Ah_{\Sigma,\Omega}(X)=0$. 
    In particular, we say $(\Sigma,\Omega)$ is a $C^{1,1}$ {\em (almost embedded) $(\Gpm,h)$-boundary} in $U$ if it is $(G,\Ah)$-stationary in $U$. 
\end{definition}

Similar to Lemma \ref{Lem: G-stationary and stationary for pairs}, we have the following result. 

\begin{lemma}\label{Lem: G-stationary and stationary for boundary}
    Given any $G$-invariant open set $U\subset M$, a $C^{1,1}$ $\Gpm$-boundary $(\Sigma,\Omega)$ is $(G,\Ah)$-stationary in $U$ if and only if $(\Sigma,\Omega)$ is $\Ah$-stationary in $U$.
\end{lemma}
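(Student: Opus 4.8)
The plan is to mimic the proof of Lemma \ref{Lem: G-stationary and stationary for pairs} almost verbatim, replacing the varifold-plus-Caccioppoli-set pair $(V,\Omega)$ by the $C^{1,1}$ $\Gpm$-boundary $(\Sigma,\Omega)$ and using the first variation formula \eqref{Eq: 1st variation of boundary} in place of \eqref{Eq: 1st variation of pairs}. The nontrivial direction is that $(G,\Ah)$-stationarity implies $\Ah$-stationarity, since the converse is immediate from $\mfX^G(U)\subset\mfX(U)$. So I fix an arbitrary $X\in\mfX(U)$ and produce a $G$-invariant test field with the same first variation.

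Concretely, I would set $X_G := \frac{1}{\#G}\sum_{g\in G} dg^{-1}(X)$, which is a $G$-invariant smooth vector field compactly supported in $U$ because $U$ is $G$-invariant and each $dg^{-1}$ preserves $\mfX(U)$. The claim is that $\delta\Ah_{\Sigma,\Omega}(X) = \delta\Ah_{\Sigma,\Omega}(X_G)$. I split the first variation into its two terms as in \eqref{Eq: 1st variation of boundary}. For the boundary term $\int_{\bd\Omega}\langle X,\nu_{\bd\Omega}\rangle h\,d\mu_{\bd\Omega}$: since $\bd\Omega$ is $G$-invariant and, by Remark \ref{Rem: equivariant PMC vector}, $h\,\nu_{\bd\Omega}$ is $G$-invariant, a change of variables $p\mapsto g\cdot p$ gives $\int_{\bd\Omega}\langle dg^{-1}(X), h\nu_{\bd\Omega}\rangle\,d\mu_{\bd\Omega} = \int_{\bd\Omega}\langle X, h\nu_{\bd\Omega}\rangle\,d\mu_{\bd\Omega}$ for every $g\in G$; averaging over $G$ yields equality of the boundary terms for $X$ and $X_G$. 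For the area term $\int_\Sigma \Div_\Sigma X\,d\mcH^2$: since $\Sigma$ (equivalently $|\bd\Omega|$, or the immersed surface together with its regular set) is $G$-invariant and the $g$ act as isometries, the same change-of-variables argument gives $\int_\Sigma \Div_\Sigma (dg^{-1}(X))\,d\mcH^2 = \int_\Sigma \Div_\Sigma X\,d\mcH^2$ for each $g$; averaging again gives the area terms for $X$ and $X_G$ agree. Adding the two identities yields $\delta\Ah_{\Sigma,\Omega}(X)=\delta\Ah_{\Sigma,\Omega}(X_G)$, so if $(\Sigma,\Omega)$ is $(G,\Ah)$-stationary then $\delta\Ah_{\Sigma,\Omega}(X)=\delta\Ah_{\Sigma,\Omega}(X_G)=0$, proving $\Ah$-stationarity.

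The only genuine point requiring a little care — and thus the main (mild) obstacle — is the invariance of the area term for an \emph{almost embedded} surface rather than an honest varifold: one must make sure that $\Div_\Sigma$ and the $\mcH^2$-integral transform correctly under the isometry $g$ even at the touching set $\mcS(\Sigma)$. This is handled by noting that $\mcR(\Sigma)$ and $\mcS(\Sigma)$ are $G$-invariant (Remark \ref{Rem: unit normal on G-boundary}), that $\mcS(\Sigma)$ has $\mcH^2$-measure contributing with the correct multiplicity on each sheet, and that pushing forward by an isometry commutes with the tangential divergence; equivalently, one may simply phrase the area term via the induced varifold $|\bd\Omega|$ and invoke its $G$-invariance exactly as in Lemma \ref{Lem: G-stationary and stationary for pairs}. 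I would also remark, as the statement's surrounding text does, that nothing here uses freeness or effectiveness of the $G$-action, so the lemma holds for general finite group actions by isometries.
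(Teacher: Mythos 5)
Your proposal is correct and follows essentially the same route as the paper: averaging $X$ over $G$ to get $X_G$, then using the $G$-invariance of $\Sigma$, $\Omega$, and $h\nu_{\bd\Omega}$ together with the first variation formula \eqref{Eq: 1st variation of boundary} to show $\delta\Ah_{\Sigma,\Omega}(X)=\delta\Ah_{\Sigma,\Omega}(X_G)$. The paper's proof is a two-line reduction to the proof of Lemma \ref{Lem: G-stationary and stationary for pairs}; your extra care about the touching set and the varifold reformulation of the area term is a harmless elaboration of the same argument.
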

\begin{proof}
    For any $X\in\mfX(U)$, let $X_G:= (\sum_{g\in G} dg^{-1}(X))/\#G \in \mfX^G(U)$. 
    Since $\Sigma$ is $G$-invariant and $\Omega\in\CG(M)$, we can combine (\ref{Eq: 1st variation of boundary}) with the proof of Lemma \ref{Lem: G-stationary and stationary for pairs} to conclude $\delta\Ah_{\Sigma,\Omega}(X)=\delta\Ah_{\Sigma,\Omega}(X_G)$ and get the desired result.   
\end{proof}

By the above lemma, $(\Sigma,\Omega)$ is a $C^{1,1}$ $(\Gpm,h)$-boundary if and only if it is a $C^{1,1}$ $\Gpm$-boundary (cf. Definition \ref{Def: G-boundary}) and a $C^{1,1}$ $h$-boundary (in the sense of \cite[Definition 1.13]{wangzc2023existenceFour}).
In particular, combining the first variation formula (\ref{Eq: 1st variation of boundary}) with the standard elliptic regularity theory, we know the regular set $\mcR(\Sigma)$ of a $C^{1,1}$ $(\Gpm,h)$-boundary $(\Sigma,\Omega)$ is smoothly embedded of prescribed mean curvature 
\begin{align}
   \left. H \right|_{\mcR(\Sigma)} =\left.h\right|_{\mcR(\Sigma)}
\end{align}
with respect to the unit normal $\nu_\Sigma=\nu_{\bd\Omega}$.

\subsection{Strong $\A^h$-stationarity}

In \cite{wangzc2023existenceFour}, Wang-Zhou introduced the {\em strong $\A^h$-stationarity} which plays an important role in the regularity theory of PMC min-max and their multiplicity one theorem.

\begin{definition}[Strong $\Ah$-stationarity]\label{Def: strong Ah-stationary}
    A $C^{1,1}$ $(\Gpm,h)$-boundary $(\Sigma,\Omega)$ is said to be {\em strongly $\A^h$-stationary} in an open set $U$, if the following holds:

    For any $p\in\mcS(\Sigma)\cap U$, there exists a small neighborhood $W\subset U$ of $p$ and a decomposition $\cup_{i=1}^l\Gamma^i$ of $\Sigma\cap W$ by $l:=\Theta^2(\Sigma,p)\geq 2$ connected disks with a natural ordering $\Gamma^1\leq\cdots\leq\Gamma^l$. 
    Let $W^1,W^l$ be the bottom and top components of $W\setminus\Sigma$. 
    Then for $i\in\{1,l\}$ and all $X\in\mfX(W)$ pointing into $W^i$ along $\Gamma^i$, 
    \begin{align}\label{Eq: strong stationary}
        \begin{array}{rl}
            \delta \mathcal{A}_{\Gamma^{i}, W^{i}}^{h}(X) \geq 0, & \text { when } W^{i} \subset \Omega; \\
            \delta \mathcal{A}_{\Gamma^{i}, W \backslash W^{i}}^{h}(X) \geq 0, & \text { when } W^{i} \cap \Omega=\emptyset.
        \end{array}
    \end{align}
\end{definition}

\begin{remark}\label{Rem: strong stationary and strong G-stationary}
    One can also say $(\Sigma,\Omega)$ is {\em strongly $(G,\Ah)$-stationary} in $U$, if $\Sigma$ has a local decomposition by $G$-connected ordered sheets $\Gamma^1\leq\cdots\leq\Gamma^l$ in a $G$-neighborhood $W\subset U$ of $G\cdot p\subset \mcS(\Sigma)$ so that (\ref{Eq: strong stationary}) is valid for all $G$-invariant $X\in\mfX^G(W)$ pointing away from all other sheets along $\Gamma^i$ ($i\in\{1,l\}$). 
    Nevertheless, since the strong $\Ah$-stationarity is a local property, a $C^{1,1}$ $(\Gpm,h)$-boundary $(\Sigma,\Omega)$ is strongly $\Ah$-stationary in $U$ if and only if it is strongly $(G,\Ah)$-stationary in $U$. 
\end{remark}

In particular, the results in \cite[Section 1.3]{wangzc2023existenceFour} remains valid for strongly $\Ah$-stationary (or strongly $(\Gpm,\Ah)$-stationary) $C^{1,1}$ $(\Gpm,h)$-boundaries. The mean curvature $H$ of $\Sigma$ w.r.t. $\nu$ still satisfies 
\begin{equation}\label{eq: generalized mean curvature} 
    H(p) = \left\{\begin{array}{ll}
     h(p) & \text{ when } p\in \mathcal{R}(\Sigma)\cap U  \\
     0 & \text{ for $\mathcal{H}^2$-a.e. } p\in \mathcal{S}(\Sigma)\cap U 
\end{array}\right ..
\end{equation}
Moreover, by assuming that $\Omega$ does not contain the region above the top sheet $\Gamma^l$, we know $H^{l} = h|_{\Gamma^l}$ in a neighborhood $h > 0$ and $H^l \geq h|_{\Gamma^l}$ in a neighborhood $h < 0$. We have a corresponding statement for $\Gamma^1$ by flipping the order.

\subsection{$G$-stability and compactness.}

Since we only consider $G$-equivariant deformations of $\Gpm$-boundaries, we will extend Wang-Zhou's compactness theorem for stable $h$-boundaries to an equivariant version. 

\begin{definition}[$G$-stable $C^{1,1}$ $(\Gpm,h)$-boundary]\label{Def: G-stable for boundary}
    Let $U\subset M$ be a $G$-invariant open set, and $(\Sigma,\Omega)$ be a $C^{1,1}$ $(\Gpm,h)$-boundary in $U$. 
    Then $(\Sigma,\Omega)$ is said to be {\em stable} (resp. $G$-stable) in $U$, if for any flow $\{\phi^t\}$ generated by $X\in\mfX(U)$ (resp. $X\in\mfX^G(U)$), 
    \[\delta^{2} \Ah_{\Sigma, \Omega}(X):=\left.\frac{d^{2}}{d t^{2}}\right|_{t=0} \Ah\left(\phi_{\#}^{t}(\Sigma, \Omega)\right) \geq 0.\]
    If in addition $(\Sigma,\Omega)$ is strongly $\Ah$-stationary, then $\delta^{2} \Ah_{\Sigma, \Omega}(X)\geq 0$ is equivalent to the following stability inequality for all $X\in\mfX(U)$ (resp. $X\in\mfX^G(U)$),
    \[\int_{\Sigma}\left|\nabla^{\perp} X^{\perp}\right|^{2}-\Ric_M\left(X^{\perp}, X^{\perp}\right)
    -\left|A_{\Sigma}\right|^{2}\left|X^{\perp}\right|^{2} \mathrm{~d} \mathcal{H}^{2} \geq \int_{\partial \Omega}\left\langle X^{\perp}, \nabla h\right\rangle\langle X, \nu_{\bd\Omega}\rangle \mathrm{d} \mathcal{H}^{2},\]
    where $\perp$ denotes the normal part with respect to $\Sigma$, and $A_\Sigma$ denotes the second fundamental form of $\Sigma$ (as an immersion). 
\end{definition}

Note if the touching set $\mcS(\Sigma)$ is empty (or consider $\Sigma$ instead of $\phi(\Sigma)$), the first eigenfunction $\varphi_1$ of the Jacobi operator $L_\Sigma^h:=\triangle_\Sigma - (\Ric_M(\nu_\Sigma,\nu_\Sigma) + |A_\Sigma|^2 + \bd_{\nu_\Sigma} h) $ is $G$-invariant since $\bd_{\nu_\Sigma}h$ is $G$-invariant and $G$-actions are isometries. 
Hence, the (intrinsic) $G$-stability is equivalent to the (intrinsic) stability provided $\mcS(\Sigma)=\emptyset$. 
However, $\mcS(\Sigma)\neq\emptyset$ in general even if $(\Sigma,\Omega)$ solves the isotopy minimizing problem \cite[Theorem 1.25]{wangzc2023existenceFour}. 
Nevertheless, as the group actions we considered here are free and effective, we still have the following lemma. 

\begin{lemma}\label{Lem: G-stable and stable for boundary}
    Let $(\Sigma,\Omega)$ be a $C^{1,1}$ $(\Gpm,h)$-boundary in an appropriately small open $G$-subset $U$. 
    Then $(\Sigma,\Omega)$ is stable in $U$ if and only if $(\Sigma,\Omega)$ is $G$-stable in $U$. 
\end{lemma}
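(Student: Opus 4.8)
Since $\mathfrak{X}^G(U)\subset\mathfrak{X}(U)$, the implication ``stable $\Rightarrow$ $G$-stable'' is immediate, so the content is the converse. The plan is to cut $U$ into its connected components and reduce \emph{both} the ordinary stability inequality and the $G$-stability inequality on $U$ to one and the same inequality on a single component. By the remark following Definition~\ref{Def: appropriately small}, since $U$ is appropriately small we may write $U=\bigsqcup_{i=1}^{\#G}U_i$ with $G$ acting freely and transitively on the set of components $\{U_i\}$; fix $g_i\in G$ with $g_i\cdot U_1=U_i$ and $g_1=\mathrm{id}$.

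The argument rests on two bookkeeping facts that I would establish first. \emph{(a) Additivity over components.} For $X\in\mathfrak{X}(U)$ write $X_i:=X|_{U_i}\in\mathfrak{X}(U_i)$. The flow $\{\phi^t\}$ generated by $X$ restricts to the flow of $X_i$ on $U_i$, equals the identity on $M\setminus U$, and preserves each $U_i$; hence $\Ah(\phi^t_\#(\Sigma,\Omega))$ is the sum of the $t$-independent contribution of $\Sigma\setminus U$ and $\Omega\setminus U$ and of $\sum_i$ the contributions from the mutually disjoint regions $U_i$. Differentiating twice at $t=0$ gives $\delta^2\Ah_{\Sigma,\Omega}(X)=\sum_i\delta^2\Ah_{\Sigma,\Omega}(X_i)$. \emph{(b) Equivariance of the second variation.} For $g\in G$ and $Z\in\mathfrak{X}(U_1)$, the flow of $dg(Z)$ is $g\circ\psi^t\circ g^{-1}$, where $\{\psi^t\}$ is the flow of $Z$. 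Using that $g$ is an isometry, that $g(\Sigma)=\Sigma$, that $g(\Omega)=\Omega$ if $g\in G_+$ while $g(\Omega)=M\setminus\Omega$ if $g\in G_-$, and that $h\circ g=h$ (resp.\ $-h$) in those two cases, a change of variables yields
\[
\Ah\big((g\circ\psi^t\circ g^{-1})_\#(\Sigma,\Omega)\big)=\Ah\big(\psi^t_\#(\Sigma,\Omega)\big)+c_g,\qquad c_g=\begin{cases}0,& g\in G_+,\\[2pt]\int_M h\, d\mathcal{H}^3,& g\in G_-,\end{cases}
\]
and since $c_g$ is independent of $t$ we get $\delta^2\Ah_{\Sigma,\Omega}(dg(Z))=\delta^2\Ah_{\Sigma,\Omega}(Z)$.

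With (a) and (b) in hand, I would conclude as follows. By (a) and the surjectivity of $X\mapsto X|_{U_i}$, stability of $(\Sigma,\Omega)$ in $U$ is equivalent to $\delta^2\Ah_{\Sigma,\Omega}(Y)\ge0$ for every $i$ and every $Y\in\mathfrak{X}(U_i)$; writing $Y=dg_i(Z)$ with $Z=dg_i^{-1}(Y)\in\mathfrak{X}(U_1)$ and using (b), this is equivalent to the single-component statement
\[
\delta^2\Ah_{\Sigma,\Omega}(Z)\ge0\quad\text{for all }Z\in\mathfrak{X}(U_1).
\]
On the other hand, for $X\in\mathfrak{X}^G(U)$ one has $X|_{U_i}=dg_i(X|_{U_1})$ by $G$-invariance, so (a) and (b) give $\delta^2\Ah_{\Sigma,\Omega}(X)=\#G\cdot\delta^2\Ah_{\Sigma,\Omega}(X|_{U_1})$; since every $Z\in\mathfrak{X}(U_1)$ extends to $\sum_{g\in G}dg(Z)\in\mathfrak{X}^G(U)$ with restriction $Z$ on $U_1$ (the stabilizer of $U_1$ being trivial), $G$-stability of $(\Sigma,\Omega)$ in $U$ is \emph{also} equivalent to the same single-component statement. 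Comparing the two equivalences proves the lemma; this parallels Remark~\ref{Rem: G-stable and stable for pairs} for pairs.

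I expect the one genuinely delicate point to be the sign bookkeeping in (b) for $g\in G_-$: there $g$ simultaneously sends $\Omega$ to $M\setminus\Omega$ and $h$ to $-h$, and one must verify that these two reversals combine to leave only the $t$-independent constant $\int_M h\, d\mathcal{H}^3$ (which disappears under $\frac{d^2}{dt^2}\big|_{t=0}$) rather than altering the quadratic form; the computation uses $\psi^t(M\setminus\Omega)=M\setminus\psi^t(\Omega)$ and $\int_{M\setminus\psi^t(\Omega)}h=\int_M h-\int_{\psi^t(\Omega)}h$. A secondary point needing care is the decoupling in (a), which relies on the fact that a vector field supported in $\bigsqcup_i U_i$ generates a flow leaving each $U_i$ invariant and fixing $M\setminus U$, so that the areas $\mathcal{H}^2(\phi^t(\Sigma))$ and the weighted volumes $\int_{\phi^t(\Omega)}h$ split as sums over the components.
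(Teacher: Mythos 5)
Your proof is correct and follows essentially the same route as the paper's: decompose the appropriately small $U$ into its $\#G$ components permuted simply transitively by $G$, extend $Z\in\mathfrak{X}(U_1)$ to $X_G=\sum_{g\in G}dg(Z)\in\mathfrak{X}^G(U)$, and use the $g$-invariance of the second variation to conclude $\delta^2\mathcal{A}^h_{\Sigma,\Omega}(X_G)=\#G\cdot\delta^2\mathcal{A}^h_{\Sigma,\Omega}(Z)$ --- the paper justifies that invariance via the simultaneous sign flips of $\nabla h$ and $\nu_{\partial\Omega}$ in the stability inequality, whereas you derive it from the change-of-variables identity for $\mathcal{A}^h$ along the conjugated flow, in the spirit of Remark \ref{Rem: G-stable and stable for pairs}. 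The only slip is cosmetic: here $\Omega\in\mathcal{C}^{G_{\pm}}(U)$, so for $g\in G_-$ the complement should be taken in $U$ and the constant comes out as $-\int_U h\,d\mathcal{H}^3$ (in fact $0$ by the $G_-$-antisymmetry of $h$) rather than $+\int_M h\,d\mathcal{H}^3$, which is immaterial since it is independent of $t$.
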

\begin{proof}
    The `only if' part is direct. 
    For the `if' part, we only need to show $(\Sigma,\Omega)$ is stable in each connected component $\{U_i\}_{i=1}^{k}$, $k:=\#G$, of $U$. 
    Indeed, given $X\in\mfX(U_1)$ (with $X\llcorner U_j = 0$ for $j\neq 1$), we can define $X_G\in\mfX^G(U)$ by $X_G\llcorner g\cdot U_1 :=dg(X)$ since $U$ is appropriately small and the $G$-action is free and effective. 
    Then we notice $\nabla h$ and $\nu_{\bd\Omega}$ will keep or change the sign simultaneously under the push forward of $g\in G_+$ or $ g\in G_-$. 
    Therefore, $\delta^{2} \Ah_{\Sigma, \Omega}(X) = \delta^{2} \Ah_{\Sigma, \Omega}(dg(X))$, and thus $0\leq \delta^{2} \Ah_{\Sigma, \Omega}(X_G) = k\cdot \delta^{2} \Ah_{\Sigma, \Omega}(X)$ by the appropriate smallness of $U$. 
\end{proof}

Combining Lemma \ref{Lem: G-stable and stable for boundary} with \cite[Proposition 1.24]{wangzc2023existenceFour}, we conclude the following compactness result. 

\begin{proposition}\label{Prop: compactness for G-stable boundary}
    Suppose $U\subset M$ is an appropriately small open $G$-subset, and $h_j,h\in C^2_\Gpm(M)$ (cf. (\ref{Eq: Ck Gpm function})) satisfies $\lim_{j\to\infty}\|h_j-h\|_{C^2}= 0$. 
    Let $\{(\Sigma_j,\Omega_j)\}_{j\in\N}$ be a sequence of $G$-stable $C^{1,1}$ $(\Gpm,h_j)$-boundaries in $U$ so that $\mcH^2(\Sigma_j)\leq \Lambda$ for some $\Lambda>0$. 
    Then there is a stable $C^{1,1}$ $(\Gpm,h)$-boundary $(\Sigma,\Omega)$ in $U$ so that 
    \begin{itemize}
        \item[(1)] $\Sigma_j$ converges to $\Sigma$ in $U$ as varifolds and also in the sense of $C^{1,\alpha}_{loc}$ for all $\alpha\in (0,1)$;
        \item[(2)] $\Omega_j$ converge to $\Omega$ as currents in $\CG(U)\subset\C(U)$. 
    \end{itemize}
    Moreover, 
    \begin{itemize}
        \item[(i)] if $\Sigma$ is smooth, then $\Sigma_j$ converges to $\Sigma$ in $U$ in the $C^{1,1}_{loc}$ topology;
        \item[(ii)] if $(\Sigma_j,\Omega_j)$ is strongly $\mcA^{h_j}$-stationary in $U$, then $(\Sigma,\Omega)$ is strongly $\mcA^{h}$-stationary in $U$.
    \end{itemize}
\end{proposition}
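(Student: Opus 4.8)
The plan is to reduce the $G$-equivariant statement entirely to the non-equivariant compactness theorem \cite[Proposition 1.24]{wangzc2023existenceFour}, using the appropriate smallness of $U$ and Lemma \ref{Lem: G-stable and stable for boundary}. First I would decompose $U$ into its $\#G$ connected components $\{U_i\}_{i=1}^{\#G}$, each diffeomorphic to $\pi(U)=U/G$, permuted simply transitively by $G$; fix $U_1$ and note $g\cdot U_1$ ranges over all $U_i$ as $g$ ranges over $G$. By Lemma \ref{Lem: G-stable and stable for boundary}, each $(\Sigma_j,\Omega_j)$ restricted to $U_1$ is a stable $C^{1,1}$ $h_j$-boundary in the (non-equivariant) sense, with the uniform area bound $\mathcal{H}^2(\Sigma_j \lc U_1)\leq\Lambda$. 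Applying \cite[Proposition 1.24]{wangzc2023existenceFour} on $U_1$ yields a stable $C^{1,1}$ $h$-boundary $(\Sigma^{(1)},\Omega^{(1)})$ in $U_1$ and, after passing to a subsequence, $\Sigma_j\lc U_1 \to \Sigma^{(1)}$ as varifolds and in $C^{1,\alpha}_{loc}$, with $\Omega_j\lc U_1\to\Omega^{(1)}$ as currents.

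Next I would promote the limit on $U_1$ to a $G$-invariant limit on all of $U$. Since each $\Sigma_j$ is $G$-invariant, $\Sigma_j\lc(g\cdot U_1) = g\cdot(\Sigma_j\lc U_1)$ for every $g\in G$; because the $G$-action is by isometries (hence acts continuously on varifolds, currents, and in $C^{1,\alpha}_{loc}$), the convergence on $U_1$ transports to convergence $\Sigma_j\lc(g\cdot U_1)\to g\cdot\Sigma^{(1)}$ on each component, and similarly $\Omega_j\lc(g\cdot U_1)\to g\cdot\Omega^{(1)}$. Define $\Sigma := \bigcup_{g\in G} g\cdot\Sigma^{(1)}$ and $\Omega$ to be the corresponding $G_+$-invariant set whose $G_-$-translates give $U\setminus\Omega$ on the relevant components; one checks $(\Sigma,\Omega)$ is $G$-invariant, that $\Omega\in\CG(U)$ (the limit of $\Omega_j\in\CG(U)$, since $\CG(U)$ is closed in $\C(U)$), and that $\Sigma_j\to\Sigma$ and $\Omega_j\to\Omega$ globally in $U$ with the stated modes of convergence — this gives (1) and (2). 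That $(\Sigma,\Omega)$ is a $C^{1,1}$ $(\Gpm,h)$-boundary in $U$ follows because it is a $C^{1,1}$ $h$-boundary on each $U_i$ (from \cite[Prop. 1.24]{wangzc2023existenceFour}) and is $G$-invariant by construction, so Lemma \ref{Lem: G-stationary and stationary for boundary} upgrades $\Ah$-stationarity to $(G,\Ah)$-stationarity; its stability in $U$ is the content of \cite[Prop. 1.24]{wangzc2023existenceFour} applied componentwise.

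For the two refinements (i) and (ii): if $\Sigma$ is smooth, then $\Sigma^{(1)} = \Sigma\lc U_1$ is smooth, so \cite[Prop. 1.24(i)]{wangzc2023existenceFour} gives $C^{1,1}_{loc}$ convergence on $U_1$, and transporting by the isometries $g\in G$ gives $C^{1,1}_{loc}$ convergence on all of $U$. For (ii), strong $\Ah$-stationarity is a purely local condition at points of the touching set (Remark \ref{Rem: strong stationary and strong G-stationary}); since each point of $\mcS(\Sigma)\cap U$ lies in some $U_i$ on which $(\Sigma_j,\Omega_j)$ is strongly $\mcA^{h_j}$-stationary, \cite[Prop. 1.24(ii)]{wangzc2023existenceFour} gives that $(\Sigma^{(1)},\Omega^{(1)})$ is strongly $\mcA^h$-stationary in $U_1$, and the isometries carry this to every $U_i$, hence to $U$.

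The only genuinely delicate point is bookkeeping the orientation/normal data when assembling $\Omega$ from $\Omega^{(1)}$: on the components $g\cdot U_1$ with $g\in G_+$ one sets $\Omega\lc(g\cdot U_1) = g\cdot\Omega^{(1)}$, while for $g\in G_-$ one must take $\Omega\lc(g\cdot U_1) = g\cdot(U_1\setminus\Omega^{(1)})$ so that the resulting $\Omega$ genuinely lies in $\CG(U)$ with the $\Gpm$ sign convention; one checks this is consistent because the $\Omega_j$ already satisfy $g\cdot\Omega_j = U\setminus\Omega_j$ for $g\in G_-$ and this passes to the limit. I expect this — together with verifying that the pieced-together $\nu_\Sigma$ matches $\nu_{\bd\Omega}$ along $\spt(\bd\Omega)$ as in Remark \ref{Rem: unit normal on G-boundary} — to be the main obstacle, but it is routine given the appropriate smallness of $U$ and the fact that $\C^G(U)$ and $\C^{\Gpm}(U)$ are closed under flat convergence.
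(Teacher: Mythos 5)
Your proposal is correct, and it rests on exactly the same key input as the paper: Lemma \ref{Lem: G-stable and stable for boundary} converting $G$-stability into full (non-equivariant) stability on the appropriately small set $U$, followed by the non-equivariant compactness theorem \cite[Proposition 1.24]{wangzc2023existenceFour}. The difference is organizational. The paper applies the compactness theorem \emph{once, on all of $U$} — which is legitimate precisely because the lemma gives stability on all of $U$, not just on a single component — obtains the limit $(\Sigma,\Omega)$ with properties (1), (2), (i), (ii) directly, and then observes that $G$-invariance of $\Sigma$ and the condition $\Omega\in\CG(U)$ pass to the limit because $G$ acts by isometries and $\CG(U)$ is closed. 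This completely sidesteps the step you flag as the main obstacle: there is no need to apply the theorem on $U_1$, transport by the group, and reassemble $\Omega$ from $\Omega^{(1)}$ with the $G_\pm$ sign convention. (Incidentally, your intermediate claim that $\Omega_j\llcorner(g\cdot U_1)\to g\cdot\Omega^{(1)}$ for all $g$ is not quite right for $g\in G_-$, where $\Omega_j\llcorner(g\cdot U_1)=g\cdot(U_1\setminus\Omega_j)$; you correct this in your final paragraph, but the global argument never encounters the issue.) One small addition in the paper: besides \cite[Proposition 1.24]{wangzc2023existenceFour} it also cites the improved regularity result \cite[Theorem 1.1]{wangzc2023improved} to guarantee the $C^{1,1}$ structure of the limit boundary; you should include that reference as well.
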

\begin{proof}
    By Lemma \ref{Lem: G-stable and stable for boundary}, we can apply the compactness result \cite[Proposition 1.24]{wangzc2023existenceFour} and the $C^{1,1}$-regularity result \cite[Theorem 1.1]{wangzc2023improved} to $(\Sigma_j,\Omega_j)$ and obtain $C^{1,1}$ $h$-boundary $(\Sigma,\Omega)$ satisfying (1)(2)(i)(ii). 
    Since $G$ acts by isometries, we get the $G$-invariance of $\Sigma$ and $\Omega\in\CG(U)$. 
\end{proof}

\subsection{$G$-isotopy minimizing problem}\label{Subsec: Preliminary-isotopy minimizing}

Let $0<\mathbf{r}_0< \min\{\rho_0 , \inf_{p\in M}\inj(G\cdot p)\}$ be a sufficiently small constant, where $\rho_0=\rho_0(M,g_{_M},\sup|h|)>0$ is given in \cite[\S 14]{sarnataro2023optimal} and $\inj(G\cdot p)$ is the injectivity radius of the normal exponential map $\exp^\perp_{G\cdot p}$. 
Note $B^G_{\mathbf{r}_0}(p)$ is appropriately small for all $p\in M$ and $\inf_{p\in M}\inj(G\cdot p)>0$ since $G$ acts freely by isometries. 

For any open $G$-subset $U\subset B^G_{\mathbf{r}_0}(p)$, suppose $\mcR\in\CG(U)$ has a smoothly embedded $G$-invariant boundary $\Sigma:=\bd\mcR\cap U$ in $U$. 
Then we say a pair $(V,\Omega)\in \VC(U)$ (resp. $(V,\Omega)\in \VCG(U)$) is an {\em isotopy minimizer} (resp. {\em $G$-isotopy minimizer}) of $(\Sigma,\mcR)$ in $U$, if there exists a sequence $\{\phi_k\}_{k\in\N}$ in $\Is(U)$ (resp. $\Is^G(U)$) so that 
\begin{itemize}
    \item $\lim_{k\to\infty}\Ah\left(\phi_k(\Sigma,\mcR)\right) = \inf\{\Ah\left(\phi(\Sigma,\mcR)\right) : \phi\in\Is(U) ~({\rm resp.} ~\phi\in\Is^G(U))\}$;
    \item $\lim_{k\to\infty} \sF((V,\Omega), \phi_k(\Sigma,\mcR)) = 0$,
\end{itemize}
where we used the notation $\phi(\Sigma,\mcR) := (\phi(1,\Sigma), \phi(1,\mcR))$ for simplicity. 

Similar to Lemma \ref{Lem: G-stable and stable for boundary}, we also have the following equivalence between isotopy minimizers and $G$-isotopy minimizers in appropriately small open $G$-sets by the definitions of $h\in C^\infty_\Gpm(U)$ and $\CG(U)$. 

\begin{lemma}\label{Lem: G-minimizing and minimizing for boundary}
    Let $U\subset M$ be an appropriately small open $G$-subset and $(\Sigma,\mcR)$ be given as above. 
    Then a $G$-isotopy minimizer $(V,\Omega)\in \VCG(U)$ of $(\Sigma,\mcR)$ in $U$ is also an isotopy minimizer of $(\Sigma,\mcR)$ in $U$. 
\end{lemma}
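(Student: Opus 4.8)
The plan is to show that the infimum of $\Ah$ over $G$-equivariant isotopies of $(\Sigma,\mcR)$ in $U$ equals the infimum over all (not necessarily equivariant) isotopies, and that a minimizing sequence realizing the $G$-infimum also realizes the full infimum. Since $\Is^G(U)\subset\Is(U)$, the $G$-infimum is automatically $\geq$ the full infimum, so the content is the reverse inequality. Write $k:=\#G$ and let $U_1,\dots,U_k$ be the connected components of the appropriately small $G$-set $U$, permuted freely and transitively by $G$; fix the component $U_1$, and let $\Sigma_1:=\Sigma\cap U_1$, $\mcR_1:=\mcR\cap U_1$, so that $(\Sigma,\mcR)=\bigsqcup_{g\in G}g\cdot(\Sigma_1,\mcR_1)$ with the union disjoint. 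Because $h\in C^\infty_{\Gpm}(M)$, for $g\in G_+$ the diffeomorphism $g$ preserves $h$ and the co-orientation, while for $g\in G_-$ it reverses the sign of $h$ and simultaneously swaps $\mcR$ with its complement; hence, as observed in Remark \ref{Rem: equivariant PMC vector} and used in Lemma \ref{Lem: G-stable and stable for boundary}, for every $g\in G$ and every Caccioppoli set $\Omega'\in\C(U_1)$ with $C^{1,1}$ boundary one has $\Ah(g\cdot\Sigma'_1,g\cdot\Omega'_1)=\Ah(\Sigma'_1,\Omega'_1)$ computed relative to the corresponding pieces; more precisely $\int_{g\cdot\Omega'_1}h = \int_{\Omega'_1}h$ when $g\in G_+$ and $\int_{g\cdot\Omega'_1}h=-\int_{\Omega'_1}h$ when $g\in G_-$, which is exactly compensated by the orientation switch of the reference region in the $\Ah$-functional.

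The first step is a symmetrization construction: given an arbitrary isotopy $\phi\in\Is(U)$, I would decompose it into its pieces on the components. Since $U$ is appropriately small and $G$ acts freely, any isotopy supported in $U$ is determined by its restrictions $\phi^{(i)}$ to the components $U_i$, and one can form a $G$-equivariant isotopy $\Phi\in\Is^G(U)$ whose restriction to $g\cdot U_1$ is $g\circ\phi^{(1)}\circ g^{-1}$ (pick, say, the $U_1$-piece as the "seed" and propagate by the group action). This $\Phi$ is genuinely $G$-equivariant and supported in $U$. The second step is to compare the $\Ah$-energies: because the component energies simply add, $\Ah(\Phi(\Sigma,\mcR)) = \sum_{g\in G}\Ah\bigl((g\circ\phi^{(1)}\circ g^{-1})(g\cdot(\Sigma_1,\mcR_1))\bigr)$, and each summand equals $\Ah(\phi^{(1)}(\Sigma_1,\mcR_1))$ by the invariance discussed above, so $\Ah(\Phi(\Sigma,\mcR)) = k\cdot\Ah(\phi^{(1)}(\Sigma_1,\mcR_1))$. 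In particular, running the same identity with $\Phi$ replaced by the original $\phi$ gives $\Ah(\phi(\Sigma,\mcR))=\sum_{i}\Ah(\phi^{(i)}(\Sigma_i,\mcR_i))$, and by choosing the seed component $U_1$ to be one on which $\phi$ achieves the smallest component energy we obtain $\Ah(\Phi(\Sigma,\mcR))\leq\Ah(\phi(\Sigma,\mcR))$. Taking the infimum over $\phi\in\Is(U)$ yields $\inf_{\Is^G(U)}\Ah(\cdot(\Sigma,\mcR))\leq\inf_{\Is(U)}\Ah(\cdot(\Sigma,\mcR))$, hence equality of the two infima.

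For the final step, let $(V,\Omega)\in\VCG(U)$ be a $G$-isotopy minimizer with approximating sequence $\{\phi_k\}\subset\Is^G(U)$, so $\Ah(\phi_k(\Sigma,\mcR))\to\inf_{\Is^G(U)}\Ah$ and $\sF((V,\Omega),\phi_k(\Sigma,\mcR))\to 0$. Since $\Is^G(U)\subset\Is(U)$, the sequence $\{\phi_k\}$ is a competitor sequence for the (non-equivariant) isotopy minimizing problem, and by the equality of infima just established its energy converges to $\inf_{\Is(U)}\Ah$ as well; the $\sF$-convergence is unchanged. Hence $(V,\Omega)$ is an isotopy minimizer of $(\Sigma,\mcR)$ in $U$, which is the claim. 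I expect the main obstacle to be purely bookkeeping: verifying carefully that the $\Ah$-functional really is invariant component-by-component under the $G_-$-elements, where the sign flip of $h$ must be matched against the simultaneous interchange of $\mcR$ and its complement in $\C^{\Gpm}$, and checking that the symmetrized isotopy $\Phi$ is admissible (supported in $U$, equal to the identity at time $0$, $G$-equivariant for all time) — both of which are consequences of appropriate smallness and freeness of the action but need to be stated precisely.
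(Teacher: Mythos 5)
Your overall mechanism is the same as the paper's: decompose the appropriately small $U$ into its $\#G$ components, compare $\Ah$ componentwise, and pass between non-equivariant and equivariant isotopies by propagating an isotopy on one component via the group action. The paper runs this in the opposite direction (any $\phi\in\Is(U_1)$ extends to $\phi_G\in\Is^G(U)$, and the total energy is an affine function of the $U_1$-energy, so the restriction of the $G$-minimizing sequence to $U_1$ is minimizing in $U_1$), while you symmetrize an arbitrary competitor in $\Is(U)$ from a best seed component; these are equivalent in substance.

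There is, however, one claim in your write-up that is false as stated and that infects the seed-selection step. You assert that for $g\in G_-$ the sign flip of $h$ is \emph{exactly} compensated by the switch of the reference region, so that corresponding pieces on $U_1$ and on $g\cdot U_1$ have equal $\Ah$-energy and hence $\Ah(\Phi(\Sigma,\mcR))=k\,\Ah(\phi^{(1)}(\Sigma_1,\mcR_1))$. Since $\mcR$ and its isotopic images lie in $\C^{\Gpm}$, the piece of the reference region in $g\cdot U_1$ ($g\in G_-$) is $g(U_1\setminus\mcR_1)$, and $-\int_{g(U_1\setminus\mcR_1)}h\,d\mcH^3=-\int_{\mcR_1}h\,d\mcH^3+\int_{U_1}h\,d\mcH^3$; so each $G_-$-component carries an extra additive constant $\int_{U_1}h\,d\mcH^3$ — this is exactly the paper's identity \eqref{Eq: Ah in appropriately small set} — and the correct identity is $\Ah(\Phi(\Sigma,\mcR))=k\,\Ah(\phi^{(1)}(\Sigma_1,\mcR_1))+\tfrac{k}{2}\int_{U_1}h\,d\mcH^3$. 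Because this constant is independent of the competitor, your symmetrization still yields equality of the two infima, \emph{provided} the seed is chosen to minimize the normalized energies, i.e.\ the energies of the conjugated isotopies $g_j^{-1}\circ\phi^{(j)}\circ g_j\in\Is(U_1)$ acting on the fixed $U_1$-piece (equivalently, subtract $\int_{U_1}h\,d\mcH^3$ from the raw energies of the $G_-$-components before comparing). Choosing the component with the smallest raw component energy, as you wrote, can violate $\Ah(\Phi(\Sigma,\mcR))\le\Ah(\phi(\Sigma,\mcR))$ when $\int_{U_1}h\,d\mcH^3\neq 0$ (already for $\#G=2$: a $G_+$-component with slightly smaller raw energy than the $G_-$-component gives $k$ times its energy plus the large constant, exceeding the original total). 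With this bookkeeping corrected, your argument closes and coincides with the paper's proof.
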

\begin{proof}
    Denote by $\{U_i\}_{i=1}^{\# G}$ the connected components of $U$, and fix any component, e.g. $U_1$. 
    Then for any $(V',\Omega')\in\VCG(U)$, we have 
    \begin{align}\label{Eq: Ah in appropriately small set}
        \Ah(V'\llcorner U_i,\Omega'\llcorner U_i) = \left\{
        \begin{array}{ll}
             \Ah(V'\llcorner U_1,\Omega'\llcorner U_1)&  {\rm if~} U_i\subset G_+\cdot U_1; 
             \\
             \Ah(V'\llcorner U_1,\Omega'\llcorner U_1) + \int_{U_1} h d\mcH^3 &  {\rm if~} U_i\subset G_-\cdot U_1; 
        \end{array}
        \right. .
    \end{align}
    Therefore, for any $G$-isotopy minimizing sequence $\{\phi_k(\Sigma,\mcR)\}_{k\in\N}$, 
    \[ \Ah(\phi_k(\Sigma,\mcR))=\sum_{i=1}^{\# G}\Ah(\phi_k(\Sigma\llcorner U_i,\mcR\llcorner U_i)) = (\# G)\cdot \Ah(\phi_k(\Sigma\llcorner U_1,\mcR\llcorner U_1)) + \frac{\# G}{2}\int_{U_1} h d\mcH^3. \] 
    Since any isotopy $\phi\in\Is(U_1)$ can be extended to a $G$-isotopy $\phi_G\in\Is^G(U)$ by taking $\phi_G\llcorner U_i = g\circ \phi \circ g^{-1} $ for $U_i=g\cdot U_1$, it follows from the above equality that $\phi_k(\Sigma\llcorner U_1,\mcR\llcorner U_1)$ is also an isotopy minimizing sequence of $(\Sigma,\mcR)$ in $U_1$, which implies $(V\llcorner U_1, \Omega\llcorner U_1)$ is also an isotopy minimizer in $U_1$. 
\end{proof}

By the choice of ${\bf r}_0$, any open $G$-set $U\subset B^G_{{\bf r}_0}(p)$ is appropriately small. 
Hence, we have the following regularity theorem for $G$-isotopy minimizers in an appropriately and sufficiently small open $G$-set. 

\begin{theorem}\label{Thm: isotopy minimizer}
    Let $U\subset B^G_{{\bf r}_0}(p)$ and $(\Sigma,\mcR)$ be given as above. 
    Suppose $(V,\Omega)\in\CG(U)$ is a $G$-isotopy minimizer of $(\Sigma,\mcR)$ in $U$. 
    Then $(V,\Omega)$ is a strongly $\Ah$-stationary and stable $C^{1,1}$ $(\Gpm,h)$-boundary in $U$. 
\end{theorem}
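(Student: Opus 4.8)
The plan is to reduce the $G$-equivariant statement to the non-equivariant regularity theory of Wang–Zhou for $h$-isotopy minimizers via the averaging/restriction trick that has already been used repeatedly in this section. Concretely, let $\{U_i\}_{i=1}^{\#G}$ denote the connected components of $U$, which are pairwise $G$-translates of a fixed $U_1$ since $U$ is appropriately small (Definition \ref{Def: appropriately small}) and the $G$-action is free and effective. By Lemma \ref{Lem: G-minimizing and minimizing for boundary}, the $G$-isotopy minimizer $(V,\Omega)$ restricted to $U_1$ is an (ordinary) isotopy minimizer of $(\Sigma\llcorner U_1,\mcR\llcorner U_1)$ in $U_1\subset B_{\mathbf{r}_0}(p_1)$, where $p_1$ is the representative of $G\cdot p$ in $U_1$; here the constraint $\mathbf{r}_0<\rho_0$ guarantees $U_1$ is small enough for the local theory to apply. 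The first step is therefore to invoke the Wang–Zhou regularity theorem for isotopy minimizers, namely \cite[Theorem 1.25]{wangzc2023existenceFour} together with the $C^{1,1}$-improvement \cite[Theorem 1.1]{wangzc2023improved}, to conclude that $(V\llcorner U_1,\Omega\llcorner U_1)$ is a strongly $\A^h$-stationary, stable $C^{1,1}$ $h$-boundary in $U_1$.

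The second step is to propagate this regularity across all of $U$ by $G$-equivariance. Since $(V,\Omega)\in\VCG(U)$, we have $g_\#(V,\Omega)=(V,\Omega)$ for all $g\in G$, and since $h\in C^\infty_{\Gpm}(M)$ — so $h\circ g=h$ for $g\in G_+$ and $h\circ g=-h$ for $g\in G_-$ — the local conditions transported by $g$ are exactly the ones defining strong $\A^h$-stationarity and stability on $g\cdot U_1$: the sign flip of $h$ under $G_-$ is compensated by the sign flip of $\nu_{\bd\Omega}$ (Remark \ref{Rem: equivariant PMC vector}) and of the prescribed mean curvature, so $H=h$ w.r.t. $\nu_\Sigma$ is preserved, as is the stability inequality (compare the sign-bookkeeping in the proof of Lemma \ref{Lem: G-stable and stable for boundary}). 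Hence $(V,\Omega)$ is a strongly $\A^h$-stationary, stable $C^{1,1}$ $h$-boundary on each $U_i$, and since these cover $U$ and the properties are local, it is one on all of $U$. Finally, because $G$ acts by isometries and $(V,\Omega)$ is already known to lie in $\VCG(U)$, the surface $\Sigma=\spt(\bd\Omega)$ is automatically $G$-invariant with $\Omega\in\CG(U)$, so $(\Sigma,\Omega)$ is in fact a $C^{1,1}$ $(\Gpm,h)$-boundary in the sense of Definition \ref{Def: G-boundary}; combined with Lemma \ref{Lem: G-stationary and stationary for boundary} and Remark \ref{Rem: strong stationary and strong G-stationary}, strong $\A^h$-stationarity upgrades to strong $(G,\A^h)$-stationarity, and Lemma \ref{Lem: G-stable and stable for boundary} (applicable since $U$ is appropriately small) gives that stability and $G$-stability coincide, closing the argument.

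I expect the main obstacle to be purely bookkeeping rather than conceptual: one must check carefully that the localized Wang–Zhou theory genuinely applies to $U_1$, i.e. that $U_1\subset B_{\mathbf{r}_0}(p_1)$ with $\mathbf{r}_0$ small enough that \cite[Theorem 1.25]{wangzc2023existenceFour} and the associated replacement/regularity machinery hold on $U_1$ — this is exactly why $\mathbf{r}_0$ was chosen below $\rho_0=\rho_0(M,g_{_M},\sup|h|)$. A secondary subtlety is that the touching set $\mcS(\Sigma)$ need not be empty, so one cannot argue $G$-stability $\Leftrightarrow$ stability via the first Jacobi eigenfunction; this is precisely the gap filled by Lemma \ref{Lem: G-stable and stable for boundary}, whose hypothesis "appropriately small" is met here by the choice of $\mathbf{r}_0$. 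Once these two points are in place, the proof is a short concatenation of Lemma \ref{Lem: G-minimizing and minimizing for boundary}, the cited non-equivariant regularity, and the equivariance of the construction.
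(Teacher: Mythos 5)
Your proof is correct and takes essentially the same route as the paper, whose proof is exactly the concatenation of Lemma \ref{Lem: G-minimizing and minimizing for boundary} with the non-equivariant regularity theorem \cite[Theorem 1.25]{wangzc2023existenceFour}. Your extra bookkeeping—restricting to one component $U_1$ and propagating regularity to the other components via the compatible sign changes of $h$ and $\nu_{\bd\Omega}$ under $G_-$, then noting $(V,\Omega)\in\VCG(U)$ makes the limit a $(\Gpm,h)$-boundary—only spells out what the paper leaves implicit.
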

\begin{proof}
    This result follows directly from Lemma \ref{Lem: G-minimizing and minimizing for boundary} and \cite[Theorem 1.25]{wangzc2023existenceFour}. 
\end{proof}

\section{Min-max theory for $C^{1,1}$ $(G_{\pm},h)$-boundaries}\label{Sec: PMC min-max}

In this section, we follow the approach in \cite{wangzc2023existenceFour} to set up the relative $G$-equivariant min-max problem for the $\mathcal{A}^{h}$-functional and prove the main regularity results for $(G, \mathcal{A}^{h})$-min-max pairs. 

\subsection{$G$-equivariant min-max problem}\label{Sec: Equivariant min-max} 
Fix a $G$-connected closed surface $\Sigma_{0}$ of genus $\mathfrak{g}_{0}$. 
A $G$-equivariant embedding $\phi: \Sigma_{0} \rightarrow M$ is said to be {\em $\Gpm$-separating} if $M \setminus \phi(\Sigma_{0}) = \Omega_{+} \sqcup \Omega_{-}$, where $\Omega_{+}, \Omega_{-}$ are two nonempty domains sharing a common boundary $\phi(\Sigma_{0})$ so that $G_+ \cdot \Omega_{\pm} = \Omega_{\pm}$ and $G_- \cdot\Omega_{\pm}=\Omega_{\mp}$. 
For convenience, write $\Sigma = \phi(\Sigma_0)$ with the orientation induced by the outer normal $\nu$ of $\Omega$, where $\Omega$ is an arbitrary choice of $\{\Omega_{+}, \Omega_{-}\}$.  
We then denote 
\[\sEG := 
        \left\{(\Sigma,\Omega):
        \Sigma \text{ is a $G$-equivariant $\Gpm$-separating embedding of $\Sigma_0$ in $M$}
        \right\}\]
endowed with oriented smooth topology in the usual sense. 

Let $X$ be a finite dimensional cubical complex, and $Z \subset X$ be a subcomplex. Let $\Phi_{0}: X \rightarrow \sEG$ be a continuous map. We denote by $\Pi$ the set of all continuous maps $\Phi: X \rightarrow \sEG$ which is homotopic to $\Phi_{0}$ relative to $\Phi_{0}|_{Z}: Z \rightarrow \sEG$. We refer to such a $\Phi$ an $(X, Z)$-sweepout, or simply a sweepout. The definitions 2.1-2.3 in \cite{wangzc2023existenceFour} are directly transferred to our situation as follows.  

\begin{definition}
    Given $(X, Z)$ and $\Phi_{0}$ as above, we call $\Pi$ the {\em $(X, Z)$-homotopy class of $\Phi_{0}$}. 
    The {\em$h$-width} of $\Pi$ is defined by: 
    \[
    \mathbf{L}^{h} = \mathbf{L}^{h}(\Pi) = \inf_{\Phi\in\Pi} \sup_{x \in X} \mathcal{A}^{h}(\Phi(x)).   
    \]
    A sequence $\{\Phi_{i}\}_{i \in \mathbb{N}} \subset \Pi$ is called a {\em minimizing sequence} if 
    \[
    \mathbf{L}^{h}(\Phi_{i}) := \sup_{x \in X} \mathcal{A}^{h}(\Phi_{i}(x)) \rightarrow \mathbf{L}^{h}, \text{ when } i \rightarrow \infty. 
    \]
    A subsequence $\{\Phi_{i_j}(x_j): x_j \in X\}_{j \in \mathbb{N}}$ is called a {\em min-max (sub)sequence} if 
    \[
    \mathcal{A}^{h}(\Phi_{i_j}(x_j)) \rightarrow \mathbf{L}^{h}, \text{ when } j \rightarrow \infty.
    \]
    The {\em critical set} of a minimizing sequence $\{\Phi_{i}\}$ is defined by 
    \[ \mathbf{C}(\{\Phi_i\})=\left\{(V, \Omega)\in \VCG(M)\left|\,
    \begin{aligned}   
        & \exists \text{ a min-max subsequence }\{\Phi_{i_j}(x_j)\} \text{ such}\\
        & \text{that } \sF \big(\Phi_{i_j}(x_j), (V, \Omega)\big) \to  0 \text{ as } j\to\infty
    \end{aligned}\right\}\right..
    \]
\end{definition}

We have the following min-max theorem, and the proof will be given later. 

\begin{theorem}[PMC Min-Max Theorem]\label{thm:pmc min-max theorem}
    With all notions as above, suppose
    \begin{equation}\label{eq:width nontrivial1}
        \mathbf{L}^h(\Pi)> \max\left\{\max_{x\in Z}\A^h\big(\Phi_0(x)\big), 0\right\}.
    \end{equation}
    Then there exist a minimizing sequence $\{\Phi_i\}\subset \Pi$, and a strongly $\Ah$-stationary, $C^{1,1}$ $(\Gpm, h)$-boundary $(\Sigma, \Omega)$ lying in the critical set $\mathbf{C}(\{\Phi_i\})$ such that 
    $\A^h(\Sigma, \Omega) = \mathbf{L}^h(\Pi)$. 
\end{theorem}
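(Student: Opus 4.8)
The plan is to follow the now-standard architecture of PMC min-max as in \cite{wangzc2023existenceFour}, but carried out entirely within the $G$-equivariant category $\sEG$ and invoking the equivariant replacements established in Section~\ref{Sec: Preliminaries}. First I would run the combinatorial min-max scheme of Almgren--Pitts--Colding--De Lellis--Pitts adapted to the $\sF$-topology on $\VCG(M)$: starting from a minimizing sequence $\{\Phi_i\}\subset\Pi$, apply the pull-tight procedure so that every element of the critical set $\mathbf{C}(\{\Phi_i\})$ is $(G,\A^h)$-stationary; here the key point is that the tightening vector fields can be chosen $G$-equivariant (average over $G$ as in Lemma~\ref{Lem: G-stationary and stationary for pairs}), so the tightened sweepouts stay in $\Pi$, and Lemma~\ref{Lem: preliminary of VCG-space}(ii),(iv) guarantees $c$-bounded first variation and compactness of the stationary critical set. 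The nontriviality hypothesis \eqref{eq:width nontrivial1} is exactly what is needed to ensure the critical set does not collapse into the boundary $Z$, i.e. there is a nontrivial min-max pair $(V,\Omega)\in\mathbf{C}(\{\Phi_i\})$ with $\A^h(V,\Omega)=\mathbf{L}^h(\Pi)$.

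Next I would establish the \emph{almost minimizing} property of some $(V,\Omega)$ in the critical set, in small $G$-invariant annuli. This is the usual dichotomy argument (no mountain-pass in small balls $\Rightarrow$ one can deform down): run it in the quotient-aware setting on $G$-invariant annuli $A^G_{s,r}(p)$ contained in $B^G_{\mathbf r_0}(p)$, which are appropriately small by the choice of $\mathbf r_0$ in Section~\ref{Subsec: Preliminary-isotopy minimizing}. The equivariant isotopies used to push the sweepout down are supplied by $\Is^G(U)$, and because appropriately small sets split into $\#G$ diffeomorphic copies permuted by $G$, any non-equivariant competitor isotopy on one component can be spread equivariantly (as in the proof of Lemma~\ref{Lem: G-minimizing and minimizing for boundary}) at the cost of a factor $\#G$ in the $\A^h$-defect, which does not affect the argument. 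The outcome is a pair $(V,\Omega)$ that is $(G,\A^h)$-almost minimizing in sufficiently small annuli and $(G,\A^h)$-stationary on all of $M$.

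Then I would upgrade this to regularity. On each appropriately small $G$-ball, the almost minimizing property lets me produce $G$-isotopy minimizers (replacements) of the restricted pair; by Theorem~\ref{Thm: isotopy minimizer} each replacement is a strongly $\A^h$-stationary, stable $C^{1,1}$ $(\Gpm,h)$-boundary in that ball. The standard removal-of-singularities / gluing-of-replacements argument — together with the equivariant compactness Proposition~\ref{Prop: compactness for G-stable boundary} to pass replacements to a limit — then shows $V=|\Sigma|$ for a globally defined $C^{1,1}$ almost embedded $\Gpm$-boundary $(\Sigma,\Omega)$ that is strongly $\A^h$-stationary in $M$, with $\A^h(\Sigma,\Omega)=\mathbf{L}^h(\Pi)$; $G$-invariance of $\Sigma$ and $\Omega\in\CG(M)$ is automatic since the $G$-action is by isometries. (All of this is the content of \cite[Sections 2--4]{wangzc2023existenceFour}, transplanted verbatim once the equivariant analogues of the lemmas are in place, which is why I expect the write-up to consist largely of ``the argument of \cite{wangzc2023existenceFour} applies, replacing $\C,\V,\mfX,\Is$ by their $G$-versions and invoking Lemmas~\ref{Lem: G-stationary and stationary for pairs}, \ref{Lem: G-stable and stable for boundary}, \ref{Lem: G-minimizing and minimizing for boundary}, Theorem~\ref{Thm: isotopy minimizer}, Proposition~\ref{Prop: compactness for G-stable boundary}''.)

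The main obstacle, and the only place where genuine care beyond bookkeeping is required, is the interaction between the $G_-$-antisymmetry of $h$ and $\Omega$ and the deformation/replacement constructions: one must check that every competitor used in pull-tight, in the almost-minimizing dichotomy, and in the isotopy-minimizing replacement can be realized inside $\CG(M)$ respectively $\C^{\Gpm}(U)$, i.e. that the $G_+$-invariance and the $G_-$-swapping of $\Omega$ and $M\setminus\Omega$ are preserved under all the cut-and-paste operations, and that the sign flips of $\nu_{\bd\Omega}$ and $\nabla h$ under $G_-$ (Remark~\ref{Rem: equivariant PMC vector}) make $h\,\nu_{\bd\Omega}$ genuinely $G$-invariant so that the first- and second-variation identities localize correctly on appropriately small sets (the $\int_{U_1} h\,d\mathcal H^3$ correction terms in \eqref{Eq: Ah in appropriately small set} must be tracked but are harmless because they are isotopy-invariant constants). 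Once this consistency is verified, the regularity conclusion and the energy identity follow exactly as in the non-equivariant PMC theory.
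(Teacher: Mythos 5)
Your proposal follows essentially the same route as the paper: pull-tight with $G$-averaged vector fields (Theorem \ref{thm:pull-tight}), the Almgren--Pitts combinatorial argument to extract a pair that is $(G,\A^h)$-almost minimizing in small $G$-annuli (Theorem \ref{thm:existence of almost minimizing pairs}), regularity via $(\Gpm,\A^h)$-replacement chains built from the constrained $G$-isotopy minimizing problem on appropriately small $G$-sets (Proposition \ref{Prop: regularity of constrained minimizer}, Theorem \ref{Thm: regularity of a.m. pairs}), and finally removal of the singularities at the annulus centers using Propositions \ref{Prop: compactness for G-stable boundary} and \ref{Prop: tangent cone}. The equivariant subtleties you flag — spreading one-component isotopies over the $\#G$ components at the cost of tracking the $\int_{U_1}h\,d\mcH^3$ correction in \eqref{Eq: Ah in appropriately small set} — are exactly the points the paper handles in Lemmas \ref{Lem: G-minimizing and minimizing for boundary}, \ref{Lem: constrained minimzing and minimizing} and \ref{Lem: constrained minimzing and minimizing 2}.
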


\subsection{Tightening}\label{sec: tightening}  
We follow the pull-tight process in Section 2.2 of \cite{wangzc2023existenceFour} with some alterations. 

\begin{theorem}[Pull-tight]\label{thm:pull-tight}
    Let $\Pi$ be an $(X, Z)$-homotopy class generated by some continuous $\Phi_0: X \to \sEG$ relative to $\Phi_0|_Z$. Given a minimizing sequence $\{\Phi^*_i\}_{i\in \mathbb{N}}\subset \Pi$ associated with $\A^h$, there exists another minimizing sequence $\{\Phi_i\}_{i \in \mathbb{N}} \subset \Pi$, such that $\mathbf{C}(\{\Phi_i\})\subset \mathbf{C}(\{\Phi_i^*\})$ and every element $(V, \Omega)\in \mathbf{C}(\{\Phi_i\})$ is either $(G, \A^h)$-stationary (and thus $\Ah$-stationary by Lemma \ref{Lem: G-stationary and stationary for pairs}), or belongs to $B = \Phi_0(Z)\subset \sEG$.   
\end{theorem}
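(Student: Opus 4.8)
\textbf{Proof plan for Theorem \ref{thm:pull-tight} (Pull-tight).}

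The plan is to adapt the classical Almgren--Pitts tightening argument, in the form carried out by Wang--Zhou \cite[\S 2.2]{wangzc2023existenceFour}, to the present $G$-equivariant setting, with the key modification that every deformation used must be built from $G$-equivariant vector fields so that it preserves the class $\sEG$. First I would fix the compact metric space $A := A^{2\mathbf{L}^h}_0 \subset \VCG(M)$ of $(G,\A^h)$-stationary pairs of bounded mass (compact by Lemma \ref{Lem: preliminary of VCG-space}(iii)(iv)), together with $B = \Phi_0(Z)$. The goal is to construct, for each pair $(V,\Omega)\in \VCG(M)$ of mass $\le 2\mathbf{L}^h$ that is \emph{not} in $A\cup B$, a ``push'' that strictly decreases $\A^h$ by a definite amount depending only on the $\sF$-distance to $A\cup B$. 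Since $(V,\Omega)\notin A$, it is not $(G,\A^h)$-stationary, so there exists $X\in\mfX^G(M)$ with $\delta\A^h_{V,\Omega}(X) < 0$; by the continuity of the first variation in the $\sF$-topology (which uses the $c$-bounded first variation from Lemma \ref{Lem: preliminary of VCG-space}(ii)) this deformation works on a whole $\sF$-neighborhood with a uniform rate. One then partitions the relevant region of $\VCG(M)$ using a locally finite cover indexed by such neighborhoods, takes a subordinate partition of unity, and sums the corresponding $G$-equivariant vector fields to obtain a continuous map $(V,\Omega)\mapsto X_{(V,\Omega)}\in\mfX^G(M)$; the associated time-$1$ flow maps $\Phi_i(x)$ to $\Phi_{i+1}(x) := (\phi^1_{X_{\Phi_i(x)}})_\#\Phi_i(x)$. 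Because each $X$ is $G$-equivariant, $\phi^1$ restricts to a $G$-equivariant diffeomorphism of $M$ in $\Diff^G_0(M)$, hence carries $\sEG$ into $\sEG$ and is homotopic to the identity through such maps; so the new sweepout stays in $\Pi$ (it is homotopic to $\Phi_0$ rel $\Phi_0|_Z$, using that near $Z$ one can arrange the deformation to be trivial since $B$ is excluded).

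Next I would verify the two bookkeeping properties. For $\mathbf{C}(\{\Phi_i\})\subset\mathbf{C}(\{\Phi_i^*\})$: the deformation only decreases $\A^h$, so any min-max subsequence for $\{\Phi_i\}$ has the property that the ``amount pushed'' tends to zero (otherwise $\mathbf{L}^h(\Phi_i)$ would drop strictly below $\mathbf{L}^h$, a contradiction), hence the corresponding pre-images along $\{\Phi_i^*\}$ accumulate to the same limit, placing the limit in $\mathbf{C}(\{\Phi_i^*\})$. For the dichotomy: if $(V,\Omega)\in\mathbf{C}(\{\Phi_i\})$ were neither $(G,\A^h)$-stationary nor in $B$, the uniform push near $(V,\Omega)$ would force $\A^h(\Phi_i(x_i))$ to fall a definite amount below $\mathbf{L}^h$ along the min-max subsequence, again contradicting $\A^h(\Phi_i(x_i))\to\mathbf{L}^h$. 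The final clause, that $(G,\A^h)$-stationarity is equivalent to $\A^h$-stationarity, is exactly Lemma \ref{Lem: G-stationary and stationary for pairs}.

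The main obstacle, and the one genuinely new point compared to \cite{wangzc2023existenceFour}, is the equivariance constraint woven through the construction: one must check that the cover of $\VCG(M)\setminus(A\cup B)$, the partition of unity, and the assembled vector field can all be taken $G$-equivariantly and continuously in $(V,\Omega)\in\VCG(M)$. This is where Lemma \ref{Lem: G-stationary and stationary for pairs} does the heavy lifting on the infinitesimal level --- it guarantees that whenever a non-$G$-stationary pair admits \emph{some} destabilizing $X\in\mfX(M)$, it admits a $G$-invariant one (the average $X_G$), with the same (or better) first-variation decrease --- so the local charts can be chosen inside $\mfX^G(M)$ from the start. I would also need to confirm that the flow of a $G$-equivariant field stays within $\Diff^G_0(M)$ and that the resulting reparametrized family is continuous into $\sEG$ with its oriented smooth topology; these are routine once the vector-field assignment is continuous, but they must be stated. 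Everything else --- the compactness of $A$, the bounded-first-variation estimate, the choice of the mass bound $2\mathbf{L}^h$ --- transfers verbatim from \cite[\S 2.2]{wangzc2023existenceFour} via the equivariant analogues already established in Section \ref{Sec: Preliminaries}.
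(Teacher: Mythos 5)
Your plan is essentially the paper's proof: both follow the Wang--Zhou tightening scheme with the equivariance obtained by averaging ($X\mapsto X_G$) so that the pulled-tight flows lie in $\Diff^G_0(M)$ and preserve $\sEG$, and both conclude via the same decrease estimate and min-max subsequence argument. The one detail you gloss is that the equivariant vector-field assignment is only $C^1$-continuous on the compact set of pairs, so the paper additionally smooths it for each $i$ (with $C^1$-error $1/i$, vanishing on $Z$) to make $\Phi_i$ continuous into $\sEG$ with its smooth topology, at the cost of a $C/i$ term in the decrease estimate; also the mass bound should be of the form $\mathbf{L}^h+\sup_M|h|\cdot\Vol(M)+1$ rather than $2\mathbf{L}^h$, since $\Ah$ and the mass differ by the volume term.
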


\begin{proof}
    Given $C := \mathbf{L}^h + \sup_M |h(p)| \cdot \Vol(M) + 1$, let $A^C$ and $A^C_0$ be defined as in Lemma \ref{Lem: preliminary of VCG-space}. 
    Denote by $B := \Phi_{0}(Z)$ and $A_0=A^C_0\cup B$. 
    For any $\mathcal{X}\in \mathfrak{X}(M)$, we choose $\mathcal{X}^{G}= (\Sigma_{g \in G} d g^{-1}(\mathcal{X})) /\#G$, which satisfies $\delta A^h_{V, \Omega}(\mathcal{X})=\delta A^h_{V, \Omega}(\mathcal{X}^G)$ for every $(V,\Omega)\in \VCG(M)$ (Lemma \ref{Lem: G-stationary and stationary for pairs}). 
    Then, combining with the constructions in \cite[Section 2.2, Step 1, 2]{wangzc2023existenceFour}, we obtain a continuous map $\mathcal{X}^G: A^C\to\mfX^G(M)$ (under the $C^1$-topology on $\mfX^G(M)$) so that $\mathcal{X}^G\llcorner A_0 =0$ and $\mathcal{X}^G\llcorner (A^C\setminus A_0)$ is continuous under the smooth topology on $\mfX^G(M)$. 
    Additionally, by \cite[Section 2.2, Step 3]{wangzc2023existenceFour}, we also have two continuous functions $T,\mathcal{L}:(0,\infty)\to (0,\infty)$ with $T(x),\mathcal{L}(x)\to 0$ as $x\to 0$ so that for each $(V,\Omega)\in A^C$ with $\gamma=\sF((V,\Omega),A_0)$, the homotopy map $H: I \times A^{C} \rightarrow A^{C}$ defined by
    \begin{align*}
    (t, (V, \Omega)) \mapsto (V_{T(\gamma)t}, \Omega_{T(\gamma)t}):= \Phi^{G}_{V, \Omega}(T(\gamma)t)_{\#}(V, \Omega)
    \end{align*}
    is continuous in the $\sF$-metric satisfying
    \begin{itemize}
        \item $H(t, (V, \Omega)) = (V, \Omega)$ if $(V, \Omega) \in A^C_{0}\cup B$,
        \item $\mathcal{A}^{h}(V_1, \Omega_1) - \mathcal{A}^{h}(V, \Omega) \leq -\mathcal{L}(\gamma)$,
    \end{itemize}
    where $\{\Phi^{G}_{V, \Omega}(t,\cdot)\}_{t\geq 0}\subset \Diff_0^G(M)$ is the flow associated with $\mathcal{X}^G(V,\Omega)$. 
    
    Without loss of generality, we may assume that $\Phi_{i}(x) \in A^{C}$ for all $i \in \mathbb{N}$ and $x \in X$. 
    Let $\mathcal{X}^{G}_{i}(x) = \mathcal{X}^{G}(\Phi^{*}_{i}(x))$ for $x\in X$, which is continuous under the $C^{1}$-topology on $\mathfrak{X}^{G}(M)$ with $\mathcal{X}^{G}_{i}\llcorner Z = 0$. 
    For each $i \in \mathbb{N}$, define $H_{i}: X \rightarrow \Is^{G}(M)$ by $H_{i}(x) = H(\cdot, \Phi_{i}^{*}(x))$. By smoothing out $\mathcal{X}^{G}_{i}$ to some $\widetilde{\mathcal{X}}^{G}_{i}: X \rightarrow \mathfrak{X}^{G}(M)$, which is continuous under the smooth topology with $\widetilde{\mathcal{X}}^{G}_{i}(x) = 0$ for any $x \in Z$ and $||\mathcal{X}^{G}_{i} - \widetilde{\mathcal{X}}^{G}_{i}||_{C^1} \leq 1/i$, we define $\widetilde{H}_{i}: X \rightarrow \Is^{G}(M)$ using $\widetilde{\mathcal{X}}^{G}_{i}$ rather than $\mathcal{X}^{G}_{i}$. Denoting $\Phi_{i}(x) = \widetilde{H}_{i}(1, \Phi_{i}^{*}(x))$, we obtain $\Phi_{i} \in \Pi$ and the same estimate (2.9) of \cite{wangzc2023existenceFour}:
    \[
    \mathcal{A}^{h}(\Phi_{i}(x)) - \mathcal{A}^{h}(\Phi^{*}_{i}(x)) \leq -\mathcal{L}(\sF(\Phi_{i}^{*}(x), A_{0})) + \frac{C}{i}, 
    \]
    where $C > 0$ is a universal constant. 

    Given a min-max sequence $\{\Phi_{i_{j}}(x_j)\}$, by the above estimate and the fact that $\{\Phi_{i}^{*}\}$ is a minimizing sequence, we know $\{\Phi^{*}_{i_{j}}(x_j)\}$ is also a min-max sequence and $\sF(\Phi_{i_j}^{*}(x_j), A_{0}) \to 0$ as $j \to \infty$. 
    Hence, $\lim_{j\to\infty}\sF(\Phi_{i_j}^{*}(x_j), \Phi_{i_j}(x_j)) = 0$ and $\mathbf{C}(\{\Phi_{i}\}) \subset \mathbf{C}(\{\Phi_{i}^{*}\})$. Since $\sF(\Phi_{i_j}(x_j), A_{0}) \rightarrow 0$ as $j \rightarrow \infty$, each element in $\mathbf{C}(\{\Phi_i\})$ is either $(G, \A^h)$-stationary or belongs to $B = \Phi_0(Z)\subset \sEG$.
\end{proof}

\subsection{Almost minimizing} 
We now adapt the almost minimizing property to the $(G, \mathcal{A}^{h})$-functional using $G$-equivariant embedded surfaces. 

\begin{definition}\label{Def: e,d-almost minimizing}
Given $\epsilon, \delta >0$, a $G$-invariant open set $U\subset M$, and $(\Sigma, \Omega)\in \sEG$, we say that {\em $(\Sigma, \Omega)$ is $G$-equivariantly $(\A^h, \epsilon, \delta)$-almost minimizing in $U$} if there does {\em not} exist any isotopy $\psi \in \Is^{G}(U)$, such that
\begin{itemize}
    \item $\A^h(\psi(t, \Sigma, \Omega)) \leq \A^h(\Sigma, \Omega) + \delta$ for all $t\in[0,1]$;
    \item $\A^h(\psi(1, \Sigma, \Omega)) \leq \A^h(\Sigma, \Omega) - \epsilon$.
\end{itemize}
\end{definition}

\begin{definition}[$(G, \A^h)$-almost minimizing pairs]\label{Def: almost minimizing pairs}
Given a $G$-invariant open subset $U\subset M$, a pair $(V, \Omega)\in \VCG(M)$, and a sequence $\{(\Sigma_j, \Omega_j)\}_{j\in \mathbb{N}}\subset \sEG$, we say that $(V, \Omega)$ is {\em $(G, \A^h)$-almost minimizing w.r.t. $\{(\Sigma_j, \Omega_j)\}$ in $U$}, if there exist $\epsilon_j\to 0$ and $\delta_j\to 0$, such that
\begin{itemize}
\item $(\Sigma_j, \Omega_j) \to (V, \Omega)$ in the $\sF$-metric as $j\to \infty$;
\item $(\Sigma_j, \Omega_j)$ is $G$-equivariantly $(\A^h, \epsilon_j, \delta_j)$-almost minimizing in $U$. 
\end{itemize}
Sometimes we also say {\em $(V, \Omega)$ is $(G, \A^h)$-almost minimizing in $U$} without referring to  $\{(\Sigma_j, \Omega_j)\}$.

Moreover, we say $(V_0, \Omega_0)$ is {\em $(G, \A^h)$-almost minimizing in small $G$-annuli w.r.t. $\{(\Sigma_j, \Omega_j)\}$}, if for any $p\in M$, there exists $r_{\am}(G \cdot p) > 0$ such that $(V_0, \Omega_0)$ is $(G, \A^h)$-almost minimizing w.r.t. $\{(\Sigma_j, \Omega_j)\}$ in every $A^{G}_{s, r}(p)\subset\subset A^G_{0,r_{\am}(G \cdot p)}(p)$. 
\end{definition}

The lemma below emerges as a consequence of being $(G, \A^h)$-almost minimizing.  
The proof imitates the argument in Lemma 3.3 of \cite{wangzc2023existenceFour}, which is skipped here. 

\begin{lemma}\label{lem:am implies stationary and stable}
    Let $(V, \Omega)\in \VCG(M)$ be $(G, \A^h)$-almost minimizing in a $G$-invariant open set $U \subset M$, then
    \begin{itemize}
        \item[(\rom{1})] $(V, \Omega)$ is $(G, \A^h)$-stationary in $U$;
        \item[(\rom{2})] $(V, \Omega)$ is $(G, \A^h)$-stable in $U$.
    \end{itemize}
    In particular, if $U$ is appropriately small, then $(V, \Omega)$ is $\Ah$-stationary and $\Ah$-stable in $U$ by Lemma \ref{Lem: G-stationary and stationary for pairs} and Remark \ref{Rem: G-stable and stable for pairs}. 
\end{lemma}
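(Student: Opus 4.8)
The plan is to reduce the statement to Wang--Zhou's corresponding result \cite[Lemma 3.3]{wangzc2023existenceFour} by passing from $G$-equivariant almost minimizing to ordinary almost minimizing in the obvious way, and then invoking the averaging trick that has already been used repeatedly (Lemmas \ref{Lem: G-stationary and stationary for pairs}, \ref{Lem: G-stable and stable for boundary}, \ref{Lem: G-minimizing and minimizing for boundary}). First I would recall, from Definition \ref{Def: almost minimizing pairs}, that $(V,\Omega)$ being $(G,\A^h)$-almost minimizing in $U$ gives sequences $\epsilon_j,\delta_j\to 0$ and $(\Sigma_j,\Omega_j)\in\sEG$ with $(\Sigma_j,\Omega_j)\to(V,\Omega)$ in $\sF$ such that no $G$-equivariant isotopy $\psi\in\Is^G(U)$ can decrease $\A^h$ by $\epsilon_j$ while staying within $\delta_j$ along the way.

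For part (\rom{1}), $(G,\A^h)$-stationarity in $U$: I would argue by contradiction. If $\delta\A^h_{V,\Omega}(X)<0$ for some $X\in\mfX^G(U)$, then for $j$ large the flow $\{\phi^t\}$ generated by $X$ applied to $(\Sigma_j,\Omega_j)$ (which remains a $G$-equivariant isotopy since $X$ is $G$-invariant, hence in $\Is^G(U)$) decreases $\A^h$ by a definite amount on a fixed small time interval, using the $\sF$-convergence $(\Sigma_j,\Omega_j)\to(V,\Omega)$ together with continuity of $t\mapsto\A^h(\phi^t_\#(\cdot))$ and its first variation; this contradicts the $G$-equivariant almost minimizing property once $\epsilon_j,\delta_j$ are small. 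This is exactly the argument of \cite[Lemma 3.3]{wangzc2023existenceFour} with $\mfX(U)$ replaced by $\mfX^G(U)$ and $\Is(U)$ by $\Is^G(U)$ throughout; no new idea is needed, only the observation that every step of that proof produces $G$-invariant vector fields and $G$-equivariant isotopies when fed $G$-invariant data. For part (\rom{2}), $(G,\A^h)$-stability, one runs the same contradiction scheme at the level of the second variation: if $\delta^2\A^h_{V,\Omega}(X,X)<0$ for some $X\in\mfX^G(U)$, combine the first-order stationarity from (\rom{1}) with a second-order Taylor expansion of $\A^h$ along the $G$-equivariant flow of $X$ applied to $(\Sigma_j,\Omega_j)$ to again violate $G$-equivariant almost minimizing for large $j$; this mirrors the corresponding stability conclusion in Lemma 3.3 of \cite{wangzc2023existenceFour}.

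Finally, for the ``in particular'' clause, when $U$ is appropriately small (Definition \ref{Def: appropriately small}), I would simply quote the equivalences already established: $(G,\A^h)$-stationarity in $U$ implies $\A^h$-stationarity in $U$ by Lemma \ref{Lem: G-stationary and stationary for pairs}, and $(G,\A^h)$-stability in $U$ implies $\A^h$-stability in $U$ by Remark \ref{Rem: G-stable and stable for pairs} (via the identity $\#G\cdot\delta^2\A^h_{V,\Omega}(X,X)=\delta^2\A^h_{V,\Omega}(X_G,X_G)$ for $X$ supported in one component). The main obstacle, such as it is, is not conceptual but bookkeeping: one must check that in the deformation arguments imported from \cite[Lemma 3.3]{wangzc2023existenceFour} every auxiliary isotopy used to realize the strict decrease in $\A^h$ can be taken $G$-equivariant and supported in the $G$-invariant set $U$, so that it is a legitimate competitor in Definition \ref{Def: e,d-almost minimizing}; since all the relevant data ($V$, $\Omega$, $h$, $U$) are $G$-(anti)symmetric and the $G$-action is by isometries, averaging a non-equivariant competitor over $G$ as in $X\mapsto X_G=(\sum_{g\in G}dg^{-1}(X))/\#G$ produces an equivariant one without increasing the relevant $\A^h$-variations, exactly as in the proofs of Lemmas \ref{Lem: G-stationary and stationary for pairs} and \ref{Lem: G-stable and stable for boundary}. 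Hence the argument of \cite[Lemma 3.3]{wangzc2023existenceFour} goes through verbatim in the equivariant setting, which is why the detailed proof is omitted.
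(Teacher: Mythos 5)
Your proposal is correct and follows exactly the route the paper intends: the paper itself skips the proof, noting it imitates \cite[Lemma 3.3]{wangzc2023existenceFour}, and your contradiction argument via the flow of a $G$-invariant $X\in\mfX^G(U)$ (automatically a competitor in $\Is^G(U)$), together with quoting Lemma \ref{Lem: G-stationary and stationary for pairs} and Remark \ref{Rem: G-stable and stable for pairs} for the appropriately small case, is precisely that adaptation. The only superfluous remark is the final averaging discussion, which is not needed here since the definitions of $(G,\A^h)$-stationarity and stability already restrict to $G$-invariant vector fields.
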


We also need the following $G$-equivariant notions generalized from \cite{colding2018classification, wangzc2023existenceFour}.   

\begin{definition}\label{Def: admissble annuli}
    Given $L \in \mathbb{N}$ and $p \in M$, let  
    \[
    \mathscr{C}^{G} = \{A^{G}_{s_1, r_1}(p), \cdots A^{G}_{s_L, r_L}(p)\}  
    \]
    be a collection of $G$-annuli centered at $G\cdot p$. 
    We say $\mathscr{C}^{G}$ is {\em $L$-admissible} if $2r_{j+1}<s_j$ for all $j=1,\cdots, L-1$, and $B^G_{r_L}(p)$ is appropriately small. 

    In addition, a pair $(V, \Omega)\in \VCG(M)$ is said to be {\em $(G, \A^h)$-almost minimizing in $\mathscr{C}^{G}$ w.r.t. a sequence $\{(\Sigma_j, \Omega_j)\}\subset \sEG$}, if there exists $\epsilon_j\to 0$ and $\delta_j\to 0$ so that 
    \begin{itemize}
        \item $(\Sigma_j, \Omega_j) \to (V, \Omega)$ in the $\mathscr{F}$-metric as $j\to \infty$;
        \item for each $j$, $(\Sigma_j, \Omega_j)$ is $G$-equivariantly $(\A^h, \epsilon_j, \delta_j)$-almost minimizing in at least one $G$-annulus in $\mathscr{C}^{G}$.
    \end{itemize}
\end{definition}

Comparing with \cite[Definition 3.9]{wangzc2023existenceFour}, our $L$-admissible collection $\mathscr{C}^{G}$ of $G$-annuli requires $r_L$ is less than the injectivity radius of $G\cdot p$ so that every ${\rm An}^G\in\mathscr{C}^{G}$ is appropriately small. 

Now, our goal is to find a $(G, \mathcal{A}^{h})$-min-max-pair $(V, \Omega)$ which is $(G, \mathcal{A}^{h})$-almost minimizing in small $G$-annuli. Consider the setup in Section \ref{Sec: Equivariant min-max} where the nontriviality condition (\ref{eq:width nontrivial1}) is met. If $\{\Phi_i\}_{i\in \mathbb{N}}\subset \Pi$ is a pull-tight minimizing sequence obtained by Theorem \ref{thm:pull-tight}, then every $(V, \Omega) \in \mathbf{C}(\{\Phi_i\})$ is $ \mathcal{A}^h$-stationary.

\begin{theorem}[Existence of $(G, \mathcal{A}^{h})$-almost minimizing pairs]\label{thm:existence of almost minimizing pairs}
Let $\Pi$ be an $(X, Z)$-homotopy class generated by some continuous $\Phi_0: X \to \sEG$ relative to $\Phi_0|_Z$ so that \eqref{eq:width nontrivial1} holds. 
Then there exists a min-max subsequence $\{(\Sigma_j, \Omega_j) = \Phi_{i_j}(x_j)\}_{j\in \mathbb{N}} \subset \sEG$ converging to an $\Ah$-stationary pair $(V_0,\Omega_0)\in \mathbf{C}(\{\Phi_i\})$ so that $(V_0,\Omega_0)$ is $(G, \A^h)$-almost minimizing in every $L$-admissible collection of $G$-annuli w.r.t. $\{(\Sigma_j,\Omega_j)\}$, where $L=L(m)$ is an integer depending only on $m:=\dim(X)$.

In addition, up to a subsequence of $\{(\Sigma_j,\Omega_j)\}$, $(V_0, \Omega_0)$ is $(G, \A^h)$-almost minimizing in small $G$-annuli w.r.t. $\{(\Sigma_j, \Omega_j)\}$.   
\end{theorem}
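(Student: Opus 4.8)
\textbf{Proof proposal for Theorem \ref{thm:existence of almost minimizing pairs}.}

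The plan is to run the standard combinatorial min-max machinery of Pitts (as adapted by Colding--De Lellis and used by Wang--Zhou), but carried out entirely inside the $G$-equivariant category, using the pull-tight sequence $\{\Phi_i\}$ from Theorem \ref{thm:pull-tight} and the $\sF$-metric compactness from Lemma \ref{Lem: preliminary of VCG-space}(iii)--(iv). First I would argue by contradiction: if no min-max subsequence gives a pair that is $(G,\A^h)$-almost minimizing in every $L$-admissible collection of $G$-annuli, then for every $(V,\Omega)\in\mathbf{C}(\{\Phi_i\})$ and every sufficiently fine $L$-tuple of concentric $G$-annuli, at least one annulus admits an improving $G$-isotopy in the sense of Definition \ref{Def: e,d-almost minimizing} (with quantitative $\epsilon,\delta$). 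The integer $L=L(m)$ is dictated by the dimension $m=\dim X$ exactly as in \cite[Section 3]{wangzc2023existenceFour}: one needs enough disjoint annuli so that for a cubical complex of dimension $m$ one can find, at each point of parameter space, an annulus on which the deformation can be performed independently of the finitely many ``bad'' directions. I would state the equivariant discretization/interpolation lemmas (Almgren--Pitts type, transferred to $\sEG$ with the oriented smooth topology) and cite that they go through verbatim because $\Is^G(U)$ is a closed subgroup and the $\A^h$-functional is continuous and $G$-natural.

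The core of the argument is the equivariant version of Pitts' combinatorial deformation: given the (hypothetical) improving $G$-isotopies on annuli, one assembles, via a partition-of-unity / chessboard argument over the parameter complex $X$, a global competitor $\Phi_i'\in\Pi$ homotopic to $\Phi_i$ rel $Z$ with $\sup_x \A^h(\Phi_i'(x)) < \mathbf{L}^h - \eta$ for some fixed $\eta>0$, contradicting $\mathbf{L}^h=\mathbf{L}^h(\Pi)$. Here the equivariance is actually a help rather than an obstruction: all the cut-and-paste deformations are performed with $G$-isotopies supported in $G$-annuli $A^G_{s,r}(p)=G\cdot A_{s,r}(p)$, and since the relevant balls $B^G_{r_L}(p)$ are appropriately small (Definition \ref{Def: appropriately small}), the deformation on one fundamental-domain copy of the annulus canonically extends by the $G$-action — this is precisely the extension trick already used in the proofs of Lemma \ref{Lem: G-stable and stable for boundary} and Lemma \ref{Lem: G-minimizing and minimizing for boundary}. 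One must also keep the deformed surfaces inside $\sEG$, i.e. preserve the $\Gpm$-separating property; this is automatic since $G$-isotopies preserve the decomposition $M\setminus\Sigma=\Omega_+\sqcup\Omega_-$ together with the actions $G_+\cdot\Omega_\pm=\Omega_\pm$, $G_-\cdot\Omega_\pm=\Omega_\mp$. The nontriviality hypothesis \eqref{eq:width nontrivial1} guarantees the min-max value is not attained on $B=\Phi_0(Z)$, so after pull-tight the relevant critical pairs are genuinely $\A^h$-stationary interior objects on which these local deformations have room to act.

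For the ``in addition'' clause, once we have a pair $(V_0,\Omega_0)$ that is $(G,\A^h)$-almost minimizing in every $L$-admissible collection of $G$-annuli, I would deduce almost-minimizing in small $G$-annuli by the standard pigeonhole argument of Colding--De Lellis / \cite{colding2018classification}: fix $p\in M$ and a large number $N$ of disjoint concentric $G$-annuli in a small appropriately-small $G$-ball around $p$; group them into $L$-admissible sub-collections; the almost-minimizing property on each sub-collection, combined with the fact that only boundedly many annuli (depending on $L$) can fail to be individually almost-minimizing for a fixed member of the min-max subsequence, forces — after passing to a diagonal subsequence of $\{(\Sigma_j,\Omega_j)\}$ and choosing $r_{\am}(G\cdot p)>0$ small — that $(V_0,\Omega_0)$ is $(G,\A^h)$-almost minimizing w.r.t.\ $\{(\Sigma_j,\Omega_j)\}$ in every $A^G_{s,r}(p)\subset\subset A^G_{0,r_{\am}(G\cdot p)}(p)$. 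A Lindelöf/diagonal argument over a countable dense set of points $p$ then yields the single subsequence working simultaneously at all $G\cdot p$.

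The main obstacle I expect is not conceptual but bookkeeping: one must check carefully that \emph{every} discretization, interpolation, and deformation step in \cite[Sections 3]{wangzc2023existenceFour} can be performed $G$-equivariantly while staying in $\sEG$ — in particular that the ``no-mass-concentration'' and the equidistribution of the min-max subsequence survive the constraint, and that smoothing out $G$-invariant vector fields (needed to re-enter the smooth oriented topology on $\sEG$, as in the proof of Theorem \ref{thm:pull-tight}) does not destroy equivariance. All of these have an antecedent in the present paper's equivariant lemmas (the averaging operator $X\mapsto (\sum_{g\in G} dg^{-1}X)/\#G$ and the appropriately-small extension trick), so the resolution is to invoke those systematically rather than to redo the estimates.
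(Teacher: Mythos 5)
Your proposal is correct and follows essentially the same route as the paper: the paper's proof is exactly the contradiction argument via Pitts' combinatorial deformation (citing the Appendix of Colding--Gabai--Ketover and Section 3.3 of Wang--Zhou), with the observation that, being combinatorial, it carries over verbatim to $G$-equivariant isotopies supported in appropriately small $G$-annuli. Your additional remarks on the averaging/extension trick and the pigeonhole-diagonal argument for the small-$G$-annuli clause are consistent with how the cited arguments are adapted, so there is nothing to correct.
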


\begin{proof}
    The proof is essentially the same as that of \cite[Appendix]{colding2018classification} (for the area functional) and \cite[Section 3.3]{wangzc2023existenceFour} (for $\Ah$-functional). 
    Namely, if there is no such min-max sequence, then one can apply a combinatorial argument of Almgren-Pitts \cite{pitts2014existence} to find several isotopies supported in many disjoint annuli so that the $\Ah$-functional of a certain sweepout will be pulled down to strictly below $\bL^h(\Pi)$ via these isotopies, which contradicts the definition of $\bL^h(\Pi)$. 
    Since the proof is combinatorial, the argument would also carry over with our $G$-equivariant objects. 
\end{proof}

An immediate consequence of Lemma \ref{lem:am implies stationary and stable} and Theorem \ref{thm:existence of almost minimizing pairs} is that 

\begin{corollary}\label{cor:property R} 
    The limit $\Ah$-stationary pair $(V_0,\Omega_0)\in \mathbf{C}(\{\Phi_i\})$ satisfies 
    \begin{equation}\label{eq:property R}
    \begin{aligned}
    \text{Property {\bf(R)}}:\quad  & \text{for every $L(m)$-admissible collection $\mathscr{C}^{G}$ of $G$-annuli, }\\
    & \text{$(V_0, \Omega_0)$ is $\A^h$-stable in at least one $G$-annulus in $\mathscr{C}^{G}$.}
    \end{aligned}
    \end{equation}
\end{corollary}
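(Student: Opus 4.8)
The plan is to derive Property \textbf{(R)} as an essentially formal consequence of the two results just established: Theorem \ref{thm:existence of almost minimizing pairs}, which produces a min-max subsequence $\{(\Sigma_j,\Omega_j)=\Phi_{i_j}(x_j)\}$ converging to an $\Ah$-stationary pair $(V_0,\Omega_0)\in\mathbf{C}(\{\Phi_i\})$ that is $(G,\Ah)$-almost minimizing in every $L(m)$-admissible collection of $G$-annuli with respect to $\{(\Sigma_j,\Omega_j)\}$; and Lemma \ref{lem:am implies stationary and stable}, which upgrades $(G,\Ah)$-almost minimizing in a $G$-invariant open set to $(G,\Ah)$-stability, hence to $\Ah$-stability once the set is appropriately small.

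\smallskip

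First I would fix any $L(m)$-admissible collection $\mathscr{C}^{G}=\{A^G_{s_1,r_1}(p),\dots,A^G_{s_L,r_L}(p)\}$ of $G$-annuli centered at $G\cdot p$. By Theorem \ref{thm:existence of almost minimizing pairs}, $(V_0,\Omega_0)$ is $(G,\Ah)$-almost minimizing in $\mathscr{C}^{G}$ with respect to $\{(\Sigma_j,\Omega_j)\}$; unwinding Definition \ref{Def: admissble annuli}, this means there are $\epsilon_j\to 0$, $\delta_j\to 0$ such that $(\Sigma_j,\Omega_j)\to(V_0,\Omega_0)$ in the $\sF$-metric and each $(\Sigma_j,\Omega_j)$ is $G$-equivariantly $(\Ah,\epsilon_j,\delta_j)$-almost minimizing in at least one $G$-annulus of $\mathscr{C}^{G}$. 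Since $\mathscr{C}^{G}$ is finite, by the pigeonhole principle there is a fixed index $k\in\{1,\dots,L\}$ and a subsequence (which I will not relabel) such that every $(\Sigma_j,\Omega_j)$ is $G$-equivariantly $(\Ah,\epsilon_j,\delta_j)$-almost minimizing in the single $G$-annulus $U:=A^G_{s_k,r_k}(p)$. Then $(V_0,\Omega_0)$ is $(G,\Ah)$-almost minimizing with respect to this subsequence in the $G$-invariant open set $U$ in the sense of Definition \ref{Def: almost minimizing pairs}.

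\smallskip

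Next I would apply Lemma \ref{lem:am implies stationary and stable} to $U$: it gives that $(V_0,\Omega_0)$ is $(G,\Ah)$-stable in $U$. Because $\mathscr{C}^{G}$ is $L$-admissible, $B^G_{r_L}(p)$ is appropriately small, and since $U=A^G_{s_k,r_k}(p)\subset B^G_{r_k}(p)\subset B^G_{r_L}(p)$ is a $G$-invariant open subset of an appropriately small set, $U$ is itself appropriately small (each component is diffeomorphic to $U/G$). Hence, by the final sentence of Lemma \ref{lem:am implies stationary and stable} (or directly by Remark \ref{Rem: G-stable and stable for pairs}), $(V_0,\Omega_0)$ is $\Ah$-stable in $U$. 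Thus $(V_0,\Omega_0)$ is $\Ah$-stable in at least one $G$-annulus of $\mathscr{C}^{G}$, namely $U$, which is exactly Property \textbf{(R)}. Taking the conjunction over all $L(m)$-admissible collections finishes the corollary.

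\smallskip

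There is no genuine obstacle here — the statement is a bookkeeping corollary — but the one point that requires care is the passage from ``almost minimizing in the collection $\mathscr{C}^{G}$'' (a disjunctive, $j$-dependent condition) to ``almost minimizing in a single fixed annulus'': this is precisely where the finiteness of $\mathscr{C}^{G}$ and a pigeonhole/diagonal subsequence argument are used, and one should note that passing to this subsequence does not destroy membership of $(V_0,\Omega_0)$ in $\mathbf{C}(\{\Phi_i\})$ nor the convergence $(\Sigma_j,\Omega_j)\to(V_0,\Omega_0)$. The only other thing to verify explicitly is that a $G$-invariant open subset of an appropriately small $G$-set is again appropriately small, so that the stability upgrade from $(G,\Ah)$-stable to $\Ah$-stable via Remark \ref{Rem: G-stable and stable for pairs} is legitimate.
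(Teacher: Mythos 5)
Your argument is exactly the paper's intended one: the paper offers no separate proof of Corollary \ref{cor:property R}, treating it as an immediate combination of Theorem \ref{thm:existence of almost minimizing pairs} and Lemma \ref{lem:am implies stationary and stable}, and your pigeonhole step (fixing a single annulus for a subsequence so that Definition \ref{Def: almost minimizing pairs} applies) is the right way to make the word ``immediate'' precise.

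One justification in your write-up is wrong as stated, though the conclusion you need is still available. In an $L$-admissible collection the radii satisfy $2r_{j+1}<s_j<r_j$, so they \emph{decrease}: $r_1>r_2>\cdots>r_L$, and $B^G_{r_L}(p)$ is the \emph{innermost} ball. Hence your inclusion chain $A^G_{s_k,r_k}(p)\subset B^G_{r_k}(p)\subset B^G_{r_L}(p)$ is backwards and fails whenever the pigeonhole lands on $k<L$; as literally written, this step does not establish that the chosen annulus $U$ is appropriately small, which is precisely what is needed to upgrade the $(G,\Ah)$-stability from Lemma \ref{lem:am implies stationary and stable} to the full $\Ah$-stability demanded by Property {\bf(R)} (via Remark \ref{Rem: G-stable and stable for pairs}). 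The correct justification is the one the paper itself supplies in the remark following Definition \ref{Def: admissble annuli}: admissibility is set up so that \emph{every} $G$-annulus in $\mathscr{C}^{G}$ is appropriately small (equivalently, the outer radius bound should be read as putting $r_1$, hence all $r_k$, below the injectivity radius of $G\cdot p$). With that reading, your proof goes through verbatim; without it, your argument only yields $\Ah$-stability when the fixed annulus is the innermost one, and merely $(G,\Ah)$-stability otherwise. Also, your parenthetical claim that any $G$-invariant open subset of an appropriately small set is appropriately small needs the subset to have exactly $\#G$ components to match Definition \ref{Def: appropriately small}; for the annuli in question this holds, but it is not a general fact.
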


\subsection{Regularity of min-max pairs: Part I}

In this subsection, we will introduce the notions concerning $(\Gpm, \mathcal{A}^{h})$-replacements, and show the regularity for the pairs $(V,\Omega)\in\VCG(M)$ with a certain $(\Gpm, \mathcal{A}^{h})$-replacement chain property. 

\begin{definition}
Given an open $G$-subset $U \subset M$ and $(V, \Omega)\in \VCG(M)$, a pair $(V^*, \Omega^*)\in \VCG(M)$ is said to be a {\em $(\Gpm, \A^h)$-replacement} of $(V, \Omega)$ in $U$ if 
\begin{itemize}
    \item[(i)] $(V^*, \Omega^*) = (V, \Omega)$ in $M\setminus\Clos(U)$; 
    \item[(ii)] $\A^h(V^*, \Omega^*) = \A^h(V, \Omega)$; 
    \item[(iii)] $(V^*, \Omega^*)$ is a strongly $\A^h$-stationary and stable $C^{1, 1}$ $(\Gpm, h)$-boundary in $U$.
\end{itemize}
\end{definition}

\begin{definition}
As above, $(V, \Omega)$ is said to have {\em (weak) good $(\Gpm, \mathcal{A}^{h})$-replacement property in $U$} if for any $p\in U$, there exists $r_{G \cdot p}>0$, such that $(V, \Omega)$ has an $(\Gpm, \A^h)$-replacement $(V^*, \Omega^*)$ in any open $G$-annulus $\An^{G} \subset\subset A^{G}_{0, r_{G \cdot p}}(p)$.  
\end{definition}

\begin{proposition}[Classification of tangent cones]\label{Prop: tangent cone}
    Let $(V,\Omega)\in\VCG(M)$ be $\Ah$-stationary in an open $G$-set $U$ and has (weak) good $(\Gpm,\Ah)$-replacement property in $U$. 
    Then $V$ is integer rectifiable in $U$, and for any $p\in \spt(\|V\|)\cap U$, every tangent varifold of $V$ at $p$ is an integer multiple of a plane in $T_pM$. 
\end{proposition}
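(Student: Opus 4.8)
The plan is to reduce the statement to Wang--Zhou's non-equivariant classification of tangent cones for $C^{1,1}$ $(\Gpm,h)$-boundaries (equivalently, for $h$-boundaries), by unfolding locally near a point $p$ so that the $G$-action becomes invisible. First I would recall that, by the choice of $\mathbf{r}_0$ in Section \ref{Subsec: Preliminary-isotopy minimizing}, any open $G$-set contained in $B^G_{\mathbf{r}_0}(q)$ is appropriately small, so its $\#G$ connected components are permuted freely by $G$ and each is diffeomorphic to its image in $M/G$. Fix $p\in\spt(\|V\|)\cap U$ and choose $r_{G\cdot p}>0$ smaller than both $\mathbf{r}_0$ and the radius from the (weak) good $(\Gpm,\Ah)$-replacement property at $p$. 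Then for every open $G$-annulus $\An^G\subset\subset A^G_{0,r_{G\cdot p}}(p)$ there is a $(\Gpm,\Ah)$-replacement $(V^*,\Omega^*)$ which, restricted to the connected component of $\An^G$ containing the relevant copy of $p$, is a strongly $\Ah$-stationary and stable $C^{1,1}$ $(\Gpm,h)$-boundary. Since $\An^G$ is appropriately small, Lemma \ref{Lem: G-stable and stable for boundary} (together with the localization remark preceding it) promotes $G$-stability to honest stability on that component, so on the chosen Euclidean annulus around $p$ we have an ordinary strongly $\Ah$-stationary, stable $C^{1,1}$ $h$-boundary replacement for $V$.

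Next I would observe that $G$-invariance of $V$ plus $\Ah$-stationarity in $U$ passes, via Lemma \ref{Lem: preliminary of VCG-space}(ii), to the fact that $V$ has $c$-bounded first variation in $U$ with $c=\sup|h|$; in particular the monotonicity formula holds and density $\Theta^2(\|V\|,p)$ exists. Restricting attention to one connected component $W$ of a sufficiently small $B^G_\rho(p)$ (which is diffeomorphic to $W/G$ and on which $h$ is just a smooth function, all the $\Gpm$-structure being inert because $W$ meets no other orbit point), the pair $(V\lc W,\Omega\lc W)$ is an $\Ah$-stationary element of $\VC(W)$ with the (weak) good $\Ah$-replacement property in $W$ coming from the components of the equivariant replacements. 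This is exactly the hypothesis of Wang--Zhou's tangent-cone classification (their analogue of \cite[Section 4]{wangzc2023existenceFour} / Pitts' and Schoen--Simon's argument adapted to $h$-boundaries): one shows, using the replacements in nested annuli, that $\Theta^2(\|V\|,\cdot)$ is upper semicontinuous and that every tangent varifold is a stationary (for the area functional, since $h$ scales away in the blow-up) integral cone which is stable and hence, by the Schoen--Simon–type regularity in dimension $2$, a finite union of half-planes or planes through the origin; the structure of $C^{1,1}$ almost-embedded $h$-boundaries forces it to be an integer multiple of a single plane. Integer rectifiability of $V$ in $U$ then follows from Allard's rectifiability theorem once positivity and finiteness of the density is known, which the replacements and monotonicity provide.

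The main obstacle I expect is purely bookkeeping rather than conceptual: one must check that the component-wise descent from $\xEG$-replacements to ordinary $h$-boundary replacements is compatible with the nesting of annuli required by the tangent-cone argument — i.e. that shrinking the annulus $\An^G$ keeps us inside the same appropriately-small ball and the same component containing $p$, so that the replacement chains used to compare densities at different scales all take place downstairs in a fixed diffeomorphic copy $W/G$. Once that is arranged, no new analysis is needed: the proof of Proposition \ref{Prop: tangent cone} is word-for-word that of the corresponding statement in \cite[Section 4]{wangzc2023existenceFour}, applied in $W$, with Lemma \ref{Lem: G-stationary and stationary for pairs}, Lemma \ref{Lem: G-stable and stable for boundary} and Lemma \ref{Lem: preliminary of VCG-space} supplying the translation between the equivariant and non-equivariant settings. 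I would therefore write the proof as: (1) localize to an appropriately small ball; (2) invoke the descent lemmas to get non-equivariant stationarity, stability and replacements there; (3) cite the non-equivariant tangent-cone classification verbatim.
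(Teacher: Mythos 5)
Your proposal is correct and follows essentially the same route as the paper: the paper simply observes that the argument of \cite[Lemma 6.4]{colding2003min} and \cite[Lemma 20.2]{sarnataro2023optimal} carries over almost verbatim, which is exactly your step (3), while your steps (1)–(2) just make explicit the harmless localization to a component of an appropriately small $G$-set where the equivariant replacements, stationarity and stability descend to the ordinary non-equivariant notions. No gap; the extra bookkeeping you flag (nesting annuli inside one fixed component) is indeed all that is needed.
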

\begin{proof}
    The proof can taken almost verbatim from \cite[Lemma 6.4]{colding2003min}\cite[Lemma 20.2]{sarnataro2023optimal}. 
\end{proof}

\begin{definition}[$(\Gpm, \mathcal{A}^{h})$-replacement chain property]
Let $(V,\Omega)\in\VCG(M)$ and $U \subset M$ be an open $G$-set. $(V, \Omega)$ is said to have the {\em $(\Gpm,\Ah)$-replacement chain property in $U$} if the following statement holds. 
For any sequence of open $G$-subsets $B_1^G,\cdots, B_k^G\subset \subset U$, there exist a sequence \[(V,\Omega)=(V_0,\Omega_0),(V_1,\Omega_1),\cdots, (V_k,\Omega_k) \subset \VCG(M)\]
satisfying that 
\begin{itemize}
    \item[(i)] $(V_j,\Omega_j)$ is an $(\Gpm, \mathcal{A}^h)$-replacement of $(V_{j-1},\Omega_{j-1})$ in $B_j^G$ for $j=1,\cdots,k$;
    \item[(ii)] $(V_j,\Omega_j)$ is $\A^h$-stationary and stable in $U$;
    \item[(iii)] for another sequence of open $G$-subsets $B_1^G, \cdots, B_{k}^G, \widetilde{B}_{k+1}^G, \cdots , \widetilde{B}_{\ell}^G\subset \subset U$, the sequence of $(\Gpm, \mathcal{A}^h)$-replacements $(\widetilde{V}_j, \widetilde{\Omega}_j)$ can be chosen so that $(\widetilde{V}_j, \widetilde{\Omega}_j) = (V_j,\Omega_j)$ for $j=1,\cdots, k$.
\end{itemize}
\end{definition}

Note the $(\Gpm,\Ah)$-replacement chain property of $(V,\Omega)\in\VCG(M)$ implies that $(V,\Omega)$ is $\Ah$-stationary and stable in $U$, and has the (weak) good $(\Gpm,\Ah)$-replacement property in $U$, which further indicates the rectifiability of $V$ by Proposition \ref{Prop: tangent cone}. 
Additionally, if $(V^*,\Omega^*)$ is a $(\Gpm, \A^h)$-replacement of $(V, \Omega)$ in $B^G\subset U$, then $(V^*,\Omega^*)$ not only is an $\Ah$-replacement of $(V, \Omega)$ in $B^G\subset U$ in the sense of \cite[Definition 3.4]{wangzc2023existenceFour} but also has certain symmetries, i.e. $(V^*,\Omega^*)\in\VCG(M)\subset\VC(M)$. 
Hence, the above definitions concerning $(\Gpm,\Ah)$-replacements are stronger than \cite[Definition 3.4, 3.5, 3.6]{wangzc2023existenceFour}. 
We then have the following regularity theorem by \cite[Theorem 4.4]{wangzc2023existenceFour}. 

\begin{theorem}[First Regularity]\label{Thm: first regularity}
    Let $(V,\Omega)\in\VCG(M)$ satisfy the $(\Gpm,\Ah)$-replacement chain property in a given open $G$-set $U\subset M$. 
    Then $(V,\Omega)$ is induced by a strongly $\Ah$-stationary and stable $C^{1,1}$ $(\Gpm, h)$-boundary in $U$. 
\end{theorem}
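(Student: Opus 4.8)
The plan is to reduce Theorem \ref{Thm: first regularity} to the already-established non-equivariant regularity theorem \cite[Theorem 4.4]{wangzc2023existenceFour} by checking that the $(\Gpm,\Ah)$-replacement chain property, which we have arranged to be strictly stronger than the plain $(\Ah)$-replacement chain property of \cite[Definition 3.6]{wangzc2023existenceFour}, feeds directly into that result and that the output automatically inherits the $G$-symmetry. First I would observe, as already noted right before the statement, that $(V,\Omega)\in\VCG(M)$ satisfying the $(\Gpm,\Ah)$-replacement chain property in $U$ is in particular $\Ah$-stationary and stable in $U$, has the weak good $\Ah$-replacement property in $U$ (each $(\Gpm,\Ah)$-replacement is an $\Ah$-replacement in the sense of \cite[Definition 3.4]{wangzc2023existenceFour}), and hence by Proposition \ref{Prop: tangent cone} the varifold $V$ is integer rectifiable with Euclidean tangent cones at every point of $\spt\|V\|\cap U$. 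These are exactly the hypotheses under which \cite[Theorem 4.4]{wangzc2023existenceFour} applies.

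The core of the argument is then to run the non-equivariant first regularity proof of \cite[Theorem 4.4]{wangzc2023existenceFour} verbatim: for each $p\in\spt\|V\|\cap U$ one builds, via the replacement chain, successive replacements in a carefully chosen finite family of concentric annuli $B_1^G,\dots,B_k^G$ centered at $G\cdot p$, uses the $C^{1,1}$ regularity and stability of each intermediate $(V_j,\Omega_j)$ (part (iii) of Proposition \ref{Prop: compactness for G-stable boundary} together with the strong $\Ah$-stationarity passing to limits, part (ii)), and the consistency clause (iii) of the replacement chain property to glue the local smooth pieces coherently. The conclusion of that machine is that $(V,\Omega)$ is itself induced by a strongly $\Ah$-stationary, stable $C^{1,1}$ $h$-boundary $(\Sigma,\Omega)$ in $U$. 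The only additional point beyond citing \cite[Theorem 4.4]{wangzc2023existenceFour} is to note that since all the data—$V$, $\Omega$, the function $h\in C^\infty_{\Gpm}(M)$, the annuli $B_j^G$, and each replacement $(V_j,\Omega_j)\in\VCG(M)$—are $G$-equivariant, and since $G$ acts by isometries, the resulting surface $\Sigma$ and Caccioppoli set $\Omega$ are $G$-invariant, i.e. $(\Sigma,\Omega)$ is a $C^{1,1}$ $(\Gpm,h)$-boundary in the sense of Definition \ref{Def: G-boundary}; equivalently, one can carry out the local regularity in an appropriately small $G$-annulus $A^G_{s,r}(p)$ where by Lemma \ref{Lem: G-stable and stable for boundary} and Lemma \ref{Lem: G-minimizing and minimizing for boundary} the equivariant and non-equivariant notions coincide, and then propagate the symmetry by uniqueness.

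I expect the main obstacle to be purely bookkeeping: verifying that every ingredient invoked inside the proof of \cite[Theorem 4.4]{wangzc2023existenceFour}—the uniqueness of tangent cones, the gluing of overlapping replacements, the removable singularity / unique continuation steps—has an equivariant counterpart that holds on the scale $\mathbf{r}_0$ of Section \ref{Subsec: Preliminary-isotopy minimizing}, where all $G$-balls are appropriately small and the compactness Proposition \ref{Prop: compactness for G-stable boundary} is available. Since each of these steps is local and the $G$-action is free, restricting to a single component of an appropriately small $G$-annulus reduces everything to the non-equivariant statement already proved in \cite{wangzc2023existenceFour}; the symmetry of the global object then follows because the replacement chain and hence the limit are canonically $G$-equivariant. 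Thus no genuinely new analytic input is needed beyond what has been set up above, and the theorem follows.
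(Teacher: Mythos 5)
Your proposal is correct and follows exactly the paper's own argument: the paper proves Theorem \ref{Thm: first regularity} in one step by noting that the $(\Gpm,\Ah)$-replacement chain property implies the non-equivariant replacement chain property of \cite[Definition 3.6]{wangzc2023existenceFour}, invoking \cite[Theorem 4.4]{wangzc2023existenceFour}, and reading off the $G$-invariance of the resulting boundary from the fact that $(V,\Omega)\in\VCG(M)$. Your additional remarks about running the annulus/gluing machinery equivariantly are just an expanded account of the same reduction and introduce no new difficulty.
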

\begin{proof}
    By the above definitions, $(V,\Omega)\in\VCG(M)$ also has the replacement chain property in $U$ in the sense of \cite[Definition 3.6]{wangzc2023existenceFour}. 
    Hence, the desired regularity result follows from \cite[Theorem 4.4]{wangzc2023existenceFour} and the fact that $(V,\Omega)\in\VCG(M)$.  
\end{proof}

\subsection{Regularity of min-max pairs: Part II}

In this subsection, we prove the regularity of the $(G,\Ah)$-almost minimizing pairs $(V,\Omega)\in\VCG(M)$ in an appropriately small open $G$-set $U$ by constructing $(\Gpm,\Ah)$-replacement chains. 
Throughout this subsection, we always assume \[\mbox{$U\subset M$ is an {\em appropriately small} (cf. Definition \ref{Def: appropriately small}) open $G$-set}\]
with connected components $\{U_i\}_{i=1}^{\#G}$. 

To begin with, consider a constrained $\Ah$-minimizing problem. 
For any $G$-equivariant $\Gpm$-separating embedded surface $(\Sigma,\Omega)\in\sEG$ and $\delta>0$, take 
\begin{align*}
    \Is^{G,h}_\delta(U) &:= \{\psi\in\Is^G(U): \Ah(\psi(t,(\Sigma,\Omega)))\leq \Ah(\Sigma,\Omega) + \delta \}\\
    \Is^h_\delta(U_i) &:= \{\psi\in\Is(U_i): \Ah(\psi(t,(\Sigma,\Omega)))\leq \Ah(\Sigma,\Omega) + \delta \}
\end{align*}
Then a sequence $\{(\Sigma_k,\Omega_k)\}_{k\in\N}\subset \sEG$ is said to be {\em minimizing in Problem $(\Sigma,\Omega,\Is^{G,h}_\delta(U))$} if there exists a sequence $\{\psi_k\}_{k\in\N}\subset \Is^{G,h}_\delta(U)$ with $(\Sigma_k,\Omega_k)=\psi_k(1,(\Sigma,\Omega))$ so that 
\[\Ah(\Sigma,\Omega)\geq \Ah(\Sigma_k,\Omega_k)\to m^G_\delta:=\inf\{ \Ah(\psi(1,(\Sigma,\Omega)))  : \psi\in\Is^{G,h}_\delta(U)\} ~{\rm as}~k\to\infty\]
Similarly, if $\{\hat{\psi}_k\}_{k\in\N}\subset \Is^{h}_\delta(U_i)$ with $(\hat{\Sigma}_k,\hat{\Omega}_k)=\hat{\psi}_k(1,(\Sigma,\Omega))$ so that 
\[\Ah(\Sigma,\Omega)\geq \Ah(\hat{\Sigma}_k,\hat{\Omega}_k)\to m_\delta:=\inf\{ \Ah(\hat{\psi}(1,(\Sigma,\Omega)))  : \hat{\psi}\in\Is^{h}_\delta(U_i)\} ~{\rm as}~k\to\infty,\]
then we say $\{(\hat{\Sigma}_k,\hat{\Omega}_k)\}_{k\in\N}$ is {\em minimizing in Problem $(\Sigma,\Omega,\Is^h_\delta(U_i))$}. 

Using the arguments in Lemma \ref{Lem: G-minimizing and minimizing for boundary}, we also have the following result indicating the equivalence between minimizing in Problem $(\Sigma,\Omega,\Is^{G,h}_\delta(U))$ and minimizing in Problem $(\Sigma,\Omega,\Is^h_{\delta/\#G}(U_i))$. 

\begin{lemma}\label{Lem: constrained minimzing and minimizing}
    Given $i\in\{1,\dots,\#G\}$, $\{\psi_k\}_{k\in\N}\subset \Is^{G,h}_\delta(U)$ and  $(\Sigma_k,\Omega_k)=\psi_k(1,(\Sigma,\Omega))$, define
    \[
    \hat{\psi}_k :=\left\{
    \begin{array}{ll}
         \psi_k & {\rm in}~U_i,  \\
         id & {\rm in}~ M\setminus U_i,
    \end{array}
    \right . 
    \quad
    (\hat{\Sigma}_k,\hat{\Omega}_k) := \hat{\psi}_k(1,(\Sigma,\Omega)) = \left\{
    \begin{array}{ll}
        (\Sigma_k,\Omega_k) & {\rm in}~U_i, \\
        (\Sigma,\Omega) & {\rm in}~ M\setminus U_i.
    \end{array}
    \right.
    \]
    Then $(\Sigma_k,\Omega_k)$ is minimizing in Problem $(\Sigma,\Omega,\Is^{G,h}_\delta(U))$ if and only if $(\hat{\Sigma}_k,\hat{\Omega}_k)$ is minimizing in Problem $(\Sigma,\Omega,\Is^h_{\delta/\#G}(U_i))$. 
\end{lemma}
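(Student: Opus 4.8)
The plan is to establish the two directions of the equivalence using the key identity \eqref{Eq: Ah in appropriately small set} relating the $\A^h$-functional on the component $U_i$ to the total $\A^h$-functional over all components of $U$. First I would observe that, because $U$ is appropriately small with components $\{U_j\}_{j=1}^{\#G}$ permuted freely by $G$, any $G$-equivariant isotopy $\psi_k\in\Is^{G,h}_\delta(U)$ is uniquely determined by its restriction to $U_i$: namely $\psi_k\llcorner (g\cdot U_i) = g\circ(\psi_k\llcorner U_i)\circ g^{-1}$ for $g\in G$. Conversely, any $\hat\psi\in\Is(U_i)$ extends uniquely to a $G$-equivariant isotopy of $U$ by the same formula, as in the proof of Lemma \ref{Lem: G-minimizing and minimizing for boundary}. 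This sets up the bijection between the two isotopy families that the lemma is implicitly exploiting.

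Next I would compute how the $\A^h$-functional transforms. Using $h\in C^\infty_\Gpm(M)$ (so $h$ changes sign under $G_-$) and $\Omega\in\CG(M)$, together with \eqref{Eq: Ah in appropriately small set}, I get for the $G$-equivariant motion $\psi_k$
\[
\A^h(\psi_k(t,(\Sigma,\Omega))) = \A^h(\Sigma,\Omega) + \#G\cdot\big(\A^h\big(\psi_k(t,(\Sigma,\Omega))\llcorner U_i\big) - \A^h\big((\Sigma,\Omega)\llcorner U_i\big)\big),
\]
since the contributions from the $G_+$-translates of $U_i$ all equal that from $U_i$, those from the $G_-$-translates equal it up to the fixed constant $\int_{U_1}h\,d\mcH^3$, and outside $U$ nothing changes; the same bookkeeping as in Lemma \ref{Lem: G-minimizing and minimizing for boundary}. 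Hence the constraint $\A^h(\psi_k(t,(\Sigma,\Omega)))\leq \A^h(\Sigma,\Omega)+\delta$ for all $t$ is equivalent to the local constraint $\A^h(\hat\psi_k(t,(\Sigma,\Omega)))\leq \A^h(\Sigma,\Omega)+\delta/\#G$ for all $t$ (here $\hat\psi_k$ acts only in $U_i$), i.e. $\hat\psi_k\in\Is^h_{\delta/\#G}(U_i)$; and the energies satisfy $\A^h(\Sigma_k,\Omega_k) = \A^h(\hat\Sigma_k,\hat\Omega_k) + (\#G-1)\cdot(\cdots)$ arranged so that $\A^h(\Sigma_k,\Omega_k)\to m^G_\delta$ if and only if $\A^h(\hat\Sigma_k,\hat\Omega_k)\to m_{\delta/\#G}$. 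Finally, I would check that the two infima are related by $m^G_\delta = \A^h(\Sigma,\Omega) + \#G\cdot(m_{\delta/\#G} - \A^h((\Sigma,\Omega)\llcorner U_i))$, using the extension construction to show every competitor for one problem yields a competitor for the other with matching energy; this identifies minimizing sequences on both sides.

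The main obstacle I anticipate is purely bookkeeping: carefully tracking the additive constants $\int_{U_1}h\,d\mcH^3$ contributed by the $G_-$-translates of $U_i$, and confirming that these constants are \emph{independent of $t$} and of the isotopy (they depend only on the fixed initial data $(\Sigma,\Omega)$ and the fixed region $U_1$), so that the transformation law above is genuinely affine in $\A^h(\cdot\llcorner U_i)$ with fixed coefficients. Once that is in hand, the equivalence of the constraint conditions, of the convergence of energies to the respective infima, and hence of the two notions of ``minimizing in Problem $(\cdots)$'' all follow formally, exactly paralleling Lemma \ref{Lem: G-minimizing and minimizing for boundary}.
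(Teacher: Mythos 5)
Your proposal is correct and follows essentially the same route as the paper: the restriction/extension bijection between $\Is^{G,h}_\delta(U)$ and $\Is^h_{\delta/\#G}(U_i)$ together with the affine identity coming from \eqref{Eq: Ah in appropriately small set}, which simultaneously matches the constraints and the infima, exactly as in the paper's \eqref{Eq: Ah in appropriate small set 2}. The only blemish is your explicit formula for the infima, $m^G_\delta = \A^h(\Sigma,\Omega) + \#G\,(m_{\delta/\#G} - \A^h((\Sigma,\Omega)\llcorner U_i))$, where the last term should be $\A^h(\Sigma,\Omega)$ (equivalently the paper's $m^G_\delta = \#G\cdot m_{\delta/\#G} + C_0$); this is a bookkeeping slip that your own displayed identity immediately corrects and does not affect the argument.
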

\begin{proof}
    By \eqref{Eq: Ah in appropriately small set} and the above definitions, we have 
    \begin{align}\label{Eq: Ah in appropriate small set 2}
        \Ah(\psi_k(t,(\Sigma,\Omega))) &= \#G\cdot \Ah(\hat{\psi}_k(t,(\Sigma\cap U_i,\Omega\cap U_i))) + \frac{\#G}{2}\int_{U_i}h + \Ah(\Sigma\setminus U,\Omega\setminus U) \nonumber
        \\&= \#G\cdot \Ah(\hat{\psi}_k(t,(\Sigma,\Omega))) + C_0,
    \end{align}
    where $C_0=C_0(U,\Sigma,\Omega,h,\#G)$ is a constant (independent of $\psi_k$). 
    In particular, 
    \begin{align}
    \Ah(\Sigma,\Omega)&=\#G\cdot \Ah(\Sigma\cap U_i,\Omega\cap U_i) + \frac{\#G}{2}\int_{U_i}h + \Ah(\Sigma\setminus U,\Omega\setminus U)\nonumber\\
    &= \#G\cdot \Ah(\Sigma, \Omega) + C_0.
    \end{align}
    Thus, $\{\hat{\psi}_k\}\subset \Is^h_{\delta/\#G}(U_i)$, and every $\hat{\phi}\in \Is^h_{\delta/\#G}(U_i)$ can be recovered to $\phi\in\Is^{G,h}_\delta(U)$ by taking $\phi\llcorner U_j = g\circ \hat{\phi} \circ g^{-1} $ for $U_j=g\cdot U_i$. 
    Moreover, the above formulae also imply that $m^G_\delta = \#G\cdot m_{\delta/\#G} + C_0$. 
    Therefore, $\Ah(\Sigma,\Omega)\geq \Ah(\Sigma_k,\Omega_k)\to m^G_\delta$ if and only if $\Ah(\Sigma,\Omega)\geq \Ah(\hat{\Sigma}_k,\hat{\Omega}_k)\to m_{\delta/\#G}$ by the above equalities again. 
\end{proof}

In the following lemma, we show that any isotopy in a small enough open $G$-set which doesn't increase $\Ah(\Sigma_k,\Omega_k)$ can also be replaced by an isotopy in $\Is^{G,h}_\delta(U)$. 

\begin{lemma}\label{Lem: constrained minimzing and minimizing 2}
    Let $\{(\Sigma_k,\Omega_k)\}_{k\in\N}$ be minimizing in Problem $(\Sigma,\Omega,\Is^{G,h}_\delta(U))$. 
    Given any $G$-subset $U'\subset\subset U$, there exists $\rho_0>0$ and $k_0>>1$ so that for any $k\geq k_0$ and $B^G_{2\rho}(x)\subset U'$ with $\rho<\rho_0$, if $\varphi\in\Is^G(B^G_\rho(x))$ with $\Ah(\varphi(1,(\Sigma_k,\Omega_k)))\leq \Ah(\Sigma_k,\Omega_k)$, then there is $\Phi\in\Is^G(B^G_{2\rho}(x))$ with 
    \[\Phi(1,\cdot)=\varphi(1,\cdot) \quad {and}\quad \Ah(\Phi(t,(\Sigma_k,\Omega_k)))\leq \Ah(\Sigma_k,\Omega_k) + \delta, ~\forall t\in [0,1] .\]
    Moreover, $\rho_0$ depends on $\mcH^2(\Sigma),\|h\|_{L^\infty},U',M , \delta$, but does not depend on $\{(\Sigma_k,\Omega_k)\}_{k\in\N}$. 
\end{lemma}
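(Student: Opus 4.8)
The plan is to reduce the $G$-equivariant statement to the non-equivariant interpolation lemma of \cite[Section 5]{wangzc2023existenceFour} applied inside a single component of the appropriately small set $U$, using the quantitative correspondence between equivariant and ordinary isotopies established in Lemma \ref{Lem: constrained minimzing and minimizing} and its proof. First I would invoke Lemma \ref{Lem: constrained minimzing and minimizing} to replace the hypothesis ``$\{(\Sigma_k,\Omega_k)\}$ is minimizing in Problem $(\Sigma,\Omega,\Is^{G,h}_\delta(U))$'' by the equivalent statement that $(\hat\Sigma_k,\hat\Omega_k)$ (the restriction of $(\Sigma_k,\Omega_k)$ to a fixed component $U_i$, equal to $(\Sigma,\Omega)$ elsewhere) is minimizing in Problem $(\Sigma,\Omega,\Is^h_{\delta/\#G}(U_i))$. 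Then I would fix $U'\subset\subset U$ and note that, for $\rho_0>0$ small enough (smaller than, say, one quarter of $\mathbf{r}_0$ and of $\dist(U',\partial U)$), every ball $B^G_{2\rho}(x)$ with $B^G_{2\rho}(x)\subset U'$ and $\rho<\rho_0$ has $\#G$ disjoint connected components, exactly one of which, call it $B_{2\rho}(x_i)$, lies in $U_i$; the $G$-equivariant isotopy $\varphi\in\Is^G(B^G_\rho(x))$ restricts to an ordinary isotopy $\hat\varphi\in\Is(B_\rho(x_i))$ in that component.

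The key step is then to apply the non-equivariant interpolation result (the precise analog of this lemma in the ungrouped setting, i.e. \cite[Lemma 5.1 or its variant]{wangzc2023existenceFour}, which is exactly what is cited in the final paragraph of that paper's Section 5) to $(\hat\Sigma_k,\hat\Omega_k)$ in $U_i$: since $\hat\varphi$ has $\A^h(\hat\varphi(1,(\hat\Sigma_k,\hat\Omega_k)))\le\A^h(\hat\Sigma_k,\hat\Omega_k)$ — which follows from the corresponding inequality for $\varphi$ together with the multiplicative relation \eqref{Eq: Ah in appropriate small set 2} — there is a $\hat\Phi\in\Is(B_{2\rho}(x_i))$ with $\hat\Phi(1,\cdot)=\hat\varphi(1,\cdot)$ and $\A^h(\hat\Phi(t,(\hat\Sigma_k,\hat\Omega_k)))\le\A^h(\hat\Sigma_k,\hat\Omega_k)+\delta/\#G$ for all $t$, provided $k\ge k_0$ and $\rho<\rho_0$, where $\rho_0$ depends only on $\mcH^2(\Sigma),\|h\|_{L^\infty},U_i,M,\delta/\#G$ and not on the minimizing sequence. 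I would then define $\Phi\in\Is^G(B^G_{2\rho}(x))$ by transporting $\hat\Phi$ around the group: $\Phi\llcorner (g\cdot B_{2\rho}(x_i)):=g\circ\hat\Phi\circ g^{-1}$ for each $g\in G$, which is well-defined, smooth, $G$-equivariant, supported in $B^G_{2\rho}(x)$, and satisfies $\Phi(1,\cdot)=\varphi(1,\cdot)$ by construction (here one uses that $\varphi$ was already $G$-equivariant, so $\varphi$ and this transported $\hat\varphi$ agree on every component). Finally, applying \eqref{Eq: Ah in appropriate small set 2} again in reverse gives $\A^h(\Phi(t,(\Sigma_k,\Omega_k)))=\#G\cdot\A^h(\hat\Phi(t,(\hat\Sigma_k,\hat\Omega_k)))+C_0\le \#G\cdot(\A^h(\hat\Sigma_k,\hat\Omega_k)+\delta/\#G)+C_0=\A^h(\Sigma_k,\Omega_k)+\delta$, as desired.

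The main obstacle I anticipate is not the group-theoretic bookkeeping — that is routine given the ``appropriately small'' hypothesis — but rather making sure the cited non-equivariant interpolation lemma is applied in a form whose constant $\rho_0$ is genuinely uniform in $k$. This requires that the relevant one of the ``constrained minimizing'' characterizations be invariant under the restriction-to-$U_i$ operation with controlled bookkeeping constants, which is precisely what Lemma \ref{Lem: constrained minimzing and minimizing} supplies, together with the observation that $\mcH^2(\hat\Sigma_k)\le\mcH^2(\Sigma)$ (the isotopies in $\Is^h_{\delta/\#G}(U_i)$ can only decrease area by a bounded amount, so a uniform area bound on the competitors is available). I would also need to check the compatibility of supports: $B^G_{2\rho}(x)\subset U'\subset\subset U$ guarantees the transported isotopies have disjoint supports contained in $U$ and do not interact, so no gluing issue arises. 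With these points addressed, the statement follows.
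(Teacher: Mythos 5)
Your proposal is correct and follows essentially the same route as the paper: reduce to a single component $U_i$ via Lemma \ref{Lem: constrained minimzing and minimizing}, apply the non-equivariant interpolation lemma of Wang--Zhou (the paper cites \cite[Lemma 4.5]{wangzc2023existenceFour}, not Lemma 5.1, but this is only a citation slip) in $U'\cap U_i$ with parameter $\delta/\#G$, and then re-equivariantize $\hat\Phi$ by conjugation, using \eqref{Eq: Ah in appropriate small set 2} in both directions. No gaps beyond that minor reference inaccuracy.
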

\begin{proof}
    Let $\{\hat{\psi}_k\}$ and $\{(\hat{\Sigma}_k,\hat{\Omega}_k)\}$ be given as in Lemma \ref{Lem: constrained minimzing and minimizing} w.r.t some fixed $i$. 
    Hence, $\{(\hat{\Sigma}_k,\hat{\Omega}_k)\}$ is minimizing in Problem $(\Sigma,\Omega,\Is^h_{\delta/\#G}(U_i))$ so that \cite[Lemma 4.5]{wangzc2023existenceFour} is applicable in $U_i':=U'\cap U_i$, which gives us the desired $\rho_0>0$ and $k_0>>1$. 
    
    Indeed, take any $\varphi\in\Is^G(B^G_\rho(x))$ so that $\Ah(\varphi(1,(\Sigma_k,\Omega_k)))\leq \Ah(\Sigma_k,\Omega_k)$, where $k\geq k_0$, $\rho<\rho_0$, and $x\in U_i$ with $B^G_{2\rho}(x)\subset U'$. 
    Let $\hat{\varphi} = \varphi$ in $B_\rho(x) =U_i\cap B^G_{\rho}(x)$ and $\hat{\varphi} = id$ outside $B_{\rho}(x)$. 
    Then $\hat{\varphi}\in\Is(B_\rho(x))$ and $\Ah(\hat{\varphi}(1,(\Sigma_k,\Omega_k)))\leq \Ah(\Sigma_k,\Omega_k)$ by \eqref{Eq: Ah in appropriate small set 2}. 
    Thus, by \cite[Lemma 4.5]{wangzc2023existenceFour}, we have an isotopy $\hat{\Phi}\in\Is(B_{2\rho}(x))$ with $\hat{\Phi}(1,\cdot)=\hat{\varphi}(1,\cdot)$ and $\Ah(\hat{\Phi}(t,(\Sigma_k,\Omega_k)))\leq \Ah(\Sigma_k,\Omega_k) + \delta/\#G$ for all $t\in [0,1]$. 
    After recovering $\hat{\Phi}$ to $\Phi\in\Is^{G}(B^G_{2\rho}(x))$ by taking $\Phi\llcorner U_j = g\circ \hat{\Phi} \circ g^{-1} $ for $U_j=g\cdot U_i$, we can use \eqref{Eq: Ah in appropriate small set 2} again to show that $\Phi$ is the desired isotopy. 
\end{proof}

Combining Lemma \ref{Lem: constrained minimzing and minimizing 2}, \ref{Lem: G-minimizing and minimizing for boundary} and Proposition \ref{Prop: compactness for G-stable boundary}, we have the following regularity result for the minimizers in Problem $(\Sigma,\Omega,\Is^{G,h}_\delta(U))$. 

\begin{proposition}\label{Prop: regularity of constrained minimizer}
    Suppose $(\Sigma,\Omega)\in\sEG$ is $G$-equivariantly $(\Ah,\epsilon,\delta)$-almost minimizing in an appropriately small open $G$-set $U$. 
    Let $\{\psi_k\}\subset\Is^{G,h}_\delta(U)$ so that $\{(\Sigma_k,\Omega_k)=\psi_k(1,(\Sigma,\Omega))\}$ is minimizing in Problem $(\Sigma,\Omega,\Is^{G,h}_\delta(U))$. 
    Then, up to a subsequence, $(\Sigma_k,\Omega_k)$ converges to some $(\hat{V},\hat{\Omega})\in\VCG(M)$ so that  
    \begin{itemize}
        \item[(i)] $\Ah(\Sigma,\Omega) - \epsilon \leq \Ah(\hat{V},\hat{\Omega})\leq\Ah(\Sigma,\Omega)$;
        \item[(ii)] $(\hat{V},\hat{\Omega})\llcorner U$ is a strongly $\Ah$-stationary and stable $C^{1,1}$ $(\Gpm,h)$-boundary in $U$. 
    \end{itemize}
\end{proposition}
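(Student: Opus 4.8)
The plan is to reduce the equivariant statement to the non-equivariant result \cite[Proposition 4.6]{wangzc2023existenceFour} component by component, using the equivalences established earlier in this section. First, since $U$ is appropriately small with connected components $\{U_i\}_{i=1}^{\#G}$ permuted freely by $G$, Lemma \ref{Lem: constrained minimzing and minimizing} tells us that minimizing in Problem $(\Sigma,\Omega,\Is^{G,h}_\delta(U))$ is equivalent to minimizing in Problem $(\Sigma,\Omega,\Is^h_{\delta/\#G}(U_1))$ after restricting, i.e. $\{(\hat\Sigma_k,\hat\Omega_k)\}$ (with $\hat\psi_k = \psi_k$ on $U_1$, $\mathrm{id}$ elsewhere) is minimizing in Problem $(\Sigma,\Omega,\Is^h_{\delta/\#G}(U_1))$. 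Moreover $G$-equivariant $(\Ah,\epsilon,\delta)$-almost minimality of $(\Sigma,\Omega)$ in $U$ translates, again via \eqref{Eq: Ah in appropriate small set 2} and Lemma \ref{Lem: constrained minimzing and minimizing}, into $(\Ah,\epsilon/\#G,\delta/\#G)$-almost minimality of $(\Sigma,\Omega)$ in $U_1$ in the non-equivariant sense (any single-component isotopy pulling $\Ah$ down too far in $U_1$ would, after $G$-averaging the isotopies as in Lemma \ref{Lem: constrained minimzing and minimizing 2}, contradict equivariant almost minimality in $U$).

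Second, I apply the non-equivariant regularity result \cite[Proposition 4.6]{wangzc2023existenceFour} to $(\Sigma,\Omega)$ in $U_1$: up to a subsequence $(\hat\Sigma_k,\hat\Omega_k)\to(\hat V',\hat\Omega')$ in $U_1$ with $\Ah(\Sigma,\Omega)-\epsilon/\#G\le \Ah(\hat V',\hat\Omega')\le\Ah(\Sigma,\Omega)$ on $U_1$ and $(\hat V',\hat\Omega')\llcorner U_1$ a strongly $\Ah$-stationary, stable $C^{1,1}$ $h$-boundary in $U_1$. The key inputs there are Lemma \ref{Lem: constrained minimzing and minimizing 2} (the replacement-isotopy lemma) and the compactness Proposition \ref{Prop: compactness for G-stable boundary}; both are available. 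Now define $(\hat V,\hat\Omega)$ on all of $M$ by setting $(\hat V,\hat\Omega)\llcorner (g\cdot U_1) := g_\#(\hat V',\hat\Omega')\llcorner U_1$ for each coset representative, and $(\hat V,\hat\Omega)=(V_0,\Omega_0)$ (or rather $(\Sigma,\Omega)$ — whatever the original pair restricts to) outside $U$; by the $G$-invariance already present in $\sEG$ and $\VCG$, the limit $(\Sigma_k,\Omega_k)\to(\hat V,\hat\Omega)$ holds in the $\sF$-metric globally and $(\hat V,\hat\Omega)\in\VCG(M)$. Rescaling the energy identity through \eqref{Eq: Ah in appropriate small set 2} and \eqref{Eq: Ah in appropriate small set 2}'s companion $\Ah(\Sigma,\Omega)=\#G\cdot\Ah(\Sigma,\Omega)\llcorner U_1 + C_0$ converts the $U_1$-bound $\Ah(\Sigma,\Omega)-\epsilon/\#G\le\Ah(\hat V',\hat\Omega')$ into the asserted global bound $\Ah(\Sigma,\Omega)-\epsilon\le\Ah(\hat V,\hat\Omega)\le\Ah(\Sigma,\Omega)$, giving (i).

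Third, for (ii): strong $\Ah$-stationarity of $(\hat V,\hat\Omega)\llcorner U$ and stability are both local properties that hold on $U_1$ by the previous step, and they transport to each $g\cdot U_1$ because $G$ acts by isometries and $h\in C^\infty_\Gpm(M)$ is $G_+$-symmetric/$G_-$-antisymmetric, so the $\Ah$-functional is preserved under $g_\#$ up to the constant shift $\tfrac12\int h$ that does not affect first or second variations (precisely the observation used in Remark \ref{Rem: equivariant PMC vector}, Lemma \ref{Lem: G-stationary and stationary for pairs}, and Lemma \ref{Lem: G-stable and stable for boundary}). Equivalently one invokes the strong $(G,\Ah)$-stationarity $\Leftrightarrow$ strong $\Ah$-stationarity equivalence (Remark \ref{Rem: strong stationary and strong G-stationary}) and Lemma \ref{Lem: G-stable and stable for boundary}. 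That the limit is actually an (almost embedded) $C^{1,1}$ $(\Gpm,h)$-boundary — not merely an $h$-boundary — follows because $\hat\Omega\in\CG(U)$ by construction (the $\CG$-class being closed under the $\sF$-limit of the $G$-invariant $\Omega_k$), so Definition \ref{Def: G-boundary} is met.

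The main obstacle I anticipate is not conceptual but bookkeeping: making sure the constants in the almost-minimality descent ($\epsilon$ versus $\epsilon/\#G$, $\delta$ versus $\delta/\#G$) and in the energy identity \eqref{Eq: Ah in appropriate small set 2} are tracked consistently so that the final two-sided bound in (i) comes out with $\epsilon$ and not, say, $\epsilon/\#G$ or $\#G\cdot\epsilon$; and ensuring that the subsequence extraction on $U_1$ can be promoted to a subsequence for which convergence holds simultaneously on every component $g\cdot U_1$ — this is automatic since there are only finitely many components and each is a $g$-translate of the first, so a single diagonal subsequence works. The genuinely delicate point borrowed from \cite{wangzc2023existenceFour} — that Lemma \ref{Lem: constrained minimzing and minimizing 2} provides competitor isotopies staying within the $\delta$-constraint, which is what lets the non-equivariant Proposition 4.6 run — has already been established above in the $G$-equivariant form, so no new analytic difficulty arises here.
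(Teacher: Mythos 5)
Your argument is essentially correct, but it takes a genuinely different route from the paper's. You descend completely to a fundamental-domain component: using Lemma \ref{Lem: constrained minimzing and minimizing} and the identity \eqref{Eq: Ah in appropriate small set 2} you show the restricted sequence is minimizing in Problem $(\Sigma,\Omega,\Is^h_{\delta/\#G}(U_1))$ and that $G$-equivariant $(\Ah,\epsilon,\delta)$-almost minimality in $U$ descends to non-equivariant $(\Ah,\epsilon/\#G,\delta/\#G)$-almost minimality in $U_1$ (via conjugation-extension of a hypothetical bad isotopy --- note this is extension, not ``averaging''), then you apply \cite[Proposition 4.6]{wangzc2023existenceFour} as a black box on $U_1$ and transport stationarity, strong stationarity and stability back to the other components by the free isometric action and the $\Gpm$-symmetry of $h$ (the content of Lemma \ref{Lem: G-stationary and stationary for pairs}, Remark \ref{Rem: strong stationary and strong G-stationary}, Lemma \ref{Lem: G-stable and stable for boundary}), finally rescaling the energy bound by $\#G$ to get (i). The paper instead stays equivariant throughout: it gets (i) immediately from Definition \ref{Def: e,d-almost minimizing}, proves strong $(G,\Ah)$-stationarity and $G$-stability of the limit directly from the minimizing property (as in Lemma \ref{lem:am implies stationary and stable}) and converts via the appropriately-small equivalences, and then establishes the $(\Gpm,\Ah)$-replacement chain property in small $G$-balls by rerunning the proof of \cite[Proposition 4.6]{wangzc2023existenceFour} with the equivariant substitutes (Theorem \ref{Thm: isotopy minimizer}, Proposition \ref{Prop: compactness for G-stable boundary}, Lemma \ref{Lem: constrained minimzing and minimizing 2}), concluding with Theorem \ref{Thm: first regularity}. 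Your route buys brevity --- no equivariant re-derivation of the replacement construction --- at the price of the extra bookkeeping you acknowledge (scaled constants, identification of the global limit with the $G$-translated one-component limit, and the sign check that the two cases of \eqref{Eq: strong stationary} swap consistently under $g\in G_-$, which does work out), whereas the paper's route directly yields the equivariant replacement chain structure that its regularity framework is built on. Two small imprecisions worth fixing: the $\Omega_k$ are not $G$-invariant but lie in $\C^{\Gpm}(M)$ (only $G_+$-invariant, with $G_-$ exchanging $\Omega_k$ and its complement), and the direct argument for (i) from Definition \ref{Def: e,d-almost minimizing} plus continuity of $\Ah$ along the convergent sequence is shorter than rescaling the $U_1$ bound.
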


\begin{proof}
    (i) follows directly from Definition \ref{Def: e,d-almost minimizing}. 
    Similar to the proof of Lemma \ref{lem:am implies stationary and stable} (cf. \cite[Lemma 3.3]{wangzc2023existenceFour}), one can show that $(\hat{V},\hat{\Omega})\llcorner U$ is strongly $(G,\Ah)$-stationary and $G$-stable in $U$ because $\{(\Sigma_k,\Omega_k)\}$ is minimizing in Problem $(\Sigma,\Omega,\Is^{G,h}_\delta(U))$.
    Then since $U$ is appropriately small, we have $(\hat{V},\hat{\Omega})\llcorner U$ is strongly $\Ah$-stationary and stable in $U$ by Lemma \ref{Lem: G-stationary and stationary for pairs} and Remark \ref{Rem: G-stable and stable for pairs}
    
    Next, for any $p\in U$, take $r_0\in (0, \min\{r_1,\rho_0\})$, where $r_1=\dist_M(p,\bd U)/4$ and $\rho_0$ is given by Lemma \ref{Lem: constrained minimzing and minimizing} for $U'=B_{r_1}^G(p)$. 
    We claim that $(\hat{V},\hat{\Omega})$ has the $(\Gpm,\Ah)$-replacement chain property in $B^G_{r_0}(p)$, which indicates the regularity of $(\hat{V},\hat{\Omega})\llcorner U$ by Theorem \ref{Thm: first regularity} and the arbitrariness of $p\in U$. 
    Indeed, since $U$ is appropriately small, the proof of \cite[Proposition 4.6]{wangzc2023existenceFour} would carry over with our $G$-equivariant objects by using Theorem \ref{Thm: isotopy minimizer}, Proposition \ref{Prop: compactness for G-stable boundary}, and Lemma \ref{Lem: constrained minimzing and minimizing 2} in place of \cite[Theorem 1.25, Proposition 1.24, Lemma 4.5]{wangzc2023existenceFour} respectively. 
\end{proof}

Now, we can show the regularity for $(G,\Ah)$-almost minimizing pairs. 

\begin{theorem}[Regularity of $(\Gpm,\Ah)$-almost minimizing pairs]\label{Thm: regularity of a.m. pairs}
    Given any appropriately small open $G$-set $U\subset M$, let $(V,\Omega)\in\VCG(M)$ be $(\Gpm,\Ah)$-almost minimizing w.r.t. $\{(\Sigma_j,\Omega_j)\}_{j\in\N}\subset \sEG$ in $U$. 
    Then $(V,\Omega)\llcorner U$ is induced by a strongly $\Ah$-stationary and stable $C^{1,1}$ $(\Gpm,h)$-boundary. 
\end{theorem}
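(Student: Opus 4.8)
The plan is to deduce Theorem~\ref{Thm: regularity of a.m. pairs} from the machinery already assembled in this subsection by producing, for a neighborhood of each point, a $(\Gpm,\Ah)$-replacement chain and then invoking Theorem~\ref{Thm: first regularity}. Fix $(V,\Omega)\in\VCG(M)$ that is $(\Gpm,\Ah)$-almost minimizing with respect to $\{(\Sigma_j,\Omega_j)\}\subset\sEG$ in $U$, with associated $\epsilon_j,\delta_j\to 0$. By Lemma~\ref{lem:am implies stationary and stable}, $(V,\Omega)$ is already $\Ah$-stationary and $\Ah$-stable in $U$ (using that $U$ is appropriately small), so the only task is interior regularity. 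First I would set up the constrained minimization: for each $j$, since $(\Sigma_j,\Omega_j)$ is $G$-equivariantly $(\Ah,\epsilon_j,\delta_j)$-almost minimizing in $U$, apply Proposition~\ref{Prop: regularity of constrained minimizer} to a minimizing sequence for Problem $(\Sigma_j,\Omega_j,\Is^{G,h}_{\delta_j}(U'))$ on a slightly smaller $G$-set $U'\subset\subset U$, obtaining limits $(\hat V_j,\hat\Omega_j)$ which are strongly $\Ah$-stationary and stable $C^{1,1}$ $(\Gpm,h)$-boundaries in $U'$ with $\Ah(\Sigma_j,\Omega_j)-\epsilon_j\leq\Ah(\hat V_j,\hat\Omega_j)\leq\Ah(\Sigma_j,\Omega_j)$.

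Next I would run a diagonal argument. Since $(\Sigma_j,\Omega_j)\to(V,\Omega)$ in the $\sF$-metric and the areas are uniformly bounded (comparable to $\mcH^2(\Sigma_0)$ up to the isotopy constraint), the pairs $(\hat V_j,\hat\Omega_j)$ subconverge in $\sF$ to some $(\hat V,\hat\Omega)\in\VCG(M)$ with $\Ah(\hat V,\hat\Omega)=\Ah(V,\Omega)$, since $\epsilon_j\to 0$. On $U'$ the compactness result Proposition~\ref{Prop: compactness for G-stable boundary} applies to the $G$-stable strongly $\Ah$-stationary $C^{1,1}$ $(\Gpm,h)$-boundaries $(\hat V_j,\hat\Omega_j)$: they converge in $C^{1,\alpha}_{loc}$ (and varifold sense) to a strongly $\Ah$-stationary and stable $C^{1,1}$ $(\Gpm,h)$-boundary $(\hat\Gamma,\hat\Theta)$ in $U'$; here one uses Lemma~\ref{Lem: G-stable and stable for boundary} to identify $G$-stability with stability in the appropriately small $U'$, together with the strong $\Ah$-stationarity persistence clause (ii) of that proposition. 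The central identification to make is then $(\hat V,\hat\Omega)\llcorner U'=(V,\Omega)\llcorner U'$: outside the region where deformations act this is clear, and inside one argues exactly as in the replacement-chain reasoning (the limit must agree with $(V,\Omega)$ because any strict decrease would contradict the $\sF$-convergence $(\Sigma_j,\Omega_j)\to(V,\Omega)$ and the lower area bound $\Ah(\hat V_j,\hat\Omega_j)\geq\Ah(\Sigma_j,\Omega_j)-\epsilon_j$). This is the step I expect to be the main obstacle: matching the "replacement limit" with the original pair requires a careful monotonicity/mass-comparison argument, essentially re-deriving the uniqueness of the replacement as in \cite[Proposition 4.6]{wangzc2023existenceFour}, now with $G$-equivariance and the $\Gpm$-antisymmetry bookkeeping carried through via \eqref{Eq: Ah in appropriate small set 2}.

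Finally, to promote this local regularity to the statement, I would observe that the argument above, run with $U'$ ranging over an exhaustion of $U$ by appropriately small $G$-sets (and with the replacement balls $B^G_{r_0}(p)$ as in the proof of Proposition~\ref{Prop: regularity of constrained minimizer}), shows that $(V,\Omega)$ has the $(\Gpm,\Ah)$-replacement chain property in a neighborhood of every $p\in U$: given $B^G_1,\dots,B^G_k\subset\subset U$, iterating the constrained-minimization-plus-compactness construction in each $B^G_i$ produces the required chain $(V_0,\Omega_0),\dots,(V_k,\Omega_k)$, with the consistency condition (iii) following from the fact that the replacement in each ball depends only on the data in that ball. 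By Theorem~\ref{Thm: first regularity}, $(V,\Omega)\llcorner U$ is then induced by a strongly $\Ah$-stationary and stable $C^{1,1}$ $(\Gpm,h)$-boundary, which is the claim. All the $G$-equivariant versus ordinary equivalences (stationarity, stability, minimizing) needed along the way are supplied by Lemmas~\ref{Lem: G-stationary and stationary for pairs}, \ref{Lem: G-stable and stable for boundary}, \ref{Lem: G-minimizing and minimizing for boundary}, and Remark~\ref{Rem: G-stable and stable for pairs}, using throughout that $U$ is appropriately small.
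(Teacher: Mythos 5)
Your outer strategy---constrained minimization, the compactness of Proposition \ref{Prop: compactness for G-stable boundary}, the $(\Gpm,\Ah)$-replacement chain property, and finally Theorem \ref{Thm: first regularity}---is exactly the paper's route, and your closing paragraph is essentially the actual proof. The genuine problem is the middle step you yourself flag as ``the main obstacle'': the identification $(\hat V,\hat\Omega)\llcorner U'=(V,\Omega)\llcorner U'$. This is neither needed nor true in general. A $(\Gpm,\Ah)$-replacement typically differs from $(V,\Omega)$ inside the region where the isotopies act; the equality $\Ah(\hat V,\hat\Omega)=\Ah(V,\Omega)$ (which does follow from $\epsilon_j\to 0$) says nothing about the pairs coinciding, and no monotonicity or mass-comparison argument will force it---if such an identification held, the whole replacement-chain machinery would be superfluous, since the replacement is already regular where it was constructed. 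In the paper, regularity of $(V,\Omega)$ itself is never obtained by matching it with a single replacement; it is extracted from Theorem \ref{Thm: first regularity} (i.e.\ \cite[Theorem 4.4]{wangzc2023existenceFour}), whose proof uses the chain of successive replacements together with the tangent-cone classification (Proposition \ref{Prop: tangent cone}) and gluing across overlapping annuli. So that identification step should simply be deleted; it is a conceptual detour that cannot be carried out.

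Second, your justification of condition (iii) of the chain property (``the replacement in each ball depends only on the data in that ball'') misses the actual mechanism that makes the iteration possible. In the paper, the constrained minimizers for Problem $(\Sigma_j,\Omega_j,\Is^{G,h}_{\delta_j}(B^G_1))$ are of the form $(\Sigma^1_{j,l},\Omega^1_{j,l})=\psi^1_l(1,(\Sigma_j,\Omega_j))$ with $\psi^1_l\in\Is^{G,h}_{\delta_j}(B^G_1)$ and $\Ah(\Sigma^1_{j,l},\Omega^1_{j,l})\leq\Ah(\Sigma_j,\Omega_j)$, and hence remain $G$-equivariantly $(\Ah,\epsilon_j,\delta_j)$-almost minimizing in all of $U$; a diagonal subsequence of these surfaces converges to the first replacement $(V^1,\Omega^1)$, so the same constrained-minimization-plus-compactness construction can be re-run in $B^G_2$ starting from them, yielding $(V^2,\Omega^2)$ and, inductively, the full chain with the required compatibility. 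This persistence of the almost minimizing property along the approximants is the key point enabling successive replacements; with it made explicit and the false identification removed, your outline coincides with the paper's proof.
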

\begin{proof}
    Since $U$ is appropriately small, it follows from Lemma \ref{lem:am implies stationary and stable} that $(V,\Omega)\llcorner U$ is strongly $\Ah$-stationary and stable. 

    Fix any open $G$-subsets $B_1^G\subset \subset U'\subset\subset U$. 
    Note $(\Sigma_j,\Omega_j)$ is $G$-equivariantly $(\Ah,\epsilon_j,\delta_j)$-almost minimizing in $U$ for some $\epsilon_j\to 0$ and $\delta_j\to 0$ as $j\to\infty$. 
    Hence, for each $j\in\N$, we can take a minimizing sequence $\{(\Sigma_{j,l}^1,\Omega_{j,l}^1)\}_{l\in\N}$ for Problem $(\Sigma_j,\Omega_j,\Is^{G,h}_{\delta_j}(B^G_1))$, and apply Proposition \ref{Prop: regularity of constrained minimizer} to see $\lim_{l\to\infty}(\Sigma_{j,l}^1,\Omega_{j,l}^1) = (V_j^1,\Omega_j^1)\in\VCG(M)$ (up to a subsequence) so that 
    \begin{itemize}
        \item[(1)] $\Ah(\Sigma_j,\Omega_j) - \epsilon_j \leq \Ah(V^1_j,\Omega^1_j)\leq\Ah(\Sigma_j,\Omega_j)$;
        \item[(2)] $(V^1_j,\Omega^1_j) = (\Sigma_j,\Omega_j)$ in $M\setminus\Clos(B^G_1)$;
        \item[(3)] $(V^1_j,\Omega^1_j)$ is a strongly $\Ah$-stationary stable $C^{1,1}$ $(\Gpm,h)$-boundary in $B^G_1$. 
    \end{itemize}
    Up to a subsequence, $(V^1_j,\Omega^1_j)$ converges to $(V^1,\Omega^1)\in\VCG(M)$. 
    Combining Proposition \ref{Prop: compactness for G-stable boundary} with (1)-(3), we see $(V^1,\Omega^1)$ is a $(\Gpm,\Ah)$-replacement of $(V,\Omega)$ in $B^G_1$. 
    Moreover, $(\Sigma^1_{j,l},\Omega^1_{j,l})$ is also $G$-equivariantly $(\Ah,\epsilon_j,\delta_j)$-almost minimizing in $U$ since $(\Sigma^1_{j,l},\Omega^1_{j,l}) = \psi^1_l(1,(\Sigma_{j},\Omega_{j}))$ for some $\psi^1_l\in\Is^{G,h}_{\delta_j}(B^G_1)$ and $\Ah(\Sigma^1_{j,l},\Omega^1_{j,l})\leq \Ah(\Sigma_j,\Omega_{j})$. 

    Next, for another open $G$-set $B^G_2\subset\subset U'$, the above arguments can be applied in $B^G_2$ to a diagonal sequence $\{(\Sigma^1_{j,l(j)},\Omega^1_{j,l(j)})\}_{j\in\N}$ that converges to $(V^1,\Omega^1)$. 
    Then we obtain a $(\Gpm,\Ah)$-replacement $(V^2,\Omega^2)=\lim(\Sigma^2_{j,l(j)},\Omega^2_{j,l(j)})$ of $(V^1,\Omega^1)$ in $B^G_2$ so that each $(\Sigma^2_{j,l(j)},\Omega^2_{j,l(j)})$ is $G$-equivariantly $(\Ah,\epsilon_j,\delta_j)$-almost minimizing in $U$. 
    Hence, $(V,\Omega)$ has $(\Gpm,\Ah)$-replacement chain property in $U'$ by repeating this procedure, which indicates the regularity of $(V,\Omega)$ by Theorem \ref{Thm: first regularity}. 
\end{proof}

\subsection{Proof of Theorem \ref{thm:pmc min-max theorem}}

By Theorem \ref{thm:existence of almost minimizing pairs}, there exists an $\A^h$-stationary pair $(V_0,\Omega_0)\in \mathbf{C}(\{\Phi_i\})$ that is $(G, \A^h)$-almost minimizing in small $G$-annuli w.r.t. a min-max subsequence $\{(\Sigma_j, \Omega_j) = \Phi_{i_j}(x_j)\}_{j\in \mathbb{N}} \subset \sEG$. 
Let $\{B^G_{r_i}(p_i)\}_{i=1}^{\mathfrak{m}}$ be a finite set of appropriately small open $G$-sets covering $M$ with radius $r_i=\frac{1}{2}\min\{ r_{am}(G\cdot p_i), \inj(G\cdot p_i)\}$, where $r_{am}(G\cdot p_i)$ is given by Definition \ref{Def: almost minimizing pairs}. 
After applying Theorem \ref{Thm: regularity of a.m. pairs} to $(V_0,\Omega_0)$ in any open $G$-set $U\subset \subset A^G_{0,r_i}(p_i)$, we see $(V_0,\Omega_0)=(\Sigma_0,\Omega_0)$ is a strongly $\Ah$-stationary $C^{1,1}$ $(\Gpm, h)$-boundary in $M\setminus\{G\cdot p_1,\cdots, G\cdot p_{\mathfrak{m}}\}$, which is also $\Ah$-stable in any $U\subset\subset A^G_{0,r_i}(p_i)$. 
Finally, using Proposition \ref{Prop: compactness for G-stable boundary}, \ref{Prop: tangent cone} in place of \cite[Proposition 1.24, 4.1]{wangzc2023existenceFour}, the arguments in \cite[Section 4.4]{wangzc2023existenceFour} can be taken almost verbatim to show the regularity of $(V_0,\Omega_0)$ extends across each $G\cdot p_i$.

\section{Compactness for min-max $(G_{\pm},h)$-boundaries}\label{Sec: Convergence}

\subsection{Strong convergence and weighted genus bound}  
Given an $h \in C^{\infty}_{\Gpm}(M)$ and a sequence of positive numbers $\epsilon_k \to 0$ as $k \to \infty$, we denote $\A^{\epsilon_k h}$ simply by $\A^{k}$. With notations from Section \ref{Sec: Equivariant min-max}, consider the equivariant min-max problem associated with $\Pi$ for each $\A^k$, $k \in \mathbb{N}$. By assuming that the nontriviality condition (\ref{eq:width nontrivial1}) is met for all $k$, we apply Theorem \ref{thm:existence of almost minimizing pairs} to the $\A^{k}$-functional for each $k$. This yields a $(G, \A^{k})$-min-max pair $(V_{k}, \Omega_{k}) \in \VCG(M)$ and an associated min-max sequence $\{(\Sigma_{k, j}, \Omega_{k, j})\}_{j\in \mathbb{N}}\subset \sEG$, such that $(V_k, \Omega_k)$ is $\A^k$-stationary and $(G, \A^k)$-almost minimizing in small $G$-annuli w.r.t. $\{(\Sigma_{k, j}, \Omega_{k, j})\}$. By Theorem \ref{thm:pmc min-max theorem}, $(V_k, \Omega_k)$ is a strongly $\A^k$-stationary, $C^{1,1}$ $(\Gpm, \epsilon_k h)$-boundary $(\Sigma_k, \Omega_k)$ with $\A^k(\Sigma_k, \Omega_k) = \mathbf{L}^{\epsilon_k h}(\Pi)$. 

In this part, our goal is to show the smooth regularity of a subsequential varifold limit $V_{\infty}$ of $\{\Sigma_k\}$ and upgrade the convergence to $C^{1, 1}_{loc}$. Moreover, for specially chosen $h$, we prove the weighted genus bound (see \eqref{eq:genus bound0}).  

To begin with, by Corollary \ref{cor:property R}, we note that $V_{\infty}$ satisfies 
\begin{equation}\label{eq:property R2}
\begin{aligned}
    \text{Property {\bf(R')}}:\quad  & \text{for every $L(m)$-admissible collection $\mathscr{C}^{G}$ of $G$-annuli,}\\
    & \text{$V_\infty$ is stable (for area) in at least one $G$-annulus in $\mathscr{C}^{G}$.}
\end{aligned}
\end{equation}

Arguing similarly as in the proof of \cite[Proposition 5.1]{wangzc2023existenceFour}, we have the following proposition, which is crucial for the removable singularity step. 

\begin{proposition}\label{prop:annulus picking for Ak almost minimizing}
There exists a subsequence of $\{(\Sigma_k, \Omega_k)\}_{k\in \mathbb{N}}$, such that
\begin{equation}\label{eq:property S}
\begin{aligned}
    \text{Property {\bf(S)}}:\quad  & \text{given any $p\in M$, there exists $r_{G \cdot p}>0$, such that}\\
    & \text{for each $A_{s, r}^{G}(p)$ with $0 < s < r < r_{G \cdot p}$, }\\
    & \text{$(\Sigma_k, \Omega_k)$ is $\A^k$-stable in $A_{s, r}^{G}(p)$ for all sufficiently large $k$.}
\end{aligned}
\end{equation}
\end{proposition}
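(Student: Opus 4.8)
The plan is to mimic the proof of \cite[Proposition 5.1]{wangzc2023existenceFour}, carrying the argument over verbatim to the $G$-equivariant setting. The key point is that the varifold limit $V_\infty$ (equivalently $\Sigma_k \to V_\infty$) inherits a stability property from Property {\bf(R')}, and we want to upgrade this from ``stable in \emph{some} annulus of each admissible collection'' to ``stable in \emph{every} sufficiently small annulus about each point, for all large $k$''. This is the standard annulus-picking / diagonalization argument of Marques--Neves (adapted by Wang--Zhou), and it interfaces cleanly with equivariance because every object involved ($G$-annuli, the $\A^k$-functional, the almost minimizing property) is already $G$-equivariant.

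The steps, in order, would be as follows. First, fix a point $p\in M$ and a large integer $L=L(m)$. Suppose for contradiction that Property {\bf(S)} fails at $G\cdot p$: then for every radius $r>0$ one can find arbitrarily small sub-annuli $A^G_{s,r}(p)$ and arbitrarily large indices $k$ for which $(\Sigma_k,\Omega_k)$ is \emph{not} $\A^k$-stable. Second, using this failure one constructs, for infinitely many $k$, an $L$-admissible collection $\mathscr{C}^G = \{A^G_{s_1,r_1}(p),\dots,A^G_{s_L,r_L}(p)\}$ of nested $G$-annuli about $G\cdot p$ (with the separation condition $2r_{j+1}<s_j$ and $B^G_{r_L}(p)$ appropriately small) such that $(\Sigma_k,\Omega_k)$ is not $\A^k$-stable in \emph{any} annulus of $\mathscr{C}^G$; here one must choose the radii small enough that each $A^G_{s_j,r_j}(p)$ lies inside the region where Theorem~\ref{thm:existence of almost minimizing pairs} guarantees $(V_k,\Omega_k)$ is $(G,\A^k)$-almost minimizing, and appeal to Lemma~\ref{lem:am implies stationary and stable} and Theorem~\ref{Thm: regularity of a.m. pairs} to see that instability of $\Sigma_k$ in an annulus contradicts being an $\A^k$-almost-minimizing pair there. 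Third, since $\mathscr{C}^G$ is a fixed $L$-admissible collection of $G$-annuli, Property {\bf(R')} applied to $V_\infty$ forces $V_\infty$ to be stable (for area) in at least one $A^G_{s_{j_0},r_{j_0}}(p)\in\mathscr{C}^G$. Fourth, passing to the limit $k\to\infty$ via the equivariant compactness theorem (Proposition~\ref{Prop: compactness for G-stable boundary}, together with $\epsilon_k\to 0$ so that the $\A^k$-stable boundaries converge to an area-stable limit) one derives that $(\Sigma_k,\Omega_k)$ must in fact be $\A^k$-stable in $A^G_{s_{j_0},r_{j_0}}(p)$ for all large $k$ — contradicting the construction in the second step. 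Finally, since $p$ was arbitrary and only finitely many radii are used, a diagonal argument over a countable dense set of points produces a single subsequence of $\{(\Sigma_k,\Omega_k)\}$ for which Property {\bf(S)} holds at every $G\cdot p$.

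The main obstacle, as in the non-equivariant case, is the second step: ensuring that \emph{failure of stability in small annuli for infinitely many $k$} can actually be organized into failure in \emph{all} annuli of a \emph{single} admissible collection, uniformly in $k$ along a subsequence. This requires a careful pigeonhole over the (finitely many) annuli in the collection combined with the nested structure $2r_{j+1}<s_j$, and it is exactly the combinatorial core of \cite[Proposition 5.1]{wangzc2023existenceFour}; the equivariant decoration adds nothing new here since $G$-annuli about $G\cdot p$ behave identically to ordinary annuli about $p$ inside an appropriately small $G$-ball. A secondary technical point is that one must invoke the $\epsilon_k h \to 0$ normalization so that $\A^k$-stability degenerates to area-stability in the limit, letting Property {\bf(R')} for $V_\infty$ feed back into statements about $(\Sigma_k,\Omega_k)$; this is precisely where Proposition~\ref{Prop: compactness for G-stable boundary} is used in place of \cite[Proposition 1.24]{wangzc2023existenceFour}. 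Beyond these, the argument is a routine transcription, which is why we only sketch it.
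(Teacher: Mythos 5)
Your overall skeleton (argue by contradiction, assemble an $L$-admissible collection of nested $G$-annuli around the bad point in which stability fails, and contradict the combinatorial output of the min-max construction) is the right one, but Steps 3 and 4 contain a genuine logical error. You invoke Property \textbf{(R')} for the limit varifold $V_\infty$ to conclude that $V_\infty$ is area-stable in some annulus $A^G_{s_{j_0},r_{j_0}}(p)$ of the collection, and then claim that ``passing to the limit'' via Proposition \ref{Prop: compactness for G-stable boundary} forces $(\Sigma_k,\Omega_k)$ to be $\A^k$-stable in that annulus for all large $k$. Stability does not transfer in that direction: the compactness theorem carries stability from the approximating sequence to the limit, never from the limit back to the sequence (a varifold limit can be stable while every $\Sigma_k$ is unstable in the same annulus, e.g.\ because of small unstable necks or components that vanish in the limit, or because of multiplicity). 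Moreover, Property \textbf{(R')} for $V_\infty$ is itself derived from stability of the $\Sigma_k$ in suitable annuli, so using it here is circular. The correct input is Corollary \ref{cor:property R} applied to the $\A^k$-min-max problem for each fixed $k$: every individual $(\Sigma_k,\Omega_k)$ is $\A^k$-stable in at least one annulus of \emph{every} $L$-admissible collection. Once Step 2 produces one admissible collection and one index $k$ unstable in all $L$ of its annuli, the contradiction is immediate; Steps 3 and 4 are then both unnecessary and incorrect. Your parenthetical appeal in Step 2 to Lemma \ref{lem:am implies stationary and stable} and Theorem \ref{Thm: regularity of a.m. pairs} (``instability contradicts almost minimizing'') also misses the point: the radius $r_{\am}(G\cdot p)$ in Definition \ref{Def: almost minimizing pairs} below which $(\Sigma_k,\Omega_k)$ is almost minimizing in small annuli depends on $k$, so it yields no $k$-uniform stability; the only $k$-uniform statement available is Property \textbf{(R)} for each $k$.

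There is a second gap in the quantifier management. Carried out correctly, Step 2 fixes the point $p$ and extracts nested infinite index sets $I_1\supset I_2\supset\cdots\supset I_L$ together with annuli $A_1\supset\supset\cdots\supset\supset A_L$ (satisfying $2r_{j+1}<s_j$) such that every $k\in I_L$ is unstable in all $A_j$; this requires refining the index set at each of the $L$ scales, and it produces, for each fixed $p$, only a \emph{further} subsequence and a radius working at that $p$. Your concluding diagonal argument over a countable dense set of points does not then give Property \textbf{(S)} at every $p\in M$: the radii obtained at the dense points are not uniform, so the neighborhoods on which goodness propagates (via the inclusion of a small annulus about a nearby point into an annulus about $p$) need not cover $M$. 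Handling all (uncountably many) centers with a single subsequence is precisely the delicate part of \cite[Proposition 5.1]{wangzc2023existenceFour}, and the sketch does not address it.
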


\begin{theorem}\label{thm:convergence of pmc to minimal}
$\spt\|V_\infty\|$ is a closed embedded $G$-invariant minimal surface $\Sigma_\infty$. Moreover, there exists a finite set of points $\mathcal{Y}\subset M$, such that up to a subsequence, $\{\Sigma_k\}_{k \in \mathbb{N}}$ converges in $C^{1,1}_{loc}$ to $\Sigma_\infty$ in any compact subset of $M \setminus \mathcal{Y}$.
\end{theorem}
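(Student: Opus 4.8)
\emph{Proof proposal.} This is the equivariant, $\epsilon_k h\to 0$ version of the convergence statement in \cite[Section 5]{wangzc2023existenceFour}, and the plan is to obtain it by feeding the equivariant stable compactness of Proposition \ref{Prop: compactness for G-stable boundary} into the annulus–stability Property \textbf{(S)} of Proposition \ref{prop:annulus picking for Ak almost minimizing}. Each $(\Sigma_k,\Omega_k)$ is a strongly $\A^k$-stationary $C^{1,1}$ $(\Gpm,\epsilon_k h)$-boundary with $\mathcal{H}^2(\Sigma_k)=\A^k(\Sigma_k,\Omega_k)+\epsilon_k\int_{\Omega_k}h=\mathbf{L}^{\epsilon_k h}(\Pi)+O(\epsilon_k)$, which is uniformly bounded; passing to a subsequence, $(\Sigma_k,\Omega_k)=(V_k,\Omega_k)\to(V_\infty,\Omega_\infty)$ in $\VCG(M)$, and since the first variation of $\Sigma_k$ is bounded by $\epsilon_k\sup_M|h|\to 0$, the limit $V_\infty$ is a stationary, integral, $G$-invariant $2$-varifold (with $\|V_\infty\|(M)=\mathbf{L}^0(\Pi)$ by the usual no-mass-loss argument, although this is not needed below). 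Refining once more, Proposition \ref{prop:annulus picking for Ak almost minimizing} furnishes Property \textbf{(S)}: for each $p\in M$ there is $r_{G\cdot p}\in(0,\inj(G\cdot p))$ such that for all $0<s<r<r_{G\cdot p}$, $(\Sigma_k,\Omega_k)$ is $\A^k$-stable in the appropriately small $G$-annulus $A^G_{s,r}(p)$ for all large $k$.

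Next I would prove $C^{1,1}_{loc}$ convergence away from a finite set. Fixing $p$ and $0<s<r<r_{G\cdot p}$, for large $k$ the pair $(\Sigma_k,\Omega_k)$ is an $\A^k$-stable (hence $G$-stable) $C^{1,1}$ $(\Gpm,\epsilon_k h)$-boundary in $A^G_{s,r}(p)$ with $\epsilon_k h\to 0$ in $C^2_\Gpm(M)$, so Proposition \ref{Prop: compactness for G-stable boundary} (with target $h\equiv 0$) gives, along a subsequence, convergence in $C^{1,\alpha}_{loc}$ and as varifolds to a strongly $\A^0$-stationary, stable $C^{1,1}$ $(\Gpm,0)$-boundary $\Sigma$, i.e.\ a stable almost embedded minimal surface. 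By \eqref{eq: generalized mean curvature} its regular part is smoothly minimal, and at any touching point the two tangent minimal sheets on opposite sides must coincide by the strong maximum principle, so $\Sigma$ is smooth, its support being a smooth embedded minimal surface; then Proposition \ref{Prop: compactness for G-stable boundary}(i) upgrades the convergence to $C^{1,1}_{loc}$, and uniqueness of the varifold limit identifies $\spt\|\Sigma\|$ with $\spt\|V_\infty\|\cap A^G_{s,r}(p)$. Since every subsequence has a further subsequence converging $C^{1,1}_{loc}$ to this same limit, the chosen $\Sigma_k$ converges $C^{1,1}_{loc}$ on $A^G_{s,r}(p)$. Exhausting $B^G_{r_{G\cdot p}}(p)\setminus(G\cdot p)$ by such annuli and covering the compact $M$ by finitely many balls $B^G_{r_{G\cdot p_i}/2}(p_i)$, $i=1,\dots,N$, I would conclude that $\Sigma_k\to\Sigma_\infty:=\spt\|V_\infty\|$ in $C^{1,1}_{loc}$ on $M\setminus\mathcal{Y}$, where $\mathcal{Y}\subseteq\bigcup_{i}(G\cdot p_i)$ is the finite, $G$-invariant set of points in whose punctured neighborhood the convergence is not locally $C^{1,1}$; in particular $\Sigma_\infty$ is a smooth embedded $G$-invariant minimal surface in $M\setminus\mathcal{Y}$.

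It remains to remove the singularities at $\mathcal{Y}$. Near each $p\in\mathcal{Y}$, the previous step exhibits $\Sigma_\infty$ as a smooth embedded minimal surface of locally finite area in the punctured ball $B^G_{r_{G\cdot p}}(p)\setminus(G\cdot p)$, and also shows (directly from Proposition \ref{Prop: compactness for G-stable boundary} with $h\equiv 0$) that $V_\infty$ is stable for area in every sub-annulus $A^G_{s,r}(p)$. The classical removable singularity theorem for stable minimal surfaces of bounded area (as used in the non-equivariant and $h$-settings in \cite{colding2003min} and \cite[Section 5]{wangzc2023existenceFour}) then shows that the density $\Theta^2(\|V_\infty\|,\cdot)$ is a positive integer along $G\cdot p$ and that $\Sigma_\infty$ extends smoothly across $G\cdot p$; carrying this out at each of the finitely many points of $\mathcal{Y}$ identifies $\Sigma_\infty=\spt\|V_\infty\|$ as a closed, embedded, $G$-invariant minimal surface, which is the assertion.

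I expect this last step to be the main obstacle: a priori a shrinking neck or curvature concentration could persist at a point of $\mathcal{Y}$, and it must be excluded so that $\Sigma_\infty$ extends smoothly there — this is precisely where Property \textbf{(S)} is essential, since it transmits area-stability of the $\Sigma_k$ in all small annuli to the limit and enables the stable removable-singularity theorem. A secondary technical point, already inside the second step, is the passage from $C^{1,\alpha}_{loc}$ to $C^{1,1}_{loc}$ convergence, which requires first knowing the limit is an honest smooth embedded minimal surface, supplied here by the strong maximum principle applied to the touching sheets of the $C^{1,1}$ almost embedded limit.
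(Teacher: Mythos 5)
Your proposal is correct and follows exactly the route the paper takes: its proof is a one-line citation of Proposition \ref{Prop: compactness for G-stable boundary}, Property \textbf{(S)} from Proposition \ref{prop:annulus picking for Ak almost minimizing}, and the standard removable singularity theorem of \cite{schoen1981regularity}, which are precisely the three ingredients you assemble. Your spelled-out version (annulus stability, equivariant stable compactness with $\epsilon_k h\to 0$, maximum principle at touching sheets, finite covering to get $\mathcal{Y}$, then removing the singularities) is a faithful expansion of that argument.
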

\begin{proof}
The theorem is readily verified by combining Proposition \ref{Prop: compactness for G-stable boundary}, Property {\bf(S)} \eqref{eq:property S}, and the standard removable singularity theorem (see \cite{schoen1981regularity}). 
\end{proof}

Although $V_\infty$ may not be $G$-equivariantly $\A^{0}$-almost minimizing in small $G$-annuli, we may choose a special $h \in C^{\infty}_{\Gpm}(M)$ so that the limit minimal surface still has the total genus less than $\mathfrak{g}_0$ -- the genus of elements in $\sEG$.  

\begin{theorem}[Genus bound]\label{thm:genus bound}
Let $(M, g_{M})$ be a closed orientable $3$-dimensional Riemannian manifold and $G$ be a finite group acting freely and effectively as isometries on $M$ so that $G$ admits an index $2$ subgroup $G_{+}$ with coset $G_{-} = G \setminus G_{+}$. Consider $V_{\infty}$ as above. Suppose that there are finitely many pairwise disjoint appropriately small open $G$-balls $B^{G}_{1}, \cdots, B^{G}_{\alpha} \subset M$ such that 
\begin{enumerate}
    \item $\pi(\spt\|V_\infty\| \cap B^{G}_j)$ is an embedded disk for $j=1,\cdots,\alpha$;
    \item $h\equiv 0$ in a small neighborhood of $\spt\|V_\infty\| \setminus \cup_{j} B^{G}_j$.
\end{enumerate}
Assume that $ V_\infty=\sum_{i=1}^N m_i[\Gamma_i]$,
where $\{\Gamma_i\}_{i=1}^N$ is a pairwise disjoint collection of connected, closed, embedded, minimal surfaces. 
Denote by $I_O \subset \{1,\cdots,N\}$ (resp. $I_U$) the collection of $i$ such that $\Gamma_i$ is orientable (resp. non-orientable). Then we have
\begin{equation}\label{eq:genus bound0}
\sum_{i\in I_O} m_i\cdot \mathfrak{g}(\Gamma_i)+\frac{1}{2}\sum_{i\in I_U} m_i \cdot (\mathfrak{g}(\Gamma_i)-1)\leq \mathfrak{g}_0,
\end{equation}
where $\mathfrak{g}_0$ and $\mathfrak{g}(\Gamma_i)$ are the genus of $\Sigma_0$ and $\Gamma_i$, respectively. 
\end{theorem}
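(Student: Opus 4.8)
The plan is to reduce the equivariant genus bound to the standard (non-equivariant) genus bound for Simon-Smith min-max and the compactness theory of Ketover \cite{ketover2019degeneration}, applied on the quotient $M/G$. First I would record the key observation that, after passing to the quotient, the min-max surfaces $\Sigma_k$ descend to sweepout surfaces $\Sigma_k/G$ in $M/G$ whose genus equals $\mathfrak{g}_0$ as well, since $\Sigma_0/G$ is the fixed topological type of the sweepout in $M/G$ (by Definition~\ref{Def: appropriately small} and the fact that $G$ acts freely, $\pi|_{\Sigma_k}$ is a covering map of degree $\#G$, but the genus of the sweepout element is fixed along the homotopy class, so working directly upstairs on $M$ is cleaner). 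Hence I will instead argue directly on $M$: the surfaces $\Sigma_k \in \sEG$ all have genus $\mathfrak{g}_0$, they converge in $C^{1,1}_{loc}$ away from the finite set $\mathcal{Y}$ to $\Sigma_\infty = \sum_i m_i[\Gamma_i]$ by Theorem~\ref{thm:convergence of pmc to minimal}, and the convergence is of bounded-genus minimal-surface type because of the $\A^k$-stability in small annuli (Property~{\bf(S)}).

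The main step is the local genus accounting. I would apply the lifting/local-degeneration analysis of Ketover (see \cite[\S 2]{ketover2019degeneration}, and as used in \cite[(2.6)]{ketover2022flipping} and \cite[Section 5]{wangzc2023existenceFour}) at each point of the finite singular set $\mathcal{Y}$ where curvature concentrates, together with the hypotheses (1) and (2): outside the balls $B^G_1,\dots,B^G_\alpha$ the prescribing function $h$ vanishes near $\spt\|V_\infty\|$, so the surfaces $\Sigma_k$ are genuinely minimal there for the limiting problem and no genus is lost except possibly inside these balls; and inside each $B^G_j$ the limit is a single embedded disk downstairs, so the genus contribution localized there is controlled. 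Summing the local genus contributions over the $\#G$ components of each $B^G_j$ and over $\mathcal{Y}$, and using that a connected covering of degree $d$ of $\Gamma_i$ that appears with multiplicity $m_i$ contributes $m_i\cdot\mathfrak{g}(\Gamma_i)$ to the genus when $\Gamma_i$ is orientable (two-sided) and $\frac{1}{2}m_i(\mathfrak{g}(\Gamma_i)-1)$ when $\Gamma_i$ is one-sided — here $\mathfrak{g}(\Gamma_i)-1$ is the genus of the connected orientable double cover minus one, i.e. the first Betti number halved — yields \eqref{eq:genus bound0}. The orientable/non-orientable dichotomy encoded by $I_O, I_U$ is exactly the bookkeeping in the Simon-Smith genus bound for possibly one-sided limits, as in Ketover's work; the factor $\frac12$ and the shift by $1$ come from the Euler characteristic of the orientation double cover.

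Concretely the steps are: (i) pass to a subsequence so that Theorems~\ref{thm:convergence of pmc to minimal} holds and curvature concentrates on a finite set $\mathcal{Y}$, with $\mathcal{Y}$ being $G$-invariant; (ii) away from $\mathcal{Y}$ and away from $\cup_j B^G_j$ the convergence is smooth and locally graphical with multiplicity $m_i$ over each $\Gamma_i$, so no genus is created there; (iii) on small balls around points of $\mathcal{Y}$ apply the catenoid-neck/lifting argument to bound the genus drop by the number of necks, which are in turn counted by the genus of $\Sigma_k$; (iv) on each $B^G_j$, use hypothesis (1) that the limit descends to an embedded disk to see the genus localized there is absorbed; (v) assemble the inequality, being careful that each geometric feature upstairs is counted with the correct multiplicity under $\pi$ and that the one-sided $\Gamma_i$ contribute through their connected double covers. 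The hard part will be step (iii): making the local genus-comparison argument rigorous in the equivariant, prescribed-mean-curvature setting — i.e. verifying that Ketover's degeneration analysis, which is stated for minimal surfaces, carries over to the $C^{1,1}$ $(\Gpm,\epsilon_k h)$-boundaries $\Sigma_k$ with $\epsilon_k\to 0$ and that the equivariance does not introduce extra genus — but since $\epsilon_k h \to 0$ in $C^\infty$ and the $G$-action is free (so $\pi$ is a local diffeomorphism), this should follow by the same arguments as in \cite[Section 5]{wangzc2023existenceFour} applied $G$-equivariantly, exactly as the earlier regularity results in this section were obtained.
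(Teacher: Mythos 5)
Your proposal follows the same overall route as the paper: both reduce the equivariant genus bound to the curve-lifting arguments of Ketover and De~Lellis--Pellandini, use hypothesis (1) together with hypothesis (2) to arrange that the prescribing function vanishes where the topological comparison is made, and account for one-sided components via their orientation double covers (which is exactly where the $\tfrac12(\mathfrak{g}(\Gamma_i)-1)$ term comes from). The paper's proof is itself only a sketch citing the same sources, so at that level of detail your plan is compatible with it.

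Two points in your write-up are off the mark, though. First, your step (iii) — ``bound the genus drop by the number of necks'' — addresses the wrong direction of the inequality. The theorem asserts an \emph{upper} bound on the weighted genus of the limit, and the mechanism is to choose homologically independent simple closed curves $\gamma_i$ \emph{in the limit surfaces} $\Gamma_i$ and lift each of them, with multiplicity $m_i$, to the approximating surfaces $\Sigma_{k,j(k)}$ of genus $\mathfrak{g}_0$; counting necks created or destroyed in the limit is not what yields \eqref{eq:genus bound0}. Second, hypothesis (1) is used in a sharper way than ``the genus localized in $B^G_j$ is absorbed'': since $\pi(\spt\|V_\infty\|\cap B^G_j)$ is a disk, every curve $\gamma_i$ can be isotoped within $\cup_i\Gamma_i$ off $\cup_j B^G_j$, whence by hypothesis (2) one has $\epsilon_k h\equiv 0$ near $\cup_i\gamma_i$ and the surfaces are $G$-equivariantly $(\A^0,\epsilon_k,\delta_k)$-almost minimizing there (the equivariant analogue of \cite[Proposition 5.3]{wangzc2023existenceFour}). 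The actual technical engine, which your sketch omits, is then to take successive $(\Gpm,\A^0)$-replacements in overlapping small $G$-balls covering the curves, producing $G$-invariant surfaces $W_k$ obtained from $\Sigma_{k,j(k)}$ by finitely many equivariant surgeries, with the same limit and with graphical convergence away from finitely many points (via Schoen's estimates, a No Folding property, and an integrated Gauss--Bonnet argument); the lifting of the $\gamma_i$ with correct multiplicity is performed on the $W_k$. Your closing remark that the equivariance causes no trouble because $G$ acts freely and $\epsilon_k h\to 0$ is correct, but without the replacement step the lifting argument does not get off the ground.
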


\begin{proof}
    Let $\{\gamma_i\}_{i=1}^k$ be a collection of simple closed curves contained in $\cup_{i=1}^N \Gamma_i$. By assumption (1), we can perturb $\{\gamma_i\}_{i=1}^k$ in the same isotopy class so that $\cup_i\gamma_i$ does not intersect $\cup_j B^{G}_j$. Hence by assumption (2), $\epsilon_k h \equiv 0$ in a neighborhood of $\cup_i\gamma_i$. 
    Additionally, one notices that \cite[Proposition 5.3]{wangzc2023existenceFour} can be easily generalized to our $G$-equivariant setting. 
    Hence, by possibly perturbing $\{\gamma_i\}$ and shrinking $r_{G \cdot p}>0$, 
    we can assume that $(\Sigma_{k,j(k)},\Omega_{k,j(k)})$ is $G$-equivariantly $(\A^0, \epsilon_k, \delta_k)$-almost minimizing in $B^{G}_{r_{G\cdot p}}(p)$ for any $p\in\cup_i\gamma_i$. To prove the curve lifting lemma, we follow the strategy in \cite[Proposition 2.2]{ketover2019genus} (see also \cite{de2010genus}). For simplicity, we only consider the setting of two appropriately small open $G$-balls $B_{1}^{G}$ and $B_{2}^{G}$ intersecting along a curve $\gamma$. Suppose further that the $G$-equivariantly $\A^{0}$-almost minimizing property holds in $B_{1}^{G} \cup B_{2}^{G}$. By taking successive $(\Gpm, \A^0)$-replacements of $(\Sigma_{k,j(k)},\Omega_{k,j(k)})$ on $B_{1}^{G}$ and $B_{2}^{G}$, we obtain a new $G$-invariant surface $W_{k}$ (arises topologically from $\Sigma_{k, j(k)}$ after finitely many $G$-equivariant surgeries), which has the same limit as $\Sigma_{k, j(k)}$. Combining Schoen's estimates, a No Folding property, and an integrated Gauss-Bonnet argument, we show the graphical smooth convergence of $W_{k}$ away from finitely many points. Hence, we can lift $\gamma$ with the correct multiplicity by perturbing it slightly to avoid those singularities. 
\end{proof}

\subsection{Existence of supersolution} 
For a sequence of strongly $\A^{\epsilon_k h}$-stationary $C^{1,1}$ $\epsilon_kh$-boundaries $\{(\Sigma_k,\Omega_k)\}$ converging as varifolds to a closed $2$-sided minimal surface $\Sigma$, Wang-Zhou \cite{wangzc2023existenceFour} proved in the non-equivariant setting that $\Sigma$ admits a nonnegative weak supersolution to a variant of the Jacobi equation provided that $h\llcorner\Sigma$ changes sign and the convergence is $C^{1,1}_{loc}$ away from a finite set $\mathcal{Y}$ with multiplicity $m\geq 2$. 
Noting $G$ is finite and $h\in C^\infty_{\Gpm}(M)$ must change sign unless $h\equiv 0$, we immediately have the following generalization. 

\begin{proposition}\label{Prop: supersolution}
    Let $\{(\Sigma_k,\Omega_k)\}_{k\in\N}$ be a sequence of strongly $\A^{\epsilon_k h}$-stationary $C^{1,1}$ $(\Gpm,\epsilon_kh)$-boundaries with $\lim_{k\to\infty}\epsilon_k= 0$ so that $\Sigma_k$ converges as varifolds to a closed embedded $2$-sided minimal $G$-surface $\Sigma$ with multiplicity $m\geq 2$. 
    Suppose the convergence is also $C^{1,1}_{loc}$ away from a finite $G$-set $\mathcal{Y}$. 
    Then $\Sigma$ admits a non-negative $G$-invariant function $\varphi\in W^{1,2}(\Sigma)$ satisfying $\|\varphi\|_{L^2(\Sigma)}=1$ and 
    \begin{align}\label{Eq: supersolution}
        \int_{\Sigma}\langle\nabla\varphi,\nabla f\rangle - \left(\Ric(\nu,\nu) + |A^\Sigma|^2\right)\varphi f d\mathcal{H}^2 \geq \int_\Sigma 2chf d\mathcal{H}^2,\quad \forall f\in C^1(\Sigma) {\rm ~and~} f\geq 0, 
    \end{align}
    where $c\geq 0$ is a constant so that $c=0$ if $m\geq 3$ is odd. 
\end{proposition}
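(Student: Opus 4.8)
# Proof Proposal for Proposition \ref{Prop: supersolution}

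The plan is to reduce everything to the non-equivariant statement proved by Wang--Zhou (\cite[\S 6]{wangzc2023existenceFour}), and then to produce the $G$-invariant supersolution by averaging over $G$. First I would invoke the non-equivariant version of the result: since each $(\Sigma_k,\Omega_k)$ is in particular a strongly $\A^{\epsilon_k h}$-stationary $C^{1,1}$ $\epsilon_k h$-boundary (forgetting the $\Gpm$-symmetry), and the convergence to the $2$-sided minimal surface $\Sigma$ has multiplicity $m\geq 2$ with $C^{1,1}_{loc}$ convergence away from the finite set $\mathcal{Y}$, Wang--Zhou's argument yields a nonnegative $\varphi_0\in W^{1,2}(\Sigma)$ with $\|\varphi_0\|_{L^2(\Sigma)}=1$ solving the weak inequality \eqref{Eq: supersolution} with some constant $c_0\geq 0$, where $c_0=0$ when $m\geq 3$ is odd. (Here one must first note that $h$ automatically changes sign on $\Sigma$ unless $h\equiv 0$: if $h\not\equiv 0$ on $M$ then by the $G_-$-antisymmetry $h(p)=-h(g\cdot p)$ for $g\in G_-$, so $h$ takes both signs on $M$, and since $\Sigma$ is $G$-invariant and connected components are permuted by $G$, $h\llcorner\Sigma$ changes sign too; if $h\equiv 0$ then the inequality \eqref{Eq: supersolution} with $c=0$ is just the stability inequality and the statement is classical. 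In the generic case $h\not\equiv 0$, $m\geq 2$, we are in the situation covered by the cited result.)

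Second, I would upgrade $\varphi_0$ to a $G$-invariant function. Set
\[
  \varphi := \Bigl(\sum_{g\in G} (g|_\Sigma)^*\varphi_0\Bigr)\Big/ \Bigl\| \sum_{g\in G} (g|_\Sigma)^*\varphi_0 \Bigr\|_{L^2(\Sigma)},
\]
which is well defined because the sum is nonnegative, not identically zero (each summand is nonnegative and $\varphi_0\not\equiv 0$), and lies in $W^{1,2}(\Sigma)$ since $G$ acts by isometries; clearly $\varphi\geq 0$, $\varphi$ is $G$-invariant, and $\|\varphi\|_{L^2(\Sigma)}=1$. To see that $\varphi$ still satisfies \eqref{Eq: supersolution}, I would use that every isometry $g\in G$ preserves $\Sigma$, hence preserves the area form, the Ricci curvature term $\Ric(\nu,\nu)$, and $|A^\Sigma|^2$; and that $h$ is $G_+$-invariant and $G_-$-antisymmetric while the two-sided normal $\nu$ flips sign under $G_-$, so the pairing entering the right-hand side through the second variation (which is quadratic in the normal data) makes $h$-dependent quantities behave consistently — concretely, the relevant term $2ch$ pulls back under $g\in G_-$ to $2c(-h)$, but a simultaneous sign flip of the test data is absorbed since $f\geq 0$ is arbitrary and one tests against $(g|_\Sigma)^*f$. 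Testing the inequality for $\varphi_0$ against $(g|_\Sigma)^*f$ and changing variables by $g$, each pulled-back copy $(g|_\Sigma)^*\varphi_0$ satisfies \eqref{Eq: supersolution} against all nonnegative $f$ with the \emph{same} constant $c_0$ (the sign subtlety on $h$ is neutralized because, after the change of variables, one compares $\int_\Sigma 2c_0 h f$ for $\varphi_0$-pulled-back-by-$g$ with $\int_\Sigma 2c_0 h\,((g^{-1})^*f)$ for $\varphi_0$, and $(g^{-1})^*f\geq 0$). Summing over $g\in G$ and normalizing gives \eqref{Eq: supersolution} for $\varphi$ with the constant $c:=c_0$, and the parity statement ($c=0$ if $m\geq 3$ odd) is inherited verbatim.

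The main obstacle is bookkeeping the sign interplay between the $G_-$-antisymmetry of $h$, the orientation reversal of $\nu$ under $G_-$, and the quadratic-vs-linear dependence of the two terms in \eqref{Eq: supersolution} on the normal data; one must be careful that the $h$-term, which is linear in the deformation in the first variation but enters the supersolution inequality through a structurally different route, transforms in the way claimed so that all $\#G$ summands carry an \emph{identical} constant rather than constants of opposite sign (which would force $c=0$ always and is false in the $m$ even case). I expect this to require writing out the derivation of the supersolution inequality from \cite{wangzc2023existenceFour} one step more carefully in the equivariant setting — in particular checking that the relevant limiting quantity is built from $|h|$ or from $h$ tested against a fixed-sign function, so that pulling back by $g\in G_-$ does not flip its sign. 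Everything else (the averaging, the $W^{1,2}$ bound, the normalization, the use of $G$ being finite to keep all constants uniform) is routine.
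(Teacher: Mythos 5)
Your overall strategy (quote the non-equivariant Wang--Zhou supersolution result, then symmetrize over $G$) differs from the paper's, and the central claim of your symmetrization step is not correct as stated: for $g\in G_-$ the pulled-back function $(g|_\Sigma)^*\varphi_0$ does \emph{not} satisfy \eqref{Eq: supersolution} with the same constant $c_0$. Testing the inequality for $\varphi_0$ against $f\circ g^{-1}\geq 0$ and changing variables by the isometry $g$ (which preserves $\Sigma$, the area form, $\Ric(\nu,\nu)$ and $|A^\Sigma|^2$) turns the right-hand side into $\int_\Sigma 2c_0\,(h\circ g)\,f = -\int_\Sigma 2c_0 h f$, because $h\circ g=-h$ on the $G$-invariant surface $\Sigma$. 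So half of your summands carry the constant $+c_0$ and half carry $-c_0$; your attempted "neutralization" of this sign does not work, and you yourself flag it as the unresolved obstacle. Note also that your reason for resisting the cancellation is a misreading of the statement: the proposition only asserts the existence of \emph{some} $c\geq 0$ (with $c=0$ forced when $m\geq 3$ is odd); it does not claim $c>0$ for even $m$. In fact, if you accept the sign flip, summing over $G$ makes the $h$-terms cancel exactly (since $\#G_+=\#G_-$), so the averaged, normalized function is a nonnegative $G$-invariant weak supersolution with $c=0$, which still verifies the statement and suffices for its application in Theorem \ref{Thm: multiplicity one}. So the averaging route is salvageable, but only after correcting the transformation law you assert, and the proof as written has a genuine gap at precisely that step.

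For comparison, the paper does not average at all: it recalls that the Wang--Zhou supersolution is constructed locally as the normalized limit of $\varphi_k=u_k^m-u_k^1$, the gap between the top and bottom graphical sheets of $\Sigma_k$ over $\mathcal{U}\subset\subset\Sigma\setminus\mathcal{Y}$. Since $G_+$ preserves and $G_-$ reverses the normal direction, the union $\Gamma_k^1\cup\Gamma_k^m$ of the extreme sheets is $G$-invariant and $u_k^m-u_k^1$ is a $G$-invariant function, so the limit $\varphi$ is automatically $G$-invariant with the \emph{same} constant $c$ as in the non-equivariant construction. That route keeps the quantitative form of the conclusion intact and avoids the sign bookkeeping entirely, whereas your (repaired) averaging argument yields only the $c=0$ instance.
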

\begin{proof}
    As we explained before, \cite[Proposition 6.4]{wangzc2023existenceFour} can be applied directly to get a non-negative function $\varphi\in W^{1,2}(\Sigma)$ satisfying \eqref{Eq: supersolution}. 
    To show $\varphi$ is $G$-invariant, we recall the constructions in \cite[Section 6]{wangzc2023existenceFour}. 
    Take any open $G$-set $\mathcal{U} \subset \subset \Sigma \setminus \mathcal{Y}$ and a unit normal $\nu$ of $\Sigma$. 
    Then for sufficiently large $k$, $\Sigma_{k}$ admits a decomposition by ordered $m$-sheets $\Gamma_k^1\leq \cdots \leq \Gamma_k^m$ inside a thickened $G$-neighborhood $\mathcal{U}_\delta = \mathcal{U} \times (-\delta, \delta)$, and each sheet $\Gamma_k^\iota$ ($1\leq \iota \leq m$) is a normal graph of some function $u_k^\iota\in C^{1,1}(\mathcal{U})$, such that $u_k^1\leq \cdots \leq u_k^m$, and $u_k^\iota \to 0$ in $C^{1,1}(\mathcal{U})$ as $k\to\infty$. 
    Although a single sheet $\Gamma_k^i$ may not be $G$-invariant, we notice $\Gamma_k^m\cup\Gamma_k^1$ is $G$-invariant, and thus $|\varphi_k| = \varphi_k :=u_k^m -u_k^1$ is $G$-invariant. Since $\varphi\llcorner\mathcal{U}$ is the $C^{1,\alpha}$ limit of $\varphi_k/\|\varphi_k\|_{L^2(\mathcal{U})}$, we know $\varphi$ is $G$-invariant. 
\end{proof}

\section{Multiplicity one for generalized Simon-Smith min-max theory}\label{Sec: multiplicity one}

In this section, we will generalize two multiplicity one theorems in \cite{wangzc2023existenceFour} to the equivariant setting, namely the relative min-max in the space of oriented $G$-equivariant $\Gpm$-separating surfaces in Section \ref{Sec: multi one for relative} and the classical min-max in the space of un-oriented $G$-equivariant $\Gpm$-separating surfaces in Section \ref{Sec: multi one for classical}.

\subsection{Multiplicity one for relative equivariant min-max}\label{Sec: multi one for relative}
To start with, we have the following compactness theorem, which admits a similar proof as in \cite{wangzc2023existenceFour}.   

\begin{theorem}\label{Thm: compactness for property R2}
    Let $L \in \mathbb{N}$ and $C > 0$ be a constant. Consider $\{\Sigma_k\}$ a sequence of closed embedded $G$-invariant minimal surfaces satisfying $\sup_{k} \mathcal{H}^2(\Sigma_k) \leq C$ and Property \emph{\textbf{(R')}} (\ref{eq:property R2}). Then $\Sigma_k$ converges subsequentially as varifolds to a closed embedded $G$-invariant minimal surface $\Sigma$ possibly with integer multiplicities. Moreover, $\Sigma$ is degenerate if $\Sigma_k \neq \Sigma$ for infinitely many $k$. 
\end{theorem}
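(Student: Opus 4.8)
The plan is to run the non-equivariant compactness argument of Wang--Zhou (the analogue of this theorem in \cite{wangzc2023existenceFour}) and verify that every step is compatible with the $G$-action, the key inputs being Schoen's curvature estimates for stable minimal surfaces in $3$-manifolds, the Schoen--Simon removable singularity theorem, and the combinatorial pigeonhole built into Property \textbf{(R')}. First I would take the integral varifolds $V_k := |\Sigma_k|$: since each $\Sigma_k$ is a closed embedded minimal surface with $\mathcal{H}^2(\Sigma_k)\le C$, the $V_k$ are stationary with uniformly bounded mass, so by Allard's compactness a subsequence converges to a stationary integral varifold $V$ with $\|V\|(M)\le C$; because $G$ acts by isometries and each $V_k$ is $G$-invariant, $V$ is $G$-invariant. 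Next I would upgrade this to smooth convergence away from a finite set. Given $p\in M$ and a small scale, Property \textbf{(R')} supplies an $L(m)$-admissible collection of $G$-annuli centered at $G\cdot p$ in at least one of which $\Sigma_k$ is stable for area; since the collection is finite, a pigeonhole argument along a further subsequence makes the stable annulus a fixed $A^G_{s,r}(p)$ for all large $k$. On $A^G_{s,r}(p)$ Schoen's estimates give uniform local second fundamental form bounds, hence (Arzel\`a--Ascoli plus the usual argument) smooth graphical convergence of $\Sigma_k$ to $\spt\|V\|$ there, with locally constant integer multiplicity.

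I would then define $\mathcal{Y}\subset M$ to be the set of points near which no such uniform stability can be arranged on all small annuli, i.e. where curvature concentrates; this set is finite by the monotonicity formula (each orbit in $\mathcal{Y}$ carries a definite amount of $\|V\|$-mass in a small $G$-ball, bounded in number by $\|V\|(M)\le C$) and is $G$-invariant since everything in sight is. Off $\mathcal{Y}$ the previous paragraph shows $\spt\|V\|$ is a smooth embedded minimal surface and $\Sigma_k\to\spt\|V\|$ in $C^{\infty}_{\mathrm{loc}}$ with locally constant integer multiplicities on each component. The Schoen--Simon removable singularity theorem (the convergence near each point of $\mathcal{Y}$ being through stable sheets, as in \cite{schoen1981regularity}) then lets $\spt\|V\|$ extend across $\mathcal{Y}$ to a closed embedded minimal surface $\Sigma$, which is $G$-invariant because $\spt\|V\|$ is; write $V=\sum_i m_i[\Gamma_i]$ over the connected components $\Gamma_i$ with $m_i\in\mathbb{N}$.

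For the degeneracy statement, suppose after passing to a subsequence that $\Sigma_k\neq\Sigma$ for every $k$. If some multiplicity $m_i\ge 2$, then over a $G$-invariant open set $\mathcal{U}\subset\subset\Gamma_i\setminus\mathcal{Y}$ the surface $\Sigma_k$ decomposes into ordered normal graphs $u_k^1\le\cdots\le u_k^{m_i}$ over $\Gamma_i$ with each $u_k^\iota\to 0$ in $C^{1,1}(\mathcal{U})$; setting $\varphi_k := u_k^{m_i}-u_k^1\ge 0$, which is $G$-invariant since the union of the top and bottom sheets is (exactly as in the proof of Proposition \ref{Prop: supersolution}), the normalizations $\varphi_k/\|\varphi_k\|_{L^2}$ converge in $C^{1,\alpha}_{\mathrm{loc}}$ to a nonnegative $G$-invariant solution of the Jacobi equation on $\Gamma_i$ (no prescribed-mean-curvature term appears here, the $\Sigma_k$ being minimal), which is nontrivial by Harnack; hence $\Gamma_i$, and therefore $\Sigma$, is degenerate. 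If instead every $m_i=1$, then $\Sigma_k$ is a single normal graph of a function $u_k\not\equiv 0$ over $\Sigma$ with $u_k\to 0$, both surfaces minimal, so $u_k$ solves a quasilinear elliptic equation; the normalization $u_k/\|u_k\|_{L^2}$ converges to a nontrivial Jacobi field on $\Sigma$ (on the connected double cover if a component is $1$-sided), so $\Sigma$ is degenerate in either case.

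The hard part will be the second step: converting Property \textbf{(R')} into genuine uniform local stability on a \emph{fixed} annulus for all large $k$ via the pigeonhole/diagonal argument, and then controlling $\mathcal{Y}$ — ensuring it is finite, $G$-invariant, and that the Schoen--Simon removable singularity theorem applies through the stable sheets accumulating there. These are precisely the places where the equivariance must be threaded through by hand (all annuli, the bad set, and the limiting Jacobi field are taken $G$-invariant), but since $G$ is finite and acts by isometries no new analytic difficulty arises beyond bookkeeping; the essential estimates are imported verbatim from the non-equivariant proof in \cite{wangzc2023existenceFour}.
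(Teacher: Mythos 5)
Your overall skeleton -- varifold limit, pigeonhole on Property \textbf{(R')} to get stability of a subsequence in a fixed $G$-annulus, Schoen's curvature estimates, a removable-singularity step, and the sheet-separation/graphical Jacobi-field dichotomy for the degeneracy statement -- is exactly the route the paper intends (it gives no proof of this theorem, deferring to the analogous non-equivariant compactness result of Wang--Zhou), and the equivariant bookkeeping you describe is indeed the only new ingredient. However, there is one genuine gap: your finiteness argument for the bad set $\mathcal{Y}$. Curvature-concentration points of a sequence of embedded minimal surfaces with bounded area need not carry any uniform amount of mass: the monotonicity formula only gives $\|V\|(B_r(p)) \geq c\,r^2$, and the scale at which concentration occurs can shrink from point to point (think of necks of vanishing size joining nearby sheets), so the bound $\|V\|(M)\leq C$ does not control $\#\mathcal{Y}$. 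The finiteness must instead be extracted from Property \textbf{(R')} itself: if $\mathcal{Y}$ had an accumulation point $p_*$, choose an $L$-admissible collection of $G$-annuli centered at $G\cdot p_*$ so that each of the $L$ annuli contains a point of $\mathcal{Y}$ in its interior (possible precisely because points of $\mathcal{Y}$ accumulate at $p_*$); by \eqref{eq:property R2} and pigeonhole a further subsequence of $\Sigma_k$ is stable in one fixed such annulus, and Schoen's estimate then bounds $|A_{\Sigma_k}|$ uniformly near the enclosed point of $\mathcal{Y}$, a contradiction. Compactness of $M$ then gives $\#\mathcal{Y}<\infty$.

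A related imprecision occurs in your removable-singularity step: near a point $y\in\mathcal{Y}$ the convergence is not ``through stable sheets'' -- stability is exactly what fails there. What you actually have from \textbf{(R')} and pigeonhole is stability of a subsequence of $\Sigma_k$ (hence of the limit surface) on a sequence of $G$-annuli centered at $G\cdot y$ with radii tending to zero, together with smooth graphical convergence on the whole punctured ball once $\mathcal{Y}$ is known to be finite; these, not stability of $\Sigma_k$ near $y$, are the hypotheses to feed into the Schoen--Simon type removable-singularity theorem. The degeneracy argument via the normalized top-minus-bottom sheet difference (multiplicity $\geq 2$) or the normalized graph function (multiplicity one), with $G$-invariance as in Proposition \ref{Prop: supersolution}, is the standard one and is fine modulo the usual removability of the finitely many bad points for the limiting Jacobi field. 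With these two repairs -- both of which again use \textbf{(R')} rather than monotonicity -- your proposal coincides with the intended proof.
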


Recall that the space $\sEG$ of $G$-equivariant $\Gpm$-separating surfaces of genus $\mathfrak{g}_{0}$ is defined in Section \ref{Sec: PMC min-max}. By choosing an appropriate prescribing function $h \in C^\infty_{\Gpm}(M)$, we obtain the first multiplicity one type result as follows. 

\begin{theorem}\label{Thm: multiplicity one}
	Let $(M, g_{_M})$ be a closed connected orientable $3$-dimensional Riemannian manifold, and $G$ be a finite group acting freely and effectively as isometries on $M$ so that $G$ admits an index $2$ subgroup $G_+$ with coset $G_-=G\setminus G_+$. Suppose that $X$ is a finite dimensional cubical complex with $Z \subset X$ a subcomplex. Consider $\Phi_{0}: X \rightarrow \sEG$ a continuous map and $\Pi$ the $(X, Z)$-homotopy class of $\Phi_{0}$. Assume that 
    \[
    \mathbf{L}(\Pi) > \max_{x \in Z} \mathcal{H}^2(\Phi_0(x)).  
    \]
	Then there exists a closed embedded $G$-invariant minimal surface $\Gamma$ with connected components $\{\Gamma_{j}\}_{j=1}^J$ and integer multiplicities $\{m_j\}_{j=1}^J$ so that 
	\begin{itemize}
		\item[(i)] if $\Gamma_j$ is unstable and $2$-sided, then $m_j=1$;
		\item[(ii)] if $\Gamma_j$ is $1$-sided, then its connected double cover is stable. 
	\end{itemize}
	Moreover the weighted total genus of $\Gamma$ (\ref{eq:genus bound0}) is bounded by $\mathfrak{g}_0$.
\end{theorem}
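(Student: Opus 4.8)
The plan is to run the $G$-equivariant prescribed--mean--curvature min--max of Sections~\ref{Sec: PMC min-max}--\ref{Sec: Convergence} along a vanishing sequence of perturbations $\A^{\epsilon_k h}$, $\epsilon_k\downarrow 0$, for a suitably chosen $h\in C^\infty_{\Gpm}(M)$, and then to extract the multiplicity data from the supersolution of Proposition~\ref{Prop: supersolution} together with the genus bound of Theorem~\ref{thm:genus bound}. Since $\mathbf{L}(\Pi)>\max_{x\in Z}\mcH^2(\Phi_0(x))$ and $|\A^{\epsilon_k h}-\mcH^2|\le\epsilon_k(\sup_M|h|)\Vol(M)$ on $\sEG$, the nontriviality condition \eqref{eq:width nontrivial1} holds for $\A^{\epsilon_k h}$ once $\epsilon_k$ is small; Theorems~\ref{thm:pmc min-max theorem} and~\ref{thm:existence of almost minimizing pairs} then produce strongly $\A^{\epsilon_k h}$-stationary $C^{1,1}$ $(\Gpm,\epsilon_k h)$-boundaries $(\Sigma_k,\Omega_k)$ with $\A^{\epsilon_k h}(\Sigma_k,\Omega_k)=\mathbf{L}^{\epsilon_k h}(\Pi)$ that are $(G,\A^{\epsilon_k h})$-almost minimizing in small $G$-annuli. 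By Theorem~\ref{thm:convergence of pmc to minimal}, after passing to a subsequence $\Sigma_k$ converges as varifolds to a closed embedded $G$-invariant minimal surface $\Gamma=\spt\|V_\infty\|$, with $C^{1,1}_{loc}$ convergence away from a finite $G$-set $\mcY$; writing $V_\infty=\sum_{j}m_j[\Gamma_j]$ over the connected components and using $\epsilon_k\int_{\Omega_k}h\to 0$ gives $\sum_j m_j\mcH^2(\Gamma_j)=\mathbf{L}(\Pi)$, and after re-merging $G$-orbits as in Remark~\ref{Rem: idea} we may assume each $\Gamma_j$ is $G$-invariant.

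For the weighted genus bound I would first run the unperturbed ($h\equiv 0$) min--max to obtain a minimal surface $\Sigma^0$ realizing $\mathbf{L}(\Pi)$ with the bound \eqref{eq:genus bound0}, and then pick $h\in C^\infty_{\Gpm}(M)$, $h\not\equiv 0$, supported in $\bigcup_j\overline{(B'_j)^G}$ with $\overline{(B'_j)^G}\subset B^G_j$, where $B^G_1,\dots,B^G_\alpha$ are finitely many pairwise disjoint appropriately small open $G$-balls centered at regular points of $\Sigma^0$ and meeting $\Sigma^0$ in embedded disks. With this support condition, hypothesis~(2) of Theorem~\ref{thm:genus bound} holds automatically for \emph{any} limit $V_\infty$ (the open set $M\setminus\bigcup_j\overline{(B'_j)^G}$ is a neighborhood of $\spt\|V_\infty\|\setminus\bigcup_j B^G_j$ on which $h\equiv 0$), while hypothesis~(1) is arranged by a confinement/area argument exactly as in the proof of \cite[Theorem~B]{wangzc2023existenceFour}, carried over verbatim to the $G$-equivariant objects. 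Then Theorem~\ref{thm:genus bound} yields \eqref{eq:genus bound0}$\,\le\mathfrak{g}_0$. I expect this step -- producing an $h$ that is simultaneously \emph{nontrivial} (so that Proposition~\ref{Prop: supersolution} has content) and compatible with the genus bound for the a priori unknown limit -- to be the main obstacle.

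Next I would read off the multiplicities. Suppose $\Gamma_j$ is $2$-sided with $m_j\ge 2$. Proposition~\ref{Prop: supersolution} furnishes a $G$-invariant $\varphi\in W^{1,2}(\Gamma_j)$ with $\varphi\ge 0$, $\|\varphi\|_{L^2(\Gamma_j)}=1$ and
\[
\int_{\Gamma_j}\langle\nabla\varphi,\nabla f\rangle-\bigl(\Ric(\nu,\nu)+|A^{\Gamma_j}|^2\bigr)\varphi f\,d\mcH^2\ \ge\ \int_{\Gamma_j}2chf\,d\mcH^2,\qquad\forall\,f\in C^1(\Gamma_j),\ f\ge 0.
\]
Restricting to $G$-invariant $f\ge 0$: since $h|_{\Gamma_j}$ is $G_-$-antisymmetric and $G$ acts by measure--preserving isometries of $\Gamma_j$, one has $\int_{\Gamma_j}hf\,d\mcH^2=0$, so the right-hand side vanishes regardless of the value of $c$, and $\varphi$ is a nonnegative weak supersolution of the Jacobi equation on $\Gamma_j$ relative to $G$-invariant variations. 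By the Harnack inequality $\varphi>0$, and a Fischer--Colbrie--Schoen type argument then shows $\Gamma_j$ is $G$-stable; but the lowest Jacobi eigenfunction of the $G$-invariant minimal surface $\Gamma_j$ is itself $G$-invariant (the eigenspace is one-dimensional and $G$ is isometric), so $G$-stability is equivalent to stability, contradicting the hypothesis in~(i). Hence $m_j=1$ whenever $\Gamma_j$ is unstable and $2$-sided, which is~(i).

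Finally, for a $1$-sided component $\Gamma_j$ with connected double cover $\widetilde\Gamma_j$, I would use that an orientable separating surface close to $\Gamma_j$ inside a twisted $I$-bundle neighborhood must double cover $\Gamma_j$ (its zero section does not separate such a neighborhood); hence, after lifting, the sheets of $\Sigma_k$ over $\widetilde\Gamma_j$ occur in deck--paired families, and the normalized gap between the topmost sheet and its deck image is a positive function on $\widetilde\Gamma_j$ satisfying the same Jacobi supersolution inequality -- the $h$-term again annihilated by the $G_-$-antisymmetry -- which forces $\widetilde\Gamma_j$ to be stable; this is~(ii). Taking $\Gamma=\spt\|V_\infty\|=\bigcup_j\Gamma_j$ with multiplicities $m_j$, together with the genus bound from the second step, completes the proof; the remaining points (smoothness and $G$-invariance of $\Gamma$, and that $\Gamma$ realizes the width) are already supplied by the results of Sections~\ref{Sec: PMC min-max}--\ref{Sec: Convergence}.
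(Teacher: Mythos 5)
Your overall scaffolding (perturb to $\A^{\epsilon_k h}$, run the equivariant PMC min--max, pass to the limit, use Proposition~\ref{Prop: supersolution} plus Theorem~\ref{thm:genus bound}) matches the paper, but the step you yourself flag as the main obstacle is where the proposal genuinely breaks down. You propose to choose $h$ supported in balls adapted to a minimal surface $\Sigma^0$ obtained from the \emph{unperturbed} min--max. Nothing forces the limits $V_\infty$ of the $\A^{\epsilon_k h}$-min--max boundaries to coincide with, or even be close to, this pre-selected $\Sigma^0$: the limit can be any $G$-invariant minimal surface of area $\mathbf{L}(\Pi)$ satisfying Property {\bf(R')}. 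Consequently you cannot verify hypothesis (1) of Theorem~\ref{thm:genus bound} (that $\pi(\spt\|V_\infty\|\cap B^G_j)$ is a disk) for the actual limit, nor that $h$ restricted to each limit component changes sign, which is the hypothesis under which the Wang--Zhou supersolution (hence Proposition~\ref{Prop: supersolution}) is produced; if the limit avoids $\spt h$ entirely, the perturbation carries no information about multiplicity. The paper's resolution is precisely what is missing from your proposal: first reduce to a $G$-bumpy metric, then invoke the compactness result (Theorem~\ref{Thm: compactness for property R2}) to conclude that \emph{all} possible limits lie in a finite set $\mathcal{M}(C)=\{S_1,\dots,S_\alpha\}$ of $G$-invariant minimal surfaces with Property {\bf(R')}, and only then choose $h$ adapted simultaneously to every $S_i$ (sign change on each $S_i$, disk intersections with the chosen balls, and orthogonality to the first Jacobi eigenfunctions). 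The ``confinement/area argument exactly as in \cite[Theorem B]{wangzc2023existenceFour}'' that you invoke \emph{is} this finiteness argument; it cannot be replaced by anchoring $h$ on $\Sigma^0$, and without the bumpy reduction the finiteness fails altogether.

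On the positive side, your observation about the multiplicity step is a genuine (and correct, with a caveat) simplification of the paper's choice of $h$: since the first eigenfunction $\phi_1$ of the Jacobi operator on a $2$-sided component is positive and hence $G$-invariant, and $h$ is $G_-$-antisymmetric, the term $\int_{\Gamma_j} h\phi_1\,d\mcH^2$ vanishes automatically whenever the component $\Gamma_j$ is invariant under the full group $G$ (including $G_-$); testing the supersolution inequality with $\phi_1$ then gives $\lambda_1(\Gamma_j)\ge 0$ exactly as in the paper, without imposing the orthogonality conditions (3)--(4) on $h$. The caveat is that a connected component need only be $G_+$-invariant with $G_-$ permuting components, in which case the cancellation must be run on the $G$-orbit with a $G$-invariant test function and one must know $\varphi>0$ on the whole orbit (which follows from $G$-invariance of $\varphi$ and Harnack). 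Also note that ``Fischer--Colbrie--Schoen plus $G$-stability equals stability'' is an unnecessary detour here: once the $h$-term is killed, testing directly against $\phi_1$ already yields stability. None of this, however, repairs the main gap above concerning the a priori control of the limit and the hypotheses of the genus bound and supersolution results.
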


\begin{proof}
    It is sufficient to verify the theorem when $g_{_M}$ is $G$-bumpy, i.e. any finite cover of a closed embedded $G$-invariant minimal hypersurface in $(M, g_{_M})$ is non-degenerate. Given a constant $C > 0$ (e.g. $C := \mathbf{L}(\Pi) + 1$), set $\mathcal{M}(C)$ the space of all closed embedded $G$-invariant minimal surfaces $\Gamma$ satisfying $\mathcal{H}^2(\Gamma) \leq C$ and Property \textbf{(R')}. 
    By Theorem \ref{Thm: compactness for property R2}, $\mathcal{M}(C) = \{S_1, \ldots, S_\alpha\}$ is a finite set. Take $p_1, \ldots, p_{\alpha}$ in $M$ so that $p_{i} \in S_{j}$ if and only if $j = i$. 
    Let $r > 0$ be small enough such that 
    \begin{itemize}
        \item $B^{G}_{r}(p_1), \cdots, B^{G}_{r}(p_\alpha)$  are pairwise disjoint appropriately small open $G$-balls;
        \item $B_{r}^G(p_{i})$  intersects $S_j$ if and only if $j = i$;
        \item $\pi(B_{r}^G(p_{i}) \cap S_{i})$ is an embedded disk for all $i = 1, \cdots, \alpha$. 
    \end{itemize}
    Now, choose $h \in C^\infty_{\Gpm}(M)$ with $h(M) \subset [-1, 1]$ satisfying that for all $i = 1, \cdots, \alpha$, 
    \begin{enumerate}
        \item $h = 0$ outside $\cup_{i} B^G_r(p_i)$;
        \item $h > 0$ in some component of $B_{r/2}^G(p_{i})$ (then $h < 0$ in another component of $B_{r/2}^G(p_{i})$);
        \item if $S_i$ is $2$-sided, then $\int_{S_i} h\phi_i d\mathcal{H}^2 = 0$, where $\phi_i$ is the first eigenfunction of the Jacobi operator on $S_i$;
        \item if $S_i$ is $1$-sided, then $\int_{\tilde{S}_i} h\phi_i d\mathcal{H}^2 = 0$, where $\phi_i$ is the first eigenfunction of the Jacobi operator on $\tilde{S}_i$ and $\tilde{S}_i$ is the connected double cover of $S_i$.
    \end{enumerate}

    Choose $\epsilon_k \rightarrow 0$ so that the nontriviality condition (\ref{eq:width nontrivial1}) with $h$ replaced by $\epsilon_k h$ is met for sufficiently large $k$. By combining Theorem \ref{thm:pmc min-max theorem}, Theorem \ref{thm:convergence of pmc to minimal}, and arguing similarly as in \cite{wangzc2023existenceFour}, we obtain a closed embedded $G$-invariant minimal surface $\Gamma_{\infty} = \cup_{j = 1}^J \Gamma_j$, where each $\Gamma_j$ is $G$-invariant and belongs to $\mathcal{M}(C)$. Assuming $\Gamma_j = S_{i_j}$ for $i_j \in\{ 1, \cdots, \alpha \}$, the sign of $h$ must change on each $\Gamma_j$. If $\Gamma_j$ is $2$-sided with $m_{j} \geq 2$, it follows from Proposition \ref{Prop: supersolution} (nonnegative and nontriviality of $\varphi$), the positivity of $\phi_{i_j}$, and our choice of $h$ that the first eigenvalue satisfies $\lambda_{1}(\Gamma_j) \geq 0$. Hence, any connected component $\Gamma_j$ is stable if $\Gamma_j$ is $2$-sided with $m_j \geq 2$, which shows the first item. The same argument applied to the double cover of $1$-sided $G$-invariant $\Gamma' \subset \Gamma_\infty$ proves the second item. 

    By the choice of $h$, we have $h = 0$ outside $B_{r}^G(p_i)$. Since $\pi(B_{r}^G(p_i) \cap \Gamma_j)$ is a disk, it follows from Theorem \ref{thm:genus bound} that $\Gamma$ admits the genus bound. 
\end{proof}

\subsection{Multiplicity one for classical equivariant min-max}\label{Sec: multi one for classical}

In this part, we show an equivariant version of a multiplicity one type result in analogy to \cite[Theorem 7.3]{wangzc2023existenceFour}. This relies on a version of Simon-Smith min-max theory for un-oriented $G$-equivariant $\Gpm$-separating surfaces. 

Consider $(M, g_{_M})$ and $G$ as in Theorem \ref{Thm: multiplicity one}. Fix $\Sigma_{0}$ a $G$-connected closed surface of genus $\mathfrak{g}_{0}$.  We equip 
\[
\xEG(\Sigma_{0}) := \{\phi(\Sigma_{0})|\phi: \Sigma_0 \rightarrow M \text{ is a $G$-equivariant $\Gpm$-separating embedding}\} 
\]
with the un-oriented smooth topology. Let $X$ be a finite dimensional cubicial complex. Given  a fixed continuous map $\Phi_{0}: X \rightarrow \xEG$, we denote by $\Pi$ the collection of all continuous $\Phi: X \rightarrow \xEG$ which is homotopic to $\Phi_0$. Such a $\Phi$ is called a \emph{sweepout by $\Sigma_0$}, or simply a sweepout. Define 
\[
\mathbf{L}(\Pi) := \inf_{\Phi \in \Pi} \sup_{x \in X} \mathcal{H}^2(\Phi(x)). 
\]
In our setting, the space $\sEG$ defined in Section \ref{Sec: PMC min-max} double covers $\xEG$. Let $\bar{\lambda} \in H^{1}(\xEG; \mathbb{Z}_2)$ be the generator dual to the nontrivial element of $\pi_1(\xEG)$  corresponding to the projection $\mathbf{\pi}: \sEG \rightarrow \xEG$. Note a loop $\phi$ in $\xEG$ forms a sweepout in the sense of Almgren-Pitts if and only if $\bar{\lambda}[\phi] \neq 0$.

\begin{theorem}\label{Thm: multiplicity one classical}
    Consider the above setup and let $\Pi$ be a homotopy class of sweepouts by $\Sigma_0$ with 
    \[
        \mathbf{L}(\Pi) > 0. 
    \]
    Then there exists a closed embedded $G$-invariant minimal surface $\Gamma$ with $G$-connected components $\{\Gamma_{j}\}_{j=1}^J$ and integer multiplicities $\{m_j\}_{j=1}^J$ so that 
    \[
        \mathbf{L}(\Pi) = \mathcal{H}^2(\Gamma) = \sum_{j = 1}^{J} m_j \mathcal{H}^2(\Gamma_j)
    \]
    and 
	\begin{itemize}
		\item[(i)] if $\Gamma_j$ is unstable and $2$-sided, then $m_j=1$;
		\item[(ii)] if $\Gamma_j$ is $1$-sided, then its connected double cover is stable. 
	\end{itemize}
	Moreover the genus bound (\ref{eq:genus bound0}) holds for $\Gamma$ with $\mathfrak{g}_{0} = \mathfrak{g}(\Sigma_0)$.
\end{theorem}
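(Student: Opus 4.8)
To prove the theorem, the plan is to reduce it to the relative statement of Theorem~\ref{Thm: multiplicity one} by lifting an unoriented sweepout $\Phi_0\colon X\to\xEG$ to the oriented double cover $\mathbf{\pi}\colon\sEG\to\xEG$ detected by $\bar\lambda$, running the $\A^{\epsilon_k h}$--min-max there, and transporting the area identity, the multiplicity dichotomy, and the genus bound back down. As in the proof of Theorem~\ref{Thm: multiplicity one}, it suffices to treat a $G$-bumpy metric, these being generic among $G$-invariant metrics (White's argument \cite{white1991space} is insensitive to the free isometric $G$-action), so that $\mathcal{M}(C)=\{S_1,\dots,S_\alpha\}$ is finite for $C:=\mathbf{L}(\Pi)+1$ by Theorem~\ref{Thm: compactness for property R2}. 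We then fix once and for all the prescribing function $h\in C^\infty_{\Gpm}(M)$ with $h(M)\subset[-1,1]$, supported in disjoint appropriately small open $G$-balls $B^G_r(p_i)$ meeting $S_i$ only, changing sign in each $B^G_{r/2}(p_i)$, with $\pi(B^G_r(p_i)\cap S_i)$ an embedded disk, and $L^2$-orthogonal to the first Jacobi eigenfunction of $S_i$ (resp. of the connected double cover $\tilde S_i$) when $S_i$ is $2$-sided (resp. $1$-sided), exactly as in that proof.

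\textbf{Lifting to a relative class.} Since $\Phi_0$ represents a homotopy class of sweepouts, $\Phi_0$ lifts only to a nontrivial double cover $\hat X:=\Phi_0^*\,\sEG$ of (the relevant part of) $X$, which carries a free deck involution $\tau$ and a lift $\hat\Phi_0\colon\hat X\to\sEG$ with $\hat\Phi_0\circ\tau=\iota\circ\hat\Phi_0$, where $\iota(\Sigma,\Omega)=(\Sigma,M\setminus\Omega)$ is the orientation-reversing involution of $\sEG$. Following Marques--Neves and Zhou as adapted in \cite[\S7]{wangzc2023existenceFour}, one manufactures from these data a finite-dimensional cubical pair $(\hat X,\hat Z)$ and a relative homotopy class $\hat\Pi$ of maps $\hat X\to\sEG$ with $\hat\Phi_0|_{\hat Z}$ prescribed $\iota$-paired boundary data, such that the nontriviality condition \eqref{eq:width nontrivial1} holds for $\A^{\epsilon_k h}$ with a suitable $\epsilon_k\to0$ and
\[
    \mathbf{L}^{\epsilon_k h}(\hat\Pi)\;=\;\mathbf{L}(\Pi)+o(1)\qquad(k\to\infty).
\]
The width equality is forced by the uniform convergence $\A^{\epsilon h}(\,\cdot\,,\Omega)=\mathcal{H}^2(\,\cdot\,)-\epsilon\!\int_\Omega h\to\mathcal{H}^2(\,\cdot\,)$ together with the fact, built into $(\hat X,\hat Z)$, that every $\sEG$-family in $\hat\Pi$ descends to an $\xEG$-family over $X$ lying in $\Pi$ with the same supremal area, and conversely; the nontriviality uses $\mathbf{L}(\Pi)>0$ and $\bar\lambda[\Phi_0]\neq0$. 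Here one also uses the elementary compactness and pull-tight for the \emph{unoriented} equivariant Simon--Smith min-max, a verbatim $h\equiv 0$ specialization of the constructions in Sections~\ref{Sec: PMC min-max}--\ref{Sec: Convergence}, to guarantee that $\mathbf{L}(\Pi)$ is the honest width of $\Pi$.

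\textbf{Applying Theorem~\ref{Thm: multiplicity one} and descending.} With $(\hat X,\hat Z)$, the class $\hat\Pi$, and the function $h$ above, Theorem~\ref{Thm: multiplicity one} applies and produces a closed embedded $G$-invariant minimal surface $\Gamma$ with components $\{\Gamma_j\}_{j=1}^J$ and multiplicities $\{m_j\}$, all lying in $\mathcal{M}(C)$, with $\mathcal{H}^2(\Gamma)=\mathbf{L}^{\epsilon_k h}(\hat\Pi)+o(1)=\mathbf{L}(\Pi)$, obeying (i) and (ii), and satisfying the weighted genus bound \eqref{eq:genus bound0}; re-merging components as in Remark~\ref{Rem: idea} makes each $\Gamma_j$ $G$-connected while preserving $m_j$. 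The multiplicity conclusions come out exactly as in the proof of Theorem~\ref{Thm: multiplicity one}: when $\Gamma_j$ is $2$-sided with $m_j\ge 2$, Proposition~\ref{Prop: supersolution} yields a nonnegative $G$-invariant weak supersolution on $\Gamma_j$ which, tested against the positive first Jacobi eigenfunction $\phi_{i_j}$ and using $\int_{S_{i_j}}h\phi_{i_j}=0$, forces $\lambda_1(\Gamma_j)\ge 0$, i.e. stability; the same argument on the connected double cover handles the $1$-sided case and gives (ii). Since $h\equiv 0$ off $\cup_i B^G_r(p_i)$ and $\pi(B^G_r(p_i)\cap\Gamma_j)$ is an embedded disk, Theorem~\ref{thm:genus bound} gives \eqref{eq:genus bound0} with $\mathfrak{g}_0=\mathfrak{g}(\Sigma_0)$, and $\mathbf{L}(\Pi)=\mathcal{H}^2(\Gamma)=\sum_j m_j\,\mathcal{H}^2(\Gamma_j)$ is read off from the width comparison.

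\textbf{Expected main difficulty.} The crux is the second step: arranging the equivariant cubical pair $(\hat X,\hat Z)$ so that the $\A^{\epsilon_k h}$-width of $\hat\Pi$ neither drops below nor exceeds $\mathbf{L}(\Pi)$ --- that is, the covering-space bookkeeping tying together $\bar\lambda$, the double cover $\mathbf{\pi}\colon\sEG\to\xEG$, and $(\hat X,\hat Z)$, including the verification of the nontriviality \eqref{eq:width nontrivial1} purely from $\mathbf{L}(\Pi)>0$. Two further points requiring care, but already handled by the machinery of Sections~\ref{Sec: Convergence}--\ref{Sec: multiplicity one}, are: the possible presence of $1$-sided components of $\Gamma$, so that the $(\Gpm,\A^h)$-replacements and the supersolution of Proposition~\ref{Prop: supersolution} must be read on connected double covers; and the equivariant curve-lifting lemma internal to Theorem~\ref{thm:genus bound}, which rests on the $G$-equivariant analogues of \cite[Prop.~5.3]{wangzc2023existenceFour} and \cite[Prop.~2.2]{ketover2019genus}.
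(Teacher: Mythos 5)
Your overall strategy (reduce to the relative Theorem \ref{Thm: multiplicity one} by passing from the unoriented space $\xEG$ to its oriented double cover $\sEG$) is the same as the paper's, but the step you yourself flag as the crux is carried out incorrectly, and this is a genuine gap. You lift $\Phi_0$ over \emph{all} of $X$ to the pulled-back double cover $\hat X=\Phi_0^*\sEG$ and then assert that the resulting relative class $\hat\Pi$ satisfies $\mathbf{L}^{\epsilon_k h}(\hat\Pi)=\mathbf{L}(\Pi)+o(1)$ because ``every $\sEG$-family in $\hat\Pi$ descends to an $\xEG$-family in $\Pi$ with the same supremal area, and conversely.'' The converse direction is fine (lifts of a minimizing sequence in $\Pi$ give $\mathbf{L}(\hat\Pi)\leq\mathbf{L}(\Pi)$), but the forward direction fails: elements of $\hat\Pi$ are obtained from $\hat\Phi_0$ by \emph{arbitrary} homotopies in $\sEG$ rel $\hat Z$, which need not be equivariant with respect to the deck involution $\tau$ and the orientation-reversing involution of $\sEG$, and hence do not descend to families over $X$ at all. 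Consequently the inequality $\mathbf{L}(\hat\Pi)\geq\mathbf{L}(\Pi)$ is unjustified; a priori the width of the lifted class could drop (this is precisely the multiplicity-two phenomenon the theorem is meant to exclude, so assuming it away is circular). It is also unclear what $\hat Z\subset\hat X$ is supposed to be when $X$ is a closed complex.

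The paper's proof handles exactly this point differently. After the unoriented equivariant min-max produces the width $\mathbf{L}(\Pi)$ and bumpiness makes the relevant set $\mathcal{S}$ of critical varifolds finite, one sets $Z_i=\{x:\mathbf{F}(\Phi_i(x),\mathcal{S})\geq\bar\epsilon\}$ and $Y_i=\overline{X\setminus Z_i}$, uses a continuous version of Pitts' combinatorial argument to deform the minimizing sequence so that $\sup_{Z_i}\mathcal{H}^2(\Phi_i)<\mathbf{L}(\Pi)$, and then lifts $\Phi_i$ only over the \emph{topologically trivial} region $Y_i$ (so the lift is an honest map $\tilde\Phi_i:Y_i\to\sEG$, not a map from a double cover). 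This yields a relative $(Y_i,\partial Y_i)$-class $\tilde\Pi_i$ whose boundary areas are strictly below $\mathbf{L}(\Pi)$, and the key inequality $\mathbf{L}(\tilde\Pi_i)\geq\mathbf{L}(\Pi)$ is then proved by a contradiction argument as in \cite[Lemma 5.8]{zhou2020multiplicity} --- a substantive step, not a formal descent correspondence. Only then is Theorem \ref{Thm: multiplicity one} applied to $\tilde\Pi_i$, letting $i\to\infty$. Your remaining ingredients (choice of $h$, Proposition \ref{Prop: supersolution}, the genus bound via Theorem \ref{thm:genus bound}) belong to the proof of Theorem \ref{Thm: multiplicity one} itself and are fine, but without the $Y_i/Z_i$ decomposition and the Zhou-type width comparison your reduction does not go through.
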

\begin{proof}
    We follow the proof of \cite[Theorem 7.3]{wangzc2023existenceFour} (also see \cite[Theorem 5.2]{zhou2020multiplicity}) with some alterations.
    Similar to before, we only need to check the theorem for $G$-bumpy metric $g_{_M}$. The existence of a closed embedded $G$-invariant minimal surface $\Sigma$ satisfying $\mathcal{H}^2(\Sigma) = \mathbf{L}(\Pi)$ and Property \textbf{(R') }is guaranteed by Theorem \ref{thm:pull-tight}, Theorem \ref{thm:existence of almost minimizing pairs}, and Theorem \ref{thm:pmc min-max theorem}. 

    Let $\mathcal{S}$ be the collection of all stationary $2$-varifolds with mass lying in $[\mathbf{L}(\Pi) - 1, \mathbf{L}(\Pi) + 1]$, whose support is a closed embedded $G$-invariant minimal surface satisfying \textbf{(R')}.  Note that $\mathcal{S}$ is a finite set by bumpiness. Given a small $\bar{\epsilon} > 0$, set 
    \[
    Z_{i} = \{x \in X: \mathbf{F}(\Phi_{i}(x), \mathcal{S}) \geq \bar{\epsilon}\}, \text{ and } Y_i = \overline{X \setminus Z_i}.
    \]

    Since each $Y_i$ is topologically trivial, by adapting a continuous version of Pitts' combinatorial argument to $\{\Phi_i\}$, we can find another minimizing sequence, still denoted by  $\{\Phi_i\}$, such that $\mathbf{L}(\{\Phi_i|_{Z_i}\}) < \mathbf{L}(\Pi)$ for sufficiently large $i$. Lifting the maps $\Phi_i: Y_i \rightarrow \xEG$ to its double cover $\tilde{\Phi}_{i}: Y_i \rightarrow \sEG$, we have for $i$ large enough,
    \[
    \sup_{x \in \partial Y_i} \mathcal{H}^2(\tilde{\Phi}_i(x)) \leq \sup_{x \in Z_i} \mathcal{H}^2(\tilde{\Phi}_i(x)) < \mathbf{L}(\Pi).
    \]

    Denote $\tilde{\Pi}_{i}$ the $(Y_i, \partial Y_i)$-homotopy class associated with $\tilde{\Phi}_i|_{Y_i}$ in $\sEG$. By employing a contradiction argument as in \cite[Lemma 5.8]{zhou2020multiplicity}, we obtain 
    $\mathbf{L}(\tilde{\Pi}_i) \geq \mathbf{L}(\Pi) > \sup_{x \in \partial Y_i} \mathcal{H}^2(\tilde{\Phi}_i(x))$. 
    Then the proof is completed by applying Theorem \ref{Thm: multiplicity one} to $\tilde{\Pi}_i$ and letting $i \rightarrow \infty$. 
\end{proof}

\begin{proposition}\label{Prop: exist 1-sided component}
    In the above theorem, there exists a subset $\mathcal{I}\subset\{1,\dots,J\}$ so that $\Gamma'=\cup_{j\in\mathcal{I}}\Gamma_j$ is a $\Gpm$-separating $G$-surface. 
\end{proposition}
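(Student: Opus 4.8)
The plan is to realize the separating sub-collection as the reduced boundary of a subsequential flat limit of the co-bounded regions of the min-max sequence. Recall first how $\Gamma=\bigsqcup_{j=1}^{J}\Gamma_{j}$ and the multiplicities $m_{j}$ arise, i.e.\ how the varifold $V_{\infty}:=\sum_{j}m_{j}|\Gamma_{j}|$ is produced: combining Theorem~\ref{Thm: multiplicity one}, Theorem~\ref{thm:convergence of pmc to minimal}, and a diagonalization over the auxiliary classes $\tilde\Pi_{i}$ appearing in the proof of Theorem~\ref{Thm: multiplicity one classical}, $V_{\infty}$ is the varifold limit of a sequence $\{|\Sigma_{l}|\}_{l\in\N}$ in which each $(\Sigma_{l},\Omega_{l})$ is a $C^{1,1}$ $\Gpm$-boundary with $\Omega_{l}\in\C^{\Gpm}(M)$ and $\Sigma_{l}=\bd\Omega_{l}$, and with $\mathcal H^{2}(\Sigma_{l})$ uniformly bounded. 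Since every $g\in G_{-}$ is an isometry of $M$ with $g\cdot\Omega_{l}=M\setminus\Omega_{l}$, we obtain $\Vol(\Omega_{l})=\tfrac12\Vol(M)$ for all $l$.

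I would then pass to a subsequence so that $\Omega_{l}\to\Omega_{\infty}$ in the flat topology (by the compactness theorem for Caccioppoli sets, using the uniform perimeter bound). As $\C^{\Gpm}(M)$ is closed in $\C(M)$ and $\VCG(M)$ is $\sF$-closed, one gets $\Omega_{\infty}\in\C^{\Gpm}(M)$ and $(V_{\infty},\Omega_{\infty})\in\VCG(M)$; moreover $\Vol(\Omega_{\infty})=\tfrac12\Vol(M)$, so $\Omega_{\infty}\neq\emptyset,M$. Applying Lemma~\ref{Lem: preliminary of VCG-space}(i) to $(V_{\infty},\Omega_{\infty})$ gives $\spt(\bd\Omega_{\infty})\subset\spt\|V_{\infty}\|=\Gamma=\bigsqcup_{j}\Gamma_{j}$ and $\|\bd\Omega_{\infty}\|\le\|V_{\infty}\|$. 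Since the $\Gamma_{j}$ are pairwise disjoint, connected, closed, embedded surfaces, the constancy theorem for sets of finite perimeter forces $\mathbf 1_{\Omega_{\infty}}$ to be $\mathcal H^{3}$-a.e.\ constant on each connected component of $M\setminus\Gamma$; hence $\Omega_{\infty}$ coincides up to a null set with a union of such components, and by De Giorgi's structure theorem its reduced boundary is a union of whole components: $\bd\Omega_{\infty}=\sum_{j\in\mathcal I}[[\Gamma_{j}]]$ (mod $2$) for some $\mathcal I\subset\{1,\dots,J\}$, with $\Gamma':=\bigcup_{j\in\mathcal I}\Gamma_{j}=\spt(\bd\Omega_{\infty})$. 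Because $M$ is connected and $0<\Vol(\Omega_{\infty})<\Vol(M)$, the set $\Omega_{\infty}$ has positive perimeter, so $\mathcal I\neq\emptyset$.

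It then remains to check that $\Gamma'$ is a $\Gpm$-separating $G$-surface. It is $G$-invariant, because $\Omega_{\infty}\in\C^{\Gpm}(M)$ means $g\cdot\Omega_{\infty}=\Omega_{\infty}$ for $g\in G_{+}$ and $g\cdot\Omega_{\infty}=M\setminus\Omega_{\infty}$ for $g\in G_{-}$, so each such $g$ preserves $\spt(\bd\Omega_{\infty})=\Gamma'$; consequently $\Gamma'$ is a union of $G$-connected components of $\Gamma$. Taking $\Omega_{+}$ to be the union of the components of $M\setminus\Gamma'$ on which $\mathbf 1_{\Omega_{\infty}}\equiv1$ and $\Omega_{-}:=M\setminus\Clos(\Omega_{+})$ the union of the rest, one reads off $M\setminus\Gamma'=\Omega_{+}\sqcup\Omega_{-}$ with both $\Omega_{\pm}$ nonempty and open and with common boundary $\Gamma'$; transporting the above identities to these open representatives yields $G_{+}\cdot\Omega_{\pm}=\Omega_{\pm}$ and $G_{-}\cdot\Omega_{\pm}=\Omega_{\mp}$, as required.

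The step I expect to be the main obstacle is the identification $\bd\Omega_{\infty}=\sum_{j\in\mathcal I}[[\Gamma_{j}]]$: one must ensure both that the flat limit $\Omega_{\infty}$ is nondegenerate — which is precisely where the $G_{-}$-antisymmetry is essential, since it pins $\Vol(\Omega_{l})\equiv\tfrac12\Vol(M)$ and hence $\Omega_{\infty}\neq\emptyset,M$ — and that the reduced boundary of $\Omega_{\infty}$ is made up of entire components $\Gamma_{j}$, each with multiplicity one, which follows from $\spt(\bd\Omega_{\infty})\subset\Gamma$ (Lemma~\ref{Lem: preliminary of VCG-space}(i)) together with the connectedness of each $\Gamma_{j}$ and the constancy/structure theorems. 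The remaining verifications are routine point-set and $G$-symmetry bookkeeping.
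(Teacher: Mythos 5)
Your argument is correct and is essentially the paper's own proof: the paper likewise extracts a diagonal min-max subsequence of $\Gpm$-boundaries $(\Sigma_l,\Omega_l)$ converging as varifolds to $V_\infty$, passes to a flat limit $\Omega_\infty\in\C^{\Gpm}(M)$ with $\|\bd\Omega_\infty\|\le\|V_\infty\|$, applies the constancy theorem to write $\bd\Omega_\infty=\sum_j k_j[[\Gamma_j]]$ with $k_j\in\{0,1\}$, and uses $\Vol(\Omega_\infty)=\Vol(M)/2$ to get nontriviality and the $\Gpm$-separating property. Your phrasing via local constancy of $\mathbf 1_{\Omega_\infty}$ on components of $M\setminus\Gamma$ is just a reformulation of the same constancy-theorem step, so there is nothing substantively different to flag.
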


\begin{proof}
    By extracting a diagonal subsequence, one easily obtains a sequence $\Omega_k\in\CG(M)$ with $\bd\Omega_k\in\xEG$ so that $|\bd\Omega_k|\to V_\infty = \sum_{j=1}^J m_j|\Gamma_j|\in\V^G(M)$ as $k\to\infty$ in the varifolds sense, where $V_\infty$ is the min-max varifold. 
    It also follows from the compactness theorem that $\Omega_k$ converges (up to a subsequence) to some $\Omega_\infty\in\CG(M)$. 
    Hence, $\|\bd\Omega_\infty\|\leq \|V_{\infty}\|$ and $\bd\Omega_{\infty}$ is a $G$-invariant integral current supported in $\cup_{j=1}^J\Gamma_j$. 
    As an elementary fact, $\bd\Omega_\infty$ is also a $G$-invariant integral $n$-cycle in $ \cup_{j=1}^J\Gamma_j$ (cf. \cite[Appendix 8]{zhou2015min}), which implies $\bd\Omega_\infty=\sum_{j=1}^J k_j[[\Gamma_j]]$ for some $k_j\in \{0,1\}$ by the Constancy Theorem \cite[26.27]{simon1983lectures}. 
    Since $\Vol(\Omega_\infty)=\Vol(M)/2$, we see $\emptyset\neq \Gamma':=\cup\{ \Gamma_{j}: k_j =1\}$ is the smooth boundary of $\Omega_\infty\in\CG(M)$, which is $\Gpm$-separating.  
\end{proof}

\section{Minimal $\mathbb{RP}^2$ in $\mathbb{RP}^3$}\label{Sec: Minimal RP2}

In this section, let $M := S^3 \subset \mathbb{R}^4$ be the unit $3$-sphere, and $G:=\mZ_2$ acts on $M$ by the identity map $G_+=\{[0]\}$ and the antipodal map $G_-=\{[1]\}$. Consider the space
 \begin{align}
     \xEG:=\{\phi(S^2)|\phi:S^2\to S^3 \text{ a }G\text{-equivariant } G_{\pm}\text{-separating smooth embedding} \}
 \end{align}
endowed with the un-oriented smooth topology, where $G=\mathbb Z_2$ is given as above.

\subsection{Sweepouts formed by real projective planes}\label{Sec: sweepouts in RP^3}

We now describe three classes of sweepouts that detect three non-trivial cohomology classes in $H^k(\xEG;\mathbb Z_2)$, $k\in\{1,2,3\}$. 

To begin with, use $(a_1,a_2,a_3,a_4)$ and $[a_1,a_2,a_3,a_4]$ with $\sum_{i=1}^4 a_i^2=1$ to denote the points in $S^3$ and $\mathbb{RP}^3$ respectively. 
Define then 
\begin{align*}
    \widetilde{\mathcal G}((a_1,a_2,a_3,a_4))&:=\partial \left(\{a_1x_1+a_2x_2+a_3x_3+a_4x_4<0\}\cap S^3 \right),\\
    \mathcal G([a_1,a_2,a_3,a_4])&:=\{a_1x_1+a_2x_2+a_3x_3+a_4x_4=0\}\cap S^3,
\end{align*}
where $x_1,\dots, x_4$ are the coordinates functions in $\mathbb R^4$. 
Note $\widetilde{\mathcal G}(S^3)\cong S^3$ is the space of oriented great spheres and is a double cover of $\mathcal G(\RP^3)\cong \RP^3$ the space of un-oriented great spheres. 
Since $\mathcal G(\RP^3)\subset\xEG$, we can define our three maps:
\begin{align*}
    \Phi_1: \mathbb{RP}^1\to\xEG,\quad &[a_1,a_2]\to \mathcal{G}([a_1,a_2,0,0]);\\
    \Phi_2: \mathbb{RP}^2\to\xEG,\quad &[a_1,a_2,a_3]\to \mathcal{G}([a_1,a_2,a_3,0]);\\
    \Phi_3: \mathbb{RP}^3\to\xEG,\quad &[a_1,a_2,a_3,a_4]\to \mathcal{G}([a_1,a_2,a_3,a_4]).
\end{align*}
One notices that for any $k\in\{1,2,3\}$, $\Phi_k$ is a $k$-sweepout of $M=S^3$ in the sense of Almgren-Pitts. 
Indeed, consider the closed curve $\gamma:S^1=[0,2\pi]/\{0 \sim 2\pi\} \to \dmn(\Phi_k)= \RP^k$ given by $ \gamma(e^{i\theta})=[\cos(\theta/2),\sin(\theta/2),0,0]$, which is a generator of $\pi_1(\RP^k)$. 
Then the curve $\phi_k:=\Phi_k\circ\gamma$ in $\mathcal{G}(\RP^3)$ can be lifted to the curve $\tilde{\phi}_k:\theta\in [0,2\pi] \mapsto \widetilde{\mathcal{G}}((\cos(\theta/2),\sin(\theta/2),0,0))$ in the double cover $\widetilde{\mathcal{G}}(S^3)$ satisfying that $\tilde{\phi}_k(0)$ is $\tilde{\phi}_k(2\pi)$ with the opposite orientation. 
Hence, $\phi_k$ is a sweepout in the sense of Almgren-Pitts. 
Combining with the fact that the generator $\lambda\in H^1(\RP^k;\mZ_2) $ satisfies $\lambda(\gamma)=1$ and $\lambda^k\neq 0$, we conclude $\Phi_k$ is a $k$-sweepout in the sense of Almgren-Pitts (cf. \cite[Definition 4.1]{marques2017existence}).

Next, denote by $\iota: \xEG\to\Z_2(S^3;\mZ_2)$ the natural inclusion map into the space of mod-$2$ integral $2$-cycles, and by $\bar{\lambda}$ the generator of $H^1(\Z_2(S^3;\mZ_2);\mZ_2)$. 
Then the above result indicates $(\iota \circ \Phi_k)^*(\bar{\lambda}^k)\neq 0 \in H^k(\RP^k;\mZ_2)$ (cf. \cite[Definition 4.1]{marques2017existence}). 
In particular, we have  
\[\alpha:=\iota^*(\bar{\lambda})\in H^1(\xEG;\mZ_2)\]
satisfies $\alpha^k\neq 0\in H^k(\xEG;\mZ_2)$ for each $k\in \{1,2,3\}$. 

Finally, define $\mathscr{P}_k$, $k\in\{1,2,3\}$, to be the collection of continuous maps $\Phi$ from any cubical complex $X$ to $\xEG$ that detects $\alpha^k\in H^k(\xEG;\mZ_2)$, i.e. 
\[\mathscr{P}_k:=\left\{\Phi:X\to\xEG \left\vert \Phi^*(\alpha^k)\neq 0 \in H^k(X;\mZ_2) \right.\right\}. \]
Clearly, $\Phi_k\in\mathscr{P}_k$ for all $k=1,2,3$. 
Recall that 
\[\bL(\mathscr{P}_k):=\inf_{\Phi\in\mathscr{P}_k}\sup_{x\in\dmn(\Phi)} \mcH^2(\Phi(x)).\]

\subsection{Proof of Theorem \ref{Thm: minimal RP2 (main)}}
We have the following direct corollary by applying Theorem \ref{Thm: multiplicity one classical} and Proposition \ref{Prop: exist 1-sided component} to any homotopy class in $\mathscr{P}_k$. 
\begin{corollary}\label{Cor: exist min-max RP2}
    Suppose $\Pi\subset \mathscr{P}_k$, $k\in\{1,2,3\}$, is a homotopy class of sweepouts in $\xEG$. Then $\bL(\Pi)>0$ and the $G$-connected min-max minimal $G$-surfaces $\{\Gamma_j\}_{j=1}^J$ associated to $\Pi$ given by Theorem \ref{Thm: multiplicity one classical} satisfy that 
    \begin{itemize}
        \item[(i)] there is one $G$-component, say $\Gamma_1$, that is a $G_\pm$-separating minimal $2$-sphere, i.e. $\Gamma_1\in\xEG$; 
        \item[(ii)] every other $G$-component $\Gamma_j$, $j\geq 2$, is a $G$-invariant disjoint union of two minimal $2$-spheres. 
    \end{itemize}
\end{corollary}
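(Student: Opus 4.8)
The plan is to feed the homotopy class $\Pi$ into Theorem~\ref{Thm: multiplicity one classical} and Proposition~\ref{Prop: exist 1-sided component}, and then read off the structure of the min-max surface using two elementary facts: every closed surface embedded in $S^3$ is orientable, and the $\mZ_2$-action is free. The first thing I would check is the hypothesis $\bL(\Pi)>0$ of Theorem~\ref{Thm: multiplicity one classical}. Each $\Sigma\in\xEG$ is a smoothly embedded $2$-sphere, and since $S^3\setminus\Sigma=\Omega_+\sqcup\Omega_-$ with $g_-\cdot\Omega_\pm=\Omega_\mp$ and $g_-$ an isometry, one has $\Vol(\Omega_+)=\Vol(\Omega_-)=\tfrac12\Vol(S^3)$. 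The isoperimetric profile of the closed manifold $S^3$ is strictly positive on $(0,\Vol(S^3))$, so $\mcH^2(\Sigma)\geq c_0:=I_{S^3}\big(\tfrac12\Vol(S^3)\big)>0$ for every $\Sigma\in\xEG$, whence $\bL(\Pi)\geq c_0>0$.

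Applying Theorem~\ref{Thm: multiplicity one classical} to $\Pi$ then yields a closed embedded $G$-invariant minimal surface $\Gamma=\cup_{j=1}^J\Gamma_j$ with $G$-connected components $\Gamma_j$, multiplicities $m_j\geq 1$, and the weighted genus bound \eqref{eq:genus bound0} with $\mathfrak{g}_0=\mathfrak{g}(\Sigma_0)=\mathfrak{g}(S^2)=0$. Since any connected closed surface embedded in $S^3$ is separating (because $H_2(S^3;\mZ_2)=0$), hence two-sided and orientable, every connected component of $\Gamma$ is orientable; thus $I_U=\emptyset$ in \eqref{eq:genus bound0}, so $\sum_i m_i\,\mathfrak{g}(\Gamma_i)\leq 0$ and therefore $\mathfrak{g}(\Gamma_i)=0$ for every connected component, i.e. $\Gamma$ is a disjoint union of minimal $2$-spheres. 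Because $G=\mZ_2$ acts freely, each $G$-connected component $\Gamma_j$ is either \emph{(A)} a single $G$-invariant $2$-sphere, or \emph{(B)} a disjoint union $S\sqcup g_-(S)$ of two $2$-spheres interchanged by $g_-$; case (B) is exactly conclusion~(ii). In case (A), by Alexander's theorem $\Gamma_j$ bounds a closed $3$-ball on each side, and $g_-$ must swap these two balls — otherwise $g_-$ would restrict to a continuous self-map of a closed $3$-ball and have a fixed point by Brouwer's theorem, contradicting freeness — so $\Gamma_j$ is $\Gpm$-separating and $\Gamma_j\in\xEG$. (One also notes $\Gamma_j/G$ has Euler characteristic $1$, hence is $\RP^2$, so $\Gamma_j$ together with its $G$-action is the antipodal $S^2$, as required in the definition of $\xEG$.)

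It then remains to show there is exactly one component of type (A). For uniqueness, if $\Gamma_1,\Gamma_2$ were two distinct type-(A) components, then $\Gamma_1$ divides $S^3$ into disjoint balls $B^\pm$ with $g_-B^\pm=B^\mp$; being connected and disjoint from $\Gamma_1$, $\Gamma_2$ lies in one of them, say $B^+$, but then $\Gamma_2=g_-(\Gamma_2)\subset g_-(B^+)=B^-$, contradicting $B^+\cap B^-=\emptyset$. For existence, Proposition~\ref{Prop: exist 1-sided component} supplies $\mathcal{I}\subset\{1,\dots,J\}$ with $\Gamma'=\cup_{j\in\mathcal{I}}\Gamma_j=\bd\Omega_\infty$ for some $\Omega_\infty\in\CG(S^3)$ satisfying $g_-\Omega_\infty=S^3\setminus\Omega_\infty$. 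If $\Gamma'$ has $k$ connected ($2$-sphere) components, then Alexander duality gives that $S^3\setminus\Gamma'$ has $k+1$ components; since the free involution $g_-$ sets up a bijection between the components of $\Omega_\infty$ and those of its complement, $k+1$ is even, so $k$ is odd. As a type-(B) component contributes two connected components to $\Gamma'$, at least one $\Gamma_j$ with $j\in\mathcal{I}$ is of type (A); combined with uniqueness this gives exactly one type-(A) component, which we name $\Gamma_1$, proving (i) and (ii). I expect the only real difficulty here to be keeping the low-dimensional topology bookkeeping honest — in particular the parity count and the correct uses of Alexander's theorem, Alexander duality, and Brouwer's theorem — rather than any new analytic input beyond Theorem~\ref{Thm: multiplicity one classical} and Proposition~\ref{Prop: exist 1-sided component}.
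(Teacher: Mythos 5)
Your proposal is correct, and its skeleton is the same as the paper's: apply Theorem~\ref{Thm: multiplicity one classical} and Proposition~\ref{Prop: exist 1-sided component}, use the genus bound \eqref{eq:genus bound0} with $\mathfrak{g}_0=0$ to force all components to be spheres, and use disjointness of the $\Gamma_j$'s to pin down exactly one $\Gpm$-separating $G$-component. You deviate in two sub-steps, both legitimately. First, the paper gets $\bL(\Pi)>0$ by noting that maps in $\mathscr{P}_k$ are $k$-sweepouts in the Almgren--Pitts sense and quoting the standard width positivity, whereas you give a self-contained isoperimetric argument: since $g_-$ is an isometry swapping the two sides of any $\Sigma\in\xEG$, each side has exactly half the volume, so $\mcH^2(\Sigma)$ is bounded below by the (positive) isoperimetric profile at half-volume; this buys independence from the Almgren--Pitts machinery at that point. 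Second, the paper disposes of the dichotomy in one line (``combine Proposition~\ref{Prop: exist 1-sided component} with the fact that any two elements of $\xEG$ intersect''): your uniqueness argument via the two complementary balls swapped by $g_-$ is precisely the lifted form of that intersection fact, while your existence argument --- the Alexander-duality count that $k$ disjoint spheres cut $S^3$ into $k+1$ pieces, and the free involution swapping $\Omega_\infty$ with its complement forces $k+1$ even, hence an odd number of spheres in $\Gamma'$ and thus at least one single-sphere $G$-component --- makes fully explicit a parity step the paper leaves implicit. Your additional bookkeeping (orientability of embedded surfaces in $S^3$ so that $I_U=\emptyset$, Alexander's theorem plus Brouwer to see that a $G$-invariant sphere must be $\Gpm$-separating, and the classification of free involutions on $S^2$ to land in $\xEG$) is accurate and matches what the paper's terser statement tacitly uses.
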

\begin{proof}
    Firstly, $\bL(\Pi)>0$ since $\Pi\subset\mathscr{P}_k$ are $k$-sweepouts in the sense of Almgren-Pitts. 
    By Theorem \ref{Thm: multiplicity one classical}, $\{\Gamma_j\}_{j=1}^J$ satisfies the genus bound \eqref{eq:genus bound0}, which implies each $\Gamma_j$ is either a $G_\pm$-separating minimal sphere in $\xEG$ or a $G$-invariant disjoint union of two minimal spheres. 
    Combining Proposition \ref{Prop: exist 1-sided component} with the fact that every two elements in $\xEG$ have non-empty intersections, we know there is exactly one element of $\{\Gamma_j\}_{j=1}^J$ that is in $\xEG$. 
\end{proof}

Fix any Riemannian metric $g_{\mathbb{RP}^3}$ on
$\mathbb{RP}^3$. By \cite[Proposition 2.3]{bray2010area}, there exists an embedded area minimizing $\mathbb {RP}^2$ denoted as $\Sigma_0$, i.e. 
\[\mathcal{H}^2(\Sigma_0)=\inf\left\{\mathcal{H}^2(\Sigma):\Sigma \mbox{ is an embedded }\RP^2\subset\RP^3\right\}.\] 
Let $M=S^3$ with the pull-back metric $g_{_M}$, $G=\mZ_2$, and $\xEG$ be defined as above w.r.t. $(\RP^3,g_{_{\RP^3}})$. 
Then $\Sigma$ is an embedded minimal projective plane in $(\RP^3,g_{_{\RP^3}})$ if and only if $\pi^{-1}(\Sigma)\in\xEG$ is a minimal $2$-sphere in $(M,g_{_M})$. 

\begin{theorem}\label{Thm:L-S}
    Using the above notations, suppose $\xEG$ contains only finitely many elements that are minimal in $(M = S^3, g_{M})$. Then
    \begin{align*}
        0<2M_0<\mathbf{L}(\mathscr{P}_1)< \mathbf{L}(\mathscr{P}_2) < \mathbf{L}(\mathscr{P}_3), 
    \end{align*}
    where $M_0:=\mathcal{H}^2(\Gamma_0)$.
\end{theorem}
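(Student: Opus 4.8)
The plan is to run a Lusternik--Schnirelmann argument in the spirit of Marques--Neves \cite{marques2017existence} and Wang--Zhou \cite{wangzc2023existenceFour}, carried out inside the equivariant space $\xEG$ of un-oriented $G_\pm$-separating $2$-spheres in $(M,g_{_M})=(S^3,g_{_M})$. First, the non-strict chain and positivity are elementary. Every $\Sigma\in\xEG$ equals $\pi^{-1}(P)$ for an embedded $\RP^2\subset\RP^3$, so $\mathcal H^2(\Sigma)=2\mathcal H^2(P)\ge 2\mathcal H^2(\Sigma_0)=2M_0>0$, with equality at $\Sigma=\pi^{-1}(\Sigma_0)$; in particular $2M_0=\inf_{\Sigma\in\xEG}\mathcal H^2(\Sigma)$. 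Writing $\mathscr P_0$ for the family of all continuous maps from a nonempty cubical complex into $\xEG$ (those detecting $\alpha^0=1\in H^0(\xEG;\mathbb{Z}_2)$), this gives $\mathbf{L}(\mathscr P_0)=2M_0$. Since $\Phi^*(\alpha^{k+1})\neq0$ forces $\Phi^*(\alpha^k)\neq0$, we have $\mathscr P_3\subset\mathscr P_2\subset\mathscr P_1\subset\mathscr P_0$, hence $0<2M_0=\mathbf{L}(\mathscr P_0)\le\mathbf{L}(\mathscr P_1)\le\mathbf{L}(\mathscr P_2)\le\mathbf{L}(\mathscr P_3)$.

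For the strict inequalities, suppose toward a contradiction that $\mathbf{L}(\mathscr P_k)=\mathbf{L}(\mathscr P_{k+1})=:L$ for some $k\in\{0,1,2\}$. By Corollary \ref{Cor: exist min-max RP2}, Theorem \ref{thm:convergence of pmc to minimal} and the compactness Theorem \ref{Thm: compactness for property R2}, any min-max varifold of $\mathbf{L}$-width $L$ obtained from a homotopy class in $\mathscr P_{k+1}$ has the form $\sum_a m_a|\Gamma_a|$, where the $\Gamma_a$ are pairwise disjoint $G$-invariant closed embedded minimal surfaces of area $\le L$ satisfying Property \textbf{(R')} \eqref{eq:property R2}, exactly one $G$-component of which lies in $\xEG$. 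The finiteness hypothesis bounds that distinguished $\xEG$-component; combined with Theorem \ref{Thm: compactness for property R2} applied to the remaining (disjoint-pair) components, the set $\{\mathcal V_1,\dots,\mathcal V_N\}$ of critical varifolds of total mass $L$ that can occur is finite. For each $a$, the surfaces $\Sigma\in\xEG$ with $|\Sigma|$ close to $\mathcal V_a$ are, near $\spt\|\mathcal V_a\|$, finite ordered stacks of small normal graphs, so they admit arbitrarily small neighborhoods $\mathcal N_a\subset\xEG$ that are simply connected; hence $\alpha|_{\mathcal N_a}=0$. Choosing the $\mathcal N_a$ pairwise disjoint and setting $\mathcal N:=\bigcup_a\mathcal N_a$ gives $\alpha|_{\mathcal N}=0$.

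Now the pull-tight and almost-minimizing theory of Sections \ref{Sec: PMC min-max}--\ref{Sec: Convergence}, together with the combinatorial deformation of Almgren--Pitts \cite{pitts2014existence} used as in \cite{wangzc2023existenceFour}, produces $\varepsilon\in(0,L/2)$ and a minimizing sequence $\{\Phi_i\}\subset\mathscr P_{k+1}$, with $X_i:=\dmn(\Phi_i)$, such that $A_i:=\{x\in X_i:\mathcal H^2(\Phi_i(x))\ge L-\varepsilon\}\subset\Phi_i^{-1}(\mathcal N)$ for all large $i$. Then $\Phi_i^*\alpha$ vanishes on a neighborhood of $A_i$, so $\Phi_i^*\alpha=j^*\beta$ for some $\beta\in H^1(X_i,A_i;\mathbb{Z}_2)$, and $0\neq\Phi_i^*(\alpha^{k+1})=j^*\!\big(\beta\smile(j^*\beta)^k\big)$ forces $\beta^k\neq0$ in $H^k(X_i,A_i;\mathbb{Z}_2)$. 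Excising a subcomplex $B_i\supset\{\mathcal H^2\circ\Phi_i\ge L-\varepsilon/2\}$ and invoking the Lusternik--Schnirelmann inequality for the volume spectrum as in \cite{marques2017existence}, one extracts from $\Phi_i|_{X_i\setminus B_i}$ a $k$-sweepout $\Psi$ with $\sup\mathcal H^2\circ\Psi<L-\varepsilon/2<L$, contradicting $\mathbf{L}(\mathscr P_k)=L$. Hence all three inequalities are strict, which proves the theorem.

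The hard part will be the geometric input behind the finiteness of the critical varifolds at level $L$ used in the second paragraph: while the distinguished $\xEG$-component is controlled directly by the hypothesis, the accompanying $G$-invariant disjoint pairs of minimal $2$-spheres must also be shown to contribute only finitely many varifolds of bounded area (this is immediate for a bumpy metric, and is vacuous under positive Ricci by the embedded Frankel property, but requires care in general), and one must verify that neighborhoods in the $\xEG$-topology of such possibly multi-sheeted minimal configurations are simply connected so that $\alpha$ restricts to zero on $\mathcal N$. Once these two points are secured, the cup-product and excision manipulation, and the transfer of the pull-tight deformation of Sections \ref{Sec: PMC min-max}--\ref{Sec: Convergence} to the $G$-equivariant setting, are routine.
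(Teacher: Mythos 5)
For the two upper strict inequalities $\mathbf{L}(\mathscr{P}_1)<\mathbf{L}(\mathscr{P}_2)<\mathbf{L}(\mathscr{P}_3)$ your cup-product/Lusternik--Schnirelmann scheme is essentially the paper's own argument (the paper defers to \cite[Theorem 6.1]{marques2017existence} and \cite[Lemma 8.3]{wangzc2023existenceFour}, run inside $\xEG$ with $\alpha=\iota^*\bar\lambda$, with the contradiction supplied by Corollary \ref{Cor: exist min-max RP2}(i)). For the bottom inequality $2M_0<\mathbf{L}(\mathscr{P}_1)$ you take a genuinely different route: the paper does \emph{not} run the relative cup-product argument at level $2M_0$. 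Instead it fixes a $\mathbb{Z}_2$-invariant ball $B$ away from the (finitely many, hence closed) area minimizers, shows (Claim \ref{Claim: sweepout with small area}) that equality $2M_0=\mathbf{L}(\mathscr{P}_1)$ would yield $1$-sweepouts all of whose slices have arbitrarily small area in $B$, and contradicts the lower bound of \cite[Proposition 8.2]{marques2017existence} for Almgren--Pitts $1$-sweepouts in small balls; that argument needs no statement about $\alpha$ near critical varifolds.

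The genuine gap is in the mechanism you propose for the key step that $\alpha$ vanishes on an $\mathbf{F}$-neighborhood $\mathcal{N}$ of the finitely many critical varifolds. Your justification --- that elements of $\xEG$ which are varifold-close to $\mathcal{V}_a$ are ordered stacks of small normal graphs over $\spt\|\mathcal{V}_a\|$, hence admit simply connected neighborhoods --- does not hold: $\mathbf{F}$-closeness gives no graphical structure (a sphere in $\xEG$ can be varifold-close to $\mathcal{V}_a$ while carrying thin fingers and necks of arbitrarily small area), and simple connectivity of such $\mathbf{F}$-neighborhoods is neither evident nor what the standard proof establishes. The correct mechanism, implicit in the paper's citation of \cite[Theorem 6.1]{marques2017existence}, uses the double-cover description of $\bar\lambda$: a loop on which $\alpha$ is nonzero lifts to a path of Caccioppoli sets $\Omega_t$ with $\Omega_1=M\setminus\Omega_0$ whose boundaries stay $\mathbf{F}$-close to a single critical varifold with smooth support; the relative isoperimetric inequality in the complementary regions of that support pins $\mathcal{H}^3(\Omega_t)$ near one of two extreme values, and continuity forbids the flip --- no simple connectivity is required. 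Relatedly, the finiteness of your full critical collection $\{\mathcal{V}_1,\dots,\mathcal{V}_N\}$ at the levels $\mathbf{L}(\mathscr{P}_k)$, $k\geq 1$, is not a consequence of the theorem's hypothesis, which only controls minimal elements of $\xEG$ and says nothing about the $G$-invariant pairs of minimal spheres permitted by Corollary \ref{Cor: exist min-max RP2}(ii); the paper's sketch sidesteps this by taking its critical set $\mathcal{S}$ to consist only of varifolds supported on elements of $\xEG$ (finite by hypothesis) and routing the contradiction through Corollary \ref{Cor: exist min-max RP2}(i). Your case $k=0$ is in fact the cleanest instance (total mass $2M_0$ forces the min-max varifold to be a multiplicity-one minimizer in $\xEG$, so finiteness there is automatic), so your alternative route to $2M_0<\mathbf{L}(\mathscr{P}_1)$ is repairable once the vanishing lemma is proved by the Marques--Neves argument; but as written, neither the vanishing step nor the finiteness step is established.
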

\begin{proof}
    The fact that $0<\mathbf{L}(\mathscr{P}_k)<\mathbf{L}(\mathscr{P}_{k+1})$  for $k=1,2$, follows from the standard Lusternik-Schnirelmann theory (see \cite[Lemma 8.3]{wangzc2023existenceFour}  or \cite[Theorem 6.1] {marques2017existence}). 
    Indeed, one can take $\mathcal{S}$ to be the collection of $G$-invariant integral varifolds with mass bounded by $\bL(\mathscr{P}_{k+1})$ whose support is an element in $\xEG$. 
    Then, using $\alpha\in H^1(\xEG;\mZ_2)$ and $\xEG$ in place of $\bar{\lambda}\in H^1(\Z_2(S^3;\mZ_2);\mZ_2)$ and $\Z_2(S^3;\mZ_2)$, the proof of \cite[Theorem 6.1]{marques2017existence} would carry over, leading to a contradiction to Corollary \ref{Cor: exist min-max RP2}(i). 
    Our goal is now to prove $2M_0<\mathbf{L}(\mathscr{P}_1)$. 
    
    Suppose by contradiction that $2M_0=\mathbf{L}(\mathscr{P}_1)$. 
    By the finiteness assumption, the union 
    \[
       \mathcal{S} := {\cup}\{\Gamma \in \xEG: \mathcal{H}^2(\Gamma) = 2M_{0}\} 
    \]
    is a closed set. 
    Fix a $\mZ_2$-invariant open ball $B \subset\subset S^3 \setminus B_{r}(\mathcal{S})$. 
    We have the following claim: 
    \begin{claim}\label{Claim: sweepout with small area}
        For any $\delta > 0$, there is $ \Phi_{\delta} \in \mathscr{P}_1$ so that $\sup_{x \in \dmn(\Phi_{\delta})}\mathcal{H}^{2}(\Phi_{\delta}(x) \cap B) < \delta$.     
    \end{claim}
    \begin{proof}[Proof of Claim \ref{Claim: sweepout with small area}]
        Suppose by contradiction that for some $ \delta_{0} > 0$, every $\Phi \in \mathscr{P}_{1}$ has $x \in \dmn(\Phi)$ with $\mathcal{H}^{2}(\Phi(x) \cap B) \geq \delta_{0}$. Let $\{\Phi_{i}\}$ be a minimizing sequence in $\mathscr{P}_{1}$, i.e.  
        \[
        \lim_{i \rightarrow \infty} \sup_{x \in \dmn(\Phi_i)} \mathcal{H}^{2}(\Phi_{i}(x)) =\bL(\mathscr{P}_1) = 2M_{0}. 
        \]
        For each $i$, we may pick some $x_i \in \dmn(\Phi_i)$ so that $\mathcal{H}^{2}(\Phi_{i}(x_i) \cap B) \geq \delta_{0}$ by assumptions. Based on the fact that
        \[
        2M_0 \leq \mathcal{H}^{2}(\Phi_{i}(x_{i})) \leq \sup_{x \in \dmn(\Phi_i)} \mathcal{H}^{2}(\Phi_{i}(x)) \rightarrow 2M_0 \text{ as } i \rightarrow \infty, 
        \]
        we have $\Phi_{i}(x_i)/\mZ_2\subset\RP^3$ converges (up to a subsequence) as varifolds to an area minimizing projective plane (cf. \cite[Proposition 2.3]{bray2010area}), and thus the varifolds limit $V_{\infty}$ of $\Phi_{i}(x_i)$ satisfies $\spt||V_{\infty}|| \subset \mathcal{S}$. This forces $\lim_{i \rightarrow \infty} \mathcal{H}^2(\Phi_i(x_i) \cap B) = 0$, which contradicts the choice of $x_i$.  
    \end{proof}

    Let $B_r(q)\subset B$ be a small ball with $0<r<r_0$ and $\delta\in (0,\alpha_0 r^2)$, where $\alpha_0,r_0>0$ are given by \cite[Proposition 8.2]{marques2017existence} with respect to $\Z_2(S^3;\mZ_2)$. Consider $\Phi_{\delta}$ in Claim \ref{Claim: sweepout with small area} with respect to this given $\delta$. 
    Then by definitions, $\iota\circ\Phi_\delta : \dmn(\Phi_\delta)\to\Z_2(S^3;\mZ_2)$ is a $1$-sweepout in the sense of Almgren-Pitts. 
    However, since $\M(\iota\circ\Phi_\delta(x)\llcorner B_r(q) )\leq \M(\iota\circ\Phi_\delta(x)\llcorner B )<\delta<\alpha_0r^2$ for all $x\in\dmn(\Phi_\delta)$, we obtain a contradiction to \cite[Proposition 8.2]{marques2017existence}. 
\end{proof}

At the end of this section, we prove Theorem \ref{Thm: minimal RP2 (main)}.

\begin{proof}[Proof of Theorem \ref{Thm: minimal RP2 (main)}]
    Firstly, we have a minimizing $\mathbb{RP}^2$ embedded in $(\mathbb{RP}^3,g_{_{\RP^3}})$ denoted as $\Sigma_0$ (cf. \cite[Proposition 2.3]{bray2010area}).

    {\bf Case I: $g_{_{\RP^3}}$ is a metric with positive Ricci curvature.}

    In this case, $g_{_M}$ is a $\mathbb Z_2$-invariant metric on $M=S^3$ with positive Ricci curvature. 
    Hence,  
    \begin{align}\label{Eq: no sphere in positive Ricci}
        \mbox{every $\mathbb Z_2$-connected minimal $2$-sphere in $(M,g_{_M})$ is $(\mathbb Z_2)_{\pm}$-separating,}
    \end{align}
    i.e. there is no minimal $2$-sphere in $(\RP^3,g_{_{\RP^3}})$. 
    Otherwise, there will be a pair of minimal $S^2\subset M$, denoted as $\Gamma=\{\Gamma_{+},\Gamma_{-}\}$, such that $[1]\cdot \Gamma_{\pm}=\Gamma_{\mp}$ and $\Gamma_+\cap\Gamma_-=\emptyset $, where $[1]\in\mZ_2$ acting on $M=S^3$ by the antipodal map. 
    This violates the embedded Frankel's property \cite{frankel1966fundamental}. 
    
    Next, without loss of generality, we assume 
    \begin{align}\label{Eq: finite RP2 assumption}
        \mbox{$(M=S^3,g_{_M})$ contains finitely many minimal $2$-spheres in $\mathscr{X}_{\mZ_{2,\pm}}$,} 
    \end{align}
    whose quotients in $\mathbb{RP}^3$ are minimal projective planes. 
    By Theorem \ref{Thm:L-S} and \eqref{Eq: finite RP2 assumption}, we have
    \begin{align}\label{Eq: strictly increasing width}
        0<2M_0<\mathbf{L}(\mathscr{P}_1)<\mathbf{L}(\mathscr{P}_2)<\mathbf{L}(\mathscr{P}_3).
    \end{align}
    Additionally, although each $\mathscr{P}_k$, $k\in\{1,2,3\}$, may contain many different homotopy classes, it follows from the finiteness assumptions \eqref{Eq: finite RP2 assumption} 
    and Corollary \ref{Cor: exist min-max RP2} that the min-max values of the homotopy classes in $\mathscr{P}_k$ must be stabilized to $\bL(\mathscr{P}_k)$. 
    Moreover, for each $k\in\{1,2,3\}$, $\mathbf{L}(\mathscr{P}_k)$ is realized by the area of an embedded connected minimal $2$-sphere $\Gamma_k\in\xEG$ with integer multiplicity $m_k\in\mZ_+$, i.e. $\bL(\mathscr{P}_k)=m_{k}\mcH^2(\Gamma_{k})$. 
    
    Since manifolds with positive Ricci contain no $2$-sided stable minimal hypersurface, we see $\Gamma_i$ must be unstable, and thus $m_i=1$ by Theorem \ref{Thm: multiplicity one classical}. 
    We can then conclude that $\Sigma_0,\pi(\Gamma_1),\pi(\Gamma_2),\pi(\Gamma_3)$ are distinct 
    embedded minimal real projective planes in $(\mathbb{RP}^3,g_{_{\RP^3}})$.

    {\bf Case II: $g_{_{\RP^3}}$ is a bumpy metric.}

    In this case, every embedded $\mZ_2$-invariant minimal surface in $(M=S^3,g_{_M})$ is non-degenerate. 
    Without loss of generality, we also assume that 
    \begin{align}\label{Eq: finite RP2+S2 assumption}
        \mbox{$(M=S^3,g_{_M})$ contains finitely many $\mZ_2$-invariant minimal $2$-spheres,} 
    \end{align}
    i.e. $(\mathbb{RP}^3,g_{_{\RP^3}})$ has finitely many embedded minimal $\mathbb{RP}^2$ and finitely many embedded minimal $S^2$. 
    In particular, \eqref{Eq: strictly increasing width} is still valid by Theorem \ref{Thm:L-S}. 

    For each $k\in\{1,2,3\}$, it follows from \eqref{Eq: finite RP2+S2 assumption} and Theorem \ref{Thm: multiplicity one classical} that the min-max values of homotopy classes in $\mathscr{P}_k$ must be stabilized to $\bL(\mathscr{P}_k)$, and there are disjoint embedded $\mZ_2$-invariant $\mZ_2$-connected minimal $2$-spheres $\{\Gamma_{k,j}\}_{j=1}^{J_k}$ and integer multiplicities $\{m_{k,j}\}_{j=1}^{J_k}$ so  that 
    \begin{align}\label{Eq: width realized by RP2}
        \bL(\mathscr{P}_k)=\sum_{j=1}^{J_k}m_{k,j}\mcH^2(\Gamma_{k,j}). 
    \end{align}
    Also, by Corollary \ref{Cor: exist min-max RP2}, we can assume $\Gamma_{k,1}\in\xEG$, and $\Gamma_{k,j}$ with $j\geq 2$ is a $\mZ_2$-invariant disjoint union of two minimal $2$-spheres.  
	By Theorem \ref{Thm: multiplicity one}, $\Gamma_{k,1}$ is either unstable with multiplicity one or stable. 

    {\bf Sub-case II-a: $\Gamma_{k,1}$ is unstable with multiplicity one for all $k\in\{1,2,3\}$.}
    
    Suppose Theorem \ref{Thm: minimal RP2 (main)}(i) fails, i.e. the number of distinct minimal embedded real projective planes in $(\RP^3,g_{_{\RP^3}})$ is strictly less than $4$. 
    Clearly, this assumption and \eqref{Eq: strictly increasing width}\eqref{Eq: width realized by RP2} indicate there must be some $k_0\in \{1,2,3\}$ with $J_{k_0}\geq 2$. 
    Thus, $\Gamma_{k_0,2}$ is a $\mZ_2$-invariant union of two minimal spheres $\Gamma_\pm$ with $[1]\cdot\Gamma_\pm=\Gamma_\mp$ and $\Gamma_\pm$ lies in the different components of $M\setminus\Gamma_{k_0,1}$. 
    
    If $\Gamma_+$ is stable, and thus strictly stable due to the bumpiness of the metric. 
    Then by \cite[Proposition 8.8]{wangzc2023existenceFour} (using Song's strategy \cite{song2018existence}), we can find another two minimal spheres in $(M,g_{_M})$ lying in a fundamental domain of $\mathbb{RP}^3$ (i.e. in a component of $M\setminus\Gamma_{k_0,1}$ containing $\Gamma_+$). 
    Together, we obtain three minimal $2$-spheres and one area minimizing real projective plane $\Sigma_0$ in $(\mathbb{RP}^3,g_{_{\RP^3}})$. 
    
    If $\Gamma_+$ is unstable. 
    Then, since $\Gamma_{k_0,1}$ is also assumed to be unstable, there exists an (isotopic area minimizing) stable minimal sphere $\widetilde{\Gamma}_+$ lying between $\Gamma_+$ and $\Gamma_{k_0,1}$. 
    Using \cite[Proposition 8.8]{wangzc2023existenceFour}, we obtain at least one more minimal sphere in a fundamental domain of $\mathbb{RP}^3$ (i.e. in the component of $M\setminus\widetilde{\Gamma}_+$ containing $\Gamma_+$) which is different from $\Gamma_+,\widetilde{\Gamma}_+$. 
    Therefore, we still have one area minimizing minimal real projective plane $\Sigma_0$ and three minimal spheres in $(\mathbb{RP}^3,g_{_{\RP^3}})$. 

    {\bf Sub-case II-b: $\Gamma_{k,1}$ is stable for some $k\in\{1,2,3\}$.}
    
    Since $\Gamma_{k,1}$ is stable (and thus strictly stable due to the bumpiness) for some $k\in\{1,2,3\}$, we can apply \cite[Proposition 8.8]{wangzc2023existenceFour} again to find another two distinct minimal spheres in the fundamental domain of $(\mathbb{RP}^3,g_{_{\RP^3}})$ (i.e. in a component of $M\setminus\Gamma_{k,1}$). 
    In conclusion, we can find one area minimizing real projective plane $\Sigma_0$ and two distinct minimal spheres in $(\mathbb{RP}^3,g_{_{\RP^3}})$.
\end{proof}

\section{Minimal Klein bottles in $L(4m, 2m\pm 1)$}\label{Sec: Minimal K2}

Consider the unit $3$-sphere $\mS^3=\{(z,w)\in\mathbb{C}^2:|z|+|w|=1\}$. 
For any two coprime integers $p\geq 1$ and $q\in [1,p)$, we have the cyclic $\mZ_p$-action on $\mS^3$ generated by $\xi_{p,q}$:
\begin{align}\label{Eq: lens space action}
    [1]\cdot (z,w) =\xi_{p,q}(z,w) := \left(e^{2\pi i/p}\cdot z, e^{2\pi q i/p}\cdot w \right).
\end{align}
Then the lens space $L(p,q)$ is defined to be $\mS^3/\mZ_p$. 

By \cite[Corollary 6.4]{bredon1969nonorientable}, the Klein bottle embeds into $L(4m,2m\pm 1)$ only. 
Hence, in this section, we always denote by
\begin{align}\label{Eq: pull-back lens space}
    M:=S^3,\quad G:=\mZ_p=\langle \xi_{p,q}\rangle, \quad G_+=\mZ_{p/2}:= \langle \xi_{p,q}^2\rangle,\quad G_-:=G\setminus G_+,
\end{align}
where $p=4m$, $q=2m\pm 1$, and $m\geq 1$. 
In addition, consider the space
\begin{align}
    \mathscr{X}_{G_{\pm}}=\left\{ \phi(T^2) \left|\phi: T^2\to S^3 \text{ a $G$-equivariant $G_{\pm}$-separating smooth embedding} \right.\right\},
\end{align}
endowed with the smooth un-oriented topology. 
One can verify that for any $G$-surface $\Sigma\subset M$, $\Sigma/G$ is a Klein bottle in $L(4m,2m\pm 1)$ if and only if $\Sigma\in \xEG$.

\subsection{Sweepouts formed by Klein bottles}\label{Sec: sweepouts by K^2}

We now describe the sweepouts that detect the non-trivial cohomology classes of $H^k(\xEG;\mZ_2)$, where $k=3$ for $m=1$, and $k=1$ for $m>1$. 
Firstly, we mention the following results given by Ketover \cite[Proposition 4.2]{ketover2022flipping}.

\begin{proposition}[\cite{ketover2022flipping}]
    Let $L(p,q)$ denote the lens space endowed with the round metric, $p\geq 2$. Then $L(p,q)$ admits an embedded Klein bottle if and only if $p=4m$ and $q=2m\pm 1$ for $m\geq 1$. If $m>1$ then $L(4m,2m\pm1)$ 
    admits an $S^1$-family of minimal Klein bottles. If $m=1$, then $L(4,1)$ admits an $S^1\times \mathbb{RP}^2$-family of minimal Klein bottles.
\end{proposition}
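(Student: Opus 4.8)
The plan is to split the assertion into its topological and geometric halves. For the ``only if'' direction --- that $L(p,q)$ can contain an embedded Klein bottle only when $p=4m$ and $q=2m\pm1$ --- I would simply quote the classical classification of Bredon--Wood \cite[Corollary 6.4]{bredon1969nonorientable} and treat it as a black box. Everything else, including the ``if'' direction, reduces to an explicit construction of the two families of \emph{minimal} Klein bottles inside $(L(4m,2m\pm1),g_{\mathrm{round}})$, which is what I describe below.

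First I would reduce to $q=2m+1$: complex conjugation $c(z,w)=(z,\bar w)$ is an isometry of the round $S^3$ that conjugates $\xi_{4m,2m+1}$ to $\xi_{4m,2m-1}$, so the two lens spaces are isometric. Setting $\omega:=e^{\pi i/(2m)}$ and $\sigma(z,w):=(z,-w)$, a one-line computation gives $\xi:=\xi_{4m,2m+1}=\omega\,\sigma$ on $\mathbb{C}^2\supset S^3$, hence $\xi^2=\omega^2\,\mathrm{Id}=e^{\pi i/m}\,\mathrm{Id}$ is a complex scalar --- the crucial structural fact. Next, to any orthogonal splitting $\mathbb{R}^4=P\oplus P^\perp$ into $2$-planes I attach the Clifford torus $T_P:=\{x\in S^3:|\mathrm{pr}_Px|=|\mathrm{pr}_{P^\perp}x|\}$, which is minimal (it is $O(4)$-congruent to the standard Clifford torus) and cuts $S^3$ into the two regions $\{|\mathrm{pr}_Px|\gtrless|\mathrm{pr}_{P^\perp}x|\}$. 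An isometry $g\in O(4)$ preserves $T_P$ and \emph{interchanges} its two sides precisely when $g(P)=P^\perp$. So I would look for $2$-planes $P$ with $\xi^2(P)=P$ and $\xi(P)=P^\perp$ (which then forces $\xi(P^\perp)=P$ as well): for such $P$ the group $\mZ_{4m}=\langle\xi\rangle$ preserves $T_P$; its index-two subgroup $\mZ_{2m}=\langle\xi^2\rangle$ fixes each side, so it acts freely and orientation-preservingly, and $T_P/\mZ_{2m}$ is a torus; while the generator $\xi$ swaps the sides, inducing a free orientation-reversing involution on that torus. Hence $T_P/\mZ_{4m}$ is a smooth embedded minimal Klein bottle in $L(4m,2m+1)$.

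It remains to identify the admissible planes $P$ in each case. For $m\ge2$, $e^{\pi i/m}\notin\{\pm1\}$, so $\xi^2$-invariance forces $P$ to be a complex line $\ell=[a:b]\subset\mathbb{C}^2$, and since the scalar $\omega$ preserves complex lines the remaining requirement $\xi(\ell)=\ell^\perp$ becomes $\sigma(\ell)=\ell^\perp$, i.e. $[a:-b]=[\bar b:-\bar a]$, i.e. $|a|=|b|$; this is the circle $\ell_\phi=[1:e^{i\phi}]$, with $\ell_\phi$ and $\ell_{\phi+\pi}$ giving the same torus, yielding an $S^1$-family of minimal Klein bottles. For $m=1$, $\xi^2=-\mathrm{Id}$ is no constraint and $\xi$ is now an orthogonal complex structure $J$ on $\mathbb{R}^4$ (distinct from the standard one); here $\xi(P)=P^\perp$ says exactly that $P$ is a Lagrangian plane for $(J,g)$, so the admissible $P$ sweep out the Lagrangian Grassmannian $U(2)/O(2)$, and I would check that quotienting by $P\mapsto P^\perp$ --- which preserves the base of the fibration $\det^2\colon U(2)/O(2)\to S^1$ and acts as the antipodal map on its $S^2$ fibers --- produces an $S^1\times\RP^2$-family in $L(4,1)$. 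I expect this last, purely linear-algebraic bookkeeping (especially the $m=1$ computation of the homeomorphism type of $U(2)/O(2)$ and of the action of $P\mapsto P^\perp$) to be the main obstacle; the side-swapping and orientation arguments and the minimality of Clifford tori are routine by comparison. If one further wanted that these exhaust \emph{all} minimal Klein bottles, I would lift an arbitrary one to an embedded minimal torus in the round $S^3$, invoke Brendle's resolution of the Lawson conjecture to conclude it is a Clifford torus, and re-run the classification above --- though this is not needed for the existence statement as phrased.
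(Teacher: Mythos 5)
Your proposal is correct, and it takes a genuinely different route from the paper. The paper (following Ketover's construction, which is what it sketches in Section 7.1) works in the quaternionic model: it parameterizes oriented great circles by $\mS^2\times\mS^2$, builds the tori $\tau(a,E(b))$ as unions of geodesics, writes $\xi_{p,q}$ as a pair of rotations $(\hat\eta^1_{p,q},\hat\eta^2_{p,q})$ on $\mS^2\times\mS^2$, and reads off which tori are preserved with reversed orientation, obtaining the $\RP^1\times\RP^2$- (for $m=1$) and $\RP^1$-families directly. You instead work with orthogonal splittings $\mathbb{R}^4=P\oplus P^\perp$ and the factorization $\xi_{4m,2m+1}=\omega\sigma$, characterize the admissible planes (complex lines $[1:e^{i\phi}]$ with $|a|=|b|$ for $m\ge 2$; Lagrangian planes of the complex structure $J=\xi_{4,1}$ for $m=1$), and then quotient the space of such planes by $P\mapsto P^\perp$; the side-swapping/orientation argument showing $T_P/\mZ_{4m}$ is a minimal Klein bottle is the same in spirit but your bookkeeping is purely linear-algebraic rather than via geodesic fibrations. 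Both approaches quote Bredon--Wood for the ``only if'' direction, exactly as the paper does. The one step you flag as unfinished does go through: identifying $\Lambda(2)=U(2)/O(2)$ with the unitary symmetric matrices via $A\mapsto AA^{T}$, the double cover $S^1\times\{B\in SU(2):B^{T}=B\}\cong S^1\times S^2\to\Lambda(2)$, $(e^{i\theta},B)\mapsto e^{i\theta}B$, has deck transformation $(e^{i\theta},B)\mapsto(-e^{i\theta},-B)$, so $\Lambda(2)$ is the twisted $S^2$-bundle over $S^1$; the involution $P\mapsto P^\perp$ becomes $S\mapsto -S$, which preserves each $\det$-fiber and is the antipodal map there, and the quotient of $S^1\times S^2$ by the resulting $\mZ_2\oplus\mZ_2$ is indeed $S^1\times\RP^2$, confirming the count in the proposition. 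What your route buys is an intrinsic description of the parameter spaces (a circle of complex lines, resp.\ the Lagrangian Grassmannian mod $P\sim P^\perp$) without the quaternionic formalism; what the paper's route buys is an explicit map $\mathcal{G}$ into $\mathscr{X}_{G_\pm}$ in coordinates that is then reused verbatim to build the sweepouts detecting the cohomology classes, which is the form actually needed later in Section 7.
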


For the sake of completeness, we provide a relatively detailed explanation following the constructions in \cite[Section 4]{ketover2022flipping} (see also \cite{tucker2013geodesic}). 
Firstly, one can identify $\mS^3$ and $\mS^2$ with the group of unit quaternions and pure unit quaternions (without real part) respectively, i.e. 
\begin{align*}
    \mS^3 :=\{a+bi+cj+dk:|a|^2+|b|^2+|c|^2+|d|^2=1\}, \quad
    \mS^2 :=\{bi+cj+dk\in \mS^3\}.
\end{align*}
Note any orientation preserving isometry $f\in {\rm Isom}_+(\mS^3)=SO(4)$ can be represented by $f(q)=q_1qq_2^{-1}$ for some $q_1,q_2\in \mS^3$. 
Hence, an oriented $2$-plane in $\R^4$ spanned by two orthonormal vectors $u,v\in \mS^3$, can be written as $\langle u,v\rangle = (q_1,q_2)\cdot \langle 1, i\rangle:= \langle q_1q_2^{-1}, q_1iq_2^{-1}\rangle$ for some $q_1,q_2\in \mS^3$. 
Denote by $\tilde{G}_2(\mathbb R^4)$ the Grassmannian manifold consisting of all oriented $2$-dimensional subspaces of $\R^4$. 
Then the space of oriented geodesics in round $\mS^3$ is homeomorphic to $\tilde{G}_2(\mathbb R^4)\cong \mS^2\times \mS^2$, where the homeomorphism is specified by the map 
\[P:\tilde{G}_2(\mathbb R^4)\to \mS^2\times \mS^2, \qquad P((q_1,q_2)\cdot \langle 1, i\rangle) = (q_1iq_1^{-1}, q_2iq_2^{-1}).\] 
Note $(a,b),(-a,-b)\in\mS^2\times \mS^2$ correspond to the same geodesic with opposite orientations.

Next, for $a\in \mS^2,B\subset \mS^2$, denote 
\begin{align}
    \tau (a,B):=\{x\in \mS^3:x\in P^{-1}(a,b)\cap \mS^3, b\in B \}.
\end{align}
It's known that $\tau(a, B)$ is a Clifford torus if $a \in \mS^2$ and $B$ is a great circle of $\mS^2$ (cf. \cite[(4.16)]{ketover2022flipping}). 
Given $b\in \mS^2$, denote by 
\begin{align}
    E(b):=\bd \left(\left\{x\in \mS^2: b_1x_1+b_2x_2+b_3x_3<0\right\}\right)\subset \mS^2
\end{align}
an oriented equator. 
Since $\tau(a,E(b))=\tau(-a,E(-b))$ is $\tau(a,E(-b))=\tau(-a,E(b))$ with the opposite orientation, we see the space of unoriented Clifford tori is homeomorphic to $\mathbb{RP}^2 \times \mathbb{RP}^2$. 
In addition, by \cite[Section 5.1]{tucker2013geodesic}, we know $\xi_{p,q}$ can be represented by $\xi_{p,q}(x)=e^{i\pi (q+1)/p}\cdot x\cdot e^{-i\pi(q-1)/p}$ using quaternions. 
Moreover, given $a\in\mS^2$ and a great circle $E\subset\mS^2$, 
\begin{align}\label{Eq: lens action on torus}
    \xi_{p,q}(\tau(a,E)) = \tau(\hat{\eta}^1_{p,q}(a), \hat{\eta}^2_{p,q}(E) ),
\end{align}
where $\hat{\eta}^1_{p,q},\hat{\eta}^2_{p,q}$ is the rotation (in the $\{j,k\}$-plane) on $\mS^2$ by the angle of $2\pi(q+1)/p$ and $2\pi(q-1)/p$ respectively with fixed points $\{\pm i\}$. 

{\bf Case A: $p=4$ and $q=1$ (or $q=3$).}

In this case, one verifies that $\xi_{4,1}$ is orientation preserving, and $\xi_{4,1}(\tau \left(\cos(\theta)j+\sin(\theta)k, E(b) \right) )= \tau \left(-\cos(\theta)j - \sin(\theta)k, E(b) \right) $ is $\tau \left(\cos(\theta)j+\sin(\theta)k, E(b) \right)$ with the opposite orientation for any $\theta\in [0,2\pi], b\in\mS^2$, which implies the support of $ \tau \left(\cos(\theta)j+\sin(\theta)k, E(b) \right)$ is an element in $\xEG$. 
Hence, we have a family of Klein bottles in $L(4,1)\cong L(4,3)$ parameterized by $\mathcal{G}: \RP^1\times\RP^2 \to \xEG$, 
\begin{align}
    \mathcal{G}([a_1,a_2], [b_1,b_2,b_3]) := \spt\left(\tau\left(a_1j+a_2k, E(b_1i+b_2j+b_3k) \right) \right).
\end{align}
We can now define three maps into $\xEG$:
\begin{align*}
    \Phi_1:\mathbb{RP}^1\to \mathscr{X}_{G_{\pm}},&\quad [b_1,b_2]\mapsto \mathcal{G}([1,0],[b_1,b_2,0]); \\ 
    \Phi_2:\mathbb {RP}^2\to \mathscr{X}_{G_{\pm}}, &\quad  [b_1,b_2,b_3]\mapsto \mathcal{G}([1,0],[b_1,b_2,b_3]); \\
    \Phi_3:\mathbb{RP}^1\times \mathbb{RP}^2\to \mathscr{X}_{G_{\pm}},&\quad ([a_1,a_2],[b_1,b_2,b_3])\mapsto \mathcal{G}([a_1,a_2],[b_1,b_2,b_3]).
\end{align*}
One notices that $\Phi_k$, $k\in\{1,2,3\}$, is a $k$-sweepout in the sense of Almgren-Pitts (cf. \cite[Definition 4.1]{marques2017existence}). 
Indeed, denote by $\alpha_l$ the generator of $H^1(\RP^l;\mZ_2)$, $l\in\{1,2\}$, by $\bar{\lambda}$ the generator of $H^1(\Z_2(M;\mZ_2);\mZ_2)$, and by $\iota: \xEG\to\Z_2(M;\mZ_2)$ the natural inclusion. 
Then the closed curves $\gamma_1(e^{i\theta})=([\cos(\theta/2),\sin(\theta/2)],[1,0,0])$ and $\gamma_2(e^{i\theta})=([1,0],[\cos(\theta/2),\sin(\theta/2),0])$ in $\RP^1\times \RP^2$ satisfy that 
\begin{itemize}
    \item $\iota\circ\mathcal{G}\circ\gamma_1:S^1\to\Z_2(M;\mZ_2)$ is a sweepout in the sense of Almgren-Pitts, since $\tau(-1j,E(1j))$ is $\tau(1j,E(1j))$ with the opposite orientation;
    \item $\iota\circ\mathcal{G}\circ\gamma_2:S^1\to\Z_2(M;\mZ_2)$ is a sweepout in the sense of Almgren-Pitts, since $\tau(1j,E(-1j))$ is $\tau(1j,E(1j))$ with the opposite orientation;
    \item $\alpha_1\oplus \alpha_2(\gamma_1)=\alpha_1\oplus \alpha_2(\gamma_2)=1,$ $\alpha_1\oplus \alpha_2(\gamma_1+\gamma_2)=0$,
\end{itemize}
where $0\neq \alpha_1\oplus \alpha_2\in H^1(\RP^1\times \RP^2;\mZ_2)$. 
Combining the second bullet with the fact that $\alpha_2^2\neq 0 \in H^2(\RP^2;\mZ_2)$, we conclude $\Phi_1$ and $\Phi_2$ are $1$-sweepout and $2$-sweepout in the sense of Almgren-Pitts respectively. 
Moreover, we also have $\mathcal{G}^*\iota^*(\bar{\lambda}) = \alpha_1\oplus \alpha_2\in H^1(\RP^1\times \RP^2;\mZ_2)$ by the above three bullets. 
Together with $(\alpha_1\oplus \alpha_2)^3\neq 0 \in H^3(\RP^1\times \RP^2;\mZ_2)$, we conclude that $\Phi_3$ is a $3$-sweepout in the sense of Almgren-Pitts. 

Furthermore, since $0\neq (\alpha_1\oplus \alpha_2)^3 = \mathcal{G}^*\iota^*(\bar{\lambda}^3)\in H^3(\RP^1\times \RP^2;\mZ_2)$, we know  
\[\alpha:=\iota^*(\bar{\lambda})\in H^1(\xEG;\mZ_2)\]
satisfies $\alpha^k\neq 0\in H^k(\xEG;\mZ_2)$ for every $k\in \{1,2,3\}$. 
Thus, we can define $\mathscr{P}_k$, $k\in \{1,2,3\}$, to be the collection of continuous maps $\Phi$ from any cubical complex $X$ to $\xEG$ that detects $\alpha^k\in H^k(\xEG;\mZ_2)$, i.e. 
\begin{align}\label{Eq: sweepouts of K2 in L41}
    \mathscr{P}_k:=\left\{\Phi:X\to \xEG \left\vert \Phi^*(\alpha^k)\neq 0 \in H^k(X;\mZ_2)\right.\right\}.
\end{align}
Clearly, the above $\Phi_k\in\mathscr{P}_k$ for all $k=1,2,3$. 

{\bf Case B: $p=4m$ and $q=2m\pm 1$ with $m\geq 2$.}

In this case, one verifies that $\xi_{p,q}(\tau(\cos(\theta)j+\sin(\theta)k, E(1j)))$ is $\tau(\cos(\theta)j+\sin(\theta)k, E(1j))$ with the opposite orientation for any $\theta\in [0,2\pi]$.  
Hence, we have a family of Klein bottles in $L(4m,2m\pm1)$ ($m\geq 2$) parameterized by $\mathcal{G}:\RP^1\to \xEG$, 
\begin{align}
    \mathcal{G}([a_1,a_2]):=\spt\left(\tau\left(a_1j+a_2k, E(1j)\right)\right).
\end{align}
We can define the map 
\begin{align*}
    \Phi_1=\mathcal{G}: \mathbb{RP}^1\to \mathscr{X}_{G_{\pm}},
\end{align*}
which is a sweepout in the sense of Almgren-Pitts since $\tau(-1j, E(1j))$ is $\tau(1j, E(1j))$ with the opposite orientation. 
Hence, $\mathcal{G}^*\iota^*(\bar{\lambda})\neq 0 \in H^1(\RP^1;\mZ_2)$, 
\[\alpha:=\iota^*(\bar{\lambda})\neq 0\in H^1(\xEG;\mZ_2),\]
and we can similarly define 
\begin{align}\label{Eq: sweepouts of K2 in L4m}
    \mathscr{P}_1:=\left\{\Phi:X\to \xEG \left\vert \Phi^*(\alpha)\neq 0 \in H^1(X;\mZ_2) \right.\right\}
\end{align}
with $\Phi_1\in\mathscr P_1$.

\subsection{Proof of Theorem \ref{Thm: minimal Klein bottles (main)}} 

Using Theorem \ref{Thm: multiplicity one classical} and Proposition \ref{Prop: exist 1-sided component}, we have the following corollary, which gives the existence of min-max and minimizing minimal Klein bottles. 

\begin{corollary}\label{Cor: exist min-max and minimizing K2}
    Suppose $\mathscr{P}_k$ is given by \eqref{Eq: sweepouts of K2 in L41} or \eqref{Eq: sweepouts of K2 in L4m}, and $\Pi\subset \mathscr{P}_k$ is a homotopy class of sweepouts in $\xEG$. 
    Then $\bL(\Pi)>0$, and the $G$-connected min-max minimal $G$-surfaces $\{\Gamma_j\}_{j=1}^J$ with integer multiplicities $\{m_j\}_{j=1}^J$ given by Theorem \ref{Thm: multiplicity one classical} satisfy that
    \begin{itemize}
        \item[(i)] there is exactly one $G$-component, say $\Gamma_1$, that is a $G_\pm$-separating minimal torus with multiplicity one, i.e. $\Gamma_1\in\xEG$ and $m_1=1$; 
        \item[(ii)] any other $G$-component $\Gamma_j$, $j\geq 2$, is a $G$-invariant disjoint union of $\#G=4m$ numbers of minimal $2$-spheres in $M$. 
    \end{itemize}
    
    Moreover, support $\{\Sigma_k\}_{k\in\N}\subset\xEG$ is an area minimizing sequence in $\xEG$ satisfying 
    \[\lim_{k\to\infty}\mcH^2(\Sigma_k)=\inf\{\mcH^2(\Gamma):\Gamma\in\xEG\},\] then up to a subsequence, $\Sigma_k$ converges as varifolds to an area minimizing minimal torus $\Sigma_0\in\xEG$ with multiplicity one. 
\end{corollary}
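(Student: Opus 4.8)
The plan is to re-run, for the area-minimizing sequence, essentially the same compactness–regularity package that produced the min-max surface, and then to identify the limit by combining the weighted genus bound with the topology of $L(4m,2m\pm1)$ and a one-line area comparison. First I would write $\Sigma_k=\bd\Omega_k$ with $\Omega_k\in\C^{\Gpm}(M)$ and $\Vol(\Omega_k)=\tfrac12\Vol(M)$, and observe that since any $G$-equivariant isotopy keeps an element of $\xEG$ inside $\xEG$, the minimizing property forces $(\Sigma_k,\Omega_k)$ to be $G$-equivariantly $(\A^0,\epsilon_k,\delta_k)$-almost minimizing in \emph{every} $G$-invariant open set, with $\epsilon_k:=\mcH^2(\Sigma_k)-\inf_{\Gamma\in\xEG}\mcH^2(\Gamma)\to0$ and any $\delta_k>0$. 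After passing to a subsequence, $|\Sigma_k|\to V_\infty$ as varifolds and $\Omega_k\to\Omega_\infty$ in the flat topology with $\Omega_\infty\in\C^{\Gpm}(M)$ and $\Vol(\Omega_\infty)=\tfrac12\Vol(M)$. Applying Theorem~\ref{Thm: regularity of a.m. pairs} in a finite cover of $M$ by appropriately small open $G$-balls (the removable-singularity step of Theorem~\ref{thm:convergence of pmc to minimal} is not even needed here, since the pair is almost minimizing in full $G$-balls, not merely in annuli), one gets $V_\infty=\sum_{j=1}^N m_j[\Gamma_j]$ for a pairwise disjoint family of connected closed embedded minimal surfaces permuted by $G$; by the Constancy Theorem $\bd\Omega_\infty=\sum_j k_j[[\Gamma_j]]$ with $k_j\in\{0,1\}$, so $\Gamma':=\cup\{\Gamma_j:k_j=1\}$ is a nonempty $\Gpm$-separating $G$-surface exactly as in Proposition~\ref{Prop: exist 1-sided component}.

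Next I would pin down the components of $\Gamma'$. Since $H^1(S^3;\mZ_2)=0$, $S^3$ admits no one-sided surface, so each $\Gamma_j$ is orientable; running the curve-lifting/genus argument of Theorem~\ref{thm:genus bound} with $h\equiv0$ (it applies verbatim here because $(\Sigma_k,\Omega_k)$ is $(\A^0,\epsilon_k,\delta_k)$-almost minimizing everywhere) turns \eqref{eq:genus bound0} with $\mathfrak{g}_0=1$ into $\sum_j m_j\mathfrak{g}(\Gamma_j)\le1$. Hence there is exactly one torus component $\Gamma_1$, it carries multiplicity $m_1=1$, every other component is a sphere, and — being the unique torus component — $\Gamma_1$ is $G$-invariant and connected. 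Arguing as in Proposition~\ref{Prop: exist 1-sided component} and using that $L(4m,2m\pm1)$, $m\ge1$, is distinct from $S^3$ and contains no embedded $\RP^2$ (else its prime decomposition would contain the factor $\RP^3=L(2,1)$), one rules out that $\bd\Omega_\infty$ is a union of $2$-spheres, so $\Gamma_1\subset\Gamma'$. Since $\Gamma_1$ is a connected torus it separates $S^3$ into two components, and along $\Gamma_1\subset\bd\Omega_\infty$ the subgroup $G_+$ preserves, while $G_-$ reverses, the co-orientation of $\Omega_\infty$; hence the component of $S^3\setminus\Gamma_1$ on the $\Omega_\infty$-side is $G_+$-invariant and is interchanged with its complement by $G_-$, so $\Gamma_1\in\xEG$. (In particular $\Gamma_1/G$ is a one-sided torus-quotient in the orientable manifold $L(4m,2m\pm1)$, hence a Klein bottle, consistently with $\Gamma_1\in\xEG$.)

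The argument then closes with an area comparison: $\Gamma_1\in\xEG$ gives $\mcH^2(\Gamma_1)\ge\inf_{\Gamma\in\xEG}\mcH^2(\Gamma)$, while
\[
\mcH^2(\Gamma_1)\le\|V_\infty\|(M)\le\liminf_{k\to\infty}\mcH^2(\Sigma_k)=\inf_{\Gamma\in\xEG}\mcH^2(\Gamma),
\]
so all inequalities are equalities; this forces $V_\infty=|\Gamma_1|$ (no sphere component survives, and $m_1=1$). Setting $\Sigma_0:=\Gamma_1$ exhibits an area-minimizing embedded minimal torus in $\xEG$ with $\Sigma_k\to|\Sigma_0|$ as varifolds with multiplicity one. (Incidentally $\inf_{\Gamma\in\xEG}\mcH^2(\Gamma)>0$: if it vanished then $V_\infty=0$, forcing $\bd\Omega_\infty=\emptyset$ and contradicting $\Vol(\Omega_\infty)=\tfrac12\Vol(M)$.)

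The step I expect to be the main obstacle is the bookkeeping of the second paragraph: carefully transporting the regularity and, above all, the weighted genus bound from the PMC min-max framework of Theorems~\ref{Thm: regularity of a.m. pairs}, \ref{thm:convergence of pmc to minimal} and~\ref{thm:genus bound} to this area-minimizing sequence (checking that the replacement and curve-lifting constructions use only the $(G,\A^0,\epsilon_k,\delta_k)$-almost-minimizing property, which is automatic here), and ruling out that the Klein-bottle class degenerates into spheres in the varifold limit. The two lens-space facts — that $L(4m,2m\pm1)$ is not $S^3$ and contains no embedded $\RP^2$ — are elementary but indispensable for this last point.
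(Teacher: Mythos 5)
There is a genuine gap: the corollary has two halves, and your proposal only addresses the second one. You prove (modulo the points below) that an area-minimizing sequence in $\xEG$ converges to a multiplicity-one minimizing torus $\Sigma_0\in\xEG$, but you never touch the first half of the statement: that $\bL(\Pi)>0$ for a homotopy class $\Pi\subset\mathscr{P}_k$, and that the min-max $G$-surfaces $\{\Gamma_j\}$ produced by Theorem \ref{Thm: multiplicity one classical} satisfy (i) and (ii). Those items are not automatic from Theorem \ref{Thm: multiplicity one classical}: the paper gets them by (a) noting the elements of $\mathscr{P}_k$ are Almgren--Pitts $k$-sweepouts, so $\bL(\Pi)>0$; (b) using the weighted genus bound \eqref{eq:genus bound0} to force each $G$-component to be either a $\Gpm$-separating torus in $\xEG$, a $G$-invariant torus not in $\xEG$, or a $G$-invariant union of spheres; (c) combining Proposition \ref{Prop: exist 1-sided component} with the fact that any two elements of $\xEG$ intersect to get \emph{exactly} one component in $\xEG$, necessarily with $m_1=1$; and (d) for (ii), observing that a sphere component with nontrivial stabilizer would descend to an embedded $\RP^2$ in $L(4m,2m\pm1)$, which does not exist, so each remaining $G$-component consists of exactly $\#G=4m$ spheres. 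None of this appears in your write-up, and (ii) in particular (the count $4m$) requires the free-action-on-$S^2$ argument, not just the genus bound.

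On the half you do treat, your route is genuinely different from the paper's and is essentially workable: the paper simply invokes Meeks--Simon--Yau for the $G$-invariant minimizing sequence (subsequential varifold convergence to embedded minimal surfaces with the genus bound), then reuses the argument of Proposition \ref{Prop: exist 1-sided component} and closes with the same area comparison you give; you instead rerun the paper's equivariant almost-minimizing machinery with $h\equiv 0$ (Theorem \ref{Thm: regularity of a.m. pairs}) and supply an explicit topological reason why $\bd\Omega_\infty$ cannot consist solely of spheres (spheres are null-homotopic in a lens space while the quotient of $\bd\Omega_\infty$ is dual to the nontrivial class of the double cover $M/G_+\to M/G$), whereas the paper leaves this step terse. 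Your extra care on the $\Gamma_1\in\xEG$ step and on the $\epsilon_k$ bookkeeping is fine, but be aware that transporting Theorem \ref{thm:genus bound} ``verbatim'' from the PMC min-max setting to a minimizing sequence is exactly the step you would have to check in detail (you flag this yourself); quoting Meeks--Simon--Yau directly, as the paper does, avoids it. So: the Moreover-part argument is acceptable as an alternative, but the proof of $\bL(\Pi)>0$ and of items (i)--(ii) is missing and must be added.
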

\begin{proof}
    Firstly, $\bL(\Pi)>0$ since $\Pi\subset\mathscr{P}_k$ are $k$-sweepouts in the sense of Almgren-Pitts. 
    By Theorem \ref{Thm: multiplicity one classical}, $\{\Gamma_j\}_{j=1}^J$ satisfies the genus bound \eqref{eq:genus bound0}, which implies each $\Gamma_j$ is either 
    \begin{itemize}
        \item[(a)] a $G_\pm$-separating minimal torus in $\xEG$ with multiplicity one, i.e. $\pi(\Gamma_j)$ is a minimal Klein bottle in $L(4m,2m\pm 1)$; or
        \item[(b)] a connected $G$-invariant minimal torus not in $\xEG$ with multiplicity one, i.e. $\pi(\Gamma_j)$ is a minimal torus in $L(4m,2m\pm 1)$; or
        \item[(c)] a $G$-invariant disjoint union of minimal spheres, i.e. $\pi(\Gamma_j)$ is a minimal sphere in $L(4m,2m\pm 1)$. (Note that there is no embedded $\RP^2$ in $L(4m, 2m\pm 1)$, see Geiges-Thies \cite{geiges2022klein}.)
    \end{itemize} 
    Combining Proposition \ref{Prop: exist 1-sided component} with the fact that every two elements in $\xEG$ have non-empty intersections, we know there is exactly one element of $\{\Gamma_j\}_{j=1}^J$ that is in $\xEG$, which gives (i). 
    Then by \eqref{eq:genus bound0}, every other $G$-component $\Gamma_j$, $j\geq 2$, is in case (c). 
    Note that there is no embedded $\RP^2$ in $L(4m,2m\pm 1)$, and the only non-trivial finite effective free action on $S^2$ is $\mZ_2$ with the quotient homeomorphic to $\RP^2$. 
    Hence, $\Gamma_j$, $j\geq 2$, has $\#G=4m$ components with $G$ permuting them. 

    Suppose $\{\Sigma_k\}_{k\in\N}\subset\xEG$ is an area minimizing sequence in $\xEG$. 
    It then follows from the $G$-invariance of $\Sigma_k$ and \cite[Theorem 1]{meeks1982embeddedMS} that $\Sigma_k$ converges (up to a subsequence) as varifolds to a disjoint union of embedded $G$-connected minimal $G$-surfaces $\{\Gamma_j\}_{j=1}^J$ with integer multiplicities $\{m_j\}_{j=1}^J$ so that the genus bound \eqref{eq:genus bound0} is also valid. 
    Next, combining the proof of Proposition \ref{Prop: exist 1-sided component} and the above arguments, we know there is exactly one $G$-component, say $\Gamma_1$, satisfying $\Gamma_1\in \xEG$ and $m_j=1$. 
    Finally, we notice $J=1$ since $\inf\{\mcH^2(\Gamma):\Gamma\in\xEG\}\leq\mcH^2(\Gamma_1)\leq \lim_{k\to\infty}\mcH^2(\Sigma_k)$. 
\end{proof}

Next, we can use the Lusternik-Schnirelmann theory to show the following result. 

\begin{theorem}\label{Thm: L-S in Lens space}
    Given any Riemannian metric $g_{_L}$ on $L(4m,2m\pm 1)$ with $m\geq 1$, let $M,G,G_\pm$ be given by \eqref{Eq: pull-back lens space}, and $g_{_M}$ be the $G$-invariant pull-back metric on $M$. 
    Suppose $\xEG$ contains only finitely many elements that are minimal in $(M,g_{_M})$. 
    Then 
    \[0<4M_0<\bL(\mathscr{P}_1)<\bL(\mathscr{P}_2)<\bL(\mathscr{P}_3) ~\mbox{if $m=1$},\quad {\rm and}\quad 0<4m M_0<\bL(\mathscr{P}_1)~\mbox{if $m\geq 2$},\]
    where $4mM_0=\inf\{\mcH^2(\Gamma):\Gamma\in\xEG\}$, and $\mathscr{P}_k$ is defined in \eqref{Eq: sweepouts of K2 in L41}\eqref{Eq: sweepouts of K2 in L4m}. 
\end{theorem}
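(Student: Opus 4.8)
The plan is to mirror the proof of Theorem \ref{Thm:L-S} in Section \ref{Sec: Minimal RP2}, adapting each ingredient to the present setting where $M = S^3$, $G = \mZ_{4m}$, $G_+ = \mZ_{2m}$, and $\xEG$ is the space of $G$-equivariant $G_\pm$-separating tori. Since $4mM_0 := \inf\{\mcH^2(\Gamma) : \Gamma \in \xEG\}$, the second half of Corollary \ref{Cor: exist min-max and minimizing K2} provides an area minimizer $\Sigma_0 \in \xEG$ (a connected $G$-torus whose quotient is an area-minimizing Klein bottle) with $\mcH^2(\Sigma_0) = 4mM_0$; here $M_0$ is the area of the minimizing Klein bottle in $L(4m,2m\pm 1)$. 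The finiteness hypothesis on minimal elements of $\xEG$ replaces the bumpiness assumption.

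For the strict monotonicity $\bL(\mathscr{P}_k) < \bL(\mathscr{P}_{k+1})$ when $m=1$ and $k \in \{1,2\}$, I would run the standard Lusternik–Schnirelmann argument exactly as in the proof of Theorem \ref{Thm:L-S}: take $\mathcal{S}$ to be the collection of $G$-invariant integral varifolds with mass bounded by $\bL(\mathscr{P}_{k+1})$ whose support lies in $\xEG$, and carry over the proof of \cite[Theorem 6.1]{marques2017existence} (or \cite[Lemma 8.3]{wangzc2023existenceFour}) using $\alpha \in H^1(\xEG;\mZ_2)$ and $\xEG$ in place of $\bar{\lambda}$ and $\Z_2(M;\mZ_2)$. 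If equality held, one would detect $\alpha^{k+1}$ by a sweepout concentrating on $\mathcal{S}$, contradicting Corollary \ref{Cor: exist min-max and minimizing K2}(i), which forces exactly one $G_\pm$-separating component. This step is essentially formal given the earlier results.

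The substantive part is the strict inequality $4mM_0 < \bL(\mathscr{P}_1)$ (for all $m \geq 1$), and here I would follow verbatim the contradiction scheme in Theorem \ref{Thm:L-S}. Suppose $4mM_0 = \bL(\mathscr{P}_1)$; set $\mathcal{S} := \cup\{\Gamma \in \xEG : \mcH^2(\Gamma) = 4mM_0\}$, a closed set by the finiteness hypothesis, fix a $G$-invariant open ball $B \subset\subset S^3 \setminus B_r(\mathcal{S})$, and establish the analogue of Claim \ref{Claim: sweepout with small area}: for every $\delta > 0$ there is $\Phi_\delta \in \mathscr{P}_1$ with $\sup_x \mcH^2(\Phi_\delta(x) \cap B) < \delta$. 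The proof of the claim uses a minimizing sequence $\{\Phi_i\} \subset \mathscr{P}_1$ together with the fact that any sequence $\Phi_i(x_i)$ with area approaching $4mM_0 = \inf\{\mcH^2(\Gamma):\Gamma\in\xEG\}$ has varifold limit supported in $\mathcal{S}$ — this is precisely the content of the second half of Corollary \ref{Cor: exist min-max and minimizing K2}, which plays the role that \cite[Proposition 2.3]{bray2010area} played in Section \ref{Sec: Minimal RP2}. Finally, since $\iota \circ \Phi_\delta$ is a $1$-sweepout of $S^3$ in the Almgren–Pitts sense, choosing $B_r(q) \subset B$ and $\delta < \alpha_0 r^2$ with $\alpha_0, r_0$ from \cite[Proposition 8.2]{marques2017existence} contradicts that proposition, since the mass of $\iota\circ\Phi_\delta(x)$ restricted to $B_r(q)$ is below $\alpha_0 r^2$ for all $x$.

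The main obstacle I anticipate is verifying that the compactness input needed for the claim is genuinely available in the equivariant torus setting: one needs that an area-minimizing sequence in $\xEG$ cannot lose area into a fixed ball away from $\mathcal{S}$, which requires the varifold limit of such a sequence to be $G$-invariant, supported on minimal surfaces, and — crucially — with total mass equal to $4mM_0$ so that no mass escapes to extra components. This is supplied by the last paragraph of Corollary \ref{Cor: exist min-max and minimizing K2} (using \cite[Theorem 1]{meeks1982embeddedMS} and the $J=1$ conclusion), so the argument goes through; one just has to be careful that the minimizing sequence can be taken inside $\mathscr{P}_1$ (not merely among all of $\xEG$), which is why the claim is phrased with $\Phi_\delta \in \mathscr{P}_1$ and the minimizing sequence is chosen within that homotopy-detection class.
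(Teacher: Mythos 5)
Your proposal is correct and follows essentially the same route as the paper: the paper's proof simply says the argument of Theorem \ref{Thm:L-S} carries over almost verbatim, using Corollary \ref{Cor: exist min-max and minimizing K2} in place of Corollary \ref{Cor: exist min-max RP2} and of \cite[Proposition 2.3]{bray2010area}, which is exactly the substitution you make (the second half of the corollary supplying the compactness of area-minimizing sequences in $\xEG$, and part (i) feeding the Lusternik--Schnirelmann step).
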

\begin{proof}
    Using Corollary \ref{Cor: exist min-max and minimizing K2} in place of Corollary \ref{Cor: exist min-max RP2} and \cite[Proposition 2.3]{bray2010area}, the proof in Theorem \ref{Thm:L-S} can be taken almost verbatim to show the desired results. 
\end{proof}

At the end of this subsection, we prove Theorem \ref{Thm: minimal Klein bottles (main)}.

\begin{proof}[Proof of Theorem \ref{Thm: minimal Klein bottles (main)}] 
    Firstly, given any Riemannian metric $g_{_{L}}$ on $L(4m,2m\pm 1)$, we have an embedded area minimizing Klein bottle denoted as $\Sigma_0$ by the second part of Corollary \ref{Cor: exist min-max and minimizing K2}. 
    Let $M=S^3$, $G=\mZ_{4m}$, and $G_\pm\subset G$ be defined as in \eqref{Eq: pull-back lens space}. 
    Endow $M$ with the pull-back metric $g_{_M}$ so that $(M,g_{_M})$ is locally isometric to $(L(4m,2m\pm 1),g_{_L})$. 

    {\bf Case I: $g_{_L}$ is a metric with positive Ricci curvature.}

    In this case, $g_{_M}$ is a $G$-invariant Riemannian metric on $M=S^3$ with positive Ricci curvature. 
    Hence, similar to \eqref{Eq: no sphere in positive Ricci}, the embedded Frankel's property indicates there is no embedded $G$-invariant union of minimal $2$-spheres in $(M,g_{_M})$, 
    i.e. there is no minimal $2$-sphere in $(L(4m,2m\pm 1),g_{_L})$. 
    
    Without loss of generality, we assume that $(L(4m,2m\pm 1),g_{_L})$ contains only finitely many distinct minimal embedded Klein bottles, i.e. 
    \begin{align}\label{Eq: finite K2 assumption}
        \mbox{$(M,g_{_M})$ contains finitely many minimal tori in $\xEG$.}
    \end{align}

    {\bf Sub-case I.A: $m=1$.} 
    Combining \eqref{Eq: finite K2 assumption} with Theorem \ref{Thm: L-S in Lens space}, $\{\mathscr{P}_k\}_{k=1}^3$ in \eqref{Eq: sweepouts of K2 in L41} satisfies
    \begin{align}\label{Eq: increasing width in Lens}
        0<4\mcH^2(\Sigma_0)<\mathbf{L}(\mathscr{P}_1)<\mathbf{L}(\mathscr{P}_2)<\mathbf{L}(\mathscr{P}_3).
    \end{align}
    For each $k\in\{1,2,3\}$, although $\mathscr{P}_k$ may contain many different homotopy classes, we see from \eqref{Eq: finite K2 assumption} 
    and Corollary \ref{Cor: exist min-max and minimizing K2}(i)(ii) that the min-max values of the homotopy classes in $\mathscr{P}_k$ must be stabilized to $\bL(\mathscr{P}_k)$, and moreover, $\bL(\mathscr{P}_k)$ is realized by the area of an embedded $G$-connected minimal torus $\Gamma_k\in\xEG$ with multiplicity one, i.e. $\bL(\mathscr{P}_k)=\mcH^2(\Gamma_k)$. 

    Therefore, we have found $4$ distinct embedded minimal Klein bottles in $(L(4,1),g_{_L})$.

    {\bf Sub-case I.B: $m\geq 2$.}
    The proof in {\bf Sub-case I.A} would carry over for $\Sigma_0$ and $\mathscr{P}_1$ in \eqref{Eq: sweepouts of K2 in L4m} to find $2$ distinct embedded minimal Klein bottles in $(L(4m,2m\pm 1), g_{_L})$.

    {\bf Case II: $g_{_L}$ is a bumpy metric.}

    In this case, every embedded $G$-invariant minimal surface in $(M,g_{_M})$ is non-degenerate. 
    Without loss of generality, we also assume that $(M,g_{_M})$ contains finitely many $G$-invariant minimal tori in $\xEG$ and finitely many $G$-invariant union of minimal $2$-spheres.

    {\bf Sub-case II.A: $m=1$.} 
    Combining the constructions of $\{\mathscr{P}_k\}_{k=1}^3$ in \eqref{Eq: sweepouts of K2 in L41} and the above finiteness assumptions with Theorem \ref{Thm: L-S in Lens space}, we know \eqref{Eq: increasing width in Lens} is still valid here. 
    In addition, for each $k\in\{1,2,3\}$, using the arguments in {\bf Sub-case I.A} and Corollary \ref{Cor: exist min-max and minimizing K2}(i)(ii), there are disjoint embedded $G$-connected minimal $G$-surfaces $\{\Gamma_{k,j}\}_{j=1}^{J_k}$ and integer multiplicities $\{m_{k,j}\}_{j=1}^{J_k}$ so that 
    \begin{align}\label{Eq: min-max surface in lens}
        \bL(\mathscr{P}_k)=\sum_{j=1}^{J_k}m_{k,j}\mcH^2(\Gamma_{k,j}),  
    \end{align} 
    where 
    $\Gamma_{k,1}\in\xEG$ has multiplicity $m_{k,1}=1$, and $\Gamma_{k,j}$ with $j\in\{2,\dots,J_k\}$ is a $G$-invariant disjoint union of $4m$ numbers of minimal $2$-spheres. 

    Suppose Theorem \ref{Thm: minimal Klein bottles (main)}(i) fails, i.e. the number of distinct embedded minimal Klein bottles in $(L(4,1),g_{_L})$ is strictly less than $4$. 
    Clearly, this assumption and \eqref{Eq: increasing width in Lens} indicate the existence of $k_0\in\{1,2,3\}$ with $J_{k_0}\geq 2$. 
    Hence, we have a minimal Klein bottle $K=\pi(\Gamma_{k_0,1})$ and a minimal $2$-sphere $S=\pi(\Gamma_{k_0,2})$ in $(L(4,1),g_{_L})$ with $K\cap S=\emptyset$. 
    
    By cutting $L(4,1)$ along $K$, we obtain a compact manifold $\widetilde{L}$ with a connected boundary $\bd\widetilde{L}$ diffeomorphic to the oriented double cover $\widetilde{K}$ of $K$. 
    Then, by isotopic area minimizing $S$ within the region in $\widetilde{L}$ enclosed between $\bd\widetilde{L}=\widetilde{K}$ and $S$, we have a stable (and thus strictly stable) minimal $2$-sphere $S_0\subset\widetilde{L}\setminus\bd\widetilde{L}$ so that $\widetilde{L}\setminus S_0$ has a component $N$ with $N\cap\bd\widetilde{L}=\emptyset$.  
    After applying \cite[Proposition 8.8]{wangzc2023existenceFour} (using Song's strategy \cite{song2018existence}) to $N$, we can find either
    \begin{itemize}
        \item at least one more minimal sphere in $N$ that is different from $S$, if $S\neq S_0$; or
        \item at least two more minimal spheres in $N$, if $S=S_0$.
    \end{itemize}
    Thus, we obtain three minimal $2$-spheres and an area minimizing Klein bottle in $(L(4,1),g_{_L})$. 
    
    {\bf Sub-case II.B: $m\geq 2$.}
    The proof in {\bf Sub-case II.A} would carry over for $\Sigma_0$ and $\mathscr{P}_1$ in \eqref{Eq: sweepouts of K2 in L4m} to find either two distinct embedded minimal Klein bottles or one area minimizing Klein bottle together with three embedded minimal spheres in $(L(4m,2m\pm 1), g_{_L})$. 
\end{proof}

\section{Minimal torus in lens spaces}\label{Sec: minimal T2}

Note Theorem \ref{Thm: minimal tori (main)}(ii) has been proven by Ketover in \cite[Theorem 4.8]{ketover2022flipping}. Hence, in this section, we only consider minimal tori in $L(2,1)=\RP^3$ and $L(p,q)$ with $q\notin \{1,p-1\}$. 
In particular, we always denote by 
\begin{align}\label{Eq: notaions for G-invariant T2}
    M:=S^3,\quad G:=\mZ_p=\langle \xi_{p,q}\rangle, \quad G_+=\mZ_{p/2}:= \langle \xi_{p,q}^2\rangle,\quad G_-:=G\setminus G_+,
\end{align}
where $\xi_{p,q}$ is given by \eqref{Eq: lens space action}, and either $(p,q)=(2,1)$ or $q\notin\{1,p-1\}$. 
In addition, we consider the following spaces 
\begin{itemize}
    \item $\msX :=\left\{ \phi(T^2) \left|\phi: T^2\to M/G \text{ a separating embedding} \right.\right\}$,
    \item $\msY :=\left\{ \pi(S^1) \left| S^1 \text{ is a $G$-invariant equator (great circle) in $S^3$} \right. \right\}$,
    \item $\bmsX := \msX\cup\msY$,
\end{itemize}
which are endowed with the smooth un-oriented topology. 
Note that $\phi(T^2)$ is separating if $(M/G)\setminus\phi(T^2)$ has two connected components $U_1,U_2$. 
Moreover, denote by $\iota:\bmsX\to\Z_2(M/G;\mZ_2)$ the natural inclusion, and by $\bar{\lambda}$ the generator of $H^1(\Z_2(M/G;\mZ_2);\mZ_2)$.

\subsection{Sweepouts formed by tori}\label{Sec: sweepouts by T^2}

To begin with, we have a family of $T^2$ in $\msX$ by Ketover's construction \cite[Proposition 4.2]{ketover2022flipping}. 

\begin{proposition}[Ketover \cite{ketover2022flipping}]\label{prop: parametrization of T^2}
    Let $L(p,q)$ denote the lens space endowed with the round metric, $p\geq 2$. Then we have
    \begin{enumerate}
        \item $\mathbb{RP}^3$ admits a family of Clifford torus parameterized by $\mathbb{RP}^2\times \mathbb{RP}^2$; 
        \item $L(p,q)$ with $q\notin \{1,p-1\}$ admits exactly one Clifford torus.
    \end{enumerate}
\end{proposition}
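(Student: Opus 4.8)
The plan is to combine the parametrization of Clifford tori in $S^3$ recalled in Section~\ref{Sec: sweepouts by K^2} with the action formula \eqref{Eq: lens action on torus}, and to read off the $\mathbb{Z}_p$-invariant Clifford tori as the fixed-point set of a pair of rotations acting on $\RP^2$.

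For (1), with $p=2$, $q=1$ the generator is $\xi_{2,1}(z,w)=(-z,-w)$, the antipodal map, while $\hat\eta^1_{2,1}$ and $\hat\eta^2_{2,1}$ are rotations of $S^2$ by $2\pi$ and $0$, i.e.\ both the identity. Hence \eqref{Eq: lens action on torus} gives $\xi_{2,1}(\tau(a,E(b)))=\tau(a,E(b))$ for all $a\in S^2$ and all great circles $E(b)$: every Clifford torus in $S^3$ is $\mathbb{Z}_2$-invariant. Since the $\mathbb{Z}_2$-action is free and restricts to each such torus as an orientation-preserving free isometry (conjugate to a translation, all Clifford tori being congruent via an isometry commuting with the central antipodal map), each quotient $\pi(\tau(a,E(b)))$ is an embedded torus in $\RP^3$; moreover $\pi^{-1}(\pi(\tau))=\tau$ by invariance, so $\tau\mapsto\pi(\tau)$ is injective. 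Feeding in the identification of the space of unoriented Clifford tori in $S^3$ with $\RP^2\times\RP^2$ (Section~\ref{Sec: sweepouts by K^2}) then yields the desired $\RP^2\times\RP^2$-family of Clifford tori in $\RP^3$.

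For (2), assume $q\notin\{1,p-1\}$, equivalently $q+1\not\equiv 0$ and $q-1\not\equiv 0\pmod p$; then $\hat\eta^1_{p,q}$ and $\hat\eta^2_{p,q}$ are nontrivial rotations of $S^2$ about the common axis $\{\pm i\}$. By \eqref{Eq: lens action on torus} together with the $\RP^2\times\RP^2$-parametrization, $\tau(a,E(b))$ is $\mathbb{Z}_p$-invariant if and only if $[a]\in\RP^2$ is fixed by $\hat\eta^1_{p,q}$ and $[b]\in\RP^2$ is fixed by $\hat\eta^2_{p,q}$. A nontrivial rotation of $S^2$ about $\{\pm i\}$ fixes only $\pm i$, so its fixed set in $\RP^2$ is the single point $[i]$, unless the rotation angle is $\pi$, in which case the projective line spanned by the $\{j,k\}$-equator is fixed as well. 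The angle of $\hat\eta^1_{p,q}$ (resp.\ $\hat\eta^2_{p,q}$) equals $\pi$ exactly when $2(q+1)=p$ (resp.\ $2(q-1)=p$), and since $\gcd(p,q)=1$ this forces $L(p,q)=L(4m,2m-1)$ (resp.\ $L(4m,2m+1)$); in that exceptional case the additional $\mathbb{Z}_p$-invariant Clifford tori $\tau(a,E(i))$ (resp.\ $\tau(i,E(b))$), with $a$ (resp.\ $b$) on the $\{j,k\}$-equator, are precisely those whose $\mathbb{Z}_p$-quotients are non-orientable, namely the minimal Klein bottles produced in Section~\ref{Sec: Minimal K2}; they do not count as tori, consistently with the fact that $L(p,q)$ embeds no Klein bottle unless $p=4m$, $q=2m\pm1$ (\cite{bredon1969nonorientable}, \cite{geiges2022klein}). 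In every case $[a]=[b]=[i]$ are the only choices descending to an embedded torus, namely $\pi(\tau(i,E(i)))$ (which is $\mathbb{Z}_p$-invariant since $i$ is fixed by both rotations, $\xi_{p,q}$ acting on it as a translation), so $L(p,q)$ admits exactly one Clifford torus.

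The step I expect to be the main obstacle is the exceptional rotation-by-$\pi$ case in (2): one must check that the extra $\mathbb{Z}_p$-invariant Clifford tori appearing there descend to Klein bottles rather than to embedded tori. This rests on the orientation-reversal computation underlying the constructions of Section~\ref{Sec: sweepouts by K^2} (equivalently, on Bredon's classification of which lens spaces contain an embedded Klein bottle). Away from this point, both statements reduce to an elementary bookkeeping of the fixed-point sets of the two rotations $\hat\eta^1_{p,q},\hat\eta^2_{p,q}$ acting on $\RP^2$, together with the observation that $\mathbb{Z}_p$ acts by translations on the central Clifford torus.
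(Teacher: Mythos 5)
Your argument is correct and follows essentially the same route as the paper and its cited source: the paper simply quotes Ketover's Proposition 4.2 and then, in Sections \ref{Sec: sweepouts by K^2} and \ref{Sec: sweepouts by T^2}, performs exactly the kind of invariance computation you carry out, namely reading off $\mZ_p$-invariant Clifford tori from the rotation formula \eqref{Eq: lens action on torus} via the $\tau(a,E(b))$ parametrization. Your fixed-point bookkeeping on $\RP^2\times\RP^2$, including the exceptional rotation-by-$\pi$ cases $(p,q)=(4m,2m\mp1)$ whose extra invariant tori descend to Klein bottles (consistent with \cite{bredon1969nonorientable}), is the same analysis as in Ketover's proof, just written out in full rather than cited.
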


For the sake of completeness, we adopt the notations as in Section \ref{Sec: Minimal K2} and explain the constructions of sweepouts formed by tori in $\RP^3$ or $L(p,q)$ with $q\notin\{1,p-1\}$.

Firstly, for any $a,b\in\mS^2$, recall that $E(b)=\bd \{x\in\mS^2: x\cdot b <0\}$ is an oriented equator and $\tau(a,E(b))$ is an oriented Clifford torus in $\mS^3$. 
Then, for any $t\in [-1,1]$, we define 
\begin{align}\label{Eq: parallel circle}
    E_t(b):= \bd \left\{x\in\mS^2: x\cdot b := x_1b_1+x_2b_2+x_3b_3 < t \right\}
\end{align}
to be an oriented circle parallel to $E(b)$ with $E_1(b)=\{b\}$ and $E_{-1}(b)=\{-b\}$. 
It's known that $\tau(a,E_t(b))$ is an oriented flat torus in $\mS^3$ parallel to the Clifford torus $\tau(a,E_0(b))$ (cf. \cite[Page 22]{ketover2022flipping}), which degenerates to an oriented great circle in $\mS^3$ at $t=\pm1$.

{\bf Case A: $(p,q)=(2,1)$ and $L(p,q)=\RP^3$.} 

Using the notations in Section \ref{Sec: Minimal K2} and \eqref{Eq: lens action on torus}, we see $\xi_{2,1}$ is orientation preserving and the oriented (possibly degenerated) torus $\xi_{2,1}\left(\tau\left( a,E_t(b) \right)\right) $ in $\mS^3$ is the same as $ \tau\left( a,E_t(b) \right) $ for all $a,b\in\mS^2, t\in[-1,1]$, which implies the quotient of $\spt(\tau\left( a,E_t(b) \right))$ is an element in $\bmsX$. 
Therefore, we have a continuous family of oriented tori in $\RP^3$ parameterized by $\widetilde{\mathcal{G}}:[-1,1]\times \mS^2\times\mS^2 \to \bmsX$ (with oriented smooth topology) 
\begin{align}
    \widetilde{\mathcal{G}}(t,a,b)=  (\tau\left(a,E_t(b) \right) )/G, \qquad \forall (t,a,b)\in [-1,1]\times\mS^2\times\mS^2.    
\end{align}
Note that the oriented torus $\widetilde{\mathcal{G}}(t,a,b)=\widetilde{\mathcal{G}}(t,-a,-b)$ is the oriented torus $\widetilde{\mathcal{G}}(-t,-a,b)=\widetilde{\mathcal{G}}(-t,a,-b)$ with the opposite orientation for $t\in (-1,1)$. 
Hence, $\widetilde{\mathcal{G}}$ induces a continuous map into $(\bmsX,\msY)$ with un-oriented smooth topology:
\begin{align}\label{Eq: parameter T2 in RP3}
    \mathcal{G}: (\bmcX,\mcY) \to (\bmsX,\msY), \qquad \mathcal{G}([t,a,b])= \spt\left( \widetilde{\mathcal{G}}(t,a,b)  \right),
\end{align}
where $[t,a,b]\in \bmcX$, 
\begin{align}\label{Eq: parameter space of T2 in RP3}
    \bmcX:= \left([-1,1]\times \mS^2\times\mS^2\right) / \mZ_2\oplus\mZ_2, \qquad \mcY:= \bd \bmcX, \qquad \mcX:=\bmcX\setminus \mcY,
\end{align}
and $\mZ_2\oplus\mZ_2$ acts on $[-1,1]\times \mS^2\times\mS^2$ by 
$(0,0) = id$, 
\begin{align}\label{Eq: action on parameter space}
    (1,0)\cdot(t,a,b)=(-t,-a,b), ~~~(0,1)\cdot (t,a,b)=(-t,a,-b), ~~~(1,1)\cdot(t,a,b)=(t,-a,-b).
\end{align}
In addition, one notices that the parameter space $\bmcX$ retracts onto
\begin{align}
    \mcX_4:= \{ [0,a,b]\in \mcX: a,b\in\mS^2 \} \cong \RP^2\times \RP^2 
\end{align}
the parameter space of Clifford tori. 
Then, by Poincar\'{e}–Lefschetz Duality, 
\begin{align}\label{Eq: poincare dual of parameter space}
    H^1(\bmcX,\mcY;\mZ_2)\cong H_4(\bmcX;\mZ_2) \cong H_4(\mcX_4;\mZ_2) \cong \mZ_2
\end{align}
has a generator $\lambda\in H^1(\bmcX,\mcY;\mZ_2)$. 
Moreover, for any fixed $a,b\in\mS^2$, the curve $\gamma(t):=[t,a,b]\in\bmcX$ satisfies that $\iota\circ \mathcal{G}\circ\gamma : ([-1,1],\{\pm 1\})\to  (\Z_2(M;\mZ_2), \{0\})$ is a sweepout in the sense of Almgren-Pitts, which implies $\mathcal{G}^*\iota^*(\bar{\lambda}) = \lambda\in H^1(\bmcX,\mcY;\mZ_2).$

\begin{lemma}\label{Lem: lambda4 not 0}
    $\lambda^4=\mathcal{G}^*\iota^*(\bar{\lambda}^4) \neq 0 \in H^4(\bmcX,\mcY;\mZ_2)$ and $\lambda^5=0$. 
\end{lemma}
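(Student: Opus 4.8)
The plan is to realize the pair $(\bmcX,\mcY)$ as the disk/sphere bundle pair of a real line bundle over $\RP^2\times\RP^2$ and then read off both statements from the Thom isomorphism. First I would unwind the $\mZ_2\oplus\mZ_2$--quotient in \eqref{Eq: action on parameter space} one generator at a time: quotienting $[-1,1]\times\mS^2\times\mS^2$ by the (free) subaction of $(1,1)$ yields $[-1,1]\times N$, where $N:=(\mS^2\times\mS^2)/\langle(-1,-1)\rangle$ is a closed $4$--manifold, and the residual involution $(1,0)$ then acts on $[-1,1]\times N$ by $(t,[a,b])\mapsto(-t,\sigma[a,b])$ with $\sigma[a,b]:=[-a,b]$ a \emph{free} involution of $N$ satisfying $N/\sigma=(\mS^2\times\mS^2)/(\mZ_2\oplus\mZ_2)=\RP^2\times\RP^2$ (the group of independent sign changes on the two factors). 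Hence $\bmcX=([-1,1]\times N)/\mZ_2$ is exactly the unit interval (disk) bundle $D(L)$ of the line bundle $L:=(\mathbb R\times N)/\mZ_2$ over $B:=\RP^2\times\RP^2$, with zero section $\mcX_4\cong B$ and with $\mcY=\partial\bmcX$ the associated double cover $S(L)\cong N$. Since the covering $N\to B$ corresponds to the diagonal subgroup $\pi_1(N)=\langle(1,1)\rangle\subset\pi_1(B)=\mZ_2\oplus\mZ_2$, it is classified by $(i,j)\mapsto i+j$, so $w_1(L)=x+y$, where $x,y$ are the degree-one generators of $H^*(B;\mZ_2)=\mZ_2[x,y]/(x^3,y^3)$.

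Next I would apply the mod-$2$ Thom isomorphism $\Phi\colon H^{k-1}(B;\mZ_2)\xrightarrow{\ \cong\ }H^{k}(D(L),S(L);\mZ_2)=H^{k}(\bmcX,\mcY;\mZ_2)$, $\Phi(a)=U\smile\pi^*a$, with Thom class $U\in H^1(\bmcX,\mcY;\mZ_2)$; for $k=1$ this recovers the isomorphism $H^1(\bmcX,\mcY;\mZ_2)\cong\mZ_2$ of \eqref{Eq: poincare dual of parameter space} and identifies its generator with $U$, so that $\lambda$, being by definition the nonzero element of this group, equals $U$. Writing $j^*\colon H^*(\bmcX,\mcY;\mZ_2)\to H^*(\bmcX;\mZ_2)=H^*(B;\mZ_2)$ for the restriction, one has $j^*U=w_1(L)=x+y$ (the mod-$2$ Euler class). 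Using the standard identity that a relative cup product is computed by applying $j^*$ to one of the two factors, together with multiplicativity of $j^*$, I then get $\lambda^k=U\smile U^{k-1}=U\smile(j^*U)^{k-1}=U\smile\pi^*\!\big((x+y)^{k-1}\big)=\Phi\big((x+y)^{k-1}\big)$. Because $\Phi$ is an isomorphism, $\lambda^k\neq0$ if and only if $(x+y)^{k-1}\neq0$ in $H^{k-1}(\RP^2\times\RP^2;\mZ_2)$.

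The proof then finishes with the elementary computation in $\mZ_2[x,y]/(x^3,y^3)$: $(x+y)^3=x^2y+xy^2\neq0$, while $(x+y)^4=x^4+y^4=0$ since the middle binomial coefficients vanish mod $2$ and $x^3=y^3=0$. Taking $k=4$ and $k=5$ gives $\lambda^4=\Phi(x^2y+xy^2)\neq0$ and $\lambda^5=\Phi(0)=0$, and $\lambda^4=\mathcal G^*\iota^*(\bar\lambda^4)$ is automatic from $\mathcal G^*\iota^*\bar\lambda=\lambda$ and the fact that $\mathcal G^*\iota^*$ is a ring homomorphism. The only genuinely non-formal step is the first one: one must recognize that the iterated $\mZ_2\oplus\mZ_2$--quotient makes $\bmcX$ a \emph{nontrivial} $I$--bundle over $\RP^2\times\RP^2$ rather than a product, and pin down its first Stiefel--Whitney class as $x+y$; once that identification is in hand, everything else is the Thom isomorphism together with binomial coefficients mod $2$.
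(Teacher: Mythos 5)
Your proof is correct, but it takes a genuinely different route from the paper. The paper works homologically: it identifies $\lambda$ with the Poincar\'e--Lefschetz dual $[\mcX_4]^*$ of the Clifford-torus locus and computes $\lambda^2,\lambda^3,\lambda^4,\lambda^5$ by exhibiting explicit submanifolds ($\mcX_4'$, $\mcX_3'$, $\mcX_2'$, $\mcX_1'$, built from inner and cross products of $a,b$) homotopic to the previous intersection and transverse to $\mcX_4$, ending with $\lambda^4=[\mcX_1]^*\neq 0$ and $\lambda^5=0$ because the final intersection consists of two points. You instead observe that the iterated $\mZ_2\oplus\mZ_2$-quotient exhibits $(\bmcX,\mcY)$ as the disk/sphere bundle pair of the line bundle $L$ over $B=\RP^2\times\RP^2$ with $w_1(L)=x+y$ (your identification of the residual free involution on $N=(\mS^2\times\mS^2)/\langle(-a,-b)\rangle$ and of the classifying homomorphism $(i,j)\mapsto i+j$ is correct, and can be confirmed by noting that the deck transformation $(1,0)$ reverses the $t$-direction while $(1,1)$ preserves it), so that the mod-$2$ Thom isomorphism together with the standard factorization of the relative cup product through $j^*$ gives $\lambda^k=\Phi\bigl((x+y)^{k-1}\bigr)$; the computation $(x+y)^3=x^2y+xy^2\neq 0$ and $(x+y)^4=0$ in $\mZ_2[x,y]/(x^3,y^3)$ then yields both assertions at once, and $\lambda^4=\mathcal G^*\iota^*(\bar\lambda^4)$ follows, as you say, from the identity $\mathcal G^*\iota^*(\bar\lambda)=\lambda$ already established in the text before the lemma. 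The trade-off: your argument is more structural, determines $H^*(\bmcX,\mcY;\mZ_2)$ as a module in all degrees simultaneously, and makes the vanishing $\lambda^5=0$ transparent (it is not a dimension count, since $H^5(\bmcX,\mcY;\mZ_2)\cong\mZ_2$), whereas the paper's intersection-theoretic argument avoids identifying the bundle structure and produces explicit geometric cycles detecting each power, at the cost of transversality checks for each of the four successive representatives.
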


The proof of Lemma \ref{Lem: lambda4 not 0} is left in Appendix \ref{Appendix: lambda4}.  
Now, we know $\iota\circ\mathcal{G}:(\bmcX,\mcY) \to (\Z_2(M/G;\mZ_2), \{0\})$ is a $4$-sweepout in the sense of Almgren-Pitts as $\lambda^4= (\iota\circ\mathcal{G})^*(\bar{\lambda}^4)\neq 0$. 
In particular, 
\[\alpha = \iota^*(\bar{\lambda})\in H^1(\bmsX,\msY;\mZ_2)\]
satisfies $\alpha^4\neq 0 \in H^4(\bmsX,\msY;\mZ_2)$. 
Hence, we can define the collection of continuous maps
\begin{align}\label{Eq: sweepouts of T2 in RP3}
    \mathscr{P}_k:=\left\{\Phi: (X,Z) \to (\bmsX ,\msY) \left\vert \Phi^*(\alpha^k)\neq 0 \in H^k(X,Z;\mZ_2) \right.\right\},
\end{align}
where $k\in\{1,2,3,4\}$ and $Z\subset X$ are any two cubical complexes.

{\bf Case B: $q\notin\{1,p-1\}$.} 

In this case, we fix $a_0=b_0=(1,0,0)\in\mS^3$. 
Similarly, by \eqref{Eq: lens action on torus}, the oriented (possibly degenerated) torus $\xi_{p,q}\left(\tau\left( a_0,E_t(b_0) \right)\right) $ in $\mS^3$ is the same as $ \tau\left( a_0,E_t(b_0) \right) $ for all $t\in[-1,1]$, which implies the quotient of $\spt(\tau\left( a_0,E_t(b_0) \right))$ is an element in $\bmsX$. 
Therefore, we have a continuous family of un-oriented tori in $\RP^3$ parameterized by 
\begin{align}\label{Eq: parameter T2 in Lpq}
    \mathcal{G}:([-1,1],\{\pm 1\})\to (\bmsX,\msY),\qquad\mathcal{G} (t) :=  \spt\left(\tau\left(a_0,E_t(b_0) \right) /G\right), ~\forall t\in [-1,1].    
\end{align}
Note $\iota\circ \mathcal{G}$ is a sweepout in the sense of Almgren-Pitts. 
Hence, 
\[\alpha:=\iota^*(\bar{\lambda})\neq 0 \in H^1(\bmsX,\msY;\mZ_2),\]
and we can define the collection of continuous maps 
\begin{align}\label{Eq: sweepouts of T2 in Lpq}
    \mathscr{P}_1:=\left\{\Phi: (X,Z) \to (\bmsX,\msY) \left\vert \Phi^*(\alpha)\neq 0 \in H^1(X,Z;\mZ_2) \right.\right\},
\end{align}
where $Z\subset X$ are any two cubical complexes.

\subsection{Proof of Theorem \ref{Thm: minimal tori (main)}}

Since $T^2$ is orientable, we can apply \cite[Theorem B]{wangzc2023existenceFour} to obtain the following corollary. 

\begin{corollary}\label{Cor: exist min-max T2}
    Let $g_{_{L}}$ be a Riemannian metric on $L(p,q)$ with positive Ricci curvature, where $(p,q)=(2,1)$ or $q\notin\{1,p-1\}$. 
    Suppose $\mathscr{P}_k$ is given by \eqref{Eq: sweepouts of T2 in RP3} or \eqref{Eq: sweepouts of T2 in Lpq}, and $\Pi\subset \mathscr{P}_k$ is a homotopy class of sweepouts in $\bmsX$. 
    Then,  
    \[ \bL(\Pi) = \mcH^2(\Gamma)>0\]
    for a smooth connected embedded minimal torus $\Gamma$ in $(L(p,q),g_{_{L}})$. 
\end{corollary}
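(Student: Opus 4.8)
\textbf{Proof proposal for Corollary \ref{Cor: exist min-max T2}.}

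The plan is to reduce the statement to the orientable multiplicity-one min-max theorem \cite[Theorem B]{wangzc2023existenceFour}, applied to the quotient manifold $L(p,q)$ directly, and then use the positive Ricci assumption to eliminate all unwanted components of the resulting min-max minimal surface. First I would note that a homotopy class $\Pi \subset \mathscr{P}_k$ of maps $\Phi:(X,Z)\to(\bmsX,\msY)$ detecting $\alpha^k$ (with $\alpha = \iota^*(\bar\lambda)$) is, after composing with the inclusion $\iota:\bmsX\to\Z_2(M/G;\mZ_2)$, a nontrivial $k$-sweepout of $L(p,q)$ in the sense of Almgren--Pitts, because $(\iota\circ\Phi)^*(\bar\lambda^k) = \Phi^*(\alpha^k)\neq 0$. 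Hence $\bL(\Pi) = \inf_{\Phi\in\Pi}\sup_{x\in X}\mcH^2(\Phi(x)) > 0$: the relative boundary $\msY$ consists of great circles, which are $1$-dimensional and carry zero $\mcH^2$-mass, so the width is not achieved on $Z$, and a standard Lusternik--Schnirelmann/isoperimetric lower bound (as in \cite[Proposition 8.2]{marques2017existence}) forces $\bL(\Pi)>0$.

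Next I would invoke \cite[Theorem B]{wangzc2023existenceFour} (the Simon--Smith multiplicity one theorem for sweepouts by orientable surfaces of a fixed genus, here genus $1$) on $(L(p,q),g_{_L})$ with the homotopy class $\Pi$. This produces a closed embedded minimal surface $\Gamma = \cup_{j} m_j\,\Gamma_j$ realizing $\bL(\Pi) = \sum_j m_j\,\mcH^2(\Gamma_j)$, with the property that each unstable two-sided component has multiplicity one, each one-sided component has stable connected double cover, and the weighted genus of $\Gamma$ is bounded by the genus of $T^2$, namely $1$. Since $(L(p,q),g_{_L})$ has positive Ricci curvature, it admits no two-sided stable minimal hypersurface, so every two-sided component is unstable with multiplicity one; moreover the embedded Frankel property forces all components of $\Gamma$ to be pairwise intersecting, hence $\Gamma$ is connected, so $\Gamma = m_1 \Gamma_1$ with $m_1 = 1$ if $\Gamma_1$ is two-sided. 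If $\Gamma_1$ were one-sided, its double cover would be a stable minimal surface in the positive-Ricci double cover, which is impossible; alternatively one uses that a one-sided surface would lift in $S^3$ to a disconnected pair of minimal surfaces violating Frankel. Thus $\Gamma$ is a single connected two-sided embedded minimal surface with multiplicity one, $\bL(\Pi) = \mcH^2(\Gamma)$.

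Finally I would identify the topological type of $\Gamma$ as a torus. The weighted genus bound \eqref{eq:genus bound0} with $\mathfrak{g}_0 = 1$ gives $\mathfrak{g}(\Gamma)\leq 1$, and since $\Gamma$ is two-sided (orientable) this leaves only $\mathfrak{g}(\Gamma)\in\{0,1\}$, i.e. $\Gamma$ is a $2$-sphere or a torus. But $L(p,q)$ with positive Ricci contains no embedded minimal $2$-sphere: such a sphere would separate or, when lifted to $S^3$, produce a $G$-invariant union of minimal spheres violating the embedded Frankel property exactly as in \eqref{Eq: no sphere in positive Ricci}; moreover the sweepout $\Pi$ detects a nontrivial class coming from the parameter space $\bmcX$ of Heegaard tori, and a removable-singularity/catenoid-estimate argument (as in the proof of Theorem \ref{Thm:L-S}, Claim \ref{Claim: sweepout with small area}) rules out degeneration to a $2$-sphere when $k\geq 1$. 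Hence $\mathfrak{g}(\Gamma) = 1$ and $\Gamma$ is a connected embedded minimal torus, as claimed. The main obstacle I anticipate is the last step: cleanly excluding minimal $2$-spheres from the min-max output, since the genus bound alone does not distinguish a multiplicity-one torus from a sphere; this is where the positive Ricci hypothesis (via Frankel) and the structure of the sweepout parameter space $\bmcX$ must be combined carefully, paralleling the argument used for Klein bottles in Corollary \ref{Cor: exist min-max and minimizing K2}.
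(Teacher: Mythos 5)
Your proposal is correct and follows essentially the same route as the paper: apply the Wang--Zhou multiplicity one theorem (Theorem B of \cite{wangzc2023existenceFour}) to the orientable genus-one sweepout in the quotient, use the genus bound to restrict the topology to sphere/torus/$\RP^2$/Klein bottle, rule out one-sided components and higher multiplicity via the absence of stable two-sided minimal surfaces under positive Ricci, and eliminate minimal $2$-spheres by lifting to $S^3$ (where the preimage is $p\geq 2$ disjoint minimal spheres, since $S^2$ admits only trivial covers) and invoking the embedded Frankel property. The extra remark about the parameter space $\bmcX$ in your last step is unnecessary, and the phrase ``stable minimal surface in the positive-Ricci double cover'' is slightly imprecise (the stable connected double cover is an immersed two-sided minimal surface in the positive-Ricci ambient manifold itself), but neither affects the validity of the argument.
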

\begin{proof}
    Firstly, we know $\bL(\Pi)>0$ since $\Pi$ is a family of sweepouts in the sense of Almgren-Pitts. 
    Then, combining \cite[Theorem B]{wangzc2023existenceFour} with the embedded Frankel's property, there exists a connected smooth embedded minimal surfaces $\Gamma$ in $(L(p,q),g_{_L})$ so that $\bL(\Pi)=m\mcH^2(\Gamma)$ for some integer $m\in\mZ_+$, and (by the genus bound) $\Gamma$ is either 
    \begin{itemize}
        \item[(a)] a minimal torus with multiplicity one, or
        \item[(b)] a minimal $2$-sphere, or 
        \item[(c)] a minimal real projective plane with a stable connected double cover (only if $L(p,q)\cong\RP^3$),
        \item[(d)] a minimal Klein bottle with a stable connected double cover (only if $(p,q)=(4m,2m\pm 1)$).
    \end{itemize}
    Note the pull-back metric $g_{_M}=\pi^*g_{_L}$ on $M=S^3$ also have positive Ricci curvature. 
    Hence, (c)(d) can not happen. 
    Additionally, if $\Gamma$ is in case (b), then $\pi^{-1}(\Gamma)$ is a $\#G=p$ numbers union of minimal spheres in $S^3$, since $2$-spheres only have trivial finite covers. 
    This contradicts the embedded Frankel's property in $(M,g_{_M})$. 
    Therefore, $\Gamma$ has to be in case (a). 
\end{proof}

Recall that $\bL(\mathscr{P}_k):=\inf_{\Phi\in\mathscr{P}_k}\sup_{x\in\dmn(\Phi)}\mcH^2(\Phi(x))$. 

\begin{theorem}\label{Thm: L-S in Lens space for T2}
    Let $g_{_L}$ be a Riemannian metric on $L(p,q)$ with positive Ricci curvature, where $(p,q)=(2,1)$ or $q\notin\{1,p-1\}$. 
    Suppose $\msX$ contains only finitely many elements that are minimal in $(L(p,q),g_{_L})$. 
    Then 
    \[0<\bL(\mathscr{P}_1)<\bL(\mathscr{P}_2)<\bL(\mathscr{P}_3)<\bL(\mathscr{P}_4) \]
    when $(p,q)=(2,1)$, and $0<\bL(\mathscr{P}_1)$ when $q\notin\{1,p-1\}$. 
\end{theorem}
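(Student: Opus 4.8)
The plan is to adapt the Lusternik--Schnirelmann argument from the proof of Theorem \ref{Thm:L-S} (which in turn follows \cite[Theorem 6.1]{marques2017existence} and \cite[Lemma 8.3]{wangzc2023existenceFour}) to the relative setting $(\bmsX,\msY)$. First I would dispose of the case $q\notin\{1,p-1\}$: there $\mathscr{P}_1$ is generated by the sweepout $\mathcal G$ of \eqref{Eq: parameter T2 in Lpq}, which is a genuine Almgren--Pitts $1$-sweepout, so $\bL(\mathscr{P}_1)\geq \bL(\Pi)>0$ by Corollary \ref{Cor: exist min-max T2} (or simply because the width of a nontrivial sweepout is positive). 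That settles the second assertion.

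For $(p,q)=(2,1)$ the heart is the strict monotonicity $\bL(\mathscr{P}_k)<\bL(\mathscr{P}_{k+1})$ for $k=1,2,3$. Here I would argue by contradiction, assuming $\bL(\mathscr{P}_k)=\bL(\mathscr{P}_{k+1})=:\mathbf{L}$ for some $k$. Set $\mathcal{S}$ to be the (compact, by the finiteness hypothesis) set of $G$-invariant stationary integral varifolds whose support is a minimal torus in $\msX$ of area $\leq \mathbf{L}$, together with the finitely many $G$-invariant great circles in $\msY$ and their unions appearing as varifold limits. Taking a minimizing sequence $\{\Phi_i\}\subset\mathscr{P}_{k+1}$, one runs the Lusternik--Schnirelmann deformation: for small $\bar\epsilon>0$ let $Z_i=\{x : \mathbf{F}(\iota\circ\Phi_i(x),\mathcal{S})\geq\bar\epsilon\}$ and $Y_i=\overline{X\setminus Z_i}$; using Corollary \ref{Cor: exist min-max T2} (which forces every min-max varifold to be a single minimal torus of multiplicity one, so $\mathcal{S}$ cannot ``absorb'' a $(k+1)$-parameter family) together with the continuous combinatorial argument of Pitts (cf. \cite[Theorem 6.1]{marques2017existence}), one pushes $\sup_{Z_i}\mathcal{H}^2(\Phi_i)$ below $\mathbf{L}$. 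Since $\alpha^{k+1}=\alpha\smile\alpha^k$ and $\Phi_i^*(\alpha^k)$ must already be supported (up to homotopy) on $Y_i$, a standard cup-product/excision argument then produces a map in $\mathscr{P}_k$ detecting $\alpha^k$ on $Y_i$ with strictly smaller width, contradicting $\bL(\mathscr{P}_k)=\mathbf{L}$. The one genuinely new ingredient compared with Theorem \ref{Thm:L-S} is that we work relative to $\msY$: one must check that the great-circle boundary $\msY$ contributes nothing to the widths, i.e. $\mathcal{H}^2(\Gamma)=0$ for $\Gamma\in\msY$, so the deformations and the combinatorial argument can be carried out rel $Z$ without the boundary interfering; this uses that $\bL(\mathscr{P}_1)>0 = \sup_{\msY}\mathcal{H}^2$.

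Finally I would establish $\bL(\mathscr{P}_1)>0$ for $(p,q)=(2,1)$; as above this is immediate since $\mathcal{G}$ of \eqref{Eq: parameter T2 in RP3} restricted to a curve $t\mapsto[t,a,b]$ is a nontrivial Almgren--Pitts sweepout, so $\bL(\mathscr{P}_1)\geq\bL(\Pi)>0$ by Corollary \ref{Cor: exist min-max T2}. Thus the whole statement reduces to the monotonicity step.

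I expect the main obstacle to be the relative version of the combinatorial Lusternik--Schnirelmann deformation: in \cite{marques2017existence} and \cite{wangzc2023existenceFour} the sweepouts are absolute (maps $X\to\mathcal{Z}_2$), whereas here $\Phi:(X,Z)\to(\bmsX,\msY)$ and one must ensure all the cut-and-paste operations (the ``interpolation'' producing $\Phi_i$ with smaller width on $Z_i$) respect the boundary constraint $\Phi(Z)\subset\msY$ and stay within the relative homotopy class. The cleanest route is probably to note, via Lemma \ref{Lem: lambda4 not 0} and the retraction $\bmcX\searrow\mathcal{X}_4\cong\RP^2\times\RP^2$, that the relevant cohomology computations reduce to the (well-understood) absolute ones on $\RP^2\times\RP^2$, and then quote the continuous combinatorial argument of \cite[Section 6]{marques2017existence} with $\bar\lambda\in H^1(\mathcal{Z}_2(M/G;\mathbb{Z}_2);\mathbb{Z}_2)$ and $\bmsX$ in place of $\mathcal{Z}_2$ — exactly as was done in the proof of Theorem \ref{Thm:L-S} — after verifying the boundary $\msY$ is ``inert'' in the sense above.
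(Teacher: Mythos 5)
Your proposal takes essentially the same route as the paper: the paper's proof is a short reduction to Lusternik--Schnirelmann theory, extending \cite[Theorem 6.1]{marques2017existence} via Corollary \ref{Cor: exist min-max T2}, with the relative/degenerating boundary handled by invoking \cite[Lemma 5.3, Theorem 5.2]{haslhofer2019minimalS2} --- precisely the issue you identify and resolve by observing that elements of $\msY$ have zero area and cannot interfere with the deformation or absorb min-max limits of positive mass. The only cosmetic caveat is that positivity of $\bL(\mathscr{P}_1)$ must come from the fact that \emph{every} $\Phi\in\mathscr{P}_1$ is a nontrivial Almgren--Pitts sweepout (as your parenthetical correctly notes), not from the single explicit sweepout $\mathcal{G}$, which only yields an upper bound.
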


\begin{proof}
    This result follows from the Lusternik-Schnirelmann theory. One can easily use Corollary \ref{Cor: exist min-max T2} and \cite[Lemma 5.3]{haslhofer2019minimalS2} to extend the proof of \cite[Theorem 6.1]{marques2017existence} and \cite[Theorem 5.2]{haslhofer2019minimalS2}. 
\end{proof}

We mention that for a general Riemannian metric $g_{_L}$ on $L(p,q)$, the above result is still valid if $(L(p,q),g_{_L})$ contains only finitely many minimal $2$-spheres, minimal real projective planes, minimal Klein bottles, and minimal tori. 

Finally, we prove Theorem \ref{Thm: minimal tori (main)}.

\begin{proof}[Proof of Theorem \ref{Thm: minimal tori (main)}] 
By combining Corollary \ref{Cor: exist min-max T2} and Theorem \ref{Thm: L-S in Lens space for T2}, we find four embedded minimal tori in $L(2, 1) = \mathbb{RP}^3$ and one embedded minimal torus in $L(p, q)$ with $q \neq \{1, p - 1\}$. It remains to show the second part of (i), namely to find one more embedded minimal tori in $\mathbb{RP}^3$ under the additional assumption of bumpiness.    

To begin with, we recall the setup of White's Manifold Structure Theorem \cite{white1991space}. Let $M, N$ be compact smooth Riemannian manifolds such that $\dim N < \dim M$. Given $\Gamma$ an open subset of $C^q$ Riemannian metrics on $M$, consider the space $\mathcal{M}$ of ordered pairs $(\gamma, [N])$, where $\gamma \in \Gamma$ and $[N]$ denotes the diffeomorphism classes of minimal embeddings of a Riemannian manifold $N$ into $M$ with respect to $\gamma$. It was shown that $\mathcal{M}$ is a separable $C^2$ Banach manifold and the projection map 
\begin{align*}
    \Pi: \mathcal{M} \rightarrow \Gamma, \qquad (\gamma, [N]) \mapsto \gamma, 
\end{align*}
is smooth Fredhom with Fredhom index $0$. 

Now, let $N$ be an unoriented torus, $M = \mathbb{RP}^3$, and $\Gamma$ be the set of all $C^4$ Riemannian metrics on $\mathbb{RP}^3$ with positive Ricci curvature. Since $\Gamma$ is connected \cite{hamilton82} and $\Pi: \mathcal{M} \rightarrow \Gamma$ is proper \cite{schoen85}, we can calculate the mapping degree $d$ of $\Pi$ using Theorem 5.1 of \cite{white1991space}. By Proposition \ref{prop: parametrization of T^2} (1), the set of all Clifford torus in $\mathbb{RP}^3$ is diffeomorphic to $\mathbb{RP}^2 \times \mathbb{RP}^2$. Denote by $\gamma_{0}$ the round metric on $\mathbb{RP}^3$. As a direct corollary of Lawson's Conjecture \cite{brendle13lawson, brendle13survey}, every embedded minimal torus in $(\mathbb{RP}^3, \gamma_0)$ is congruent to the Clifford torus. Hence $\Sigma = \Pi^{-1}(\gamma_0)$ is diffeomorphic to $\mathbb{RP}^2 \times \mathbb{RP}^2$ with $\chi(\Sigma) = 1$. Together with the fact that every Clifford tori in $(\mathbb{RP}^3, \gamma_0)$ has nullity $4$ and Morse index $1$, we see
\[
d = (-1)^{\Index(\Sigma)}\chi(\Sigma) = -1.   
\]
For generic metrics $\gamma$ of positive Ricci curvature on $\mathbb{RP}^3$, we further have 
\[
\sum_{S \in \Pi^{-1}(\gamma)} (-1)^{\Index(S)} = d = -1,  
\]
which implies that the number of elements in $\Pi^{-1}(\gamma)$ is odd. Given that we already find four embedded minimal tori in $\mathbb{RP}^3$, there must be at least a fifth one. This completes the proof.  
\end{proof}

\appendix

\section{Proof of Lemma \ref{Lem: lambda4 not 0}}\label{Appendix: lambda4}

\begin{proof}[Proof of Lemma \ref{Lem: lambda4 not 0}]
    Using the notations in Section \ref{Sec: sweepouts by T^2} {\bf Case A}, for any $k$-dimensional closed submanifold $N$ in $\bmcX$, we denote by $[N]\in H_k(\bmcX;\mZ_2)$ the images of the fundamental classes of $N$ under the inclusion into $\bmcX$, and denote by $[N]^*\in H^{5-k}(\bmcX,\mcY;\mZ_2)$ its Poincar\'{e}–Lefschetz dual. 
    It then follows from the Poincar\'{e}–Lefschetz duality \eqref{Eq: poincare dual of parameter space} that the generator $\lambda\in H^1(\bmcX,\mcY;\mZ_2)$ coincides with $[\mcX_4]^*$, where $\mcX_4 :=\{[0,a,b]\in \bmcX\}\cong \RP^2\times\RP^2$ is the parameter space of Clifford tori. 
    We now use the transversal intersection product to visualize the cup product $\lambda^k$, $k=2,3,4,5$. 

    Firstly, after regarding $a=(a_1,a_2,a_3),b=(b_1,b_2,b_3)\in\mS^2$ as vectors in $\R^3$, we can define 
    \begin{align*}
        \mcX_4' & := \{ [a\cdot b, a, b]\in\bmcX : a,b\in\mS^2 \}, \\
        \mcX_3 & :=\mcX_4\cap\mcX_4' = \{[0,a,b]\in \bmcX: a,b\in\mS^2 {\rm ~with~}a\cdot b=0 \},
    \end{align*}
    where $a\cdot b = \sum_{i=1}^3a_ib_i$ is the inner product in $\R^3$. 
    Since $\{(a\cdot b, a,b):a,b\in\mS^2\}$ is a $4$-submanifold of $[-1,1]\times\mS^2\times\mS^2$ invariant under the $(\mZ_2\oplus\mZ_2)$-action in \eqref{Eq: action on parameter space}, we know $\mcX_4'$ is a closed $4$-submanifold of $\bmcX$ and is homotopic to $\mcX_4$ through $(s, [t,a,b])\mapsto [st,a,b] $, $s\in [0,1]$. 
    In addition, for any $[0,a,b]\in \mcX_3 =\mcX_4\cap\mcX_4'$, we have $a\perp b$, and thus there exists a curve $\sigma\subset \mS^2$ with $\sigma(0)=a$ and $\sigma'(0)=b$. 
    Hence, $\tilde{\sigma}(s)=[\sigma(s)\cdot b, \sigma(s), b]$ is a curve in $\mcX_4'$ so that $\tilde{\sigma}'(0)$ has a component $(\sigma \cdot b)'(0)\cdot \frac{\bd}{\bd t}=\frac{\bd}{\bd t}$, which implies $\mcX_4'$ is transversal to $\mcX_4$ along their intersection $\mcX_3$. 
    Therefore, by \cite[3.6]{goresky1981whitneyChain} (see also \cite[VI, Theorem 11.9]{bredon2013topology}), we have $\lambda^2 = [\mcX_4]^*\smile[\mcX_4']^* = [\mcX_4\cap \mcX_4']^* =[\mcX_3]^*\in H^2(\bmcX,\mcY;\mZ_2)$. 

    Next, denote by $e_1:=(1,0,0)$, $e_2:=(0,1,0)$, and $e_3:=(0,0,1)$. 
    For any $a,b\in\mS^2$ with $a\cdot b = 0$, we can take the cross product vector $v=(v_1,v_2,v_3):=a\times b$ in $\R^3$, i.e. 
    \[v_1(a,b):=a_1b_3-a_3b_2, \quad v_2(a,b):=a_3b_1-a_1b_3, \quad v_3(a,b)=a_1b_2-a_2b_1.\]
    Then $\{a,b,v\}$ forms an orthonormal basis of $\R^3$. 
    Define then 
    \begin{align*}
        \mcX_3'&:=\{[v_1(a,b), a, b]\in\bmcX: a,b\in \mS^2 {\rm ~with~} a\cdot b =0\},\\
        \mcX_2&:= \mcX_4\cap \mcX_3'=\{[0, a, b]\in\bmcX: a,b\in \mS^2 {\rm ~with~} a\cdot b =0, v_1(a,b)=0\}.
    \end{align*}
    Since $\{(v_1(a,b),a,b):a,b\in\mS^2,a\cdot b=0\}$ is a closed $3$-submanifold of $[-1,1]\times\mS^2\times\mS^2$ invariant under the $(\mZ_2\oplus\mZ_2)$-action in \eqref{Eq: action on parameter space}, it's clear that $\mcX_3'$ is a closed $3$-submanifold of $\bmcX$ and is homotopic to $\mcX_3$. 
    Additionally, for any $a,b\in\mS^2$ with $a\cdot b =0$ and $ v_1(a,b)=0$, we can rotate $a,b$, and $v=a\times b$ together around the $(v\times e_1)$-axis to obtain curves $a(\theta),b(\theta),v(\theta)=a(\theta)\times b(\theta) \subset \mS^2$ so that $a(0)=a,b(0)=b,v(0)=v$ and $v_1(\theta)=v_1(a(\theta),b(\theta))=\sin(\theta)$. 
    Hence, $\tilde{\sigma}(\theta)=[v_1(\theta), a(\theta), b(\theta)]$ is a curve in $\mcX_3'$ so that $\tilde{\sigma}'(0)$ has a component $v_1'(0)\cdot \frac{\bd}{\bd t}=\frac{\bd}{\bd t}$, which implies $\mcX_3'$ is transversal to $\mcX_4$ along their intersection $\mcX_2$. 
    Therefore, we have $\lambda^3=[\mcX_4]^*\smile[\mcX_3']^* = [\mcX_4\cap \mcX_3']^* =[\mcX_2]^*\in H^3(\bmcX,\mcY;\mZ_2)$. 

    Moreover, define 
    \begin{align*}
        \mcX_2'&:=\{[v_2(a,b), a, b]\in\bmcX: a,b\in \mS^2 {\rm ~with~} a\cdot b =0, v_1(a,b)=0 \},\\
        \mcX_1&:= \mcX_4\cap \mcX_2'=\{[0, a, b]\in\bmcX: a,b\in \mS^2 {\rm ~with~} a\cdot b =0, v_1(a,b)=v_2(a,b)=0\}. 
    \end{align*}
    Similarly, one notices that $\mcX_2'$ is a closed $2$-submanifold of $\bmcX$ and is homotopic to $\mcX_2$. 
    Given any $a,b\in\mS^2$ with $a\cdot b=0$ and $v_1(a,b)=v_2(a,b)=0$, one can use the rotation around the $e_1$-axis ($\pm e_1=v\times e_2$) to show that $\mcX_2'$ is transversal to $\mcX_4$ along their intersection $\mcX_1$.  
    Hence, $\lambda^4 = [\mcX_4]^*\smile[\mcX_2']^* = [\mcX_4\cap \mcX_2']^*=[\mcX_1]^*\in H^4(\bmcX,\mcY;\mZ_2)$. 

    Finally, we notice that $\mcX_1$ is a non-trivial closed curve in $\mcX_4\subset \mcX$ given by 
    \[\theta\mapsto [0, (\cos(\theta/2),\sin(\theta/2),0), (-\sin(\theta/2),\cos(\theta/2),0)], \qquad \forall \theta\in [0,2\pi].\]
    Hence, $[\mcX_1]\neq 0 \in H_1(\mcX_4;\mZ_2)\cong H_1(\bmcX;\mZ_2)$, and thus $\lambda^4=[\mcX_1]^*\neq 0$ in $H^4(\bmcX,\mcY;\mZ_2)$. 
    Moreover, one verifies that $\mcX_1':=\{[a_1  b_1, a,b]: [0,a,b]\in\mcX_1\}$ is homotopic to $\mcX_1$ and transversal to $\mcX_4$, and $\mcX_1'\cap\mcX_4$ has two points, which implies $\lambda^5=[\mcX_1'\cap\mcX_4]^*=0\in H^5(\bmcX,\mcY;\mZ_2)$. 
\end{proof}

\bibliographystyle{abbrv}

\end{document}